 \theoremstyle{plain}
 \newtheorem{thm}{Theorem}[section]
 \newtheorem{cor}[thm]{Corollary}
 \newtheorem{lem}[thm]{Lemma}
 \newtheorem{prop}[thm]{Proposition}
\theoremstyle{definition}
 \newtheorem{defn}[thm]{Definition}
\theoremstyle{remark}
 \newtheorem{rem}[thm]{Remark}
 \newtheorem{ter}[thm]{Terminology}
 \newtheorem{nota}[thm]{Notation}
 \newtheorem{conv}[thm]{Convention}
 \newtheorem{exam}[thm]{Example}
 \numberwithin{equation}{section}
\DeclareMathOperator{\VF}{VF}
\DeclareMathOperator{\RV}{RV}
\DeclareMathOperator{\DC}{DC}
\DeclareMathOperator{\MM}{\mathcal{M}}
\DeclareMathOperator{\OO}{\mathcal{O}}
\DeclareMathOperator{\UU}{\mathcal{U}}
 \DeclareMathOperator{\ran}{ran}
 \DeclareMathOperator{\dom}{dom}
 \DeclareMathOperator{\id}{id}
 \DeclareMathOperator{\lh}{lh}
 \DeclareMathOperator{\dcl}{dcl}
 \DeclareMathOperator{\pr}{pr}
 \DeclareMathOperator{\mgl}{GL}
\DeclareMathOperator{\jcb}{Jcb}
\DeclareMathOperator{\K}{\Bbbk}
\def\Xint#1{\mathchoice
{\XXint\displaystyle\textstyle{#1}}%
{\XXint\textstyle\scriptstyle{#1}}%
{\XXint\scriptstyle\scriptscriptstyle{#1}}%
{\XXint\scriptscriptstyle\scriptscriptstyle{#1}}%
\!\int}
\def\XXint#1#2#3{{\setbox0=\hbox{$#1{#2#3}{\int}$}
\vcenter{\hbox{$#2#3$}}\kern-.5\wd0}}
\newcommand{\Z}{\mathds{Z}}
\newcommand{\Q}{\mathds{Q}}
\newcommand{\N}{\mathds{N}}
\newcommand{\R}{\mathds{R}}
\newcommand{\omin}{$o$\nobreakdash}
\newcommand{\cmin}{$C$\nobreakdash}
\newcommand{\gB}{\mathfrak{B}}
\newcommand{\gC}{\mathfrak{C}}
\newcommand{\ga}{\mathfrak{a}}
\newcommand{\gb}{\mathfrak{b}}
\newcommand{\gc}{\mathfrak{c}}
\newcommand{\gd}{\mathfrak{d}}
\newcommand{\gh}{\mathfrak{h}}
\newcommand{\gi}{\mathfrak{i}}
\newcommand{\gm}{\mathfrak{m}}
\newcommand{\gn}{\mathfrak{n}}
\newcommand{\go}{\mathfrak{o}}
\newcommand{\gp}{\mathfrak{p}}
\newcommand{\gq}{\mathfrak{q}}
\newcommand{\0}{\emptyset}
\DeclareMathAlphabet{\mathpzc}{OT1}{pzc}{m}{it}
 \newcommand{\set}[1]{\left\{#1\right\}}
 \newcommand{\wt}[1]{\widetilde{#1}}
 \newcommand{\wh}[1]{\widehat{#1}}
 \newcommand{\usub}[2]{#1_{\textup{#2}}}
 \newcommand{\dlbr}{( \! (}
 \newcommand{\drbr}{) \! )}
 \newcommand{\lan}[3]{\mathcal{L}_{#1 \textup{#2} #3}}
\newcommand{\mdl}[1]{\mathcal{#1}}  
\newcommand{\bb}[1]{\mathbb{#1}}
\newcommand{\limplies}{\rightarrow}
\newcommand{\ex}[1]{\exists #1 \;} 
\newcommand{\rest}{\upharpoonright}
\newcommand{\fun}{\longrightarrow}
\newcommand{\efun}{\longmapsto}
\newcommand{\sub}{\subseteq}
\newcommand{\mi}{\smallsetminus}
\newcommand{\colim}[1]{\underset{#1}{\text{colim}}\,}  
\newcommand{\la}{\langle}
\newcommand{\ra}{\rangle}
\DeclareMathOperator{\rv}{rv}
\DeclareMathOperator{\vv}{val}
\DeclareMathOperator{\RES}{RES}
\DeclareMathOperator{\gsk}{\mathbf{K}_+}
\DeclareMathOperator{\ggk}{\mathbf{K}}
\DeclareMathOperator{\sgsk}{!\mathbf{K}_+}
\DeclareMathOperator{\ob}{Ob}
\DeclareMathOperator{\fn}{FN}
\DeclareMathOperator{\fib}{fib}
\DeclareMathOperator{\vol}{vol}
\DeclareMathOperator{\isp}{I_{sp}}
\DeclareMathOperator{\rad}{rad}
\DeclareMathOperator{\vrv}{vrv}
\DeclareMathOperator{\RVH}{RVH}
\DeclareMathOperator{\can}{\mathbf{c}}
\DeclareMathOperator{\pvf}{pvf}
\DeclareMathOperator{\prv}{prv}
\DeclareMathOperator{\ito}{int}
\DeclareMathOperator{\cl}{cl}
\DeclareMathOperator{\sgn}{sgn}
\DeclareMathOperator{\reg}{reg}
\DeclareMathOperator{\tor}{Tor}
\DeclareMathOperator{\res}{res}
\DeclareMathOperator{\aff}{Aff}
\newcommand{\ol}[1]{\overline{#1}}
\DeclareMathOperator{\TCVF}{TCVF}
\DeclareMathOperator{\db}{db}
\DeclareMathOperator{\ipm}{I_{pm}}
\DeclareMathOperator{\dsgsk}{!!\mathbf{K}_+}
\DeclareMathOperator{\dsggk}{!!\mathbf{K}}
\DeclareMathOperator{\sbe}{!\bb E}
\author[Yimu Yin]{Yimu Yin}
\address{Institut Math\'{e}matique de Jussieu \\ Universit\'e Pierre et Marie Curie \\ 4 place Jussieu \\ 75252 Paris Cedex 05 \\ France}
\email{yyin@math.jussieu.fr}
\title[Additive invariants in $\TCVF$]{Additive invariants in o-minimal valued fields}
\begin{document}

\begin{abstract}
We develop a theory of Hrushovski-Kazhdan style motivic integration for certain type of non-archimedean \omin-minimal fields, namely polynomial-bounded $T$-convex valued fields. The structure of valued fields is expressed through a two-sorted first-order language $\lan{}{TRV}{}$. We establish canonical homomorphisms between the Grothendieck semirings of various categories of definable sets that are associated with the $\VF$-sort and the $\RV$-sort of $\lan{}{TRV}{}$. The groupifications of some of these homomorphisms may be described explicitly and are understood as generalized Euler characteristics. In the end, following the Hrushovski-Loeser method, we construct topological zeta functions associated with (germs of) definable continuous functions in an arbitrary polynomial-bounded \omin-minimal field and show that they are rational. The overall construction is closely modeled on that of the original Hrushovski-Kazhdan construction, as reproduced in the series of papers by the present author.

\end{abstract}

\subjclass{03C60, 11S80, 03C98, 14B05, 14J17, 32S25, 32S55}
\keywords{motivic integration, Euler characteristic, \omin-minimal valued field, $T$-convexity, Milnor fiber, topological zeta function}
\thanks{The research reported in this paper has been fully supported by the ERC Advanced Grant NMNAG}

\maketitle

\tableofcontents

\section{Introduction}

Towards the end of the introduction of \cite{hrushovski:kazhdan:integration:vf} three hopes for the future of the theory of motivic integration are mentioned. We investigate one of them in this paper: integration, or rather, since we will not consider general volume forms, additive invariants, in \omin-minimal valued fields. The prototype of such valued fields is $\R \dlbr t^{\Q} \drbr$, the power series field over $\R$ with exponents in $\Q$. One of the cornerstones of the methodology of \cite{hrushovski:kazhdan:integration:vf} is \cmin-minimality, which is the right analogue of \omin-minimality for algebraically closed valued fields and other closely related structures that epitomizes the behavior of definable subsets of the affine line. It, of course, fails in an \omin-minimal valued field, mainly due to the presence of a total ordering. The construction of additive invariants in this paper is thus carried out in a different framework, which affords a similar type of normal forms for definable subsets of the affine line, a special kind of weak \omin-minimality; this framework is van den Dries and Lewenberg's theory of $T$-convex valued fields \cite{DriesLew95, Dries:tcon:97}.

For a description of the ideas and the main results of the Hrushovski-Kazhdan style integration theory, we refer the reader to the original introduction in \cite{hrushovski:kazhdan:integration:vf} and also the introductions in \cite{Yin:int:acvf, Yin:int:expan:acvf}. There is also a quite comprehensive introduction to the same materials in \cite{hru:loe:lef} and, more importantly, a specialized version that relates the Hrushovski-Kazhdan style integration to the geometry and topology of Milnor fibers over the complex field. The method expounded there will be featured in this paper as well. In fact, since much of the work below is closely modeled on that in \cite{hrushovski:kazhdan:integration:vf, Yin:special:trans, Yin:int:acvf, hru:loe:lef}, the reader may simply substitute the term ``theory of polynomial-bounded $T$-convex valued fields'' for ``theory of algebraically closed valued fields'' or more generally ``$V$-minimal theories'' in those introductions and thereby acquire a quite good grip on what the results of this paper look like. For the reader's convenience, however, we shall repeat some of the key points, perhaps with minor changes here and there.

Let $(K, \vv : K \fun \Gamma)$ be a valued field, where $\vv$ is the valuation map, and $\OO$, $\MM$, $\K$ the corresponding valuation
ring, its maximal ideal, and the residue field. Let
\[
\RV(K) = K^{\times} / (1 + \MM)
\]
and $\rv : K^{\times} \fun \RV(K)$ be the quotient map. Note that, for each $a \in K$, $\vv$ is constant on the subset $a + a\MM$ and hence there is a naturally induced map $\vrv$ from $\RV(K)$ onto the value group $\Gamma$. The situation is illustrated in the following commutative diagram
\begin{equation*}
\bfig
 \square(0,0)/^{ (}->`->>`->>`^{ (}->/<600, 400>[\OO \mi \MM`K^{\times}`\K^{\times}`
\RV(K);`\text{quotient}`\rv`]
 \morphism(600,0)/->>/<600,0>[\RV(K)`\Gamma;\vrv]
 \morphism(600,400)/->>/<600,-400>[K^{\times}`\Gamma;\vv]
\efig
\end{equation*}
where the bottom sequence is exact. This structure may be expressed by a two-sorted first-order language $\lan{}{TRV}{}$, where $K$ is referred to as the $\VF$-sort and $\RV$ is taken as a new sort. On the other hand, for the main construction in this paper, $K$ could carry any extra structure that amounts to a polynomial-bounded \omin-minimal expansion $T$ of the theory of real closed fields (henceforth abbreviated as $\usub{\textup{RCF}}{}$); this is what the letter ``$T$'' stands for in $\lan{}{TRV}{}$. In fact, there is essentially no loss of generality if we take $K = \R \dlbr t^{\Q} \drbr$, which we shall do in the remainder of this introduction (see Example~\ref{exam:RtQ} for further details).

Let $\VF_*$ and $\RV[*]$ be two categories of definable sets that are respectively associated with the $\VF$-sort and the $\RV$-sort. In $\VF_*$, the objects are definable subsets of products of the form $\VF^n \times \RV^m$ and the morphisms are definable bijections. On the other hand, for technical reasons (particularly for keeping track of ambient dimensions), $\RV[*]$ is formulated in a somewhat complicated way and is hence equipped with a gradation by ambient dimension (see Definition~\ref{defn:c:RV:cat}). The main construction of the Hrushovski-Kazhdan theory is a canonical homomorphism from the Grothendieck semiring $\gsk \VF_*$ to the Grothendieck semiring $\gsk \RV[*]$ modulo a semiring congruence relation $\isp$ on the latter. In fact, it turns out to be an isomorphism. This construction has three main steps.
\begin{enumerate}[{Step} 1.]
 \item First we define a lifting map $\bb L$ from the set of objects in $\RV[*]$ into the set of objects in $\VF_*$; see Definition~\ref{def:L}. Next we single out a subclass of isomorphisms in $\VF_*$, which are called special bijections; see
Definition~\ref{defn:special:bijection}. Then we show that for any object $A$ in $\VF_*$ there is a special bijection
$T$ on $A$ and an object $\bm U$ in $\RV[*]$ such that $T(A)$ is isomorphic to $\bb L (\bm U)$. This implies that $\bb L$
hits every isomorphism class of $\VF_*$. Of course, for this result alone we do not have to limit our means to special
bijections. However, in Step~3 below, special bijections become an essential ingredient in computing the semiring congruence
relation $\isp$.

 \item For any two isomorphic objects $\bm U_1$, $\bm U_2$ in $\RV[*]$, their lifts $\bb L(\bm U_1), \bb L(\bm U_2)$ in
$\VF_*$ are isomorphic as well. This shows that $\bb L$ induces a semiring homomorphism from $\gsk \RV[*]$ into $\gsk \VF_*$, which is also denoted by $\bb L$.

 \item A number of classical properties of integration can already be (perhaps only partially) verified for the inversion of the homomorphism $\bb L$ and hence, morally, this third step is not necessary. For applications, however, it is much more satisfying to have a precise description of the semiring congruence relation induced by $\bb L$. The basic notion used in the description is that of a blowup of an object in $\RV[*]$, which is essentially a restatement of the trivial fact that there is an additive translation from $1 + \MM$ onto $\MM$. We then show that, for any objects $\bm U_1$, $\bm U_2$ in $\RV[*]$, there are isomorphic blowups $\bm U_1^{\sharp}$, $\bm U_2^{\sharp}$ of them if and only if $\bb L(\bm U_1)$, $\bb L(\bm U_2)$ are isomorphic. The ``if'' direction essentially contains a form of Fubini's Theorem and is the most technically involved part of the construction.
\end{enumerate}
The inverse of $\bb L$ thus obtained is called a Grothendieck homomorphism. If the Jacobian transformation preserves integrals, that is, the change of variables formula holds, then it may be called a motivic integration; we shall only consider a very primitive case of this notion in this paper. When the semirings are formally groupified, this Grothendieck homomorphism is recast as a ring homomorphism, which is denoted by $\int$.

The Grothendieck ring $\ggk \RV[*]$ may be expressed as a tensor product of two other Grothendieck rings $\ggk \RES[*]$ and $\ggk \Gamma[*]$, where $\RES[*]$ is essentially the category of definable sets over $\R$ (as a model of the theory $T$) and $\Gamma[*]$ is essentially the category of definable sets over $\Q$ (as an \omin-minimal group), and both are graded by ambient dimension. This results in various retractions from $\ggk \RV[*]$ into $\ggk \RES[*]$ or $\ggk \Gamma[*]$ and, when combined with the canonical homomorphism $\int$ (see Theorem~\ref{thm:ring}), yields various (generalized) Euler characteristics
\[
\Xint{\textup{G}}  :  \ggk \VF_* \to^{\sim} \Z^{(2)} \coloneqq \Z[X]/(X+X^2),
\]
which is actually an isomorphism, and
\[
\Xint{\textup{R}}^g, \Xint{\textup{R}}^b: \ggk \VF_* \fun \Z.
\]

For the construction of topological zeta functions we shall need to introduce the simplest volume form, namely the constant $\Gamma$-volume form $1$, into the various categories above. This modification has no bearing on the collection of objects in these categories, but does trim down the collection of morphisms. The resulting categories are denoted by $\vol \VF[*]$, $\vol \RV[*]$, etc. For example, given $(a, a')$ and $(b, b')$ in $\Q^2$, the subsets $\vv^{-1}(a, a')$ and $\vv^{-1}(b, b')$ of $\R \dlbr t^{\Q} \drbr^2$ are isomorphic in $\VF_*$ but are not isomorphic in $\vol \VF[*]$ unless $a + a' = b + b'$. Another change in $\vol \VF[*]$ is that morphisms may ignore a subset whose dimension is smaller than the ambient dimension. Thus $\vol \VF[*]$ is also graded by ambient dimension; this is why we have changed the position of ``$*$'' in the notation. The semiring congruence relation $\isp$ is now homogeneous and we have a canonical isomorphism of \emph{graded} Grothendieck rings
\[
\int : \ggk \vol \VF[*] \fun \ggk \vol \RV[*] /  \isp;
\]
see Theorem~\ref{main:k:dag}. Moreover, if we do restrict our attention to a special type of objects, namely those objects whose images under $\vv$ are (in effect) bounded from both sides, then there are two natural homomorphisms of graded rings
\[
\Xint{\textup{R}}^{\pm}: \ggk \vol \VF^{\diamond}[\ast] \fun \Z[X];
\]
see Theorem~\ref{thm:poin}.

Now let $f : \R^n \fun \R$ be a definable non-constant continuous function sending $0$ to $0$, or more generally a germ at $0$ of such functions. For example, $f$ could be a polynomial function or a subanalytic function if it is allowed by the theory $T$. Unlike in the complex case, there is no Milnor fibration of $f$ (and hence there is no consensus on what the monodromy of $f$ should be). Nevertheless we may still define the positive and the negative \emph{Milnor fibers} of $f$ at $0$:
\[
M_+ = B(0, \epsilon) \cap f^{-1}(\delta) \quad \text{and} \quad M_- = B(0, \epsilon) \cap f^{-1}(- \delta),
\]
where $0 < \delta \ll \epsilon \ll 1$ and $B(0, \epsilon)$ is the ball in $\R^n$ centered at $0$ with radius $\epsilon$. By \omin-minimal trivialization (see \cite[\S 9]{dries:1998}), the (embedded) definable homeomorphism types of $M_+$ and $M_-$ are well-defined (of course $M_+$ and $M_-$ are not necessarily homeomorphic, definably or not; indeed $M_-$ may be empty while $M_+$ is not). By $T$-convexity, $f$ may be lifted in a unique way to a definable continuous function $f^{\uparrow} : \OO^n \fun \OO$. The Milnor fibers of $f$ with (thickened) formal arcs attached to each point are defined as
\begin{align*}
\wt M_+  &= \{ a \in \MM^n : \rv (f^{\uparrow}(a)) = \rv(t)\}, \\
\wt M_-  &= \{ a \in \MM^n : \rv (f^{\uparrow}(a)) = -\rv(t)\}.
\end{align*}
Following \cite{denefloeser:arc, denef:loeser:1992:caract}, we attach topological (or motivic) zeta functions $Z^{\pm}(\wt M_{\pm})(Y)$ to $f$ (two to each one of $\wt M_+$ and $\wt M_-$, due to the lack of a canonical identification of certain graded ring with $\Z[X]$), which are power series in $\Z[X] \llbracket Y \rrbracket$ whose coefficients are integrands of truncated (thickened) formal arcs. In Theorem~\ref{zeta:rat}, we show that these zeta functions are rational and their denominators are products of terms of the form $1 - (-X)^{a} Y^b$, where $b \geq 1$. Consequently, $Z^{\pm}(\wt M_{\pm})(Y)$ attain limits $e^{\pm}(\wt M_{\pm}) \in \Z$ as $Y \limplies \infty$ and we have the equality
\[
e^{\pm}(\wt M_{\pm}) X^n = - \Xint{\textup{R}}^{\pm} [\wt M_{\pm}].
\]

For certain purposes, the difference between model theory and algebraic geometry is somewhat easier to bridge if one works over the complex field, as is demonstrated in \cite{hru:loe:lef}; however, over the real field, although they do overlap significantly, the two worlds seem to diverge in their methods and ideas. Our results should be understood in the context of ``\omin-minimal geometry'' \cite{dries:1998, DrMi96}. This is reflected in our preference of the terminology ``topological zeta function'' since, in the literature of real algebraic geometry, ``motivic zeta function'' has already been constructed (see, for example, \cite{Comte:fichou}), which is a much finer invariant as there are much less morphisms in the background. In general, the various Grothendieck rings considered in real algebraic geometry bring about lesser collapse of ``algebraic data'' and hence are more faithful in this regard, although the flip side of the story is that they are computationally intractable (especially when resolution of singularities is involved) and specializations are often needed in practice. For instance, the Grothendieck ring of real algebraic varieties may be specialized to $\Z[X]$, which is called the virtual Poincar\'e polynomial (see \cite{mccrory:paru:virtual:poin}). Still, our method does not seem to be suited for recovering invariants at this level, at least not directly (that the homomorphism $\Xint{\textup{R}}^{\pm}$ has $\Z[X]$ as its codomain is merely a coincidence and is not an essential feature of the construction).

Similar constructions are available for other (closely related) categories of definable sets, in particular, for such categories with general volume forms, which we have not included in this paper for the sake of simplicity and brevity. For those constructions, one needs to add a section from the $\RV$-sort into the $\VF$-sort or at least a standard part map, that is, a section from the residue field into the $\VF$-sort, since it is not conceptually correct to use the ``counting measure'' on the residue field anymore. We shall elaborate on this in a sequel.

The role of $T$-convexity in this paper cannot be overemphasized. However, it does not quite work if the exponential function is included in the theory $T$. It remains a worthy challenge to find a suitable framework in which the construction of this paper may be extended to that case. Among many other things, such a construction will enable us to define motivic Mellin transforms, the lack of which is a major obstacle to further development and application of the theory of motivic integration.

\section{Preliminaries}

A large part of the notations, terminologies, and conventions in \cite{DriesLew95, Dries:tcon:97, Yin:special:trans,Yin:int:acvf, Yin:int:expan:acvf} are directly applicable in the current setting, more or less verbatim; some of these will be recalled as we go along.

Throughout this paper, let $T$ be a complete polynomial-bounded \omin-minimal $\lan{T}{}{}$-theory extending $\usub{\textup{RCF}}{}$. We assume that $T$ admits quantifier elimination and is universally axiomatizable in $\lan{T}{}{}$. Without loss of generality, we may also assume $T = T^{\textup{df}}$ (see \cite[\S2.4]{DriesLew95}). Thus every substructure of a model of $T$ is actually a model of $T$ and, as such, is an elementary substructure.

\begin{defn}
The language $\lan{}{TRV}{}$ has the following sorts and symbols:
\begin{itemize}
 \item A sort $\VF$, in which we use the language $\lan{T}{}{}$, together with a new constant symbol $\imath$.
 \item A sort $\RV$, whose basic language is that of groups, written multiplicatively as $\{1, \times \}$, together with a constant symbol $\infty$. We shall denote $\RV \mi \{\infty\}$ by $\RV^{\times}$.
 \item A unary predicate $\K^{\times}$ in the $\RV$-sort. The union $\K^{\times} \cup \{\infty\}$ is denoted by $\K$, which is more conveniently thought of as a sort and, as such, employs the language $\lan{T}{}{}$ as well, where the constant symbol $1$ is shared with the $\RV$-sort and $\infty$ serves as $0$.
 \item A binary relation symbol $\leq$ in the $\RV$-sort.
 \item A function symbol $\rv : \VF \fun \RV$.
\end{itemize}
\end{defn}

\begin{defn}\label{defn:tcf}
The theory $\usub{\textup{TCVF}}{}$ of \emph{$T$-convex valued fields} in the language $\lan{}{TRV}{}$ states the following:
\begin{enumerate}[({Ax.} 1.)]
 \item The $\lan{T}{}{}$-reduct of the $\VF$-sort is a model of $T$. Let $\VF^+ \sub \VF$ be the subset of positive elements and $\VF^- \sub \VF$ the subset of negative elements. Let
     \[
     |\cdot| : \VF^{\times} \fun \VF^+
      \]
      be the canonical group homomorphism, that is, the absolute value map. The element $\imath$ is infinitesimal.

 \item $(\RV^{\times}, 1, \times)$ is a divisible abelian group, where multiplication $\times$ is augmented by $t \times \infty = \infty$ for all $t \in \RV^{\times}$, and $\rv : \VF^{\times} \fun \RV^{\times}$ is a surjective group homomorphism augmented by $\rv(0) = \infty$.

  \item The relation $\leq$ is a total ordering on $\RV$ such that, for all $t, t' \in \RV$, $t < t'$ if and only if $\rv^{-1}(t) < \rv^{-1}(t')$. Let $\RV^+ \sub \RV$ be the subset of positive elements and $\RV^- \sub \RV$ the subset of negative elements. Obviously $\rv(\VF^+) = \RV^+$ and $\rv(\VF^-) = \RV^-$. The absolute value map on $\RV$ induced by that on $\VF$ is still denoted by $|\cdot|$. Note that, depending on the context, the element $\infty \in \RV$, now more aptly referred to as the \emph{middle element} of $\RV$, is sometimes more suggestively denoted by $0$. The subsets $\RV^+ \cup \{\infty\}$ and $\RV^- \cup \{\infty\}$ are denoted by $\RV^+_{\infty}$ and $\RV^-_{\infty}$, respectively.

   \item $\K^{\times}$ is a subgroup of $\RV^{\times}$ that does not contain $\rv(\imath)$ and $\K^{+} \coloneqq \K^{\times} \cap \RV^+$ is a convex subgroup of $\RV^+$. The quotient groups $\RV^{\times} / \K^+$, $\RV^{+} / \K^+$ are denoted by $\Gamma$, $\Gamma^+$, their quotient maps by $\vrv$, $\vrv^+$, and the sets $\Gamma \cup \{\infty\}$, $\Gamma^+ \cup \{\infty\}$ by $\Gamma_{\infty}$, $\Gamma^+_{\infty}$, respectively. Note that group actions in $\Gamma$, $\Gamma^+$ will be written \emph{multiplicatively} as $*$ (or omitted if there is no danger of confusion). With the ordering $\leq$ induced by $\vrv^+$, $\Gamma^+$ is actually an ordered abelian group. Let $\leq^{-1}$ be the ordering on $\Gamma^+_{\infty}$ inverse to $\leq$, with $\infty$ the top element, and
       \[
       |\Gamma_{\infty}| \coloneqq (\Gamma^+_{\infty}, +, \leq^{-1})
        \]
        the resulting \emph{additively} written ordered abelian group. The composition
      \[
      |\vv| : \VF \to^{\rv} \RV \to^{|\cdot|} \RV^+_{\infty} \to^{\vrv^+}  |\Gamma_{\infty}|
       \]
       is a (nontrivial) valuation with valuation ring $\OO = \rv^{-1}(\RV^{\circ})$ and maximal ideal $\MM = \rv^{-1}(\RV^{\circ\circ})$, where
     \begin{align*}
     \RV^{\circ} &= \{t \in \RV: 1 \geq \vrv^+(|t|) \} \\
     \RV^{\circ \circ} &= \{t \in \RV: 1 > \vrv^+(|t|)\}.
     \end{align*}
     It follows that $\imath \in \MM$. The multiplicative group $\OO^{\times} \coloneqq \OO \mi \MM$ of units of $\OO$ is sometimes denoted by $\UU$.

 \item The $\K$-sort is a model of $T$ and, as a field, is the residue field of the valued field $(\VF, \OO)$. The canonical quotient map $\OO \fun \K$ is denoted by $\res$. For notational convenience, we extend the domain of $\res$ to $\VF$ by setting $\res(a) = 0$ for all $a \in \VF \mi \OO$. The following function is also denoted by $\res$:
     \[
     \RV \to^{\rv^{-1}} \VF \to^{\res} \K. \label{ax:t:model}
     \]

 \item ($T$-convexity). Let $f : \VF \fun \VF$ be a continuous function defined by an $\lan{T}{}{}$-formula. Then $f(\OO) \sub \OO$. \label{ax:tcon}

\item Suppose that $\phi$ is an $\lan{T}{}{}$-formula that defines a continuous function $f : \VF^m \fun \VF$. Then $\phi$ also defines a continuous function $\ol f : \K^m \fun \K$. For all $a \in \OO^m$, we have $\res(f(a)) = \ol f(\res(a))$.  \label{ax:match}
\end{enumerate}
\end{defn}

Although the behavior of the valuation map $|\vv|$ in the traditional sense is coded in $\TCVF$, it is technically more correct to work with the more natural \emph{signed} valuation map
\[
\vv : \VF \to^{\rv} \RV \to^{\vrv} \Gamma_{\infty} \coloneqq (\Gamma_{\infty}, *, \leq),
\]
where the ordering $\leq$ no longer needs to be inverted. The axioms above guarantee that the quotient group $\K^{\times} / \K^+$ has exactly two elements, which is simply written as $\pm 1$. Thus $|\vv|$ may be understood as the quotient $\vv / \pm 1$, that is, the absolutization of $\vv$; similarly for $|\vrv|$. Note that we shall slightly abuse the notation and denote the ordering in $|\Gamma_{\infty}|$ simply by $\leq$; this should not cause confusion since the ordering in $\Gamma_{\infty}$ will rarely be used.

Let $T_{\textup{convex}}$ be the $\lan{}{convex}{}$-theory of pairs $(\mdl R, \OO)$ with $\mdl R \models T$ and $\OO$ a \emph{proper} $T$-convex subring, as described in \cite{DriesLew95}. We also add the new constant symbol $\imath$ to $\lan{}{convex}{}$ and the axiom ``$\imath$ is in the maximal ideal'' to $T_{\textup{convex}}$ so that $T_{\textup{convex}}$ may be formulated as a universal theory. Thus every substructure of a model of $T_{\textup{convex}}$ is a model of $T_{\textup{convex}}$. We shall view $T_{\textup{convex}}$ as the $\lan{}{convex}{}$-reduct of $\TCVF$.

\begin{thm}\label{tcon:qe}
The theory $T_{\textup{convex}}$ admits quantifier elimination and is complete; moreover, it has definable Skolem functions given by $\lan{T}{}{}(\imath)$-terms.
\end{thm}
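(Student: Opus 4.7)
The plan is to follow van den Dries and Lewenberg \cite{DriesLew95}, who establish precisely this result for $T_{\textup{convex}}$. Quantifier elimination is proved by the standard embedding test: it suffices to show that whenever $(\mdl R_1, \OO_1)$ and $(\mdl R_2, \OO_2)$ are models of $T_{\textup{convex}}$ with the second $\abs{R_1}^+$-saturated, and $A$ is a common $\lan{}{convex}{}(\imath)$-substructure, the inclusion $A \hookrightarrow (\mdl R_2, \OO_2)$ extends to an embedding $(\mdl R_1, \OO_1) \hookrightarrow (\mdl R_2, \OO_2)$. Since $T$ is universally axiomatizable in $\lan{T}{}{}$, any such $A$ is already a model of $T$, and hence $A \prec \mdl R_i$ for $i = 1, 2$; the task therefore reduces to extending the embedding one element at a time.

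For the single-element step, pick $b \in \mdl R_1 \setminus A$. One analyzes the $\lan{T}{}{}$-type (equivalently, the Dedekind cut) of $b$ over $A$, supplemented by the single piece of valuation-theoretic information of whether $b \in \OO_1$. By saturation, a realization $b' \in \mdl R_2$ of the same $\lan{T}{}{}$-type can be chosen; the key point is to arrange simultaneously that $A \la b \ra \cap \OO_1$ corresponds to $A \la b' \ra \cap \OO_2$ under the isomorphism $b \mapsto b'$. This is precisely where axiom (Ax.~\ref{ax:tcon}) is essential: $T$-convexity guarantees that for every continuous $\lan{T}{}{}$-definable $f$, the condition $f(b) \in \OO_1$ is controlled by a single coarse parameter determined by the cut of $b$, rather than by the finer $\lan{T}{}{}$-type itself. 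The hard part is precisely this reconciliation, especially when $b$ realizes a cut ``straddling'' $\OO_1$; once it is handled, induction on $b$ and a transfinite union produce the desired embedding.

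Completeness is then immediate: $T$ is complete and universally axiomatizable, so the $\lan{T}{}{}(\imath)$-substructure generated by $\imath$ alone is isomorphic across all models of $T_{\textup{convex}}$, and QE promotes any such isomorphism into elementary equivalence. For definable Skolem functions, QE reduces an arbitrary $\lan{}{convex}{}(\imath)$-definable family to a finite Boolean combination of $\lan{T}{}{}(\imath)$-conditions and conditions of the form $s(x) \in \OO$ with $s$ an $\lan{T}{}{}(\imath)$-term. Invoking uniform cell decomposition for $T$, together with the fact that $T$ itself has definable Skolem functions by $\lan{T}{}{}$-terms (a property of polynomial-bounded o-minimal theories with QE), one selects a point from each cell. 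The constant $\imath$ is needed precisely to name a witness in the maximal ideal when a cell lies entirely in $\MM$; combining such witnesses with $\lan{T}{}{}$-operations yields selectors that are $\lan{T}{}{}(\imath)$-terms by construction.
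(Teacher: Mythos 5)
Your proposal is correct in substance but takes a noticeably heavier route than the paper's proof, which simply cites \cite[Theorem~3.10, Corollary~3.13]{DriesLew95} for QE and completeness and \cite[Corollary~2.15]{DMM94} for the Skolem-function clause. Re-deriving QE via the Shoenfield/embedding test is fine and matches the van den Dries--Lewenberg argument, and your observation that adding the constant $\imath$ (with the axiom that it lies in the maximal ideal) does not spoil completeness is a genuine point that the paper glosses over by citation. For the Skolem-function clause, however, the paper's argument is far more economical: since $T_{\textup{convex}}$ is \emph{universal} and admits QE, every substructure of a model is an elementary substructure, so for any tuple $a$ and any formula $\exists y\,\phi(a,y)$ that holds, the substructure generated by $a$ already contains a witness; every element of that substructure is the value of an $\lan{}{convex}{}(\imath)$-term at $a$, and since the only new symbol in $\lan{}{convex}{}$ is the predicate $\OO$ (which contributes no new terms), these are exactly $\lan{T}{}{}(\imath)$-terms. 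Compactness then patches finitely many terms into a definable choice function. Your cell-decomposition approach can be made to work, but it requires care because the sets in play are cut by the $\OO$-conditions $s(x)\in\OO$, which are not $\lan{T}{}{}$-definable, so $T$-cell decomposition does not apply directly to the pieces you need to select from; you would have to argue separately that selectors exist for convex pieces bounded by the cut $\OO$ versus $\VF\mi\OO$. Also, the parenthetical assertion that polynomial-bounded o-minimal theories with QE automatically have definable Skolem functions by $\lan{T}{}{}$-terms is stated too strongly; that conclusion requires universal axiomatization as well (which the paper does assume for $T$), since otherwise substructures need not be elementary and witnesses need not be term-values.
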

\begin{proof}
The first assertion is contained in \cite[Theorem~3.10, Corollary~3.13]{DriesLew95}. The second assertion is an easy consequence of our assumption on $T$, quantifier elimination in $T_{\textup{convex}}$, and universality of $T_{\textup{convex}}$, as in \cite[Corollary~2.15]{DMM94}.
\end{proof}

\begin{exam}\label{exam:RtQ}
Let $T = \usub{\textup{RCF}}{an}$ be the \omin-minimal expansion of $\usub{\textup{RCF}}{}$ with restricted analytic functions and $K = \R \dlbr t^{\Q} \drbr$ the power series field (or the Hahn field) over $\R$ with exponents in $\Q$ (see \cite[\S1.2]{DMM94}). According to \cite[\S2.8]{DMM94}, $K$ models $T$ in a natural way. Let $\OO(K) = \R \llbracket t^{\Q} \rrbracket$, the subring of $K$ that consists of those power series whose leading exponents are nonnegative, and $\MM(K) \sub \OO(K)$, the $\OO(K)$-module that consists of those power series whose leading exponents are positive. It is well-known that $(K, \OO(K))$ is a henselian valued field whose residue field and value group are manifestly isomorphic to $\R$ and $\Q$, respectively. Let
\[
\RV(K) = K^{\times} / (1 + \MM(K)),
\]
which is canonically isomorphic to $\R \oplus \Q$, and $\rv: K^{\times} \fun \R \oplus \Q$ be the leading term map augmented by $\rv(0) = (0, \infty)$, that is, $\rv(x) = (a_{\xi}, \xi)$ if and only if $x$ is of the form $a_{\xi} t^{\xi} + \ldots$ and $a_{\xi} \neq 0$. Let $\R^{+}$ be the multiplicative group of positive reals and $\RV^{+}(K) = \R^+ \oplus \Q$. The quotient
\[
\Gamma(K) \coloneqq (\R \oplus \Q) / \R^{+}
\]
is canonically isomorphic to the subgroup
\[
\pm e^{\Q} \coloneqq e^{\Q} \cup - e^{\Q}
\]
of $\R^{\times}$. Now it is routine to interpret $K$ as an $\lan{}{TRV}{}$-structure. It is also a model of $\TCVF$; all the axioms are more or less derived from the valued field structure, except (Ax.~\ref{ax:tcon}), which holds since $\usub{\textup{RCF}}{an}$ is polynomial-bounded, and (Ax.~\ref{ax:match}), which follows from \cite[Proposition~2.20]{DriesLew95}.
\end{exam}

\begin{rem}
By our assumption on $T$ and \cite[Proposition~2.20]{DriesLew95}, the $T$-model that the residue field carries, as dictated by (Ax.~\ref{ax:t:model}) of Definition~\ref{defn:tcf}, is precisely the canonical one described in \cite[Remark~2.16]{DriesLew95}. This is clear if one works with a concrete \omin-minimal theory such as $\usub{\textup{RCF}}{}$ or $\usub{\textup{RCF}}{an}$. The general case follows from the fact that we may translate $T$ into a different language in which \emph{all} primitives, except the ordering $\leq$, define a \emph{total continuous} function in all models of $T$.

Thus every $T_{\textup{convex}}$-model interprets, or rather expands to, a unique $\TCVF$-model. In fact, every embedding between two $T_{\textup{convex}}$-models expands uniquely to an embedding between two $\TCVF$-models. It follows that $\TCVF$ is complete. From now on, we shall work in a sufficiently saturated model $\gC \models \TCVF$ and write $\VF(\gC)$, $\RV(\gC)$, etc., simply as $\VF$, $\RV$, etc. A subset in $\gC$ may simply be referred to as a subset.

It is routine to check that, except surjectivity of the homomorphism $\rv$, $\TCVF$ is also universally axiomatized. Let $\mdl S \sub \gC$ be a substructure. Thus $\mdl S$ is indeed a model of $\TCVF$ if it is $\VF$-generated, that is, if $\RV(\mdl S) = \rv(\VF(\mdl S))$. At any rate, $\VF(\mdl S)$, $\res(\VF(\mdl S))$, and $\K(\mdl S)$ are all models of $T$. Due to the presence of the constant $\imath$, $\vv(\VF(\mdl S))$ is never trivial.

For any $A \sub \VF \cup \RV$, the substructure generated by $A$ over $\mdl S$ is denoted by $\la \mdl S , A \ra$ or $\mdl S \la A \ra$. If $A \sub \VF$ then the $T$-model generated by $A$ over $\VF(\mdl S)$ is denoted by $\la \mdl S , A \ra_T$ or $\mdl S \la A \ra_T$, and it is easy to see that $\VF(\la \mdl S , A \ra) = \la \mdl S , A \ra_T$.
\end{rem}

The default topologies of $\VF$, $\RV$, $\Gamma$, $\K$, etc., are of course the ones induced by the total orderings. The topological operators for interior, closure, and boundary are denoted by $\ito$, $\cl$, and $\partial$, respectively. A product of open intervals $(x_i, y_i)$ (or an \emph{open box} with \emph{sides} $(x_i, y_i)$) is sometimes denoted by $(x, y)$, where $x = (x_1, \ldots, x_n)$ and $y = (y_1, \ldots, y_n)$; similarly for other cases.

\begin{thm}\label{theos:qe}
The theory $\TCVF$ admits quantifier elimination.
\end{thm}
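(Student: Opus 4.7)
The plan is to deduce the result from quantifier elimination for $T_{\textup{convex}}$ (Theorem~\ref{tcon:qe}), via the standard substructure-extension criterion for QE. Concretely, I would verify the following: for any two models $\mdl B_1, \mdl B_2 \models \TCVF$ with $\mdl B_2$ sufficiently saturated, any $\lan{}{TRV}{}$-embedding $f \colon \mdl S \to \mdl B_2$ from a small substructure $\mdl S \sub \mdl B_1$ extends to an embedding $\mdl B_1 \hookrightarrow \mdl B_2$. This is equivalent to QE of $\TCVF$, and the argument splits into a ``reduction to $\VF$-generated substructures'' step followed by an application of Theorem~\ref{tcon:qe}.

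The first step reduces to the case that $\mdl S$ is $\VF$-generated, i.e., $\RV(\mdl S) = \rv(\VF(\mdl S))$. For each $b \in \RV(\mdl S) \mi \rv(\VF(\mdl S))$, one picks any preimage $a \in \VF(\mdl B_1)$ with $\rv(a) = b$, forms the substructure $\mdl S \la a \ra$, and extends $f$ by choosing $a' \in \VF(\mdl B_2)$ with $\rv(a') = f(b)$ (such $a'$ exists since $\rv$ is surjective on $\VF^{\times}$ in $\mdl B_2$). The crucial point is that the open ball $\rv^{-1}(b) = a(1 + \MM)$ is entirely disjoint from $\VF(\mdl S)$—otherwise $b$ would already lie in $\rv(\VF(\mdl S))$—and by the cell decomposition for $T_{\textup{convex}}$ from \cite{DriesLew95}, every $\lan{T}{}{}(\VF(\mdl S))$-definable subset of $\VF$ either contains this ball or is disjoint from it. Combined with $T$-convexity (Ax.~\ref{ax:tcon}), which controls the $\rv$-values of $\lan{T}{}{}(\VF(\mdl S))$-terms evaluated on $\rv^{-1}(b)$, this forces all elements of $\rv^{-1}(b)$ to realize the same quantifier-free $\lan{}{TRV}{}$-type over $\mdl S$. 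Then saturation of $\mdl B_2$ supplies the required $a'$, and iterating (possibly transfinitely) handles all such $b$ to make $\mdl S$ genuinely $\VF$-generated.

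Once $\mdl S$ is $\VF$-generated, the Remark preceding this theorem identifies $\mdl S$ with the unique $\TCVF$-expansion of its $T_{\textup{convex}}$-reduct, and identifies $f$ with its restriction to the $\VF$-reduct $f|_{\VF(\mdl S)}$, viewed as a $T_{\textup{convex}}$-embedding into the $\VF$-reduct of $\mdl B_2$. Since the $\VF$-reduct of $\mdl B_2$ is a sufficiently saturated $T_{\textup{convex}}$-model, Theorem~\ref{tcon:qe} together with the standard substructure-extension criterion applied to $T_{\textup{convex}}$ extends $f|_{\VF(\mdl S)}$ to a $T_{\textup{convex}}$-embedding of the $\VF$-reduct of $\mdl B_1$ into that of $\mdl B_2$; the Remark then lifts this uniquely to a $\TCVF$-embedding $\mdl B_1 \hookrightarrow \mdl B_2$ extending $f$. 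The main obstacle I expect is the first step, namely the rigorous verification that all preimages of a ``fresh'' element $b \in \RV(\mdl S) \mi \rv(\VF(\mdl S))$ are quantifier-free indiscernible over $\mdl S$; once the interaction between $T$-convexity, the weak \omin-minimal cell decomposition, and the behavior of $\rv$ on $\lan{T}{}{}$-terms is sorted out, the rest of the argument is a formal back-and-forth.
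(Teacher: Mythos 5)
Your overall architecture — reduce to a $\VF$-generated substructure, then invoke the remark before the theorem together with Theorem~\ref{tcon:qe} — is the same as the paper's, and the second half of your argument is fine. The gap is in the ``crucial point'' of your first step. It is true that $\rv^{-1}(b)$ is disjoint from $\VF(\mdl S)$, and it is true that every $\lan{T}{}{}(\VF(\mdl S))$-definable subset of $\VF$ either contains the ball or misses it (this is just $o$-minimal cell decomposition for $T$, not ``cell decomposition for $T_{\textup{convex}}$''). But that only fixes the quantifier-free $\lan{T}{}{}$-type of a generic $a \in \rv^{-1}(b)$ over $\VF(\mdl S)$. Passing from there to the quantifier-free $\lan{}{TRV}{}$-type over $\mdl S$ requires controlling, for every $\lan{T}{}{}$-term $F$ over $\VF(\mdl S)$, what $\rv(F(a))$ is and how it sits relative to $\RV(\mdl S)$ — equivalently, controlling the isomorphism type of $\RV(\la \mdl S_*, a \ra)$ over $\RV(\mdl S)$. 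Your appeal to $T$-convexity (Ax.~\ref{ax:tcon}) does not do this: that axiom only says continuous total $\lan{T}{}{}$-functions map $\OO$ into $\OO$, which says nothing about whether $\rv \circ F$ is constant on a small ball, let alone about which of the dichotomies (residue field grows vs.\ value group grows) occurs when $a$ is adjoined.

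The paper's proof addresses exactly this: after observing that $a$ and its target $b$ make the same cut (hence there is an $\lan{T}{}{}$-isomorphism $\bar\sigma$ with $\bar\sigma(a)=b$), it uses the Wilkie inequality \cite[Corollary~5.6]{Dries:tcon:97} to show that either the value group or the residue field of $\la \mdl S_*, a\ra$ grows by rank one but not both, that the two sides of $\sigma$ are in the same case, and then follows the case analysis (1a)/(1b)/(2) of \cite[Theorem~3.10]{DriesLew95} to expand $\bar\sigma$ to $\lan{}{convex}{}$- and then $\lan{}{TRV}{}$-isomorphisms compatible with $\sigma$ on $\RV$. None of this is replaceable by ``cell decomposition plus $T$-convexity plus saturation''; you even cannot borrow the paper's later Lemmas~\ref{atom:gam} and \ref{atom:self} (which do give the indiscernibility you want) because they are downstream of quantifier elimination. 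So your step~1 has a real hole: the claimed quantifier-free indiscernibility of $\rv^{-1}(b)$ over $\mdl S$ is precisely the hard content of the theorem, and your sketch does not supply the mechanism — the Wilkie inequality and the $T_{\textup{convex}}$ extension cases — that the paper uses to establish it.
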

\begin{proof}
We shall run the usual Shoenfield test for quantifier elimination. Thus let $\mdl M$ be a model of $\TCVF$, $\mdl S$ a substructure of $\mdl M$, and $\sigma : \mdl S \fun \mdl \gC$ a monomorphism. All we need to do is to extend $\sigma$ to an embedding $\wh \sigma : \mdl M \fun \gC$. The construction is more or less a variation of that in the proof of \cite[Theorem~3.10]{Yin:QE:ACVF:min}. The strategy is to reduce the situation to Theorem~\ref{tcon:qe}. In the process of doing so, instead of the dimension inequality of the general theory of valued fields, the Wilkie inequality \cite[Corollary~5.6]{Dries:tcon:97} is used (see \cite[\S3.2]{DriesLew95} for the notion of ranks of $T$-models).

Let $\mdl S_* = \la \VF(\mdl S) \ra$ and $t \in \RV(\mdl S) \mi \RV(\mdl S_*)$. Note that if such a $t$ does not exist then $\mdl S = \mdl S_*$ is a model of $\TCVF$ and, since its $\lan{}{convex}{}$-reduct is also a model of $T_{\textup{convex}}$, an embedding as desired can be easily obtained by applying Theorem~\ref{tcon:qe}. Let $a \in \VF(\mdl M)$ with $\rv(a) = t$ and $b \in \VF$ with $\rv(b) = \sigma(t)$. Observe that, according to $\sigma$, $a$ and $b$ must make the same cut in $\VF(\mdl S)$ and $\VF(\sigma(\mdl S))$, respectively, and hence there is an $\lan{T}{}{}$-isomorphism
\[
\bar \sigma : \la \mdl S_*, a \ra_T \fun \la \sigma(\mdl S_*), b \ra_T
\]
with $\bar \sigma(a) = b$ and $\bar \sigma \rest \VF(\mdl S) = \sigma \rest \VF(\mdl S)$. We shall show that $\bar \sigma$ expands to an isomorphism between $\la \mdl S_*, a \ra$ and $\la \sigma(\mdl S_*), b \ra$ that is compatible with $\sigma$.

Case (1): There is an $a_1 \in \la \mdl S_*, a \ra_T$ such that
\[
| \OO(\mdl S_*) | < a_1 < | \VF(\mdl S_*) \mi \OO(\mdl S_*) |.
\]
Set $\Gamma(\mdl S_*) = G$. Since $\OO(\la\mdl S_*, a \ra)$ is $T$-convex, by \cite[Lemma~5.4]{Dries:tcon:97} and \cite[Remark~3.8]{DriesLew95},
\begin{itemize}
  \item either $a_1 \in \OO(\la\mdl S_*, a \ra)$ and $\Gamma(\la \mdl S_*, a \ra) = G$ or
  \item $a_1 \notin \OO(\la\mdl S_*, a \ra)$ and $\Gamma(\la \mdl S_*, a \ra) = G \oplus \Q$.
\end{itemize}
By the Wilkie inequality, if $\Gamma(\la \mdl S_*, a \ra) = G \oplus \Q$ then $\K(\la \mdl S_*, a \ra) = \K(\mdl S_*)$ and hence $\vrv(t) \notin G$, which implies $\vrv(\sigma(t)) \notin \sigma(G)$; conversely, if
\[
\Gamma(\la \sigma(\mdl S_*), b \ra) = \sigma(G) \oplus \Q
\]
then $\vrv(t) \notin G$. Therefore
\[
\Gamma(\la \mdl S_*, a \ra) = G \oplus \Q \quad \text{if and only if} \quad \Gamma(\la \sigma(\mdl S_*), b \ra) = \sigma(G) \oplus \Q,
\]
which, by \cite[Remark~3.8]{DriesLew95}, is equivalent to saying that $a_1 \in \OO(\la\mdl S_*, a \ra)$ if and only if $\bar \sigma(a_1) \in \OO(\la \mdl \sigma(\mdl S_*), b \ra)$.

Subcase (1a): $a_1 \in \OO(\la \mdl S_*, a \ra)$. Subcase~(1a) of the proof of \cite[Theorem~3.10]{DriesLew95} shows that $\bar \sigma$ expands to an $\lan{}{convex}{}$-isomorphism and hence to an $\lan{}{TRV}{}$-isomorphism, which is also denoted by $\bar \sigma$. Since $\Gamma(\la \mdl S_*, a \ra) = G$, we may assume $a \in \K(\mdl M)$. By the Wilkie inequality, $\K(\la \mdl S_*, a \ra)$ is precisely the $T$-model generated by $t$ over $\K(\mdl S_*)$ and hence
\[
\RV(\la \mdl S_*, a \ra) = \la \RV(\mdl S_*), t \ra.
\]
It follows that
\[
\bar \sigma \rest \RV(\la \mdl S_*, a \ra) = \sigma \rest \RV(\la \mdl S_*, a \ra).
\]

Subcase (1b): $a_1 \notin \OO(\la \mdl S_*, a \ra)$. As above, Subcase~(1b) of the proof of \cite[Theorem~3.10]{DriesLew95} shows that $\bar \sigma$ expands to an $\lan{}{TRV}{}$-isomorphism and this time $\K(\la \mdl S_*, a \ra) = \K(\mdl S_*)$. It is clear that
\[
\bar \sigma \rest \RV(\la \mdl S_*, a \ra) = \sigma \rest \RV(\la \mdl S_*, a \ra).
\]

Case (2): Case (1) fails. Then there is also no $b_1 \in \la \sigma(\mdl S_*), b \ra_T$ such that
\[
| \OO(\sigma(\mdl S_*)) | < b_1 < | \VF(\sigma(\mdl S_*)) \mi \OO(\sigma(\mdl S_*)) |.
\]
Using Case~(2) of the proof of \cite[Theorem~3.10]{DriesLew95}, compatibility between $\bar \sigma$ and $\sigma$ may be deduced as in Case (1) above.
\end{proof}

Therefore, for all $A \sub \VF$, $\la A \ra$ is an elementary substructure of $\gC$.

\begin{rem}
We may assume that, in any $\lan{}{TRV}{}$-formula, all the $\lan{T}{}{}$-terms occur in the form $\rv(F(X))$. For example, if $F(X)$ and $G(X)$ are $\lan{T}{}{}$-terms then the formula $F(X) < G(X)$ is equivalent to $\rv(F(X) - G(X)) < 0$.
\end{rem}

\begin{cor}
Every parametrically $\lan{}{TRV}{}$-definable subset of $\VF^n$ is parametrically $\lan{}{convex}{}$-definable.
\end{cor}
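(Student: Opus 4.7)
The plan is to combine quantifier elimination for $\TCVF$ (Theorem~\ref{theos:qe}) with the Remark immediately preceding the corollary, and then translate each surviving atomic $\rv$-condition into an $\lan{}{convex}{}$-formula. The underlying idea is that $\rv$ is tautologically the quotient map $\VF^{\times} \fun \VF^{\times}/(1+\MM)$, so the equivalence relation it induces, the induced ordering on $\RV$, and the subgroup $\K^{\times}$ are all visible through the predicate $\OO$ alone.

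Let $D \sub \VF^n$ be parametrically $\lan{}{TRV}{}$-definable. I would first apply Theorem~\ref{theos:qe} to obtain a quantifier-free $\lan{}{TRV}{}$-formula $\phi(x, c_{\VF}, c_{\RV})$ defining $D$, where $c_{\VF}$ and $c_{\RV}$ are tuples of VF- and RV-parameters, respectively. Since $\rv : \VF^{\times} \fun \RV^{\times}$ is surjective by (Ax.~2), each coordinate of $c_{\RV}$ lifts to some $d_{\VF} \in \VF^{\times}$ with $\rv(d_{\VF}) = c_{\RV}$ componentwise, so the RV-parameters can be absorbed into VF-parameters once we also work modulo the $\rv$-translation below. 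By the Remark, $\phi$ can be rewritten so that every $\lan{T}{}{}$-term appears inside a single application of $\rv$.

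Using that $\rv$ is a multiplicative homomorphism, every atomic $\RV$-subformula of the rewritten $\phi$ is of one of three shapes, for $\lan{T}{}{}$-terms $F, G$ in the free variables $x$ and parameters $(c_{\VF}, d_{\VF})$: an equality $\rv(F) = \rv(G)$, an order comparison $\rv(F) \leq \rv(G)$, or a predicate $\K^{\times}(\rv(F))$. The equality translates to $(F = 0 \wedge G = 0) \vee (G \neq 0 \wedge F/G - 1 \in \MM)$; the predicate translates to $F \in \OO \mi \MM$, using that $\rv(\OO^{\times}) = \K^{\times}$; the order comparison reduces, via the clause in (Ax.~3) that fibers of $\rv$ are convex, to a case analysis on the signs of $F$ and $G$ combined with the VF-ordering $F \leq G$ and the already translated equality $\rv(F) = \rv(G)$, with the two boundary cases $F = 0$ and $G = 0$ (corresponding to $\rv(\cdot) = \infty$, the middle element) handled separately. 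Each resulting formula is an $\lan{}{convex}{}$-formula in $x$ with parameters in $(c_{\VF}, d_{\VF})$.

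Substituting these translations back into $\phi$ yields an $\lan{}{convex}{}$-formula defining $D$. I do not anticipate a serious obstacle: nothing deeper is needed beyond Theorem~\ref{theos:qe}, the Remark, and the $\TCVF$-axioms relating $\rv$ to $\OO$. The only care required is the bookkeeping around $0 \in \VF$ and the middle element $\infty \in \RV$, which is absorbed into the case analysis above.
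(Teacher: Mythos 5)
Your proposal is correct and coincides with the intended argument: the paper states the corollary without proof, as an immediate consequence of Theorem~\ref{theos:qe} together with the Remark preceding it, and the steps you spell out (quantifier elimination, lifting $\RV$-parameters along the surjection $\rv$, and translating the surviving atomic $\rv$-conditions on equality, order, and $\K^{\times}$-membership into conditions on $\OO$ and $\MM$) are exactly the ones that fill in the omitted details. A minor simplification worth noting: $\rv(F) \leq \rv(G)$ is equivalent outright to $\bigl(\rv(F) = \rv(G)\bigr) \vee F < G$, which handles the $0$ and $\infty$ cases uniformly and avoids the explicit sign analysis.
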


This corollary enables us to transfer results in the theory of $T$-convex valued fields \cite{DriesLew95, Dries:tcon:97} into our setting, which we shall do without further explanation.

\begin{nota}\label{indexing}
Coordinate projections, their inverses, and other related operations are ubiquitous in this paper. It is more efficient to fix a shorthand for them as soon as possible. Unless otherwise specified, by writing $a \in A$ we shall mean that $a$ is a finite tuple of elements (or ``points'') of $A$, whose length, denoted by $\lh(a)$, is not always indicated. If $a = (a_1, \ldots, a_n)$ then for all $1 \leq i \leq n$ the tuple
\[
(a_1, \ldots, a_{i-1}, a_{i+1}, \ldots, a_n)
\]
is sometimes denoted by $\wh a_i$. We say that $B$ is a subset \emph{in} $A$ if $B \sub A^n$ for some $n$. If the coordinates of $A$ range in the sorts $\VF$, $\RV$, $\Gamma$, etc., then it makes sense to speak of the positive and the negative parts of $A$, which are denoted by $A^+$ and $A^-$, respectively. The absolute value map $A \fun A^+$ is always denoted by $| \cdot |$.

For each $n \in \N$, let $[n] = \set{1, \ldots, n}$. Let $A$ be a subset of $\VF^n \times \RV^m$. As a general rule, the coordinates of $A$ are indexed by $[n+m] = [n] \uplus [m]$. Let $E \sub [n+m]$ and $\wt E = [n+m] \mi E$. If $E$ is a singleton $\{i\}$ then we always write $E$ as $i$ and $\wt E$ as $\wt i$. We write $\pr_E(A)$ for the projection of $A$ to the coordinates in $E$. For $a \in \pr_{\wt E} (A)$, the fiber $\{b : ( b, a) \in A \}$ is denoted by $\fib(A, a)$ or, if there is no danger of confusion, by $A_a$. Note that the distinction between the two subsets $A_a$ and $A_a \times \{ a \}$ is often immaterial and hence they will be tacitly identified. Also, it is more convenient to use simple descriptions
as subscripts. For example, if $E = \{1, \ldots, k\}$, etc., then we may write $\pr_{\leq k}$, etc. If $E$ contains exactly the
$\VF$-indices (respectively $\RV$-indices) then $\pr_E$ is written as $\pvf$ (respectively $\prv$). If $\pr_{E}(A) \sub B$ and $B$ has been clearly understood in the context then it is more informative to write $\pr_B \rest A$.

Given a function $f : A \fun B$, we shall often write $A_b$ for the fiber over $b \in B$ under $f$. In particular, given a subset $A$, we may write $A_{x}$ for the fiber over $x$ under a function of the forms $\rv \rest A$, $\vv \rest A$, $\vrv \rest A$, etc. Of course which function is being considered should always be clear in context.
\end{nota}

\begin{conv}
We shall work with a fixed small substructure $\mdl S$. Note that $\mdl S$ is regarded as a part of the theory now and hence, contrary to the usual convention in the model-theoretic literature, ``$\0$-definable'' or ``definable'' only means ``$\mdl S$-definable'' instead of ``parametrically definable'' if no other qualifications are given. To simplify the notation, we shall not mention $\mdl S$ and its extensions in context if no confusion can arise. For example, the definable closure operator $\dcl_{\mdl S}$, etc., will simply be written as $\dcl$, etc.

Semantically we shall treat the value group $\Gamma$ as an imaginary sort and write $\RV_{\Gamma}$ for $\RV \cup \, \Gamma$. However, syntactically any reference to $\Gamma$ may be eliminated in the usual way and we can still work with $\lan{}{TRV}{}$-formulas for the same purpose.
\end{conv}

\begin{defn}
Let $\mdl M$, $\mdl N$ be substructures and $\sigma : \mdl M \fun \mdl N$ an $\lan{}{TRV}{}$-isomorphism. We say that $\sigma$ is an \emph{immediate isomorphism} if $\sigma(t) = t$ for all $t \in \RV(\mdl M)$.
\end{defn}

\begin{lem}\label{imm:iso}
Let $\mdl M$, $\mdl N$ be $\VF$-generated substructures and $\sigma : \mdl M \fun \mdl N$ an immediate isomorphism. Let $a, b \in \VF \mi (\VF(\mdl M) \cup \VF(\mdl N))$ such that $\rv(a - c) = \rv(b -\sigma(c))$ for all $c \in \VF(\mdl M)$. Then $\sigma$ may be extended to an immediate isomorphism $\bar \sigma : \la \mdl M, a \ra \fun \la \mdl N, b \ra$ with $\bar \sigma(a) = b$.
\end{lem}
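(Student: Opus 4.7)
The plan is to invoke the quantifier elimination for $\TCVF$ (Theorem~\ref{theos:qe}). Since $\sigma$ is already an $\lan{}{TRV}{}$-isomorphism, it suffices to verify that the assignment $a \mapsto b$ preserves the quantifier-free $\lan{}{TRV}{}$-type over $\mdl M$. By the remark preceding the lemma, every atomic $\lan{}{TRV}{}$-formula in one $\VF$-variable over $\mdl M$ takes the form $\rv(F(x,\bar c)) \, R \, s$ for some $\lan{T}{}{}$-term $F$, parameters $\bar c \in \VF(\mdl M)$, $s \in \RV(\mdl M)$, and $R \in \{=,<\}$. Hence the lemma reduces to the $\rv$-identity
\[
\rv(F(a, \bar c)) = \rv(F(b, \sigma(\bar c)))
\]
for every $\lan{T}{}{}(\imath)$-term $F$ and every $\bar c \in \VF(\mdl M)$; the hypothesis of the lemma is precisely the case where $F$ is linear in $x$.

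Reading off signs, the hypothesis also yields $a > c \iff b > \sigma(c)$ for every $c \in \VF(\mdl M)$, so $a$ and $b$ realize matching cuts over the $T$-models $\VF(\mdl M)$ and $\VF(\mdl N)$. By o-minimality, quantifier elimination of $T$, and definable Skolem functions via $\lan{T}{}{}(\imath)$-terms, the cut determines the $\lan{T}{}{}$-type, so $\sigma \rest \VF(\mdl M)$ extends uniquely to an $\lan{T}{}{}$-isomorphism $\bar\sigma_T : \la \VF(\mdl M), a \ra_T \fun \la \VF(\mdl N), b \ra_T$ with $\bar\sigma_T(a) = b$.

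To promote the $\rv$-identity from linear $F$ to arbitrary $F$, fix such $F$ and $\bar c$ and pick $c^* \in \VF(\mdl M)$ making $\vv(a - c^*)$ extremal---either the maximum value if one exists, otherwise a sufficiently late term in a pseudo-Cauchy sequence of approximations. Polynomial boundedness of $T$ makes $F(x,\bar c)$ controllable near $c^*$ by a Taylor-type expansion whose leading $\rv$-contribution to $F(a, \bar c)$ is an explicit combination of $\rv(a - c^*)$ and elements coming from $c^*, \bar c \in \VF(\mdl M)$; this is the device underlying the Wilkie inequality and the subcase analysis in the proof of Theorem~\ref{theos:qe}. Running the identical computation with $b$ in place of $a$ and $\sigma(c^*)$ in place of $c^*$, and invoking $\rv(a - c^*) = \rv(b - \sigma(c^*))$, produces the matching value $\rv(F(b, \sigma(\bar c)))$. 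Given the identity, Theorem~\ref{theos:qe} lifts $\bar\sigma_T$ to the desired $\lan{}{TRV}{}$-isomorphism $\bar\sigma : \la \mdl M, a \ra \fun \la \mdl N, b \ra$; immediateness is automatic because $\bar\sigma$ extends $\sigma$ on $\mdl M$ and is forced to be the identity on the new $\rv$-values it creates. The main obstacle is the Taylor-type reduction for general $F$: polynomial boundedness of $T$ is indispensable here, since without it a rapidly growing $\lan{T}{}{}$-definable function could generate $\rv$-values undetermined by the cut-and-valuation data afforded by the hypothesis, and the extension would fail.
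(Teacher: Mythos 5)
Your overall strategy is the same as the paper's: the paper's own proof is simply the remark that the argument is ``completely analogous to the proof of Theorem~\ref{theos:qe},'' and you correctly identify the ingredients (quantifier elimination, the reduction to matching $\rv$-values of $\lan{T}{}{}$-terms, the passage from matching cuts to an $\lan{T}{}{}$-isomorphism $\bar\sigma_T$, and the observation that once the $\rv$-identity holds immediateness is automatic). That framework is right.

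The gap is in the crucial step, the ``Taylor-type reduction.'' You assert that polynomial boundedness gives a controlled leading-term expansion of $F(a,\bar c)$ around a ``best'' center $c^*$, but you never carry this out, and as stated it is not a faithful substitute for what the proof of Theorem~\ref{theos:qe} actually does. Specifically: (i) the leading Taylor term can cancel against $F(c^*,\bar c)$, and ruling this out requires knowing whether $\vv(a-c^*)$ and $\rv(a-c^*)$ lie in $\Gamma(\mdl M)$ and $\RV(\mdl M)$ respectively; (ii) when no maximal $c^*$ exists the expansion is not stable at any finite stage, and one must argue separately that the extension is immediate; (iii) the claim that the leading asymptotic contribution is ``an explicit combination of $\rv(a-c^*)$ and $\mdl M$-data'' is exactly what needs proof. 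The mechanism in the paper's reference is a case analysis following \cite[Theorem~3.10]{DriesLew95} and the Wilkie inequality: the extension $\la\mdl M, a\ra / \mdl M$ is immediate, grows the residue field, or grows the value group (and at most one of the latter two, by Wilkie), and in each case one checks that the new $\rv$-data, if any, is pinned down by the hypothesis $\rv(a-c)=\rv(b-\sigma(c))$. You allude to this (``this is the device underlying the Wilkie inequality and the subcase analysis\ldots''), but do not execute it; the Taylor heuristic, while an intuitive gloss on that trichotomy, does not on its own establish the $\rv$-identity. To complete the argument you should reproduce the case split of Theorem~\ref{theos:qe}, using the stronger hypothesis of the lemma in place of merely matching cuts.
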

\begin{proof}
This is completely analogous to the proof of Theorem~\ref{theos:qe} and is left to the reader.
\end{proof}

\begin{lem}\label{imm:ext}
Every immediate isomorphism $\sigma : \mdl M \fun \mdl N$ may be extended to an immediate automorphism of $\gC$.
\end{lem}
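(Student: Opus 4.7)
The plan is to construct $\bar\sigma$ by a transfinite back-and-forth argument in $\gC$, with Lemma~\ref{imm:iso} providing the one-step extension. Since $\rv$ is surjective onto $\RV^{\times}$, the saturated model $\gC$ is itself $\VF$-generated, and any immediate automorphism is determined by its action on the $\VF$-sort because it acts as the identity on $\RV$. We may therefore replace $\mdl M, \mdl N$ by their $\VF$-generated substructures $\la \VF(\mdl M) \ra, \la \VF(\mdl N) \ra$ at the start: any immediate automorphism of $\gC$ extending the restriction of $\sigma$ to these substructures automatically agrees with $\sigma$ on all of $\mdl M$ by immediacy.

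Construct a chain of immediate isomorphisms $\hat\sigma : \mdl M^{*} \fun \mdl N^{*}$ between $\VF$-generated substructures of $\gC$, starting from $\sigma$, by alternately enlarging the domain and the codomain one $\VF$-element at a time. At a typical step, given the current $\hat\sigma$ of size strictly smaller than $|\gC|$, saturation of $\gC$ lets us pick $a \in \VF(\gC) \mi (\VF(\mdl M^{*}) \cup \VF(\mdl N^{*}))$, and the task is then to find $b$ in the same complement such that $\rv(a - c) = \rv(b - \hat\sigma(c))$ for all $c \in \VF(\mdl M^{*})$; Lemma~\ref{imm:iso} then extends $\hat\sigma$ by $a \efun b$. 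The symmetric construction for $\hat\sigma^{-1}$ handles the back direction. With enough steps (of length $|\gC|$), both $\mdl M^{*}$ and $\mdl N^{*}$ exhaust $\gC$, yielding the desired $\bar\sigma$.

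The existence of such a $b$ is the crux, obtained by applying saturation to the partial type
\[
\pi(x) = \set{ \rv(x - \hat\sigma(c)) = \rv(a - c) : c \in \VF(\mdl M^{*}) } \cup \set{ x \neq d : d \in \VF(\mdl M^{*}) \cup \VF(\mdl N^{*}) }.
\]
Finite satisfiability is the main technical point. Given $c_{1}, \ldots, c_{n} \in \VF(\mdl M^{*})$ with $s_{i} = \rv(a - c_{i})$, we enlarge $\hat\sigma$ by sending each $s_{i}$ to itself, producing $\sigma^{**} : \la \mdl M^{*}, s_{1}, \ldots, s_{n} \ra \fun \la \mdl N^{*}, s_{1}, \ldots, s_{n} \ra$. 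The hardest step is to verify that $\sigma^{**}$ is genuinely a well-defined immediate isomorphism; this relies essentially on the $\VF$-generated hypothesis, which gives
\[
\RV(\mdl M^{*}) = \rv(\VF(\mdl M^{*})) = \rv(\VF(\mdl N^{*})) = \RV(\mdl N^{*})
\]
via immediacy of $\hat\sigma$, so that each $s_{i}$ is either already in $\RV(\mdl M^{*})$ (hence fixed by $\hat\sigma$) or is freshly adjoined on both sides, while adjoining $\RV$-elements produces no new $\VF$-elements. By Theorem~\ref{theos:qe}, $\sigma^{**}$ is partial elementary, so the existential formula $\exists x \bigwedge_{i} \rv(x - y_{i}) = z_{i}$, witnessed in $\gC$ by $a$ at $(c_{i}, s_{i})$, transfers to $(\hat\sigma(c_{i}), s_{i})$, delivering the required witness $b$.
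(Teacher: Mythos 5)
Your argument is correct, and it takes the same basic route as the paper (iterating Lemma~\ref{imm:iso} until $\gC$ is exhausted), but it is considerably more explicit at the crucial step. The paper's own proof is a one-line sketch: pass to $\mdl M_* = \la \VF(\mdl M)\ra$, pick $t \notin \RV(\mdl M_*)$ and $a \in \rv^{-1}(t)$, observe that $\rv(a-c)=\rv(a-\sigma(c))$ for all $c \in \VF(\mdl M_*)$ (which holds because $\rv(a-c)$ is determined by the pair $(\rv(a),\rv(c))$ whenever $\rv(a)\neq\rv(c)$, and here $\rv(a)=t\notin\RV(\mdl M_*)\ni\rv(c)$), so $b=a$ works, and then ``iterate.'' You instead run a genuine back-and-forth over an enumeration of $\VF(\gC)$, and you find a partner $b$ for an \emph{arbitrary} new $a$ --- including the case $\rv(a)\in\RV(\mdl M^*)$ --- by realizing a type over a small set using saturation plus quantifier elimination (Theorem~\ref{theos:qe}), with the auxiliary extension $\sigma^{**}$ needed because the parameters $s_i=\rv(a-c_i)$ may lie outside $\RV(\mdl M^*)$. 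That case is not vacuous: even after all of $\RV$ has been absorbed, $\VF$ can still admit proper immediate extensions in the $T$-convex setting, so the paper's shortcut step does not by itself obviously exhaust $\gC$. Your argument supplies exactly the missing general step; what each approach buys is that the paper's is shorter and avoids any appeal to saturation at the extension step, while yours is self-contained and verifiably exhaustive.

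One detail you leave implicit: finite satisfiability of the clauses $x\neq d$. This is easy to supply. The realization set of finitely many conditions $\rv(x-\hat\sigma(c_i))=s_i$ is an intersection of finitely many translated $\RV$-discs; by the ultrametric inequality such an intersection, if nonempty, is again an open disc and hence infinite, so it cannot be swallowed by finitely many $d_j\in\VF(\mdl M^*)\cup\VF(\mdl N^*)$. (Also note $s_i\neq\infty$, since $a\notin\VF(\mdl M^*)$.) With that observation the proof is complete. It is also worth recording explicitly why $\sigma^{**}$ is an $\lan{}{TRV}{}$-monomorphism: it is $\hat\sigma$ on the $\VF$-sort, the identity on the $\RV$-sort, these agree on the overlap because $\hat\sigma$ is immediate, and for any $\lan{T}{}{}$-term $F$ one has $\rv(F(\hat\sigma(\bar c)))=\rv(\hat\sigma(F(\bar c)))=\rv(F(\bar c))$, so the cross-sort relation $\rv$ is respected; since there is no function symbol from $\RV$ into $\VF$, adjoining the $s_i$ does not enlarge the $\VF$-sort, which you correctly invoke (Lemma~\ref{RV:no:point} gives the sharper $\dcl$-statement but the substructure statement is all that is needed here).
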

\begin{proof}
Let $\mdl M_* = \la \VF(\mdl M) \ra$ and $\mdl N_* = \la \VF(\mdl N) \ra$. Let $t \notin \RV(\mdl M_*)$ and $a \in \rv^{-1}(t)$. Clearly $\rv(a - c) = \rv(a -\sigma(c))$ for all $c \in \VF(\mdl M_*)$. By Lemma~\ref{imm:iso}, $\sigma$ may be extended to an immediate isomorphism $\la \mdl M, a \ra \fun \la \mdl N, a \ra$. Iterating this procedure, the assertion follows.
\end{proof}

Since $\TCVF$ is a weakly \omin-minimal theory (see \cite[Corollary~3.14]{DriesLew95}), we can use the dimension theory of \cite[\S4]{mac:mar:ste:weako} in $\gC$.

\begin{defn}
The \emph{$\VF$-dimension} of a definable subset $A$, which is denoted by $\dim_{\VF}(A)$, is the largest natural number $k$ such that, possibly after re-indexing of the $\VF$-coordinates, $\pr_{\leq k}(A_t)$ has non-empty interior for some $t \in \prv(A)$.
\end{defn}

By the Wilkie inequality, the exchange principle holds in $\VF$. Therefore, by \cite[\S4.12]{mac:mar:ste:weako}, we may equivalently define $\dim_{\VF}(A)$ to be the maximum of the algebraic dimensions of the fibers $A_t$. Yet another way to define this notion of $\VF$-dimension is to imitate \cite[Definiton~4.1]{Yin:special:trans}, since we have:

\begin{lem}
If $\dim_{\VF}(A) = k$ then $k$ is the smallest number such that there is a definable injection $f: A \fun \VF^k \times \RV^l$.
\end{lem}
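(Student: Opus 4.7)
The claim asserts both existence of a definable injection $A \fun \VF^k \times \RV^l$ for some $l$ and minimality of the number $k$ of $\VF$-factors. I address these in turn.

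\emph{Existence.} Write $A \sub \VF^n \times \RV^m$ and induct on $n$. When $n = k$, inclusion suffices; when $n > k$, every fiber $A_t$ ($t \in \prv(A)$) has algebraic dimension at most $k < n$ in $\VF^n$ and therefore empty interior. Combining weak $o$-minimality of $\TCVF$ (\cite[Corollary 3.14]{DriesLew95}), quantifier elimination (Theorem~\ref{theos:qe}), and cell decomposition for $T_{\textup{convex}}$ lets one produce, uniformly in the parameter $t$, a definable partition $A = A_1 \sqcup \cdots \sqcup A_N$ in which each piece $A_j$ has some $\VF$-coordinate $i_j$ that is a continuous definable function of the remaining coordinates on $A_j$. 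Dropping the $i_j$-th coordinate on $A_j$ and labelling the piece via a fixed injection $[N] \hookrightarrow \RV$ defines a definable bijection from $A$ onto a subset $A' \sub \VF^{n-1} \times \RV^{m+1}$; removing a coordinate that is functionally determined by the others preserves algebraic dimension, so $\dim_{\VF}(A') = k$. The induction hypothesis applied to $A'$ furnishes the required injection.

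\emph{Minimality.} Suppose $f : A \fun \VF^{k'} \times \RV^l$ is a definable injection over $\mdl S$. Pick $t \in \prv(A)$ realising $\dim(A_t) = k$ and $b \in A_t$ with $\td(b / \VF(\mdl S)) = k$; this is legitimate because, by the Wilkie inequality \cite[Corollary 5.6]{Dries:tcon:97} and the exchange principle in $\VF$, the $\VF$-dimension coincides with the maximal field-theoretic transcendence degree of a fiber, and $\VF(\la \mdl S, t \ra) = \VF(\mdl S)$ since $t \in \RV$. Write $f(b, t) = (v, r)$ with $v \in \VF^{k'}$ and $r \in \RV^l$. Injectivity of $f$ gives $b \in \dcl(\mdl S, v, r)$; applying quantifier elimination and inspecting quantifier-free $\lan{}{TRV}{}$-formulas in a single $\VF$-variable, a point cannot be isolated by $\rv$-conditions alone (each cuts out a union of balls), so $b$ must satisfy some $\lan{T}{}{}$-equation with coefficients from $\VF(\mdl S) \cup \{v\}$ and hence is $\lan{T}{}{}$-algebraic over $\la \mdl S, v \ra_T$. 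Thus $k = \td(b / \VF(\mdl S)) \leq \td(v / \VF(\mdl S)) \leq k'$, as required.

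\emph{Main obstacle.} The only delicate step is the uniform (in the $\RV$-parameter) cell decomposition invoked in the existence direction; everything else---the Wilkie inequality, quantifier elimination of $\TCVF$, cell decomposition in $T_{\textup{convex}}$, and the inertness of the $\RV$-sort relative to $\VF$-generation that underpins the minimality argument---is either quoted directly from the preliminaries or from the cited works of van den Dries and Lewenberg.
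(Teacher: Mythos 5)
Your overall strategy (cell decomposition with compactness for existence, transcendence degree and the ``inertness'' of $\RV$-parameters for minimality) matches the paper's intended ``routine argument combining the exchange principle, Lemma~\ref{RV:no:point}, and compactness,'' and the conclusion you reach is correct. However, one explicit step in your minimality argument is not right as stated and would fail under scrutiny.

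You justify that $b$ must be $\lan{T}{}{}$-algebraic over $\la \mdl S, v \ra_T$ by claiming that an $\rv$-condition ``cuts out a union of balls.'' That is only true for conditions of the form $\rv(X - c) \bowtie s$. After quantifier elimination the atomic formulas in a $\VF$-variable are of the form $\rv(F(X, c)) \bowtie s$ for an $\lan{T}{}{}$-term $F$ with $\VF$-parameters $c$; the set defined is $F^{-1}(\textup{ball})$, which by monotonicity (Corollary~\ref{mono}) and HNF (Remark~\ref{rem:HNF}) is a finite union of $\vv$-intervals whose end-discs are in general open or closed discs, not balls, and the endpoint analysis you sketch does not by itself force an $\lan{T}{}{}$-equation at $b$. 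The correct tool here is exactly Lemma~\ref{RV:no:point}: if $b$ is $(\mdl S, v, r)$-definable with $r \in \RV$, then $b$ is already $(\mdl S, v)$-definable; the proof of that lemma goes through the exchange principle (if $b$ were not $(\mdl S, v)$-definable, one would get $\rv^{-1}(r_i) \sub \VF(\la \mdl S, v, b\ra)$ for some coordinate $r_i$, which is absurd), not through any disc-geometry of the defining formula. Once you cite that lemma, $b \in \VF(\la \mdl S, v \ra) = \la \mdl S, v\ra_T$, and your transcendence-degree count $k = \td(b/\VF(\mdl S)) \le \td(v/\VF(\mdl S)) \le k'$ closes the argument. (Note you are also silently using the same lemma earlier when you claim $\VF(\la \mdl S, t\ra) = \VF(\mdl S)$ for $t \in \RV$.) Your existence argument is fine modulo the uniform cell decomposition, which is available in this weakly $o$-minimal setting; the paper short-circuits both directions by quoting \cite[Theorem~4.11]{mac:mar:ste:weako}, which is precisely the statement that in such a theory $\dim$ equals the minimal number of coordinates needed for a definable injection.
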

\begin{proof}
This is immediate by a routine argument combining the exchange principle, Lemma~\ref{RV:no:point} below, and compactness.

Alternatively, we may quote \cite[Theorem~4.11]{mac:mar:ste:weako}.
\end{proof}

The proof of \cite[Lemma~4.2]{Yin:special:trans} requires the crucial though straightforward \cite[Lemma~4.10]{Yin:QE:ACVF:min}. The latter is trivialized in the current setting since there is a total ordering.

A property holds \emph{almost everywhere} on $A$ or \emph{for almost every element} in $A$ if it holds away from a definable subset of $A$ of a smaller $\VF$-dimension. This terminology will also be used when other notions of dimension are involved.

Recall \cite[Theorem~A]{Dries:tcon:97}: The structure of the definable subsets in the $\K$-sort is precisely that given by the theory $T$.

Recall \cite[Theorem~B]{Dries:tcon:97}: The structure of the definable subsets in the (imaginary) $\Gamma$-sort is precisely that given by the theory of nontrivially ordered vector spaces over $\Q$. There are two ways of treating an element $\gamma \in \Gamma$: as a point (when we study $\Gamma$ as an independent structure) or a subset of $\gC$ (when we need to remain in the realm of definable subsets of $\gC$). The former perspective simplifies the notation but is of course dispensable. We shall write $\vrv^{-1}(\gamma)$ when we want to emphasize that $\gamma \in \Gamma$ is a subset of $\gC$.

In a nutshell, both the $\K$-sort and the $\Gamma$-sort are stably embedded.

\begin{rem}\label{rem:RV:weako}
It is easy to check that the axioms that only concern the $\RV$-sort also form a weakly \omin-minimal theory and the exchange principle holds with respect to this theory. Therefore we can use the dimension theory of \cite[\S4]{mac:mar:ste:weako} directly in the $\RV$-sort as well. We call it the $\RV$-dimension and the corresponding operator is denoted by $\dim_{\RV}$. Note that $\dim_{\RV}$ does not depend on parameters (see \cite[\S4.12]{mac:mar:ste:weako}) and agrees with the \omin-minimal dimension in the $\K$-sort (see \cite[\S4.1]{dries:1998}) whenever both are applicable.

Similarly we shall use \omin-minimal dimension in the $\Gamma$-sort and call it the $\Gamma$-dimension. The corresponding operator is denoted by $\dim_{\Gamma}$.

Very often we may and shall apply the theory of \omin-minimality, in particular its terminologies and notions, to a subset $U \sub \RV^n$ such that $\vrv(U)$ is a singleton or, more generally, is finite. For example, we shall say that $U$ is a \emph{cell} if the translation $U / u \sub (\K^+)^n$ of $U$ with respect to some $u \in U$ is an \omin-minimal cell (see \cite[\S3]{dries:1998}); this definition clearly does not depend on $t$. Similarly, the \emph{Euler characteristic} $\chi(U)$ of $U$ is the Euler characteristic of $U / u$ (see \cite[\S4.2]{dries:1998}). We may extend this definition to the disjoint union of any finite number of (not necessarily disjoint) subsets $U_i \sub \RV^n \times \Gamma^m$ such that each $\vrv(U_i)$ is finite.
\end{rem}

\begin{lem}
If $\dim_{\RV}(U) = k$ then $k$ is the smallest number such that there is a definable injection $f: U \fun \RV^k$.
\end{lem}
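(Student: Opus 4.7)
My plan is to mirror the proof of the preceding lemma for $\dim_{\VF}$, transposed from the $\VF$-sort to the $\RV$-sort. By Remark~\ref{rem:RV:weako} the structure induced on $\RV$ is weakly \omin-minimal, satisfies the exchange principle, and carries the dimension theory of \cite[\S4]{mac:mar:ste:weako} in a form parallel to what was used above. In particular one may either imitate the earlier argument verbatim or quote \cite[Theorem~4.11]{mac:mar:ste:weako} directly with $\RV$ in place of $\VF$.

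The easy direction is ``$\dim_{\RV}(U) \leq k$ if a definable injection $f : U \fun \RV^k$ exists.'' Since definable injections preserve $\RV$-dimension in this weakly \omin-minimal setting, $\dim_{\RV}(U) = \dim_{\RV}(f(U)) \leq \dim_{\RV}(\RV^k) = k$.

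For the other direction, I would induct on the ambient dimension $n$ with $U \sub \RV^n$. If $n = k$ the inclusion is already an injection. Otherwise $\dim_{\RV}(U) = k < n$, and the exchange principle guarantees that, after re-indexing the $\RV$-coordinates, some projection $\pr_{\wh i} : U \fun \RV^{n-1}$ is finite-to-one away from a subset $U' \sub U$ of strictly smaller $\RV$-dimension. Using definable Skolem functions (available in $\K$ from $T$ by Theorem~\ref{tcon:qe} and in $\Gamma$ from the theory of ordered $\Q$-vector spaces, then combined through the short exact sequence defining $\RV$), together with compactness to bound the fiber size uniformly, I would partition $U \mi U'$ into finitely many definable pieces on each of which $\pr_{\wh i}$ is \emph{injective}, apply induction to each piece and to $U'$, and then reassemble.

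The main technical point, and the step I expect to take the most care, is this reassembly: given finitely many definable injections of pairwise disjoint pieces of $U$ into $\RV^k$, produce a single definable injection $U \fun \RV^k$. This uses that $\RV$ has ``enough room'' in the sense that $\K^+$ (an infinite $T$-model) and $\Gamma$ (a nontrivial divisible ordered abelian group) are both infinite, so one can select finitely many definable elements of $\RV^{\times}$ lying in distinct cosets of a suitably chosen cofinite subset and multiplicatively translate the images of the pieces into disjoint regions of a common $\RV^k$. Once this tagging is in place the individual injections glue into the required definable map $U \fun \RV^k$, completing the induction.
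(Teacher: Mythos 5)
The paper's proof of this lemma is a single citation of \cite[Theorem~4.11]{mac:mar:ste:weako}, which is precisely the first option you raise; to that extent your proposal does match the paper.

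The rest of your proposal is a from-scratch sketch, and its reassembly step --- which you correctly single out as the delicate point --- does not work as described. The mechanism of picking elements of $\RV^{\times}$ ``in distinct cosets of a suitably chosen cofinite subset'' and translating multiplicatively fails twice over. First, by (Ax.~2) of Definition~\ref{defn:tcf}, $\RV^{\times}$ is a divisible abelian group, hence has no proper finite-index subgroup at all, so there are no such cosets. Second, and more substantially, translation alone cannot separate images that have full $\Gamma$-dimension: by Lemma~\ref{gam:pulback:mono} every definable bijection in the $\Gamma$-sort is piecewise $\Q$-affine with finitely many pieces and nonzero slopes, and such a map cannot compactify, so a definable injection $\Gamma \fun \Gamma$ always has image unbounded in both directions; two such images therefore intersect. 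In other words, the exact analogue of the present lemma already \emph{fails} in the pure $\Gamma$-sort --- there is no definable injection $\Gamma \uplus \Gamma \fun \Gamma$ --- so whatever ``room'' is available in $\RV^k$ must come from the $\K$-direction, where the $T$-structure on the residue field supplies definable bijections of $\K$ with bounded intervals. But transporting that compactification uniformly across the fibres $\vrv^{-1}(\gamma)$ is itself delicate, since those fibres are $\K$-torsors with no uniformly definable basepoint. Getting this right is genuinely nontrivial, which is presumably why the paper defers the whole statement to the general Macpherson--Marker--Steinhorn theorem rather than arguing it by hand.
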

\begin{proof}
This is immediate by \cite[Theorem~4.11]{mac:mar:ste:weako}.
\end{proof}

\begin{defn}\label{defn:disc}
A subset $\gb$ of $\VF$ is an \emph{open disc} if there is a $\gamma \in |\Gamma|$ and a $b \in \gb$ such that $a \in \gb$ if and
only if $|\vv|(a - b) > \gamma$; it is a \emph{closed disc} if $a \in \gb$ if and only if $|\vv|(a - b) \geq \gamma$; it is an
\emph{$\RV$-disc} if $\gb = \rv^{-1}(t)$ for some $t \in \RV$. The value $\gamma$ is the \emph{valuative radius} of $\gb$, which is denoted by $\rad (\gb)$. Each point in $\VF$ is a closed disc of valuative radius $\infty$ and $\VF$ is a clopen disc of radius $- \infty$. If $|\vv|$ (or $\vv$) is constant on $\gb$ --- that is, $\gb$ is contained in an $\RV$-disc --- then $|\vv|(\gb)$ is the \emph{valuative center} of $\gb$; if $|\vv|$ is not constant on $\gb$, that is, $0 \in \gb$, then the \emph{valuative center} of
$\gb$ is $\infty$.

A closed disc with a maximal open subdisc removed is called a \emph{thin annulus}.

A subset $\gp \sub \VF^n \times \RV^m$ is an (\emph{open, closed, $\RV$-}) \emph{polydisc} if it is of the form $(\prod_{i \leq n} \gb_i) \times \{t \}$, where each $\gb_i$ is an (open, closed, $\RV$-) disc. The \emph{radii} of $\gp$, denoted by $\rad(\gp)$, means the tuple $((\rad(\gb_1), \ldots, (\rad(\gb_n))$, while the \emph{radius} of $\gp$ means $\min \rad(\gp)$. The open and closed polydiscs centered at $a = (a_1, \ldots, a_n) \in \VF^n$ with radii $\gamma = (\gamma_1, \ldots, \gamma_n) \in |\Gamma|^n$ are denoted by $\go(a, \gamma)$ and $\gc(a, \gamma)$, respectively.

An $\RV$-polydisc $\rv^{-1}(t_1, \ldots, t_n) \times \{ s \}$ is \emph{degenerate} if $t_i = \infty$ for some $i$.

The \emph{$\RV$-hull} of a subset $A$, denoted by $\RVH(A)$, is the union of all the $\RV$-polydiscs whose intersections with $A$ are nonempty. If $A$ equals $\RVH(A)$ then $A$ is called an \emph{$\RV$-pullback}.
\end{defn}

At times it will be more convenient to work in the traditional expansion $\gC^{\textup{eq}}$ of $\gC$. However, a much simpler expansion $\gC^{\bullet}$ suffices: it has only one additional sort $\DC$ that contains, as elements, all the open and closed discs (we do think of $\VF$ as a subset of $\DC$). Heuristically, we may think of a disc that is properly contained in an $\RV$-disc as a ``thickened'' point of certain stature in $\VF$. When we work in $\gC^{\bullet}$, the underlying substructure $\mdl S$ may contain discs with valuative radii in $\Gamma(\mdl S)$ (or more appropriately, in $|\Gamma(\mdl S)|$); whether parameters in $\DC$ are used or not shall be indicated explicitly, if it is necessary. Note that it is redundant to include in $\DC$ discs centered at $0$, since they may be identified with their valuative radii. This expansion can help reduce the technical complexity of our discussion. However, as is the case with the imaginary $\Gamma$-sort, it is conceptually inessential since, for the purpose of this paper, all allusions to discs as (imaginary) elements may be eliminated in favor of objects already definable in $\gC$.

For a disc $\ga \sub \VF$, the corresponding imaginary element in $\DC$ is denoted by $\dot \ga$ when notational distinction makes the discussion more streamlined. Conversely, a subset $D \sub \DC$ is often identified with the set $\{\ga : \dot \ga \in D\}$, in which case $\bigcup D$ denotes a subset of $\VF$.

\begin{defn}
Let $\ga$, $\gb$ be two discs. Note that possibly $\gb \sub \ga$ or $\ga \sub \gb$. The subset
\[
\{a \in \VF : \ga < a < \gb \}
\]
is called a \emph{proper $\vv$-interval} and is denoted by $(\ga, \gb)$, while the subset
\[
\{a \in \VF : \ex{x \in \ga, y \in \gb} ( x \leq a \leq y) \}
\]
is called an \emph{improper $\vv$-interval} and is denoted by $[\ga, \gb]$. The $\vv$-intervals $[\ga, \gb)$, $(-\infty, \gb]$, etc., are defined in the obvious way, where $(-\infty, \gb]$ is a half $\vv$-interval that is unbounded from below. Let $A$ be such a $\vv$-interval. The discs $\ga$, $\gb$ are called the \emph{end-discs} of $A$. If $\ga$, $\gb$ are both points in $\VF$ then of course we just say that $A$ is an interval and if $\ga$, $\gb$ are both $\RV$-discs then we say that $A$ is an $\RV$-interval. If $A = (\ga, \gb]$, where $\ga$ is an open disc and $\gb$ is the smallest closed disc containing $\ga$, then $A$ is called a \emph{half thin annulus}.

Obviously an $\vv$-interval is definable if and only if its end-discs are definable. Two $\vv$-intervals are \emph{properly disconnected} if their union is not a $\vv$-interval.

More generally, a subset $A$ is a \emph{$\vv$-box} if it is a product $\prod_i A_i$ of $\vv$-intervals. In that case, $A$ is \emph{degenerate} if some $A_i$ is a point in $\VF$ and $A$ is simply a \emph{box} if every $A_i$ is an interval.
\end{defn}

\begin{prop}[Valuation property]
Let $(\mdl R, \OO) \prec (\mdl R', \OO')$ be models of $T_{\textup{convex}}$, where $\mdl R'$ is generated by an $a \in \VF$ over $\mdl R$ and $\vv(\mdl R) \neq \vv(\mdl R')$. Then there is a $d \in \mdl R$ such that $\vv(a - d) \notin \vv(\mdl R)$.
\end{prop}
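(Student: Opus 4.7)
The plan is to argue by contradiction: assume that $\vv(a - d) \in \vv(\mdl R)$ for every $d \in \mdl R$, and from this deduce $\vv(\mdl R') = \vv(\mdl R)$, contradicting the hypothesis. The point of leverage will be polynomial-boundedness of $T$, which in the $\lan{T}{}{}$-reduct forces every definable one-variable function to have a well-defined rational ``leading power'' asymptotic expansion at every boundary cut of its domain.

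Since $(\mdl R, \OO) \prec (\mdl R', \OO')$ and $\mdl R'$ is generated by $a$ over $\mdl R$, Theorem~\ref{tcon:qe} provides definable Skolem functions given by $\lan{T}{}{}(\imath)$-terms, so every element of $\mdl R'$ has the form $F(a)$ for some $\lan{T}{}{}(\mdl R)$-definable function $F$; after cell decomposition in $T$ we may assume $F$ is continuous and monotone on an interval around $a$. Pick a witness $c = F(a) \in \mdl R'$ with $\vv(c) \notin \vv(\mdl R)$. Using polynomial-boundedness of $T$ and the resulting Hardy-field / power-function calculus in polynomial-bounded o-minimal structures, I choose $d_0 \in \mdl R$ (on the same side of the cut of $a$ as $a$ itself, possibly taking $d_0 = 0$ and working at infinity if $\vv(a-\mdl R)$ is not cofinal in $\vv(\mdl R)$) and write
\[
F(x) - F(d_0) = k (x - d_0)^r \bigl(1 + \epsilon(x)\bigr),
\]
where $k \in \mdl R^{\times}$, $r \in \Q$, and $\epsilon(x) \to 0$ as $x \to d_0$ along the cut. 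Because only finitely many rational exponents appear in such an asymptotic expansion (polynomial-boundedness), $d_0$ can be chosen so that the displayed leading term genuinely dominates at $a$, i.e.\ $1 + \epsilon(a)$ is a unit in $\OO(\mdl R')$.

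Applying $\vv$ then gives $\vv(F(a) - F(d_0)) = \vv(k) + r \cdot \vv(a - d_0)$. By our standing assumption $\vv(a - d_0) \in \vv(\mdl R)$, and $\vv(k) \in \vv(\mdl R)$; since the value group is divisible (by Ax.~2, $\RV^{\times}$ is divisible, hence so is $\Gamma$), the $\Q$-scalar $r$ keeps the right-hand side inside $\vv(\mdl R)$. Combining this with $\vv(F(d_0)) \in \vv(\mdl R)$ and the ultrametric inequality for $\vv$ on $F(a) = F(d_0) + (F(a) - F(d_0))$ yields $\vv(c) = \vv(F(a)) \in \vv(\mdl R)$, contradicting the choice of $c$. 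The main obstacle is the second step: justifying the existence and dominance of a single leading power in the asymptotic expansion of $F$ at the cut of $a$. This is precisely where polynomial-boundedness of $T$ is indispensable, and it is the reason the valuation property can fail in o-minimal expansions like $\usub{\textup{RCF}}{\exp}$ where no such bounded family of exponents exists.
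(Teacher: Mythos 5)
The paper does not prove this proposition at all; its ``proof'' is a citation to \cite[Proposition~9.2]{DriesSpei:2000} and the remark thereafter, which is itself a substantial theorem of van den Dries--Speissegger. You are therefore attempting to reconstruct an imported result from scratch. Your high-level strategy is the right intuition: contrapose, use the Skolem functions given by $\lan{T}{}{}$-terms (from Theorem~\ref{tcon:qe}) to write each element of $\mdl R'$ as $F(a)$, and then exploit the leading-power asymptotics of one-variable definable functions available in polynomial-bounded \omin-minimal structures. But as written there are two genuine gaps.

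First, the claim that $d_0$ can be chosen so that $1 + \epsilon(a)$ is a unit in $\OO'$ is unjustified. The convergence $\epsilon(x) \to 0$ as $x \to d_0$ is a statement in the order topology of $\mdl R$; it yields $\vv(\epsilon(a)) > 0$ only when $\vv(a - d_0)$ is large enough to beat the decay rate of $\epsilon$. Under the standing contradiction hypothesis (that $\vv(a - d) \in \vv(\mdl R)$ for every $d$), the element $a$ is never $\mdl R$-infinitesimally close to any $d_0 \in \mdl R$, so you cannot appeal to the limit directly: you need a quantitative estimate $|\epsilon(x)| = O((x - d_0)^s)$ with $s$ in the field of exponents, and then you need $\vv(a - d_0)$ large enough. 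This works when $\{\vv(a - d) : d \in \mdl R\}$ is cofinal in $\vv(\mdl R)$ (the pseudo-Cauchy case), but the other cut types require separate treatment, and your parenthetical about ``working at infinity'' only covers $a > \mdl R$ or $a < \mdl R$, where $d = 0$ already refutes the hypothesis anyway. Second, the concluding ultrametric step is not sound: from $F(a) = F(d_0) + (F(a) - F(d_0))$ with both summands of valuation in $\vv(\mdl R)$, you get only $\vv(F(a)) \geq \min$; if the two summands have equal valuation, cancellation can push $\vv(F(a))$ strictly higher, and nothing in your argument rules out its landing outside $\vv(\mdl R)$. Controlling that cancellation --- via iterated refinement of $d_0$, or a pseudo-Cauchy argument with Newton-polygon-style bookkeeping from polynomial-boundedness, or a Wilkie-inequality exchange argument --- is exactly where the real content of the theorem lies, and your sketch gives no handle on it.
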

\begin{proof}
See~\cite[Proposition~9.2]{DriesSpei:2000} and the remark thereafter.
\end{proof}

\begin{rem}\label{rem:HNF}
By the valuation property and \cite[Proposition~7.6]{Dries:tcon:97}, we have an important tool called \emph{Holly normal form} \cite[Theorem~4.8]{holly:can:1995} (henceforth abbreviated as HNF); that is, every definable subset $A \sub \VF$ is a unique union of finitely many pairwise definable properly disconnected $\vv$-intervals.
\end{rem}

\begin{conv}\label{conv:can}
We reiterate \cite[Convention~4.20]{Yin:QE:ACVF:min} here, with a different terminology, since this trivial-looking convention is actually quite crucial for understanding the whole construction, especially the parts that involve special bijections. For any subset $A$, let
\[
\can(A) = \{(a, \rv(a), t) : (a, t) \in A \text{ and } a \in \pvf(A)\}.
\]
The natural bijection $\can : A \fun \can(A)$ is called the \emph{regularization} of $A$. The convention is that we shall tacitly substitute $\can(A)$ for $A$ in the discussion if it is necessary or is just more convenient. Whether this substitution has been performed or not should be clear in context.
\end{conv}

\begin{ter}
Let $\gamma \in |\Gamma|$. A subset of $|\Gamma_{\infty}|^n$ is \emph{$\gamma$-bounded} if it is contained in the box $[\gamma, \infty]^n$ and is \emph{doubly $\gamma$-bounded} if it is contained in the box $[-\gamma, \gamma]^n$. More generally, let $A$ be a subset of $\VF^n \times \RV^m \times \Gamma_{\infty}^l$ and
\[
|A|_{\Gamma} = \{(|\vv|(a), |\vrv|(t), |\gamma|) : (a, t, \gamma) \in A\} \sub |\Gamma_{\infty}|^{n+m+l};
\]
then we say that $A$ is \emph{$\gamma$-bounded} (resp.\ \emph{doubly $\gamma$-bounded}) if $|A|_{\Gamma}$ is $\gamma$-bounded (resp.\ doubly $\gamma$-bounded).

\end{ter}

\section{Concerning the landscape of definable subsets in $\VF$}

We say that a definable function $f$ is \emph{quasi-$\lan{T}{}{}$-definable} if it is a restriction of an $\lan{T}{}{}$-definable function (with parameters in $\VF(\mdl S)$, of course).

\begin{lem}\label{fun:suba:fun}
Every definable function $f : \VF^n \fun \VF$ is piecewise quasi-$\lan{T}{}{}$-definable; that is, there are a definable finite partition $A_i$ of $\VF^n$ and $\lan{T}{}{}$-definable functions $f_i: \VF^n \fun \VF$ such that $f \rest A_i = f_i \rest A_i$ for all $i$.
\end{lem}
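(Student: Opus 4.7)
The plan is to deduce the lemma by combining the corollary to Theorem~\ref{theos:qe} with the definable Skolem property from Theorem~\ref{tcon:qe}. First, viewing the graph of $f$ as a definable subset of $\VF^{n+1}$, the corollary to Theorem~\ref{theos:qe} tells us that this graph is $\lan{}{convex}{}$-definable over $\mdl S$. Hence $f$ itself is $\lan{}{convex}{}$-definable, and the formula $y = f(x)$ defines a total function in the $\lan{}{convex}{}$-reduct of $\gC$.

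Next, I would invoke the second assertion of Theorem~\ref{tcon:qe}: $T_{\textup{convex}}$ has definable Skolem functions given by $\lan{T}{}{}(\imath)$-terms. The standard content of this assertion in the o-minimal setting (analogous to \cite[Corollary~2.15]{DMM94}) is that there exist a finite partition $A_1, \ldots, A_k$ of $\VF^n$ into $\lan{}{convex}{}$-definable sets and $\lan{T}{}{}(\imath)$-terms $\tau_1, \ldots, \tau_k$ such that $f(a) = \tau_i(a)$ for every $a \in A_i$. To see this formally, fix an arbitrary $a \in \VF^n$; by the Skolem property the unique value $f(a)$ equals $\tau_a(a)$ for some $\lan{T}{}{}(\imath)$-term $\tau_a$ over $\mdl S$, and the $\lan{}{convex}{}$-definable set $B_{\tau_a} \coloneqq \{x \in \VF^n : f(x) = \tau_a(x)\}$ contains $a$. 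The family $\{B_\tau\}_\tau$, indexed by $\lan{T}{}{}(\imath)$-terms over $\mdl S$, therefore covers $\VF^n$; saturation of $\gC$ (together with the smallness of $\mdl S$) forces finitely many $B_{\tau_i}$ to already cover $\VF^n$, and a routine disjointification produces the desired partition.

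Finally, since $\imath \in \VF(\mdl S)$, each $\tau_i$ is in particular an $\lan{T}{}{}$-term with parameters from $\VF(\mdl S)$, hence defines a total $\lan{T}{}{}$-definable function $f_i : \VF^n \fun \VF$. By construction $f \rest A_i = f_i \rest A_i$, so that $f$ is piecewise quasi-$\lan{T}{}{}$-definable, as required.

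The only substantive difficulty is confirming that the definable Skolem property of Theorem~\ref{tcon:qe} yields a genuine finite piecewise decomposition rather than merely a pointwise one; but this is precisely the usual content of ``definable Skolem functions given by terms'' in the o-minimal literature, and follows from saturation as sketched above. Once this is in hand, the lemma is essentially a direct translation between the $\lan{}{TRV}{}$-setting and the $\lan{}{convex}{}$-setting via the corollary to Theorem~\ref{theos:qe}.
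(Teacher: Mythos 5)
Your proof is correct and takes a genuinely different route from the paper's. The paper's argument reduces to $n=1$ by induction and compactness, then takes a quantifier-free $\lan{}{convex}{}$-formula $\phi$ defining $f$, enumerates the $\lan{T}{}{}$-terms $F_i(X,Y)$ appearing in $\phi$, and shows via o-minimal monotonicity that for each $a$ the value $f(a)$ must lie among the (uniformly definable, finitely many) breakpoints of the functions $F_i(a,\cdot)$ --- else one could perturb $f(a)$ slightly without changing the truth of $\phi(a,\cdot)$, contradicting that $f$ is a function; compactness then yields the finite piecewise decomposition. You instead bypass the term-by-term analysis entirely and quote the Skolem-by-terms property of Theorem~\ref{tcon:qe}: since $T_{\textup{convex}}$ is universal and has QE, the substructure $\langle \mdl S, a\rangle$ is an elementary submodel, so $f(a)$ is the value at $a$ of an $\lan{T}{}{}(\imath)$-term over $\mdl S$, and saturation turns the resulting small cover $\{B_\tau\}$ into a finite one. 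Both proofs are ultimately applications of QE plus compactness, but yours makes the role of universality of $T_{\textup{convex}}$ (and hence the existence of Skolem terms) explicit, treats it as a black box, and works directly in all dimensions $n$ without the inductive reduction; the paper's proof is more hands-on and implicitly yields the slightly stronger information that $f$ is piecewise a branch of the finite algebraic correspondence cut out by the terms occurring in any defining quantifier-free formula. One small point worth flagging in your write-up is the parameter bookkeeping: the corollary to Theorem~\ref{theos:qe} only guarantees $\lan{}{convex}{}$-definability after possibly enlarging the parameter set to some $\VF$-tuple, so when you write ``over $\mdl S$'' you are implicitly either assuming $\mdl S$ is $\VF$-generated (which holds from \S5 onward, but is not yet assumed at the point of this lemma) or allowing the $\lan{T}{}{}(\imath)$-terms to carry those extra $\VF$-parameters; the paper's proof has the same latent issue, so this is not a defect particular to your approach, but it deserves a sentence.
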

\begin{proof}
By induction on $n$ and compactness, this is immediately reduced to the case $n = 1$. In that case, let $\phi(X, Y)$ be a quantifier-free formula that defines $f$. Let $F_i(X, Y)$ enumerate the occurring $\lan{T}{}{}$-terms in $\phi(X, Y)$. For each $a \in \VF$ and each $F_i(a, Y)$ let $B_{a, i} \sub \VF$ be the finite subset given by \omin-minimal monotonicity (see \cite[\S1.3]{DriesLew95}). It is not hard to see that if $f(a) \notin \bigcup_i B_{a, i}$ then there would be a $b \neq f(a)$ such that $\phi(a, b)$ holds, contradicting the assumption that $f$ is a function. The assertion follows.
\end{proof}

\begin{cor}[Monotonicity]\label{mono}
Let $A \sub \VF$ and $f : A \fun \VF$ be a definable function. Then there is a definable finite partition of $A$ into $\vv$-intervals $A_i$ such that every $f \rest A_i$ is quasi-$\lan{T}{}{}$-definable, continuous, and monotone (constant or strictly increasing or strictly decreasing). Consequently, each $f(A_i)$ is a $\vv$-interval.
\end{cor}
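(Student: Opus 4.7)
The plan is to combine Lemma~\ref{fun:suba:fun} with the $o$-minimal monotonicity theorem for the theory $T$ and Holly normal form (Remark~\ref{rem:HNF}).

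First I would apply Lemma~\ref{fun:suba:fun} to $f$, obtaining a definable finite partition $A = \bigsqcup_i C_i$ together with $\lan{T}{}{}$-definable functions $f_i : \VF \fun \VF$ such that $f \rest C_i = f_i \rest C_i$. By HNF, each $C_i$ decomposes uniquely as a finite disjoint union of pairwise properly disconnected $\vv$-intervals, and collecting these across all $i$ yields a refinement of the partition into definable $\vv$-intervals on each of which $f$ agrees with some $f_i$. Next, for each $f_i$, apply the $o$-minimal monotonicity theorem for $T$ to $f_i : \VF \fun \VF$ to obtain a partition of $\VF$ into finitely many points and open $T$-intervals on each of which $f_i$ is continuous and monotone (constant, strictly increasing, or strictly decreasing). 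Points and $T$-intervals are special $\vv$-intervals, and the intersection of two definable $\vv$-intervals is a definable convex subset of $\VF$, hence by HNF again a single $\vv$-interval. Intersecting the two partitions therefore refines $A$ into definable $\vv$-intervals $A_i$ on each of which $f$ coincides with an $\lan{T}{}{}$-definable continuous monotone function, which establishes the main conclusion.

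For the final assertion, observe that each $A_i$ is convex in the $\VF$-ordering, and a continuous monotone function maps a convex set to a convex set (by the intermediate value theorem in the underlying $o$-minimal structure). Thus $f(A_i)$ is a definable convex subset of $\VF$; applying HNF one more time, its decomposition into properly disconnected $\vv$-intervals must reduce to a single $\vv$-interval, since any two properly disconnected pieces would violate convexity.

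No step is technically difficult; the key conceptual point is that Holly normal form serves as the bridge that converts the $o$-minimal monotonicity statement for $\lan{T}{}{}$-definable functions, which naturally speaks of $T$-intervals, into the corresponding statement about $\vv$-intervals, which is the appropriate notion of interval in the $\TCVF$ setting.
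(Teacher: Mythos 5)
Your proof is correct and takes essentially the same route as the paper's: the paper's proof is the single line ``This is immediate by Lemma~\ref{fun:suba:fun}, \omin-minimal monotonicity, and HNF,'' and your argument is exactly an honest unwinding of that (piecewise quasi-$\lan{T}{}{}$-definability, then $o$-minimal monotonicity in $T$, then HNF to convert the cell decomposition into $\vv$-intervals and to conclude that the convex image $f(A_i)$ is a $\vv$-interval).
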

\begin{proof}
This is immediate by Lemma~\ref{fun:suba:fun}, \omin-minimal monotonicity, and HNF.
\end{proof}

\begin{cor}\label{uni:fun:decom}
For the function $f$ in Corollary~\ref{mono}, there is a definable function $\pi : A \fun\RV$ such that, for each $t \in \ran(\pi)$, $f \rest \pi^{-1}(t)$ is either constant or injective.
\end{cor}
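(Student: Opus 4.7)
The plan is to repackage Corollary~\ref{mono} as a statement about a single indexing function $\pi$. By Corollary~\ref{mono}, there is already a definable finite partition $A_1, \ldots, A_k$ of $A$ into $\vv$-intervals such that each restriction $f \rest A_i$ is continuous and monotone --- that is, either constant or strictly monotone. A strictly monotone function is injective, so on each piece $A_i$ the restriction $f \rest A_i$ is either constant or injective. It therefore suffices to produce a definable $\pi : A \fun \RV$ whose fibers are exactly the $A_i$.

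To do this, I would fix $k$ distinct elements $t_1, \ldots, t_k \in \RV$ and set $\pi(a) = t_i$ whenever $a \in A_i$. Such $t_i$ are available because $\K^{\times} \sub \RV^{\times}$ and $\K$ is a model of $T \supseteq \usub{\textup{RCF}}{}$, so for instance we may take $t_i = i$ viewed inside $\K^{\times}$. Since the partition $\{A_i\}$ is definable and the elements $t_i$ are $\0$-definable, the function $\pi$ is definable, and by construction $\pi^{-1}(t_i) = A_i$, on which $f$ is constant or injective as required.

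There is essentially no technical obstacle here: all of the analytic content (continuity, monotonicity on $\vv$-intervals, quasi-$\lan{T}{}{}$-definability) is already absorbed by Corollary~\ref{mono} and ultimately by Lemma~\ref{fun:suba:fun} and o-minimal monotonicity. The present corollary is merely a reformulation of that partition in terms of an $\RV$-valued labelling map, and the only ingredient beyond Corollary~\ref{mono} is the trivial observation that $\RV$ has arbitrarily many distinct definable elements.
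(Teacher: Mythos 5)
Your proof is correct and takes essentially the same approach as the paper: the paper's proof simply asserts that the result ``is immediate by monotonicity,'' and your argument spells out exactly why, by labeling the finitely many pieces of the monotonicity partition from Corollary~\ref{mono} with distinct definable elements of $\RV$ (e.g.\ distinct positive integers viewed in $\K^{\times} \sub \RV^{\times}$). The paper's additional remark that the proof of an earlier ACVF lemma also carries over is only an alternative route, not the intended argument.
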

\begin{proof}
This is immediate by monotonicity. In fact, the proof of \cite[Lemma~4.11]{Yin:QE:ACVF:min} still works.
\end{proof}

Let $p : A \fun |\Gamma|$ be a definable function, where $A \sub \VF^n$. We say that $p$ is an \emph{$\go$-partition} of $A$ if for any $a \in A$ the function $p$ is constant on $\go(a, p(a)) \cap A$.

\begin{lem}\label{vol:par:bounded}
If $p$ is an $\go$-partition and $A$ is closed and bounded then $p(A)$ is doubly bounded.
\end{lem}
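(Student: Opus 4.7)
The plan is to show separately that $p(A)$ is bounded below and bounded above in $|\Gamma|$, giving doubly boundedness.

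\textbf{Lower bound.} Since $A \sub \VF^n$ is bounded, there is $M \in \VF^+$ with $|a_i| \leq M$ for every $a = (a_1, \ldots, a_n) \in A$ and every coordinate index $i$. By the ultrametric inequality applied coordinate-wise, $|\vv|(a_i - b_i) \geq \gamma_0 := |\vv|(M)$ for all $a, b \in A$. If some $a \in A$ satisfies $p(a) < \gamma_0$ in $|\Gamma|$, then every $b \in A$ lies in $\go(a, p(a))$, so $\go(a, p(a)) \cap A = A$, and the $\go$-partition property forces $p$ to be constant on $A$. Otherwise $p(A) \sub [\gamma_0, \infty]$. In either case $p(A)$ is bounded below.

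\textbf{Upper bound.} For the opposite bound I argue by contradiction. Suppose $p(A)$ is unbounded above. Since $|\Gamma|$ is an ordered divisible abelian group (Theorem~B of \cite{Dries:tcon:97}), the definable set $p(A)$ contains an unbounded interval $(\gamma_1, \infty)$. Definable Skolem (Theorem~\ref{tcon:qe}) yields a definable section $s : (\gamma_1, \infty) \fun A$ with $p(s(\gamma)) = \gamma$; after restricting to a cofinal subinterval and invoking monotonicity in each coordinate, I may assume $s$ is continuous with each $s_i$ monotone. The saturation of $\gC$ then supplies a cluster point $a^* \in A$ of $s$ at infinity: the decreasing family $\{\overline{s((\gamma_0, \infty))} \cap A\}_{\gamma_0}$ of nonempty closed definable sets has the finite intersection property, and the closedness of $A$ places the resulting $a^*$ inside $A$. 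Order-topology convergence $s(\gamma) \to a^*$ coincides with valuation-topology convergence (since $|\vv| : \VF^+ \fun |\Gamma|$ is surjective, an order-open neighborhood of $a^*$ contains a valuative open polydisc around $a^*$ of any prescribed radius and vice versa), so $s(\gamma) \in \go(a^*, p(a^*)) \cap A$ for all sufficiently large $\gamma$. The $\go$-partition property then forces $\gamma = p(s(\gamma)) = p(a^*)$ for such $\gamma$, contradicting that $\gamma$ varies unboundedly above $p(a^*)$.

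\textbf{Main obstacle.} The delicate step is the production of the cluster point $a^* \in A$, which must simultaneously exploit the monotonicity and boundedness of $s$ (to secure the finite intersection property of the nested closed family), the closedness of $A$ (to locate $a^*$ inside $A$), and enough saturation of $\gC$ to realize the intersection against a potentially large index set. Once $a^*$ is in hand, the $\go$-partition property at $a^*$ collapses the argument immediately.
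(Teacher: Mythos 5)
Your lower-bound argument (ultrametric inequality forcing $p$ to be constant if $p(a)$ is too small) is fine and makes explicit what the paper asserts in one sentence. The upper-bound argument, however, has a genuine gap at the central step: the definable section $s : (\gamma_1, \infty) \fun A$ with $p(s(\gamma)) = \gamma$ cannot exist. The Skolem functions of $T_{\textup{convex}}$ (Theorem~\ref{tcon:qe}) are given by $\lan{T}{}{}(\imath)$-terms, i.e.\ functions of $\VF$-tuples, and they do not produce sections over $\Gamma$-parameters. In fact, by Lemma~\ref{RV:no:point}, if $a \in \VF$ is $\gamma$-definable for a tuple $\gamma \in \Gamma$ then $a$ is definable over $\mdl S$; hence any definable function from a subset of $|\Gamma|$ into $\VF^n$ (in particular into $A$) has finite image and cannot satisfy $p(s(\gamma)) = \gamma$ on an unbounded interval. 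Once definability of $s$ is lost, the appeal to monotonicity collapses, and with it the production of the cluster point $a^*$.

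The paper circumvents exactly this obstacle by reparametrizing over a $\VF$-variable instead of a $\Gamma$-variable: for each $c \in \MM \mi \{0\}$, the unboundedness of $p(A)$ guarantees $A_{|\vv|(c)} \neq \0$, and since $\mdl S\la c\ra_T \prec \gC$, one finds a $c$-definable witness; compactness then glues these into a definable $f : \MM \mi \{0\} \fun A$ with $p(f(c)) > |\vv|(c)$. Because $f$ is now a genuine definable function on a $\VF$-set, monotonicity (Corollary~\ref{mono}) applies, the one-sided limits $a^{\pm}$ exist, closedness places them in $A$, and the $\go$-partition property at $a^+$ yields the contradiction. That reparametrization is the key idea your plan is missing. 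Separately, your treatment of $n > 1$ by ``monotonicity in each coordinate'' is not justified; the paper instead reduces to $n = 1$ by an explicit induction, replacing $p$ by the induced $\go$-partition $a \efun \beta_a$ on $\pr_{<n}(A)$.
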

\begin{proof}
Since $A$ is bounded, $p(A)$ must be bounded as well. We first consider the case $n=1$. Suppose for contradiction that $p(A)$ is not bounded from the other direction. For each $\gamma \in |\Gamma|$ let $A_{\gamma} = \set{a \in A : p(a) > \gamma}$. For all $c \in \MM \mi \{0\}$, since $\mdl S \la c \ra_T$ is a model of $\TCVF$, we have $A_{|\vv|(c)} \cap \mdl S \la c \ra_T \neq \0$. By compactness, there is a definable function $f : \MM \mi \{0\} \fun A$ such that $p(f(c)) > |\vv|(c)$ for every $c \in \MM \mi \{0\}$. By monotonicity,
\[
a^+ \coloneqq \lim_{x \rightarrow 0^+} f \quad \text{and} \quad a^- \coloneqq \lim_{x \rightarrow 0^-} f
\]
exist and, since $A$ is closed, they are contained in $A$. Therefore, there is a positive $c \in \MM \mi \{0\}$ such that $|\vv|(c) > p(a^+)$ and $f(c) \in \go(a^+, p(a^+))$. Since $p$ is an $\go$-partition, this implies $p(f(c)) = p(a^+)$, which is a contradiction.

Suppose that $n > 1$. For each $a \in \pr_{<n}(A)$ let $p_a : A_a \fun |\Gamma|$ be the $\go$-partition induced by $p$ and $\beta_a \in |\Gamma|$ an $a$-definable upper bound of $p_a(A_a)$. Observe that, for all $a' \in \go(a, \beta_a) \cap \pr_{<n}(A)$, $p_a(A_a) = p_{a'}(A_{a'})$ and hence, by \omin-minimality in the $\Gamma$-sort, we may assume that the function $p' : \pr_{<n}(A) \fun |\Gamma|$ given by $a \efun \beta_a$ is an $\go$-partition. Now the assertion follows from a routine induction on $n$.
\end{proof}

This lemma is crucial for the good behavior of motivic Fourier transform (see \cite[\S11]{hrushovski:kazhdan:integration:vf} and \cite{yin:hk:part:3}); in this paper we shall use it only once in \S6.

\begin{lem}\label{RV:no:point}
For $t = (t_1, \ldots, t_n) \in \RV$, if $a \in \VF$ is $t$-definable then $a$ is definable. Similarly, for $\gamma = (\gamma_1, \ldots, \gamma_n) \in \Gamma$, if $t \in \RV$ is $\gamma$-definable then $t$ is definable.
\end{lem}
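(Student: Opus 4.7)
For the first assertion I plan to argue by contradiction using the two immediate-isomorphism lemmas just proved. First I would reduce to the case where $\mdl S$ is $\VF$-generated: only finitely many parameters of $\mdl S$ occur in a formula witnessing $a\in\dcl(\mdl S,t)$, and the $\RV$-ones among them can be absorbed into the tuple $t$, so I may replace $\mdl S$ by $\la\VF(\mdl S)\ra$, which is an elementary substructure of $\gC$ by the remark just after Theorem~\ref{theos:qe}. Assume for contradiction that $a\in\dcl(\mdl S,t)\cap\VF$ but $a\notin\dcl(\mdl S)$. The plan is to produce an immediate $\mdl S$-automorphism $\rho$ of $\gC$ with $\rho(a)\neq a$: since $\rho$ fixes every element of $\RV$, it fixes $t$ and so is actually an $\mdl St$-automorphism, which would then have to fix $a$, the desired contradiction.

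By Lemma~\ref{imm:ext} it suffices to construct an immediate isomorphism $\la\mdl S,a\ra\fun\la\mdl S,b\ra$ sending $a$ to some $b\neq a$, and by Lemma~\ref{imm:iso} such an isomorphism exists as soon as I find $b\in\VF\mi\VF(\mdl S)$ with $\rv(a-c)=\rv(b-c)$ for every $c\in\VF(\mdl S)$. So consider the partial type
\[
q(x)=\{\rv(x-c)=\rv(a-c):c\in\VF(\mdl S)\},
\]
which $a$ realises. By saturation of $\gC$ it suffices to check that $q(x)\cup\{x\neq a\}$ is consistent; any realisation $b$ then furnishes the sought-after $b$, and we are done.

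If on the other hand $q(x)\cup\{x\neq a\}$ is inconsistent, compactness yields finitely many instances $\rv(x-c_i)=r_i$, with $c_i\in\VF(\mdl S)$ and $r_i=\rv(a-c_i)$, $i\leq k$, forcing $x=a$; equivalently, $\bigcap_{i\leq k}(c_i+\rv^{-1}(r_i))=\{a\}$. Whenever $r_i\in\RV^\times$, $c_i+\rv^{-1}(r_i)$ is an open disc of valuative radius $|\vrv|(r_i)\in|\Gamma|$ in the sense of Definition~\ref{defn:disc}; by the ultrametric property of $|\vv|$, any nonempty finite intersection of open discs is again an open disc, hence never a singleton when its radius lies in $|\Gamma|$. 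Therefore at least one $r_i$ must equal $\infty$, which forces $a-c_i=0$ and so $a=c_i\in\VF(\mdl S)\sub\dcl(\mdl S)$, contradicting the assumption.

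The second assertion should be handled by an exactly parallel argument with $\rv:\VF\fun\RV$ replaced by $\vrv:\RV\fun\Gamma_\infty$, immediate automorphisms by $\mdl S$-automorphisms fixing $\Gamma$ pointwise, and open discs by $\K^+$-cosets $\vrv^{-1}(g)$. I expect the obvious $\RV$-analogues of Lemmas~\ref{imm:iso} and~\ref{imm:ext}, provable by the same Shoenfield test as in Theorem~\ref{theos:qe}, to reduce the problem to producing $s\neq t$ in $\RV$ with the same $\vrv$-cut over $\mdl S$, or else to the observation that a finite intersection of $\vrv$-cosets is again a single coset, and fails to be a singleton unless that coset is $\vrv^{-1}(\infty)=\{\infty\}\sub\RV(\mdl S)$. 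The main technical obstacle in either part is this closing dichotomy in the compactness branch, which is where the valuation axioms of Definition~\ref{defn:tcf} do the essential work.
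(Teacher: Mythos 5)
Your argument for the first assertion is correct but takes a genuinely different route from the paper, which proves it by a short induction on $n$: pick $b\in\rv^{-1}(t_n)$, apply the inductive hypothesis over $\la\mdl S,b\ra$ to get $a\in\VF(\la b\ra)$, invoke the exchange principle (available since the Wilkie inequality gives exchange in $\VF$) to conclude $b\in\VF(\la a\ra)$ if $a$ were not definable, and then observe that $\rv^{-1}(t_n)\sub\VF(\la a\ra)$ is impossible for cardinality/saturation reasons. Your back-and-forth argument instead builds an immediate automorphism explicitly using Lemmas~\ref{imm:iso} and~\ref{imm:ext}, with the ultrametric-disc dichotomy doing the work in the compactness branch. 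This is a valid alternative; it trades the exchange-principle black box for an explicit construction, and it makes transparent exactly where the structure of definable subsets of $\VF$ (HNF) is being used. Both are sound for the first claim.

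The second assertion is where the proposal has a genuine gap. You claim to reduce the problem to ``producing $s\neq t$ in $\RV$ with the same $\vrv$-cut over $\mdl S$'' and to the observation that a finite intersection of $\vrv$-cosets is a single coset. This presumes an $\RV$-analogue of Lemma~\ref{imm:iso}: that the data $\{\vrv(t/u):u\in\RV(\mdl S)\}$ determines the type of $t$ over $\RV(\mdl S)$ (fixing $\Gamma$ pointwise). That analogue is false: the $\RV$-sort carries the full $\lan{T}{}{}$-structure on $\K$, and whenever $\vrv(t)=\vrv(u)$ for some $u\in\RV(\mdl S)$ the quotient $t/u$ lies in $\K^+$ and its $T$-type over $\K(\mdl S)$ is additional information that the $\vrv$-cut does not see. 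So a finite inconsistent subset of the type of $t$ over $\mdl S\cup\gamma$ need not consist of $\vrv$-coset formulas, and the closing dichotomy does not go through as stated. (Orthogonality of $\K$ and $\Gamma$, Lemma~\ref{gk:ortho}, would come to the rescue in the case $\vrv(t)\in\Gamma(\mdl S)$, but that case analysis is precisely the missing work.) The paper avoids this entirely by running the same short induction-plus-exchange argument in the $\RV$-sort, using the exchange principle guaranteed by Remark~\ref{rem:RV:weako}; I would suggest you do the same rather than developing $\RV$-analogues of the immediate-isomorphism machinery, which are neither stated in the paper nor ``obvious from the Shoenfield test.''
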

\begin{proof}
The first assertion is easily seen through an induction on $n$ with the trivial base case $n=0$. For any $b \in \rv^{-1}(t_n)$, by the inductive hypothesis, we have $a \in \VF(\la b \ra)$. If $a$ were not definable then we would have $b \in \VF(\la a \ra)$ and hence $\rv^{-1}(t_n) \sub \VF(\la a \ra)$, which is impossible. The second assertion is similar, using the exchange principle in the $\RV$-sort (see Remark~\ref{rem:RV:weako}).
\end{proof}

\begin{cor}\label{function:rv:to:vf:finite:image}
Let $U \sub \RV^m$ be a definable subset and $f : U \fun \VF^n$ a definable function. Then $f(U)$ is finite.
\end{cor}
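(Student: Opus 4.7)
The plan is to reduce to the one-dimensional case by projecting, and then combine Lemma \ref{RV:no:point} with HNF. Fix $i \leq n$ and let $V_i = \pr_i(f(U)) \sub \VF$. Since $\pr_i \circ f : U \fun \VF$ is $\mdl S$-definable, for each $t \in U$ we have $(\pr_i \circ f)(t) \in \dcl(\mdl S \cup \{t\})$. As $t$ is a tuple from $\RV$, Lemma \ref{RV:no:point} applies and gives $(\pr_i \circ f)(t) \in \dcl(\mdl S)$. Therefore $V_i \sub \dcl(\mdl S) \cap \VF$. Moreover $V_i$ itself is $\mdl S$-definable, being the projection of the $\mdl S$-definable subset $f(U)$.

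The next step is to argue that any $\mdl S$-definable subset $V \sub \VF$ with $V \sub \dcl(\mdl S)$ must be finite. By HNF (Remark~\ref{rem:HNF}), $V$ decomposes uniquely as a finite disjoint union of pairwise properly disconnected $\vv$-intervals $I_1, \ldots, I_k$, each of which is $\mdl S$-definable. If some $I_j$ were not a singleton, then by sufficient saturation of $\gC$ (which is the standing hypothesis on our monster model, while $\mdl S$ is small) the non-trivial $\vv$-interval $I_j$ would contain an element $c \notin \dcl(\mdl S)$, contradicting $I_j \sub V \sub \dcl(\mdl S)$. Hence each $I_j$ is a single point and $V$ is finite. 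Applying this to each $V_i$ and observing that $f(U) \sub V_1 \times \cdots \times V_n$, the finiteness of $f(U)$ follows.

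There is essentially no obstacle here beyond cleanly assembling these ingredients; the content is concentrated in Lemma \ref{RV:no:point} (which has already been established) and in HNF. The one point that deserves a word of care is the passage from ``definable and contained in $\dcl(\mdl S)$'' to ``finite'', which relies on the saturation convention on $\gC$ together with weak o-minimality in the $\VF$-sort to forbid a non-trivial $\vv$-interval from being swallowed by a small definable closure.
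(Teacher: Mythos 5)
Your proof is correct and follows the same essential strategy as the paper: reduce to a single $\VF$-coordinate and then apply Lemma~\ref{RV:no:point} to conclude that the image consists of $\mdl S$-definable points, hence is finite. The only difference is in the last step: the paper simply invokes ``compactness'' (the standard fact that an $\mdl S$-definable set all of whose points lie in $\dcl(\mdl S)$ must be finite, since one can cover $U$ by definable sets on which $f$ is constant and extract a finite subcover), whereas you spell out the finiteness via HNF and saturation. Your route is perfectly valid but slightly heavier than necessary: HNF is not needed, as the compactness argument works uniformly without appealing to the structure of definable subsets of $\VF$. Both amount to the same model-theoretic principle, so this is a matter of presentation rather than a genuinely different proof.
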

\begin{proof}
We may assume $n=1$. Then this is immediate by Lemma~\ref{RV:no:point} and compactness.
\end{proof}

There is a slightly more general version of Lemma~\ref{RV:no:point} that involves parameters in the $\DC$-sort:

\begin{lem}\label{ima:par:red}
Let $\dot \ga = (\dot \ga_1, \ldots, \dot \ga_n) \in \DC^n$. If $a \in \VF$ is $\dot \ga$-definable then $a$ is definable.
\end{lem}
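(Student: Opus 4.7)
The plan is to induct on $n$. The case $n = 0$ is vacuous, and for the inductive step one applies the case $n = 1$ to $\dot\ga_n$ over the expanded base $\mdl S \cup \{\dot\ga_1, \ldots, \dot\ga_{n-1}\}$ and then uses the inductive hypothesis to strip the remaining disc parameters. Thus the entire work lies in the case $n = 1$. Let $\dot\ga \in \DC$; we may assume $\ga$ is infinite, since a singleton disc is just a $\VF$-element. Set $\gamma = \rad(\ga) \in |\Gamma|$. The two sub-goals are (i) to show $a \in \dcl(\mdl S \cup \{\gamma\})$ and (ii) to remove $\gamma$ via Lemma~\ref{RV:no:point}.

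For (i), choose any $b \in \ga$. Since $\ga$ is recovered $\mdl S$-definably from $(b,\gamma)$ together with its open/closed flag, there is an $\mdl S$-definable function $h$ with $\dot\ga = h(b, \gamma)$ for every $b \in \ga$. Composing with an $\mdl S$-definable $g$ satisfying $a = g(\dot\ga)$ produces an $\mdl S \cup \{\gamma\}$-definable function $F_\gamma(y) \coloneqq g(h(y,\gamma)) : \VF \fun \VF$ that is constant on every open $\gamma$-disc and takes the value $a$ throughout $\ga$. Applying Corollary~\ref{mono}, one obtains a finite partition of $\VF$ into $\vv$-intervals $A_1,\ldots,A_k$ on each of which $F_\gamma$ is continuous and monotone.

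The technical heart of the proof is to show that $F_\gamma \rest A_i$ is constant whenever $|A_i| \geq 2$. Suppose for contradiction that $F_\gamma \rest A_i$ is strictly monotone (hence injective), and pick $b_1 < b_2$ in $A_i$. They cannot lie in a common open $\gamma$-disc (else injectivity fails), so they lie in distinct $\gamma$-discs; consequently $\vv(b_2 - b_1) \leq \gamma$, and since $\cha \VF = 0$ we have $\vv(2) = 0$, so the midpoint $m = (b_1+b_2)/2$ satisfies $\vv(m - b_1) = \vv(b_2-b_1) \leq \gamma$, placing $m$ in a third $\gamma$-disc $D = \go(m,\gamma)$. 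For every $x \in D$ the inequality $\vv(x - m) > \vv(m - b_1)$ together with order-convexity of discs in $T$-convex valued fields forces $D \sub (b_1, b_2) \sub A_i$; yet $F_\gamma$ is constant on the infinite set $D$, contradicting injectivity on $A_i$. Therefore $F_\gamma$ attains only finitely many values.

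Let $V = \{v_1 < \cdots < v_m\}$ be this image. Being finite and $\mdl S \cup \{\gamma\}$-definable, each $v_j$ is individually $\mdl S \cup \{\gamma\}$-definable by its position in the linear order of $\VF$. Since $a = F_\gamma(b) \in V$ for any $b \in \ga$, this gives $a \in \dcl(\mdl S \cup \{\gamma\})$. For (ii), pick any $t \in \RV$ with $|\vrv|(t) = \gamma$; then $\gamma \in \dcl(t)$, whence $a \in \dcl(\mdl S \cup \{t\})$, and Lemma~\ref{RV:no:point} yields $a \in \dcl(\mdl S)$. The sole real obstacle is the midpoint/order-convexity argument collapsing the monotonicity decomposition of $F_\gamma$ to finitely many constant pieces; without this collapse the image of $F_\gamma$ could a priori be infinite and the conclusion would be out of reach.
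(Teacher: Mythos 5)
Your proof is correct, but it follows a genuinely different route from the paper's. The paper's argument first observes that for any $b \in \ga_n$ and any $t \in \RV$ with $\vrv(t) = \rad(\ga_n)$, $a$ is $(\dot\ga_1,\ldots,\dot\ga_{n-1},t,b)$-definable; by the inductive hypothesis and Lemma~\ref{RV:no:point} this gives $a \in \VF(\la b \ra)$ for \emph{every} $b \in \ga_n$, and then the exchange principle in $\VF$ is applied: if $a$ were not definable, exchange would force $b \in \VF(\la a \ra)$ for every such $b$, so the entire disc $\ga_n$ would be contained in the small model $\VF(\la a \ra)$, which is impossible. Your proof replaces this exchange step with an explicit geometric argument: you construct the $\gamma$-definable function $F_\gamma$, invoke the monotonicity decomposition of Corollary~\ref{mono}, and then use the midpoint together with the order-convexity of discs to rule out strict monotonicity on any $\vv$-interval, concluding that $F_\gamma$ has finite image, hence $a$ is $\gamma$-definable; stripping $\gamma$ is then handled by Lemma~\ref{RV:no:point} as in the paper. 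Both approaches are sound; the paper's is shorter and deliberately parallels its proof of Lemma~\ref{RV:no:point}, while yours replaces one use of exchange with a hands-on convexity argument (which is instructive but not needed, since the exchange principle is already available from the Wilkie inequality). One small remark: your dismissal of singleton discs (``a singleton disc is just a $\VF$-element'') does not actually dispose of that case --- if $\ga$ were a point not in $\VF(\mdl S)$, the statement would fail; but the paper's own proof shares exactly the same blind spot (``$\ga_n \sub \VF(\la a \ra)$, which is impossible'' is \emph{not} impossible for a singleton), so the implicit convention must be that $\DC$-parameters here are proper discs of finite valuative radius.
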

\begin{proof}
We do induction on $n$. Let $b \in \ga_n$ and $t \in \RV$ such that $\vrv(t) = \rad(\ga_n)$. Then $a$ is $(\dot \ga_1, \ldots, \dot \ga_{n-1}, t, b)$-definable. By the inductive hypothesis and Lemma~\ref{RV:no:point}, we have $a \in \VF(\la b \ra)$. If $a$ were not definable then we would have $b \in \VF(\la a \ra)$ and hence $\ga_n \sub \VF(\la a \ra)$, which is impossible.
\end{proof}

\begin{defn}
Let $D$ be a subset. We say that a (not necessarily definable) nonempty subset $A$ \emph{generates a (complete) $D$-type} if, for every $D$-definable subset $B$, either $A \sub B$ or $A \cap B = \0$. In that case, $A$ is \emph{$D$-type-definable} if no subset properly contains $A$ and also generates a $D$-type. If $A$ is $D$-definable and generates a $D$-type, or equivalently, if $A$ is $D$-definable and $D$-type-definable then we say that $A$ is \emph{$D$-atomic} or \emph{atomic over $D$}.
\end{defn}

\begin{rem}\label{rem:type:atin}
It is easy to see that, by HNF, if $\gi \sub \VF$ is atomic then $\gi$ must be a $\vv$-interval; in fact, there are only four possibilities for $\gi$: a point, an open disc, a closed disc, and a half thin annulus.
\end{rem}

\begin{lem}\label{atom:gam}
Let $\ga$ be an atomic subset. Then $\ga$ remains $\gamma$-atomic for all $\gamma \in \Gamma$; if $\ga \sub \VF^n$ is an open polydisc then it remains $\dot \ga$-atomic.
\end{lem}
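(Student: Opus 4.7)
The plan is to handle the two assertions separately; both arguments proceed by contradiction and exploit the atomicity of $\ga$ in different ways.

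For the first assertion, I would use the o-minimality of $\Gamma$. If $\ga$ fails to be $\gamma$-atomic for some $\gamma \in \Gamma$, then there is an $\mdl S$-definable formula $\phi(x, y)$ such that $\phi(\gC, \gamma) \cap \ga$ is a proper nonempty subset of $\ga$. The key move is to consider the $\mdl S$-definable set
\[
D = \{\gamma' \in \Gamma : \0 \neq \phi(\gC, \gamma') \cap \ga \neq \ga\} \sub \Gamma,
\]
which is nonempty because $\gamma \in D$. By Theorem~B of \cite{Dries:tcon:97}, the $\Gamma$-sort is an o-minimal nontrivially ordered $\Q$-vector space, so $D$ is a finite union of intervals and points whose endpoints lie in $\Gamma(\mdl S) \cup \{\pm\infty\}$. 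Since $\Gamma(\mdl S)$ is a $\Q$-subspace of $\Gamma$ and contains the nonzero element $\vv(\imath)$, it is dense in $\Gamma$; hence $D$ must contain some $\mdl S$-definable $\gamma_0$. Then $\phi(\gC, \gamma_0) \cap \ga$ is an $\mdl S$-definable proper nonempty subset of $\ga$, contradicting atomicity.

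For the second assertion, I would run an automorphism-invariance argument tailored to the open-polydisc structure. The open-polydisc hypothesis ensures that $\dot\ga \in \DC^n$ is a well-defined imaginary code of $\ga$, and $\dot\ga \in \dcl_{\gC^{\bullet}}(\mdl S)$ because $\ga$ itself is $\mdl S$-definable. Suppose for contradiction that some $\dot\ga$-definable set $A$ produces a proper nonempty splitting $B \coloneqq A \cap \ga$ of $\ga$. Unpacking, $A$ is cut out by an $\mdl S$-definable formula $\phi(x, b, \rad(\ga))$, where $b \in \ga$ is any chosen representative and $\rad(\ga) \in |\Gamma(\mdl S)|^n$ is $\mdl S$-definable; the very definition of $\DC$ forces the truth of $\phi(x, b, \rad(\ga))$ to be independent of the choice of $b \in \ga$. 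Atomicity of $\ga$ plus saturation of $\gC$ yields that $\aut(\gC/\mdl S)$ acts transitively on $\ga$; any $\sigma$ in this group fixes $\rad(\ga)$ and sends $b$ to some $\sigma(b) \in \ga$, so by representative-independence
\[
\sigma(B) = \phi(\gC, \sigma(b), \rad(\ga)) \cap \ga = B.
\]
Thus $B$ is invariant under a group acting transitively on $\ga$, forcing $B \in \{\0, \ga\}$, which contradicts the choice of $B$.

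The main conceptual obstacle is to make precise the representative-invariance of $\phi(x, b, \rad(\ga))$ in $b \in \ga$. This is essentially built into the $\DC$-encoding of discs as imaginaries, but it does require a careful unpacking of how formulas over $\dot\ga$ translate to formulas over representatives. The role of the open-polydisc hypothesis is exactly to guarantee that $\ga$ is the full orbit of any of its points under translations preserving the disc structure, so that $\rad(\ga)$ is the only extra datum that $\dot\ga$ carries beyond the $\mdl S$-definable subset $\ga$ itself.
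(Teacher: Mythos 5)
Your overall strategy matches the paper's (reduce the first assertion to producing an $\mdl S$-definable splitting, and argue the second via representative-invariance of disc codes), but there is a genuine gap in the first part.

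You claim that $\Gamma(\mdl S)$ is dense in $\Gamma$ because it is a nontrivial $\Q$-subspace. This is false. Since $\gC$ is sufficiently saturated, $\Gamma$ is not archimedean over the small set $\Gamma(\mdl S)$: by saturation one can realize the type of a $\gamma \in |\Gamma|$ with $0 < \gamma < \gamma'$ for every positive $\gamma' \in |\Gamma(\mdl S)|$, so the nonempty interval $(0, \gamma)$ misses $|\Gamma(\mdl S)|$ entirely. (More generally, a nontrivial $\Q$-subspace of an ordered $\Q$-vector space need not be dense; compare $\{0\} \oplus \Q$ inside $\Q \oplus_{\textup{lex}} \Q$.) What you actually need, and what the paper invokes directly, is \emph{definable choice} in the $\Gamma$-sort: $\Gamma$ is o-minimal as a nontrivially ordered $\Q$-vector space and carries the $\mdl S$-definable nonzero constant $\vrv(\rv(\imath))$, so every nonempty $\mdl S$-definable subset of $\Gamma$ contains an $\mdl S$-definable point (an isolated point, or a midpoint, or a shift of a finite endpoint by $\vrv(\rv(\imath))$). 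Your own observation that $D$ decomposes into finitely many intervals and points with $\mdl S$-definable endpoints is exactly the input this requires; the density claim is both false and unnecessary, and should be replaced by definable choice.

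The argument for the second assertion is correct and is in essence the paper's argument. The paper phrases the step as a compactness reduction from $\dot\ga$-definability to $\rad(\ga)$-definability followed by an appeal to the first assertion; you close the loop directly by observing that every $\sigma \in \aut(\gC/\mdl S)$ fixes $B$ setwise (using that $\rad(\ga)$ and $\ga$ are $\mdl S$-definable, and that the defining formula is independent of the choice of representative $b \in \ga$), while $\aut(\gC/\mdl S)$ acts transitively on $\ga$ by atomicity and saturation, so $B \in \{\0, \ga\}$. In a saturated model the compactness and automorphism phrasings are interchangeable, and both rest on the same representative-invariance observation.
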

\begin{proof}
The first assertion is a direct consequence of definable choice in the $\Gamma$-sort. For the second assertion, let $\gamma = \rad(\ga)$. If $\ga$ were not $\dot \ga$-atomic then, by compactness, there would be a $\gamma$-definable subset $A \sub \VF^n$ such that $A \cap \ga$ is a nonempty proper subset of $\ga$, which contradicts the first assertion that $\ga$ is $\gamma$-atomic.
\end{proof}

Recall from \cite[Definition~4.5]{mac:mar:ste:weako} the notion of a cell in a weakly \omin-minimal structure; in our setting, for the $\VF$-sort, we may require that the images of the bounding functions $f_1$, $f_2$ are contained in $\DC$; then cell decomposition \cite[Theorem~4.6]{mac:mar:ste:weako} holds accordingly. Note that cells are in general not invariant under coordinate permutations; however, an atomic subset of $\VF^n$ must be a cell and must remain so under coordinate permutations.

\begin{nota}\label{nota:tor}
For each $\gamma \in |\Gamma|$ with $0 \leq \gamma < \infty$ let $\MM_\gamma$ and $\OO_{\gamma}$ be the open disc and the closed disc around $0$ with valuative radius $\gamma$, respectively. Let $\RV_{\gamma} = \VF^{\times} / (1 + \MM_\gamma)$, which is a subset of $\DC$ and is also an ordered abelian group. The canonical projection $\VF^{\times} \fun \RV_{\gamma}$ is denoted by $\rv_{\gamma}$ and is augmented by $\rv_{\gamma}(0) = \infty$. If $\dot \gb \in \DC$ and $\rad(\gb) \leq \gamma$ then $\gb$ is a union of open discs of the form $\rv_{\gamma}^{-1}(\dot \ga)$. In this case, we shall abuse the notation slightly and write $\dot \ga \in \gb$, $\gb \sub \RV_{\gamma}$, etc. For each $\dot \ga \in \RV_{\gamma}$ let $\tor (\dot \ga) \sub \RV_{\gamma}$ be the $\dot \ga$-definable subset such that $\rv^{-1}_{\gamma}(\tor (\dot \ga))$ is the smallest closed disc containing $\ga$. Set
\begin{gather*}
\tor^{\times}(\dot \ga) = \tor (\dot \ga) \mi \{\dot \ga\},\\
\tor^+(\dot \ga) = \{t \in \tor(\dot \ga):  t > \dot \ga\}, \quad \tor^-(\dot \ga) = \{t \in \tor(\dot \ga):  t < \dot \ga\}.
\end{gather*}
If $\dot \ga = (\dot \ga_1, \ldots, \dot \ga_n)$ with $\dot \ga_i \in \RV_{\gamma_i}$ then $\prod_i \tor(\dot \ga_i)$ is simply written as $\tor(\dot \ga)$; similarly for $\tor^{\times}(\dot \ga)$, $\tor^+(\dot \ga)$, etc.
\end{nota}

\begin{rem}\label{rem:K:aff}
Let $\dot \ga \in \RV_{\gamma}$. Since, via additive translation by $\dot \ga$, there is a canonical $\dot \ga$-definable order-preserving bijection \[
\aff_{\dot \ga} :\tor(\dot \ga) \fun \vrv^{-1}(\pm \gamma) \cup \{0\} \sub \RV,
\]
we see that $\dot \ga$-definable subsets of $\tor(\dot \ga)^n$ naturally correspond to those of
\[
(\vrv^{-1}(\pm \gamma)\cup \{0\})^n.
\]
If there is an $\dot \ga$-definable $t \in \vrv^{-1}(\pm \gamma)$ then, via multiplicative translation by $t$, this correspondence may be extended to $\dot \ga$-definable subsets of $\K^n$. More generally, for any $t \in \vrv^{-1}(\pm \gamma)$, the induced bijection $\tor(\dot \ga) \fun \K$ is denoted by $\aff_{\dot \ga, t}$. Consequently, $\tor(\dot \ga)$ may be viewed as a $\K$-torsor and, as such, is equipped with much of the structure of $\K$.

For example, if $\dot \gb \in \RV_{\beta}$ and $f : \tor(\dot \ga) \fun \tor(\dot \gb)$ is a function then we may define the derivative $\frac{d}{dx} f$ of $f$ at any point $\dot \gd \in \tor(\dot \ga)$ as follows. Choose any $t \in \vrv^{-1}(\pm \gamma)$ and any $s \in \vrv^{-1}(\pm \beta)$. Set $r = \aff_{\dot \ga, t}(\dot \gd)$ and consider the function
\[
f_{\dot \ga, \dot \gb, t,s} : \K \to^{\aff^{-1}_{\dot \ga, t}} \tor(\dot \ga) \to^f \tor(\dot \gb) \to^{\aff_{\dot \gb, s}} \K.
\]
Suppose that $\frac{d}{dx} f_{\dot \ga, \dot \gb, t,s}(r) \in \K$ exists. Then we set
\[
\tfrac{d}{d x} f(\dot \gd) \coloneqq s t^{-1} \tfrac{d}{d x} f_{\dot \ga, \dot \gb, t,s}(r) \in \vrv^{-1}(\pm \beta \gamma^{-1}) \cup \{0\}.
\]
It is routine to check that this construction does not depend on the choice of $\dot \ga$, $\dot \gb$, $t$, and $s$.
\end{rem}

Observe that if $\gamma$ is a definable point in $\Gamma$ then $\vrv^{-1}(\gamma)$ must contain a definable point in $\RV$.

\begin{lem}\label{gk:ortho}
If $f : \Gamma \fun \K$ is a definable function then $f(\K)$ is finite. Similarly, if $g : \K \fun \Gamma$ is a definable function then $g(\Gamma)$ is finite.
\end{lem}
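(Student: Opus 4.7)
The plan is to argue by contradiction in both directions: use the second assertion of Lemma~\ref{RV:no:point} to force a putative ``generic'' value of the image into $\dcl(\mdl S)$, and then deduce the second statement from the first via definable Skolem functions (available by Theorem~\ref{tcon:qe} together with the fact that $\K$, being a model of $T = T^{\textup{df}}$, carries its own definable Skolem).

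Suppose first that $f : \Gamma \fun \K$ is definable with $f(\Gamma)$ infinite. By Theorem~A of \cite{Dries:tcon:97} (recalled just before Remark~\ref{rem:RV:weako}), $f(\Gamma)$ is an $\lan{T}{}{}$-definable subset of the o-minimal structure $\K$, hence by o-minimality contains a non-trivial open interval. Sufficient saturation of $\gC$ then yields a point $c$ in that interval with $c \notin \acl(\mdl S)$. Picking any $\gamma \in \Gamma$ with $f(\gamma) = c$, we have
\[
c \in \dcl(\mdl S \cup \{\gamma\}) \cap \K \sub \dcl(\mdl S \cup \{\gamma\}) \cap \RV,
\]
so the second assertion of Lemma~\ref{RV:no:point} forces $c \in \dcl(\mdl S)$, contradicting $c \notin \acl(\mdl S)$.

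For the other half, suppose $g : \K \fun \Gamma$ is definable with infinite image. Since the full theory $\TCVF$ inherits definable Skolem functions, I can pick a definable partial map $h : \Gamma \rightharpoonup \K$ whose domain is exactly $g(\K)$ and which satisfies $g(h(\beta)) = \beta$ for every $\beta \in g(\K)$. Extending $h$ arbitrarily to a total definable function $\Gamma \fun \K$ and applying the first half of the lemma, the image $h(g(\K))$ is finite. Then $g(\K) = g(h(g(\K)))$ is the image of a finite set under $g$, hence finite---contradiction.

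The only subtle point to verify is the invocation of Lemma~\ref{RV:no:point} on $c \in \K$ in place of $t \in \RV$, which is legitimate because $\K$ is formally a subsort of $\RV$ in $\lan{}{TRV}{}$; no heavier machinery (such as stable embeddedness or orthogonality arguments via types) is required.
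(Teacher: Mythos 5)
Your first half is correct and takes a genuinely different route from the paper, which simply cites \cite[Proposition~5.8]{Dries:tcon:97}: you derive one direction of orthogonality internally from Lemma~\ref{RV:no:point} (using that $\K$ is a subsort of $\RV$), together with Theorem~A of \cite{Dries:tcon:97} and saturation.

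The second half, however, rests on the claim that ``the full theory $\TCVF$ inherits definable Skolem functions'', and this is false. If it were true, the formula $\rv(x)=y$ would have a definable Skolem function, i.e.\ a definable section $\RV \fun \VF$ of $\rv$; but by Corollary~\ref{function:rv:to:vf:finite:image} any definable function $\RV \fun \VF$ has finite image, whereas $\rv$ is surjective onto the infinite set $\RV$, so no such section exists. Theorem~\ref{tcon:qe} concerns the one-sorted theory $T_{\textup{convex}}$, whose Skolem functions are $\VF$-valued $\lan{T}{}{}(\imath)$-terms, and ``$\K$ carries its own definable Skolem'' only covers $\K$-parameterized families; neither, as quoted, yields a definable right inverse $h : g(\K) \fun \K$ with a $\Gamma$-variable as input. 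The step is nevertheless salvageable: $\K$ is an o-minimal expansion of an ordered field (Theorem~A again), so it carries a uniform definable choice function $e$ picking a point of each nonempty definable $A \sub \K$ (the least element if it exists, else a canonical point of the leftmost interval, built from the ordering and field operations), and the case distinction defining $e$ is first-order in $A$ and indifferent to the sort of the index variable. Hence $\beta \efun e(g^{-1}(\beta))$ is an $\mdl S$-definable map $g(\K) \fun \K$, which is the $h$ you need; with this substitution your argument closes. Equivalently, run the second half symmetrically to the first: for $\beta \in g(\K)$ the fiber $g^{-1}(\beta)$ contains a $\beta$-definable point $c \in \K$ by the same choice argument, so $c \in \dcl(\mdl S)$ by Lemma~\ref{RV:no:point} and then $\beta = g(c) \in \dcl(\mdl S)$; thus $g(\K) \sub \dcl(\mdl S)$, and saturation forbids an $\mdl S$-definable infinite subset of $\Gamma$ from lying in $\dcl(\mdl S)$.
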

\begin{proof}
See \cite[Proposition~5.8]{Dries:tcon:97}.
\end{proof}

In other words, the $\K$-sort and the $\Gamma$-sort are orthogonal to each other.

\begin{lem}\label{open:K:con}
In $\gC^{\bullet}$, let $\ga \sub \VF$ be an open disc and $f : \ga \fun \K$ a definable nonconstant function. Then there is a definable proper subdisc $\gb \sub \ga$ such that $f \rest (\ga \mi \gb)$ is constant.
\end{lem}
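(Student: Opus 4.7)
My plan is to proceed in three steps. First, to show $f(\ga) \sub \K$ is finite: if it were infinite, then $o$-minimality of $\K$ (which is a model of $T$) would give an open interval $J \sub f(\ga)$, and definable choice would produce a definable injection $J \fun \VF$, contradicting Corollary~\ref{function:rv:to:vf:finite:image}. Hence $f(\ga) = \{k_1, \dots, k_n\}$ is finite with each $k_i$ being $\mdl S$-definable (since $\K$ is ordered, a finite $\mdl S$-definable subset has $\mdl S$-definable elements), and each fiber $B_i \coloneqq f^{-1}(k_i)$ is $\mdl S$-definable.

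Next, I would apply HNF (Remark~\ref{rem:HNF}) to each $B_i$, writing it as a finite union of pairwise properly disconnected $\mdl S$-definable $\vv$-intervals, and collect the finite $\mdl S$-definable set $\mathcal{C}$ of end-discs appearing in these decompositions that happen to be proper subdiscs of $\ga$ (as opposed to the external boundary discs of $\ga$). By the ultrametric inequality applied to $(a - c) - (b - c)$ for any center $c$ of $\ga$, any two points $a, b \in \bigcup \mathcal{C} \sub \ga$ satisfy $|\vv|(a - b) > \gamma \coloneqq \rad(\ga)$ strictly; hence the smallest closed $\mdl S$-definable disc $\gb$ containing $\bigcup \mathcal{C}$ has valuative radius strictly greater than $\gamma$, so $\gb \subsetneq \ga$ is a proper subdisc.

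Finally, since every internal cut appearing in the HNF of the $B_i$'s lies inside $\gb$, no further subdivision of $\ga \mi \gb$ is produced by the partition $\{B_i\}$, forcing $\ga \mi \gb$ into a single fiber $B_{i_0}$ and giving $f \equiv k_{i_0}$ there.

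The main obstacle I anticipate is in this last step: $\ga \mi \gb$ naturally splits in the $\VF$-order into two annular ``sides'' of $\gb$, which a priori could lie in distinct fibers since $\gb$ itself may appear as a separating end-disc in $\mathcal{C}$. To rule this out, one should either enlarge $\mathcal{C}$ to include any $\mdl S$-definable datum that could distinguish the two sides (making $\gb$ grow accordingly) or---following the strategy of the ACVF analog in \cite{Yin:QE:ACVF:min}---first reduce to the $\dot\ga$-atomic case via Lemma~\ref{atom:gam} and then apply a Wilkie-inequality-based orthogonality argument (cf.\ the proof of Theorem~\ref{theos:qe}) to show that the ``outer generic'' type of $\ga$ forces both sides to share the same value in $\K(\mdl S)$.
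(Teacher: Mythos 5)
Your Step~1 is where the argument breaks: there is no definable choice function selecting a point of $f^{-1}(k)$ for each $k \in J$. The Skolem functions supplied by Theorem~\ref{tcon:qe} are $\lan{T}{}{}(\imath)$-terms in $\VF$-variables; choosing uniformly in a $\K$-parameter would amount to a definable section of $\res : \OO \fun \K$, which is precisely the ``standard part map'' that the paper notes is \emph{not} available in $\lan{}{TRV}{}$. And indeed $f(\ga)$ need not be finite: with $\ga = \MM$ and a definable $c \in \MM \mi \{0\}$ (say $c = \imath$), the function given by $f(a) = \res(a/c)$ when $|\vv|(a) = |\vv|(c)$ and $f(a) = 0$ otherwise is definable and nonconstant with $f(\ga) = \K$. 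The paper's proof makes no finiteness claim; it centers $\ga$ at $0$, on each shell $\vv^{-1}(\pm\gamma)$ extracts a $\gamma$-definable point $t_\gamma$ of $f(\vv^{-1}(\pm\gamma))$ via stable embeddedness of $\K$ (\cite[Theorem~A]{Dries:tcon:97}), argues that the shells are carried to singletons for cofinitely many $\gamma$, and closes with orthogonality of $\K$ and $\Gamma$ (Lemma~\ref{gk:ortho}) applied to $\gamma \efun t_\gamma$ --- a route quite different from your HNF-on-fibers plan and one that cannot be combined with a (false) finiteness of $f(\ga)$.

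The ``two sides'' obstacle you raise in Step~3 is a genuine problem, not merely a gap in your write-up: with $\ga = \MM$, the definable nonconstant function $f$ with $f(a) = 1$ for $a > 0$ and $f(a) = -1$ for $a \leq 0$ admits no proper subdisc $\gb$ with $f \rest (\MM \mi \gb)$ constant, since every such annulus meets both sign regions. So no choice of $\gb$ can rescue the conclusion for arbitrary open discs; some hypothesis precluding a definable sign --- in effect that $\ga$ is $\dot\ga$-atomic, which is the only situation in which the statement is subsequently invoked (proof of Lemma~\ref{open:rv:cons}) and also the only situation in which the cofiniteness step of the paper's own proof goes through --- is needed. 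Your instinct to push toward the atomic case is the right one, but it cannot be grafted onto Step~1, and Lemma~\ref{atom:gam} runs in the wrong direction for a reduction: it \emph{preserves} atomicity upon adjoining $\Gamma$- or $\DC$-parameters, it does not produce it.
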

\begin{proof}
If $\gb_1$ and $\gb_2$ are two proper subdiscs of $\ga$ such that $f \rest (\ga \mi \gb_1)$ and $f \rest (\ga \mi \gb_2)$ are both constant then $\gb_1$ and $\gb_2$ must be concentric, for otherwise $f$ would be constant. Therefore, it is enough to show that $f \rest (\ga \mi \gb)$ is constant for some proper subdisc $\gb \sub \ga$. To that end, without loss of generality, we may assume that $\ga$ is centered at $0$. For each $\gamma \in |\vv|(\ga) \sub |\Gamma|$, by \cite[Theorem~A]{Dries:tcon:97} and \omin-minimality, $f(\vv^{-1}(\pm \gamma))$ contains a $\gamma$-definable element $t_{\gamma}$. By weak \omin-minimality, $f(\vv^{-1}(\pm \gamma)) = \{t_{\gamma}\}$ for all but finitely many $\gamma \in |\vv|(\ga)$. Let $g : |\vv(\ga)| \fun \K$ be the definable function given by $\gamma \fun t_{\gamma}$. By Lemma~\ref{gk:ortho}, the image of $g$ is finite. The assertion follows.

Alternatively, we may simply quote \cite[Theorem~1.2]{jana:omin:res}.
\end{proof}

\begin{lem}\label{open:rv:cons}
In $\gC^{\bullet}$, let $\ga \sub \VF^n$ be an atomic open polydisc and $f : \ga \fun \VF$ a definable function. If $f$ is not constant then $f(\ga)$ is an (atomic) open disc; in particular, $\rv \rest f(\ga)$ is always constant.
\end{lem}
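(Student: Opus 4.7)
I first observe that $f(\ga)$ is atomic over $\dot \ga$: if $B \sub \VF$ is $\dot \ga$-definable and meets $f(\ga)$, then $f^{-1}(B) \cap \ga$ is a nonempty $\dot \ga$-definable subset of $\ga$, so atomicity of $\ga$ (Lemma~\ref{atom:gam}) gives $\ga \sub f^{-1}(B)$, whence $f(\ga) \sub B$. By Remark~\ref{rem:type:atin}, $f(\ga)$ is therefore a point, an open disc, a closed disc, or a half thin annulus; the point case is excluded by non-constancy of $f$.

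To rule out the closed-disc and half-thin-annulus cases, suppose $f(\ga) = \gc$ is a closed disc of valuative radius $\delta$ (the half-thin-annulus case is analogous, with $\gc$ replaced by its ambient closed disc). The relation of ``lying in the same maximal open subdisc of $\gc$ of radius $\delta$'' is $\dot \gc$-definable, and the quotient $\gc / \sim$ is, via Remark~\ref{rem:K:aff}, naturally a $\K$-torsor. Composing $f$ with this quotient map yields a $\dot \ga$-definable function $g : \ga \fun \gc / \sim$. In the case $n = 1$, Corollary~\ref{mono} together with atomicity of $\ga$ forces $f$ to be continuous and strictly monotone on $\ga$; after absorbing an auxiliary basepoint in $\gc$ into the base (atomicity of $\ga$ being preserved by Lemma~\ref{atom:gam} and the exchange principle, together with orthogonality of $\K$ and $\Gamma$ from Lemma~\ref{gk:ortho}), the function $g$ becomes a definable map $\ga \fun \K$, which by Lemma~\ref{open:K:con} must be constant. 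But this forces $f(\ga)$ to lie in a single maximal open subdisc of $\gc$ of radius $\delta$, contradicting $f(\ga) = \gc$. The case $n > 1$ is handled either by iterating the one-coordinate reduction, or by an induction-based polydisc extension of Lemma~\ref{open:K:con}.

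Hence $f(\ga)$ is an open disc. For the final clause, an atomic open disc $\go$ must satisfy $\rv \rest \go$ constant: $\rv(\go) \sub \RV$ is $\dot \go$-definable and inherits atomicity by the same argument as in the first paragraph, and a non-constant $\rv$ on $\go$ would yield a proper $\dot \go$-definable partition of $\go$, contradicting atomicity; hence $\rv(\go)$ is a singleton. The principal technical point to verify is that atomicity of $\ga$ is preserved throughout the auxiliary base extensions needed for the $\K$-torsor argument, which relies on Lemma~\ref{atom:gam}, Lemma~\ref{gk:ortho}, and the exchange principle in $\VF$.
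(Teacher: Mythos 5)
Your overall strategy --- show $f(\ga)$ is atomic over $\dot\ga$, then rule out the closed-disc and half-thin-annulus cases via Lemma~\ref{open:K:con} --- is reasonable, but the execution has a genuine gap. The assertion that ``atomicity of $\ga$ [is] preserved'' when you absorb a basepoint of $\gc$ (or of the torsor $\gc/\sim$) into the base is false. If $c \in \gc = f(\ga)$, or if $\dot\gb$ is a maximal open subdisc of $\gc$, then $f^{-1}(c)$, resp.\ $f^{-1}(\gb)\cap\ga$, is a nonempty proper subset of $\ga$ definable over the new base, so $\ga$ is not atomic over it; Lemma~\ref{atom:gam} and Lemma~\ref{atom:self} guarantee preservation only for extra parameters in $\Gamma$, in $\RV$, or the disc $\dot\ga$ itself, never for parameters produced from $f(\ga)$. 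Since Lemma~\ref{open:K:con} does not say the function is constant, only that it is constant away from a definable proper subdisc, without atomicity over the extended base you cannot conclude that $g$ is constant. The fix --- what the paper's proof does for $n>1$ with its remark ``(the parameter $\dot\gh$ is not needed)'' --- is to observe that the proper subdisc produced by the proof of Lemma~\ref{open:K:con} is canonical (unique up to concentricity), hence already $\dot\ga$-definable, and so contradicts the $\dot\ga$-atomicity of $\ga$ directly, with no claim of atomicity over the larger base. Your argument as written does not make this observation.

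Separately, the case $n>1$ is not actually carried out: ``iterating the one-coordinate reduction'' and ``an induction-based polydisc extension of Lemma~\ref{open:K:con}'' are names for the problem, not arguments. The paper's inductive step is quite specific: by the inductive hypothesis, for each $a\in\pr_1(\ga)$ the image $f(\ga_a)$ lies in a maximal open subdisc $\gb_a$ of $f(\ga)$, and one then applies the proof of Lemma~\ref{open:K:con} to the torsor-valued function $a\mapsto\dot\gb_a$ on $\pr_1(\ga)$. This fibrewise use of the inductive hypothesis is what makes the reduction from the multivariate situation to the univariate lemma possible, and your sketch does not supply it. (Note also that the paper's base case $n=1$ does not invoke Lemma~\ref{open:K:con} at all; it combines monotonicity with \cite[Proposition~4.2]{Dries:tcon:97}.)
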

\begin{proof}
Note that, by atomicity, $f(\ga)$ must be an atomic $\vv$-interval. We proceed by induction on $n$. For the base case $n=1$, it is easy to see that, by monotonicity and \cite[Proposition~4.2]{Dries:tcon:97}, $f(\ga)$ must be either a point or an open disc. For the case $n > 1$, suppose for contradiction that $f(\ga)$ is a closed disc (other than a point) or a half thin annulus. By the inductive hypothesis, for every $a \in \pr_{1}(\ga)$ there is a maximal open subdisc $\gb_a \sub f(\ga)$ that contains $f(\ga_a)$. Applying (the proof of) Lemma~\ref{open:K:con} to the $\dot \gh$-definable function on $\pr_{1}(\ga)$ given by $a \efun \aff_{\dot \gh}(\dot{\gb}_a)$, where $\gh$ is a maximal open subdisc of $f(\ga)$,  we see that some closed proper subdisc of $\ga$ is definable (the parameter $\dot \gh$ is not needed). This yields the desired contradiction.
\end{proof}

\begin{cor}\label{poly:open:cons}
Let $f : \VF^n \fun \VF$ be a definable function and $\ga \sub \VF^n$ an open polydisc. If $(\rv \circ f) \rest \ga$ is not constant then there is an $\dot \ga$-definable nonempty proper subset of $\ga$.
\end{cor}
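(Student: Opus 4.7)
My plan is to prove the contrapositive: assuming every $\dot\ga$-definable subset of $\ga$ is either empty or all of $\ga$, I will show that $(\rv \circ f)\rest \ga$ is constant, by reducing directly to Lemma~\ref{open:rv:cons}.

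First I would pass to the expanded base $\mdl S\la \dot\ga \ra$. Since $f$ was $\mdl S$-definable, it remains definable over this base, and the open polydisc $\ga$ is of course $\dot\ga$-definable. The hypothesis of the contrapositive says precisely that $\ga$ has no $\dot\ga$-definable nonempty proper subset, i.e.\ $\ga$ is atomic over $\mdl S \la \dot\ga \ra$. Thus we are exactly in the situation of Lemma~\ref{open:rv:cons}, applied in $\gC^{\bullet}$ relative to the enlarged base.

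Applying that lemma, either $f \rest \ga$ is constant, in which case $(\rv \circ f)\rest \ga$ is trivially constant, or $f(\ga)$ is an (atomic) open disc, in which case the ``in particular'' clause of Lemma~\ref{open:rv:cons} gives that $\rv$ is constant on $f(\ga)$, and again $(\rv \circ f)\rest \ga$ is constant. This contradicts the hypothesis that $(\rv \circ f)\rest \ga$ is non-constant, so the contrapositive is established.

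I do not anticipate a real obstacle here: the entire content is packaged in Lemma~\ref{open:rv:cons}, and the only conceptual point is the legitimate move of absorbing the parameter $\dot \ga$ into the base of definability, which is warranted by Lemma~\ref{atom:gam} in the sense that ``atomic over $\mdl S \la \dot \ga \ra$'' is precisely the condition meaning no $\dot \ga$-definable nonempty proper subset exists.
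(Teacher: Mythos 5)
Your proof is correct and is essentially the paper's implicit derivation: the corollary is just the contrapositive of Lemma~\ref{open:rv:cons} once $\dot\ga$ is absorbed into the base, and the paper indeed supplies no separate proof because it regards this as immediate. The only slip is the appeal to Lemma~\ref{atom:gam}: the equivalence between ``$\ga$ is atomic over $\mdl S\la\dot\ga\ra$'' and ``$\ga$ has no $\dot\ga$-definable nonempty proper subset'' is simply the definition of atomicity unwound, whereas Lemma~\ref{atom:gam} runs in the opposite direction (transferring atomicity from $\mdl S$ up to $\mdl S\la\dot\ga\ra$) and plays no role here.
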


Here is a strengthening of Lemma~\ref{atom:gam}:

\begin{lem}\label{atom:self}
Let $\ga = \ga_1 \times \ldots \times \ga_n \sub \VF^n$ be an open polydisc that generates a type. Then for all $a = (a_1, \ldots, a_n)$ and $b = (b_1, \ldots, b_n)$ in $\ga$ there is an immediate automorphism $\sigma$ of $\gC$ such that $\sigma(a) = b$. Consequently, $\ga$ is $(\dot \ga, t)$-atomic for all $t \in \RV$.
\end{lem}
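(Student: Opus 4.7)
The plan is to build iteratively immediate isomorphisms $\sigma_i : \la \mdl S, a_1, \ldots, a_i\ra \fun \la \mdl S, b_1, \ldots, b_i\ra$ sending $a_j$ to $b_j$ for each $j \leq i$, starting from $\sigma_0 = \id_{\mdl S}$, and then extend $\sigma_n$ to an immediate automorphism $\sigma$ of $\gC$ via Lemma~\ref{imm:ext}; this $\sigma$ will satisfy $\sigma(a) = b$ as required.

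The inductive step from $\sigma_{i-1}$ to $\sigma_i$ is obtained by applying Lemma~\ref{imm:iso}. One first checks that $a_i$ and $b_i$ lie outside $\VF(\la \mdl S, a_1, \ldots, a_{i-1}\ra) \cup \VF(\la \mdl S, b_1, \ldots, b_{i-1}\ra)$: were, say, $a_i = F(a_1, \ldots, a_{i-1})$ for some $\lan{T}{}{}$-term $F$ over $\mdl S$, then the definable locus $\{x_i = F(x_1, \ldots, x_{i-1})\}$ would intersect $\ga_1 \times \cdots \times \ga_i$ in a proper nonempty subset, contradicting the fact that this projection is itself atomic (as the image of the atomic set $\ga$ under a coordinate projection). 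The main hypothesis of Lemma~\ref{imm:iso} then requires $\rv(a_i - c) = \rv(b_i - \sigma_{i-1}(c))$ for every $c \in \VF(\la \mdl S, a_1, \ldots, a_{i-1}\ra) = \la \VF(\mdl S), a_1, \ldots, a_{i-1}\ra_T$. Writing $c = F(a_1, \ldots, a_{i-1})$ for some $\lan{T}{}{}$-term $F$ over $\mdl S$ (available by definable Skolem functions in $T$), we have $\sigma_{i-1}(c) = F(b_1, \ldots, b_{i-1})$, so the required equality reduces to constancy of the definable map
\[
G : \ga_1 \times \cdots \times \ga_i \fun \RV, \qquad G(x_1, \ldots, x_i) = \rv(x_i - F(x_1, \ldots, x_{i-1})).
\]
The fibres of $G$ partition the atomic set $\ga_1 \times \cdots \times \ga_i$ into definable pieces, so $G$ is indeed constant, which is the only non-bookkeeping ingredient in the whole argument.

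For the final clause, the resulting $\sigma$ fixes every element of $\RV$ (immediacy), in particular $t$, and it also fixes $\dot \ga$ setwise: acting coordinatewise and preserving the valuative radii $\rad(\ga_i) \in \Gamma$, we get $\sigma(\ga_i) = \go(\sigma(a_i), \rad(\ga_i)) = \go(b_i, \rad(\ga_i)) = \ga_i$ for each $i$. Hence any $(\dot \ga, t)$-definable subset $A$ is setwise stabilised by such automorphisms; if $A \cap \ga \neq \0$, choosing $a \in A \cap \ga$ and an arbitrary $b \in \ga$ yields a $\sigma$ carrying $a$ to $b$ with $\sigma(A) = A$, forcing $b \in A$, and hence $\ga \sub A$. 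The principal (and only) subtle point is the reduction of the imm:iso hypothesis to atomicity via the auxiliary function $G$; once this is recognised, the rest is a standard iteration.
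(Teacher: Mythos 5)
Your overall strategy is the same as the paper's: build $\sigma_i : \la \mdl S, a_{<i}\ra \fun \la\mdl S, b_{<i}\ra$ inductively via Lemma~\ref{imm:iso}, extend to $\gC$ by Lemma~\ref{imm:ext}, and derive the second clause from the first by observing that the automorphism fixes $\dot\ga$ and $t$. The derivation of the second clause, and the check that $a_i, b_i$ are outside $\VF(\la \mdl S, a_{<i}\ra) \cup \VF(\la \mdl S, b_{<i}\ra)$ (via the locus $\{x_i = F(x_{<i})\}$), are both fine.

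The gap is in your justification of the ``only non-bookkeeping ingredient,'' namely the constancy of $G(x_1, \ldots, x_i) = \rv(x_i - F(x_{<i}))$ on $\ga_1 \times \cdots \times \ga_i$. You argue that the fibres of $G$ are definable and so would partition the atomic set into proper pieces. But each fibre $G^{-1}(t)$ is only $t$-definable for a value $t \in \RV$ that need not lie in $\mdl S$, and the hypothesis that $\ga$ generates an $\mdl S$-type says nothing about $(\mdl S, t)$-definable subsets. In fact, the assertion that $\ga$ remains atomic after adjoining $\RV$-parameters is precisely the \emph{second} clause of the lemma you are proving, so this line of reasoning is circular. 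In general, an $\mdl S$-definable function from a type-generating set into $\RV$ need not be constant. What makes $G$ constant here is its special shape: $G = \rv \circ g$ with $g(x) = x_i - F(x_{<i})$ a $\VF$-valued definable function. The paper's proof goes through Lemma~\ref{open:rv:cons}: the inductive hypothesis shows $F(\ga_{<i})$ is a point or an open disc, and the locus argument you already used (if it intersected $\ga_i$ one would get a nonempty proper $\mdl S$-definable trace on $\ga_{\leq i}$) shows $F(\ga_{<i}) \cap \ga_i = \0$, equivalently $0 \notin g(\ga_{\leq i})$; hence $\rv$ is constant on $g(\ga_{\leq i})$. Replace your ``fibres are definable'' sentence with an appeal to Lemma~\ref{open:rv:cons} (or Corollary~\ref{poly:open:cons}) and the proof is sound and matches the paper's.
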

\begin{proof}
To see that the first assertion implies the second, suppose for contradiction that there is an $(\dot \ga, t)$-definable nonempty proper subset $A \sub \ga$. Let $a \in A$, $b \in \ga \mi A$, and $\sigma$ be an immediate automorphism of $\gC$ such that $\sigma(a) = b$. It follows that $\sigma$ is an immediate automorphism of $\gC$ over $\la \dot \ga, t \ra$, contradicting the assumption that $A$ is $(\dot \ga, t)$-definable.

For the first assertion, by Lemma~\ref{imm:ext}, it is enough to show that there is an immediate isomorphism $\sigma : \la a \ra \fun \la b \ra$ sending $a$ to $b$. Let $\ga' = \ga_1 \times \ldots \times \ga_{n-1}$, $a' =  (a_1, \ldots, a_{n-1})$, and $b' =  (b_1, \ldots, b_{n-1})$. Then, by induction on $n$ and Lemma~\ref{imm:iso}, it is enough to show that, for any immediate isomorphism $\sigma' : \la a' \ra \fun \la b' \ra$ sending $a'$ to $b'$ and any definable function $f : \VF^{n-1} \fun \VF$,
\[
\rv(a_n - f(a')) = \rv(b_n - \sigma'(f(a'))).
\]
This is clear for the base case $n=1$. For the case $n > 1$, by the inductive hypothesis and Lemma~\ref{open:rv:cons}, $f(\ga') = \sigma'(f(\ga'))$ is either a point or an open disc that is disjoint from $\ga_n$ and hence the desired condition is satisfied.
\end{proof}

\begin{cor}\label{part:rv:cons}
Let $A \sub \VF^n$ and $f : A \fun \VF$ be a definable function. Then there is a definable finite partition $A_i$ of $A$ such that, for all $i$, if $\ga \sub A_i$ is an open polydisc then $\rv \rest f(\ga)$ is constant.
\end{cor}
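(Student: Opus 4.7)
The plan is to proceed by induction on the $\VF$-dimension of the ``bad locus'' $A^{\flat} \sub A$, defined as the set of points $a \in A$ that lie in some open polydisc $\ga \sub A$ on which $\rv \circ f$ is not constant (call such a $\ga$ \emph{bad}). When $A^{\flat} = \0$, the partition $\{A\}$ already satisfies the conclusion, which gives the base case.

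For the inductive step, the key input is Corollary~\ref{poly:open:cons}: every bad polydisc $\ga$ admits an $\dot\ga$-definable nonempty proper subset. By compactness in $\TCVF$, I would extract finitely many $\lan{}{TRV}{}$-formulas $\phi_1(\dot x, y), \ldots, \phi_m(\dot x, y)$, with $\dot x$ a tuple in $\DC^n$, such that for every bad polydisc $\ga \sub A$, at least one $\phi_j(\dot\ga, y)$ defines a proper nonempty subset of $\ga$. Using definable Skolem functions in $\gC^{\bullet}$ (available by Theorem~\ref{tcon:qe}), I would pick for each $a \in A^{\flat}$ a canonical bad polydisc $\ga_a$ containing $a$ --- ideally maximal among bad polydiscs around $a$ --- and refine $A$ according to the definable conditions $\phi_j(\dot\ga_a, a)$ for $j = 1, \ldots, m$. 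By Lemma~\ref{atom:gam} together with Lemma~\ref{open:rv:cons}, atomic open polydiscs are automatically good, so the refinement only needs to worry about non-atomic ones.

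The main obstacle is showing that each resulting piece $A'$ strictly decreases the $\VF$-dimension of its bad locus, i.e.\ $\dim_{\VF}((A')^{\flat}) < \dim_{\VF}(A^{\flat})$, so that the induction actually makes progress. Intuitively, this holds because the $\phi_j$-split of a maximal bad polydisc $\ga$ prevents $\ga$ itself from lying inside any piece of the refinement; any bad polydisc surviving in $A'$ must therefore be strictly smaller than the original maximal bad polydisc around its points, squeezing the generic point of the bad locus into a strictly lower-dimensional stratum by the dimension theory of weakly \omin-minimal structures (see \cite[\S 4]{mac:mar:ste:weako}). Applying the inductive hypothesis to each resulting piece then yields the desired partition.
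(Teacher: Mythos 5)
Your proposal correctly identifies the two lemmas that do the real work — Lemma~\ref{open:rv:cons} (atomic polydiscs are good) together with Lemma~\ref{atom:gam} or Lemma~\ref{atom:self}, and Corollary~\ref{poly:open:cons} (bad polydiscs have $\dot\ga$-definable proper subsets) — but the inductive scaffold you build around them does not hold up, and it is also quite different from what the paper actually does. The paper's argument is a direct compactness argument: for each $a \in A$, every open polydisc contained in the type-definable set $D_a$ generates a type, hence is $\dot\ga$-atomic by Lemma~\ref{atom:self}, hence good by Lemma~\ref{open:rv:cons}; saturation then yields a definable $A_a \ni a$ containing no bad polydiscs, and a second application of compactness extracts a finite subcover, whose generated partition is the desired one. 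No induction on dimension and no ``bad locus'' is involved.

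The gap in your argument is the claim that one round of refinement strictly lowers $\dim_{\VF}$ of the bad locus. This is not justified by the intuition you give, and it is in fact false. Take $n=1$, $A = \MM$, and $f(x) = (x - c_1)(x - c_2)$ with $c_1 \neq c_2 \in \MM$ definable and $|\vv|(c_1 - c_2) = \rho > 0$. A disc $\ga \sub \MM$ is bad precisely when it contains $c_1$ or $c_2$; since $\MM$ itself contains both, $A^{\flat} = \MM$ and the maximal bad disc around every $a$ is $\MM$. Compactness is only guaranteed to hand you some formulas $\phi_j(\dot x, y)$; nothing forces $\phi_j(\dot\MM, y)$ to pick out $c_2$. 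If, say, $\phi_1(\dot\MM, y)$ defines $\{c_1\}$ and the remaining $\phi_j(\dot\MM, y)$ add nothing new, your refinement produces $A' = \MM \mi \{c_1\}$, whose bad locus is $\go(c_2, \rho)$ — still one-dimensional. The useful formula $\phi_j(\dot\go(c_2, \rho), y) = \{c_2\}$ is never evaluated, because you only plug in the maximal bad disc $\dot\ga_a = \dot\MM$. In other words, the refinement sees the outermost singularity but not nested ones, and the dimension of the bad locus is simply not a well-founded measure of progress. (Beyond this, ``maximal among bad polydiscs around $a$'' need not exist, and the paper only provides definable Skolem functions for $\VF$-valued choices via Theorem~\ref{tcon:qe}, not for $\DC$-valued ones, so the selection $a \efun \dot\ga_a$ is not licensed.) The compactness argument of the paper, routed through Lemma~\ref{atom:self} and the type-definable set $D_a$, sidesteps all of this.
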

\begin{proof}
Suppose for contradiction that the assertion fails. For each $a \in A$, let $D_a \sub A$ be the type-definable subset containing $a$. By Lemma~\ref{atom:self}, every open polydisc $\ga \sub D_a$ is $\dot \ga$-atomic and hence, by Lemma~\ref{open:rv:cons}, $\rv \rest f(\ga)$ is constant. By compactness, the assertion must hold in a definable subset $A_a \sub A$ that contains $a$; by compactness again, it holds in finitely many definable subsets $A_1, \ldots, A_m$ of $A$ with $\bigcup_i A_i = A$. Then the partition of $A$ generated by $A_1, \ldots, A_m$ is as desired.
\end{proof}

If $\mdl S$ is $\VF$-generated then it is an elementary substructure and hence every definable subset contains a definable point. This of course fails if $\mdl S$ carries extra $\RV$-data. However, we do have:

\begin{lem}\label{clo:disc:bary}
Every definable closed disc $\gb$ contains a definable point.
\end{lem}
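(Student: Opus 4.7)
The strategy is to produce a $\dot\gb$-definable element of $\gb$; by Lemma~\ref{ima:par:red} this is automatically $\mdl S$-definable. The argument splits on the radius $\gamma=\rad(\gb)\in|\Gamma_{\infty}|$ and on whether $0\in\gb$.

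If $\gamma=\infty$ then $\gb=\{b\}$ is a singleton, and $b$ is tautologically $\dot\gb$-definable. Suppose $\gamma\in|\Gamma|$ is finite. Pick any representative $b\in\gb$ so that $\gb=b+\OO_{\gamma}$. The condition $0\in\gb$ forces $-b\in\OO_{\gamma}$, hence $b\in\OO_{\gamma}$ and $\gb=\OO_{\gamma}$; since $\gamma$ is $\dot\gb$-definable in $\Gamma$ (and hence definable), $\OO_{\gamma}$ is definable, and $0\in\gb$ is the desired definable point.

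The nontrivial case is $\gamma\in|\Gamma|$ with $0\notin\gb$. Closed discs are $T$-convex subsets of $\VF$, so by convexity $\gb$ is contained entirely in $\VF^{+}$ or in $\VF^{-}$; replacing $\gb$ by $-\gb$ (which remains definable) we may assume $\gb\subseteq\VF^{+}$. Then $|\vv|$ is constant on $\gb$ with some value $\delta\in\Gamma^{+}$, where $\delta<\gamma$ in $|\Gamma_{\infty}|$, and $\delta$ is $\dot\gb$-definable. By the observation preceding Lemma~\ref{gk:ortho}, both $\vrv^{-1}(\delta)$ and $\vrv^{-1}(\gamma)$ contain definable elements of $\RV$. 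The plan now is to pass to the $\VF$-generated enlargement $\mdl S'=\langle\mdl S,b_{0}\rangle$ for an arbitrary choice of $b_{0}\in\gb$: then $\mdl S'$ is a $\TCVF$-model, hence an elementary substructure of $\gC$. Over $\mdl S'$ the set $\gb$ becomes $\lan{}{convex}{}$-definable (by the corollary to Theorem~\ref{theos:qe}), and Theorem~\ref{tcon:qe} supplies a Skolem $\lan{T}{}{}(\imath)$-term $F$ producing a point $p=F(b_{0},\ldots)\in\gb$.

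The final step is to arrange that the Skolem output does not in fact depend on the choice of the representative $b_{0}\in\gb$, i.e.\ that $p$ is $\dot\gb$-definable; Lemma~\ref{ima:par:red} then yields $p\in\dcl(\mdl S)$. Concretely, one exploits the translation symmetry $\gb-b_{0}=\OO_{\gamma}$: if $F$ is chosen so that $F(b_{0},\ldots)$ is equivariant under replacing $b_{0}$ by any $b_{0}'\in\gb$ (equivalently, so that $F$ factors through the coset $[b_{0}]_{\OO_{\gamma}}=\gb\in\VF/\OO_{\gamma}$), then the value of $p$ is a function of $\dot\gb$ alone. This is the ``barycenter'' in the name of the lemma.

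\textbf{Main obstacle.} The subtle step is extracting a Skolem term whose value depends only on the coset $\gb\in\VF/\OO_{\gamma}$ and not on the auxiliary representative. Equivalently, one must establish a mild form of elimination of imaginaries for the additive quotient $\VF/\OO_{\gamma}$, namely that every $\mdl S$-definable coset admits a $\VF(\mdl S)$-representative. All the ingredients, namely the orthogonality of $\K$ and $\Gamma$ (Lemma~\ref{gk:ortho}), the definable Skolem functions of $T_{\textup{convex}}$ (Theorem~\ref{tcon:qe}), and Lemma~\ref{ima:par:red} reducing $\DC$-definability of $\VF$-points to $\mdl S$-definability, combine to deliver this once the canonical (translation-invariant) Skolem term is in hand.
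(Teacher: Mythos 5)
Your reduction via Lemma~\ref{ima:par:red} and the two trivial cases ($\gamma=\infty$, respectively $0\in\gb$) are fine. The problem is that the ``main obstacle'' you flag in the remaining case is not a subsidiary fact that your listed ingredients deliver once assembled --- it \emph{is} the statement to be proved. A closed disc of finite radius $\gamma$ not containing $0$ is exactly an $\mdl S$-definable coset of $\OO_{\gamma}$, and a $\VF(\mdl S)$-representative of that coset is exactly a definable point of $\gb$; so the last step of your plan is circular. Nor can one hope for a canonical, translation-equivariant $\lan{T}{}{}(\imath)$-Skolem term: $\OO_{\gamma}$ is a clopen convex set with no least, greatest, or otherwise distinguished element, so there is no uniformly definable ``barycenter'' of a coset, and the output of any Skolem term applied to $(\gb,b_{0})$ will genuinely depend on the auxiliary representative $b_{0}$. (As a secondary issue, $\la\mdl S,b_{0}\ra$ need not be $\VF$-generated when $\mdl S$ carries extra $\RV$-data --- which is precisely the case of interest, as the paper remarks just before the lemma.)

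The idea you are missing is a combination of saturation and immediate automorphisms, organized as a proof by contradiction. If $\gb$ had no definable point it would be disjoint from $\VF(\mdl S)$; for each $c\in\VF(\mdl S)$ the value $\delta_{c}\coloneqq|\vv|(c-b)$ (for any $b\in\gb$, independent of the choice) lies strictly below $\gamma$, and since only a small set of such $\delta_{c}$ arises, saturation of $\gC$ gives a $\gamma'\in|\Gamma|$ with $\delta_{c}<\gamma'<\gamma$ for all $c$. Then $\ga\coloneqq\go(b,\gamma')$ is an open disc properly containing $\gb$ and disjoint from $\VF(\mdl S)$. Take any $a\in\ga\mi\gb$ and $b\in\gb$; since $|\vv|(a-b)>\gamma'\geq|\vv|(c-a)$ for all $c\in\VF(\mdl S)$, one gets $\rv(c-a)=\rv(c-b)$ for all such $c$, and hence, by Lemmas~\ref{imm:iso} and~\ref{imm:ext} as used in Lemma~\ref{atom:self}, there is an immediate automorphism of $\gC$ over $\mdl S$ sending $a$ to $b$. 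It must fix the $\mdl S$-definable set $\gb$, yet it carries the point $a\notin\gb$ into $\gb$; contradiction.
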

\begin{proof}
Suppose for contradiction that $\gb$ does not contain a definable point. Since $\gC$ is sufficiently saturated, there is an open disc $\ga$ that is disjoint from $\VF(\mdl S)$ and properly contains $\gb$. Let $a \in \ga \mi \gb$ and $b \in \gb$. Clearly $\rv(c - b) = \rv(c - a)$ for all $c \in \VF(\mdl S)$. As in the proof of Lemma~\ref{atom:self}, there is an immediate automorphism $\sigma$ of $\gC$ such that $\sigma(a) = b$. This means that $\gb$ is not definable, which is a contradiction.
\end{proof}

Notice that the argument above does not work if $\gb$ is an open disc.

\begin{cor}\label{open:disc:def:point}
Let $\ga \sub \VF$ be an open disc and $A \sub \VF$ a definable subset. If $\ga \cap A$ is a nonempty proper subset of $\ga$ then $\ga$ contains a definable point.
\end{cor}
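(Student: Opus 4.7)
The plan is to argue by contradiction. Suppose $\ga$ contains no definable point, so in particular $\VF(\mdl S) \cap \ga = \0$. Pick $a \in \ga \cap A$ and $b \in \ga \mi A$; I will construct an immediate automorphism $\hat \sigma$ of $\gC$ that fixes $\mdl S$ pointwise and sends $a$ to $b$. Since $A$ is $\mdl S$-definable, $\hat \sigma$ must preserve $A$, so $b = \hat \sigma(a) \in A$, contradicting $b \in \ga \mi A$.

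To build $\hat \sigma$, I would set $\mdl M = \mdl N = \la \VF(\mdl S) \ra$ and start with the identity $\sigma$ on $\mdl M$, which is trivially an immediate isomorphism between $\VF$-generated substructures. The key verification is that $\rv(c - a) = \rv(c - b)$ for all $c \in \VF(\mdl M) = \VF(\mdl S)$, so that Lemma~\ref{imm:iso} applies. This is precisely the computation in Lemma~\ref{clo:disc:bary}: since $c \notin \ga$, the smallest closed disc containing both $c$ and $\ga$ has some valuative radius $\gamma_c < \rad(\ga)$, hence $\vv(c - a) = \vv(c - b) = \gamma_c$; combined with $\vv(a - b) > \rad(\ga) > \gamma_c$, one gets $(c-b)/(c-a) \in 1 + \MM$ and therefore $\rv(c-a) = \rv(c-b)$.

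Then Lemma~\ref{imm:iso} extends $\sigma$ to an immediate isomorphism $\bar \sigma : \la \mdl M, a \ra \fun \la \mdl M, b \ra$ with $\bar \sigma(a) = b$, and Lemma~\ref{imm:ext} further extends $\bar \sigma$ to an immediate automorphism $\hat \sigma$ of $\gC$. Being immediate, $\hat \sigma$ fixes $\RV$ pointwise, and in particular $\RV(\mdl S)$; since it extends the identity on $\mdl M$, it also fixes $\VF(\mdl S)$. Together these give $\hat \sigma \rest \mdl S = \id$, completing the argument.

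The main content of the argument is the same ultrametric computation used in Lemma~\ref{clo:disc:bary}; the new ingredient is simply that the hypothesis ``$\ga \cap A$ is a nonempty proper subset of $\ga$'' provides the witnesses $a$ and $b$ needed to expose a contradiction. This also explains the author's remark following Lemma~\ref{clo:disc:bary}: without such a pair of witnesses, the argument does not force $\ga$ to contain a definable point, and indeed open discs need not contain one. I do not foresee any significant obstacle; the one small point requiring care is that ``immediate'' fixes all of $\RV$ (not merely $\RV(\mdl S)$), so combining this with fixing $\VF(\mdl S)$ really does yield an automorphism fixing $\mdl S$.
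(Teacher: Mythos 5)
Your proof is correct, and it takes a genuinely different (shorter) route than the paper's. The paper proves the corollary in two steps: first it appeals to Holly normal form to show that if $\ga \cap A$ is a nonempty proper subset of $\ga$ then $\ga$ contains a \emph{definable closed disc} (one of the end-discs of the $\vv$-intervals constituting $A$), and then it invokes Lemma~\ref{clo:disc:bary}. You bypass the HNF reduction entirely: the hypothesis already hands you witnesses $a \in \ga \cap A$ and $b \in \ga \setminus A$, and the same immediate-automorphism machinery (Lemmas~\ref{imm:iso} and~\ref{imm:ext}) that underlies the proof of Lemma~\ref{clo:disc:bary} applies directly to move $a$ to $b$ while fixing $\mdl S$, contradicting definability of $A$. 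Your version is more self-contained---it avoids the case analysis implicit in extracting a suitable definable end-disc from the HNF decomposition---and it makes the real logical content transparent: any $\mdl S$-definable set that separates two points of $\ga$ suffices; there is no need to manufacture a definable disc first. The paper's route buys reuse of Lemma~\ref{clo:disc:bary} at the cost of passing through HNF. One tiny inaccuracy in your write-up: the valuative radius $\gamma_c$ of the smallest closed disc containing $c$ and $\ga$ satisfies $\gamma_c \leq \rad(\ga)$, not necessarily $\gamma_c < \rad(\ga)$ (equality occurs when $c$ lies in the closure of $\ga$ but outside $\ga$); this is harmless, since the strict inequality you actually use is $|\vv|(a-b) > \rad(\ga) \geq \gamma_c$, which still puts $(a-b)/(c-a)$ in $\MM$.
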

\begin{proof}
It is not hard to see that, by HNF, if $\ga \cap A$ is a nonempty proper subset of $\ga$ then $\ga$ contains a definable closed disc and hence the claim is immediate by Lemma~\ref{clo:disc:bary}.
\end{proof}

\begin{defn}\label{defn:otop}
Let $A$, $B$ be two subsets of $\VF$ and $f : A \fun B$ a bijection. We say that $f$ is \emph{concentric} if, for all open disc $\ga \sub A$, $f(\ga)$ is also an open disc; if both $f$ and $f^{-1}$ are concentric then $f$ has the \emph{open-to-open property} (henceforth abbreviated as ``otop''). Note that if $f$ is definable and has otop then it also has the closed-to-closed property, that is, for all closed disc $\gc \sub A$, $f(\gc)$ is also a closed polydisc, and vice versa.

More generally, let $f : A \fun B$ be a bijection between two subsets $A$ and $B$, each with exactly one $\VF$-coordinate. For each $(t, s) \in \pr_{> 2}(f)$, let
\[
f_{t, s} = f \cap (\VF^2 \times \{(t, s)\}),
\]
which is called an \emph{$\RV$-fiber} of $f$. We say that $f$ has \emph{otop} if every $\RV$-fiber of $f$ has otop.
\end{defn}

\begin{lem}\label{open:pro}
Let $f : A \fun B$ be a definable bijection between two subsets $A$ and $B$, each with exactly one $\VF$-coordinate. Then there is a definable finite partition $A_i$ of $A$ such that each $f \rest A_i$ has otop.
\end{lem}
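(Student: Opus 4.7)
The plan is to follow the template of Corollary \ref{part:rv:cons}: reduce to a single $\RV$-fiber by compactness, verify otop on each type-definable neighbourhood of a point via Lemmas \ref{atom:self} and \ref{open:rv:cons}, and then extract a definable partition of the whole $A$ by two further applications of compactness.

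First, a standard compactness argument on the $\RV$-parameters $\pr_{> 2}(f)$ reduces the problem to the following statement: given a definable bijection $g : A' \fun B'$ between subsets $A', B' \sub \VF$, produce a finite definable partition of $A'$ on each piece of which $g$ has otop.

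Suppose for contradiction no such partition exists. For each $a \in A'$, let $D_a \sub A'$ be the type-definable subset containing $a$. By Lemma \ref{atom:self}, every open disc $\ga \sub D_a$ is $\dot \ga$-atomic, and hence by Lemma \ref{open:rv:cons} the image $g(\ga)$ is either a point or an atomic open disc; since $\ga$ is infinite and $g$ is a bijection, $g(\ga)$ must be an atomic open disc. The symmetric argument applied to $g^{-1}$ shows that every open disc $\gb \sub g(D_a)$ has $g^{-1}(\gb)$ an atomic open disc. Thus $g$ has otop on $D_a$, in the sense that every open disc contained in $D_a$ maps to an open disc, and similarly for $g^{-1}$ on $g(D_a)$.

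The condition ``$g \rest X$ has otop'' on a definable $X \sub A'$ is first-order in the parameters defining $X$, since open discs are parameterized by $(c, \gamma) \in \VF \times |\Gamma|$ and ``$g(\go(c, \gamma))$ is an open disc'' is expressible by the existence of a suitable parameter $(c', \gamma')$. As this condition holds throughout the type-definable $D_a$, compactness produces a definable $A_a \sub A'$ with $D_a \sub A_a$ on which $g \rest A_a$ has otop; a second application of compactness extracts finitely many such $A_a$'s covering $A'$, and the partition they generate contradicts our assumption. The principal obstacle is verifying otop on $D_a$, which is precisely what Lemmas \ref{atom:self} and \ref{open:rv:cons} are tailored to deliver.
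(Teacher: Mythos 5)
Your proof is correct and follows essentially the same route as the paper: the paper reduces to subsets of $\VF$ by compactness and then states that one may proceed exactly as in the proof of Corollary~\ref{part:rv:cons}, using Lemmas~\ref{open:rv:cons} and~\ref{atom:self}. Your write-up spells out the details the paper leaves implicit — that $g(\ga)$ cannot be a singleton since $g$ is injective and $\ga$ is infinite, the symmetric check for $g^{-1}$ (using that $g(D_a)$ is again a type locus), and the first-order expressibility of otop needed for the compactness step.
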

\begin{proof}
By compactness, we may simply assume that $A$ and $B$ are subsets of $\VF$. Then we may proceed exactly as in the proof of Corollary~\ref{part:rv:cons}, using Lemmas~\ref{open:rv:cons} and~\ref{atom:self}.
\end{proof}

\begin{defn}
Let $A$ be a subset of $\VF^n$. The \emph{$\RV$-boundary} of $A$, denoted by $\partial_{\RV}A$, is the definable subset of $\rv(A)$ such that $t \in \partial_{\RV} A$ if and only if $\rv^{-1}(t) \cap A$ is a proper subset of both $\rv^{-1}(t)$ and $A$. The definable subset
\[
\ito_{\RV}(A) \coloneqq \rv(A) \mi \partial_{\RV}A
\]
is called the \emph{$\RV$-interior} of $A$.
\end{defn}

Obviously if $A \sub \VF^n$ is not an $\RV$-pullback and $\rv(A)$ is not a singleton then $\partial_{\RV} A$ is not empty. Note that $\partial_{\RV}A$ is in general different from the topological boundary of $\rv(A)$ in $\RV^n$.

\begin{lem}\label{RV:bou:dim}
Let $A$ be a definable subset of $\VF^n$. Then $\dim_{\RV}(\partial_{\RV} A) < n$.
\end{lem}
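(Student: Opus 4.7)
The proof is by induction on $n$. For the base case $n=1$, HNF (Remark~\ref{rem:HNF}) expresses $A$ as a finite union of pairwise properly disconnected definable $\vv$-intervals; the $\RV$-discs meeting $A$ in a proper nonempty subset are precisely the finitely many $\RV$-discs containing an end-disc of one of these $\vv$-intervals, so $\partial_{\RV} A$ is finite and $\dim_{\RV}(\partial_{\RV} A) = 0$.

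For the inductive step, let $n \geq 2$. The first ingredient is a topological reduction: writing $\partial A = \cl(A) \mi \ito(A)$ for the frontier of $A$ in the product order topology on $\VF^n$, I claim $\partial_{\RV} A \sub \rv(\partial A)$. Each open disc in $\VF$ is an interval in the order topology, hence order-connected, so $\rv^{-1}(t)$ is connected in $\VF^n$ for every $t \in \RV^n$. If some $t \in \partial_{\RV} A$ satisfied $\rv^{-1}(t) \cap \partial A = \0$, then
\[
\rv^{-1}(t) = \bigl(\rv^{-1}(t) \cap \ito(A)\bigr) \sqcup \bigl(\rv^{-1}(t) \mi \cl(A)\bigr)
\]
would be a disjoint union of two relatively open subsets, both nonempty since $\rv^{-1}(t) \cap A \neq \0$ and $\rv^{-1}(t) \mi A \neq \0$, contradicting connectedness. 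By the dimension theory of weakly \omin-minimal structures \cite[\S4]{mac:mar:ste:weako} applied via cell decomposition in $\TCVF$, $\dim_{\VF}(\partial A) < n$.

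The second ingredient is an auxiliary bound on images under $\rv$: for any definable $B \sub \VF^n$ and any definable function $g : B \fun \RV^m$, one has $\dim_{\RV}(g(B)) \leq \dim_{\VF}(B)$. I prove this by induction on $d = \dim_{\VF}(B)$, the case $d = 0$ being trivial. For $d > 0$, by cell decomposition reduce to a single cell of $\VF$-dimension $d$, which after a coordinate permutation admits a definable bijection onto a subset $B' \sub \VF^d$. By Lemma~\ref{atom:self}, any two points of an $\mdl S$-atomic open polydisc $\ga \sub B'$ are conjugate by an immediate automorphism of $\gC$; since such an automorphism fixes $\RV$ pointwise, $g$ is constant on $\ga$. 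A compactness argument in the style of Corollary~\ref{part:rv:cons} then produces a definable finite partition of $B'$ on each piece $C_k$ of which $g$ is constant on every open polydisc $C_k$ contains. Setting $C_k' = \{a \in C_k : \rv^{-1}(\rv(a)) \cap \VF^d \sub C_k\}$, the restriction $g \rest C_k'$ factors through $\rv : C_k' \fun \RV^d$, contributing image of $\RV$-dimension $\leq d$; the complementary set $C_k \mi C_k'$ contains no full open polydisc, hence has $\VF$-dimension strictly less than $d$, and the inductive hypothesis on $d$ handles $g(C_k \mi C_k')$.

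Combining,
\[
\dim_{\RV}(\partial_{\RV} A) \leq \dim_{\RV}(\rv(\partial A)) \leq \dim_{\VF}(\partial A) < n,
\]
completing the induction on $n$. The main technical hurdle is the auxiliary claim: the conceptual input (immediate automorphisms pinning down the $\RV$-values) is clean, but marshalling it into a definable partition and handling the complementary lower-$\VF$-dimensional pieces requires the nested induction on $d$. The connectedness argument of the first ingredient and the standard frontier-dimension bound are then immediate.
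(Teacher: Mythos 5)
Your base case is fine, but the first ingredient of your inductive step is false, and this collapses the whole argument. You claim $\partial_{\RV} A \sub \rv(\partial A)$ by arguing that each $\rv^{-1}(t)$ is connected in the order topology and therefore cannot be split by the clopen pair $\ito(A)$, $\VF^n \mi \cl(A)$. But in $\TCVF$ the order topology on $\VF$ coincides with the valuation topology, and an $\RV$-polydisc, like every valuation ball, is clopen and very far from connected: $1 + \MM$ splits into the clopen pair $1 + \MM_\gamma$ and $(1 + \MM) \mi (1 + \MM_\gamma)$ for any $\gamma > 0$. Concretely, take $\gamma > 0$ in $|\Gamma(\mdl S)|$ and
\[
A = \bigl((1 + \MM_\gamma) \cup (2 + \MM)\bigr) \times (1 + \MM)^{n-1} \sub \VF^n.
\]
Then $A$ is a finite union of products of clopen balls, hence clopen, so $\partial A = \emptyset$ and $\rv(\partial A) = \emptyset$; but $\partial_{\RV} A = \{(1, \ldots, 1)\} \neq \emptyset$. (The lemma still holds for this $A$, of course, but your argument would yield $\partial_{\RV} A = \emptyset$, which is wrong.) So the inclusion cannot be used, and any proof must avoid routing through the topological frontier.

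There is also a secondary gap in your proof of the auxiliary bound $\dim_{\RV}(g(B)) \leq \dim_{\VF}(B)$: the assertion that $C_k \mi C_k'$ ``contains no full open polydisc, hence has $\VF$-dimension strictly less than $d$'' is not valid --- the set $(1 + \MM_\gamma) \times \VF$ contains no full $\RV$-polydisc yet has $\VF$-dimension $2$. The paper's own proof avoids topological considerations entirely and proceeds fiberwise: for each index $i$ and each $a \in \pr_{\wt i}(A)$, the one-dimensional boundary $\partial_{\RV} A_a$ is finite; Corollary~\ref{open:disc:def:point} together with compactness definably parametrizes these finite boundaries by functions $f_{ijk} \colon A_{ij} \fun \VF$; Corollary~\ref{part:rv:cons} refines the partition so that the $f_{ijk}$ become $\rv$-contractible; and the inductive hypothesis is applied to $\partial_{\RV} A_{ij}$. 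Salvaging your approach would require replacing the frontier estimate with an argument of this fiberwise kind.
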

\begin{proof}
We do induction on $n$. Since the base case $n=1$ follows immediately from weak \omin-minimality (in both the $\VF$-sort and the $\RV$-sort), we proceed directly to the inductive step. Since $\partial_{\RV} A_a$ is finite for every $i \in [n]$ and every $a \in \pr_{\wt i}(A)$, by Corollary~\ref{open:disc:def:point} and compactness, there are a definable finite partition $A_{ij}$ of $\pr_{\wt i}(A)$ and, for each $A_{ij}$, finitely many definable functions $f_{ijk} : A_{ij} \fun \VF$ such that $\bigcup_k \rv(f_{ijk}(a)) = \partial_{\RV} A_a$ for all $a \in A_{ij}$. By Corollary~\ref{part:rv:cons}, we may assume that if $\rv^{-1}(t) \sub A_{ij}$ then $\rv \rest f_{ijk}(\rv^{-1}(t))$ is constant. Hence each $f_{ijk}$ induces a definable function
\[
C_{ijk} : (\rv(A_{ij}) \mi  \partial_{\RV} A_{ij}) \fun \RV.
\]
Let $C = \bigcup_{i, j, k} C_{ijk}$ and $B = \bigcup_{i,j} \bigcup_{t \in \partial_{\RV} A_{ij}} \rv(A)_t$. Obviously $\dim_{\RV}(C) < n$. By the inductive hypothesis, for all $A_{ij}$ we have $\dim_{\RV}(\partial_{\RV} A_{ij}) < n-1$. Thus $\dim_{\RV}(B) < n$. Since $\partial_{\RV} A \sub B \cup C$, the claim follows.
\end{proof}

\begin{defn}\label{defn:corr:cont}
Let $f : A \fun B$ be a function. We say that $f$ is \emph{contractible} if there is a (necessarily unique) function $f_{\downarrow} : \rv(A) \fun \rv(B)$, called the \emph{contraction} of $f$, such that
\[
(\rv \rest B) \circ f = f_{\downarrow} \circ (\rv \rest A).
\]

We say that $f$ is \emph{$\res$-contractible} if there is a (necessarily unique) function $f_{\downarrow} : \res(A) \fun \res(B)$, called the \emph{$\res$-contraction} of $f$, such that
\[
(\res \rest B) \circ f = f_{\downarrow} \circ (\res \rest A).
\]
We say that $f$ is \emph{$\Gamma$-contractible} if the same holds with respect to $\vv$ or $\vrv$, depending on the coordinates, instead of $\res$.

The subscripts in these contractions will be written as $\downarrow_{\rv}$, $\downarrow_{\res}$, $\downarrow_{\Gamma}$ if they occur in the same context and therefore need to be distinguished from one another notationally.
\end{defn}

\begin{lem}\label{fn:alm:cont}
Let $f : \VF^n \fun \VF$ be a definable function. Then there is a definable subset $U \sub \RV^n$ such that $\dim_{\RV}(U) < n$ and $f \rest (\VF^n \mi  \rv^{-1}(U))$ is contractible.
\end{lem}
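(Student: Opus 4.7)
The plan is to combine Corollary~\ref{part:rv:cons}, which provides a finite partition of $\VF^n$ whose pieces behave well with respect to open polydiscs contained in them, with Lemma~\ref{RV:bou:dim}, which bounds the $\RV$-dimension of an $\RV$-boundary. The guiding observation is that contractibility of $f \rest (\VF^n \mi \rv^{-1}(U))$ is equivalent to $\rv \circ f$ being constant on every fiber $\rv^{-1}(t)$ with $t \in \RV^n \mi U$, and this will hold automatically for those $t$ whose fiber happens to sit entirely inside a single partition piece.

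First I would invoke Corollary~\ref{part:rv:cons} to obtain a definable finite partition $A_1, \ldots, A_m$ of $\VF^n$ such that $\rv \rest f(\ga)$ is constant for every open polydisc $\ga \sub A_i$. Next I would set the bad set $U \sub \RV^n$ to be the union of three pieces: (a) the set $D$ of degenerate tuples, i.e.\ those with some coordinate $\infty$, which has $\dim_{\RV}(D) = n-1$; (b) the $\RV$-boundaries $\bigcup_i \partial_{\RV} A_i$, each summand of $\dim_{\RV} < n$ by Lemma~\ref{RV:bou:dim}; and (c) the finite set $U_{\mathrm{sm}}$ consisting of the points $\rv(A_i) \in \RV^n$ for those partition pieces $A_i$ whose $\rv$-image is already a singleton. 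Then $\dim_{\RV}(U) < n$.

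To verify contractibility, fix $t \in \RV^n \mi U$. Non-degeneracy makes $\rv^{-1}(t)$ a non-empty open polydisc. Since $t \notin \partial_{\RV} A_i$ for any $i$, each intersection $\rv^{-1}(t) \cap A_i$ is either empty, all of $\rv^{-1}(t)$, or all of $A_i$. The last case forces $\rv(A_i) = \{t\}$, hence $t \in U_{\mathrm{sm}}$, contradicting $t \notin U$. So each nonempty intersection equals $\rv^{-1}(t)$, and since the $A_i$ partition $\VF^n$ there exists $i$ with $\rv^{-1}(t) \sub A_i$. The partition property then gives constancy of $\rv \rest f(\rv^{-1}(t))$, so setting $f_{\downarrow}(t) \coloneqq \rv(f(a))$ for any $a \in \rv^{-1}(t)$ yields the required contraction.

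The one subtle point I anticipate is the necessity of the third component $U_{\mathrm{sm}}$: without it, one cannot rule out the possibility that $\rv^{-1}(t)$ decomposes as a finite union of partition pieces $A_i$ each strictly contained inside it, in which case Corollary~\ref{part:rv:cons} gives no information about $\rv \circ f$ on $\rv^{-1}(t)$. Recognizing that this component is both needed and harmlessly small---a definable finite set of points, hence of $\RV$-dimension zero---is the key bookkeeping step.
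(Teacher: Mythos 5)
Your proof is correct, and the extra component $U_{\mathrm{sm}}$ you identify is genuinely needed for your argument to close (if $\rv^{-1}(t)$ were partitioned among several pieces $A_i$ each strictly contained in it, then $t \notin \partial_{\RV} A_i$ for all $i$, so the boundaries alone would not catch it). That said, your route differs in mechanics from the paper's. The paper does not pass through Corollary~\ref{part:rv:cons} at all; instead it invokes Corollary~\ref{poly:open:cons} directly to get, for each bad $t$ (one where $\rv(f(\rv^{-1}(t)))$ is not a singleton), a $t$-definable proper subset of $\rv^{-1}(t)$, and then applies compactness once to glue these into a single definable subset $A \sub \VF^n$ whose $\RV$-boundary is exactly the bad set, so that Lemma~\ref{RV:bou:dim} finishes the proof in one step. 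Your version instead applies compactness in the form already packaged inside Corollary~\ref{part:rv:cons}, producing a finite partition, and then pays for that convenience with the additional bookkeeping for degenerate tuples and singleton-image pieces. Both approaches rest on the same two ingredients --- the proper-subset observation for non-contractible polydiscs, and the dimension bound of Lemma~\ref{RV:bou:dim} --- so the underlying idea is shared; the paper's version is somewhat leaner because it builds a single $A$ tailored so that $\partial_{\RV} A$ is precisely what one wants, whereas yours reuses an off-the-shelf decomposition and cleans up afterward.
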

\begin{proof}
By Corollary~\ref{poly:open:cons}, for any $t \in \RV^n$, if $\rv(f(\rv^{-1}(t)))$ is not a singleton then $\rv^{-1}(t)$ has a $t$-definable proper subset. By compactness there is a definable subset $A \sub \VF^n$ such that $t \in \partial_{\RV} A$ if and only if $\rv(f(\rv^{-1}(t)))$ is not a singleton. So the assertion follows from Lemma~\ref{RV:bou:dim}.
\end{proof}

Let $f : \VF^n \fun \VF^m$ be a definable function. By Lemma~\ref{fun:suba:fun} and \omin-minimal differentiability, $f$ is $C^p$ almost everywhere for all $p$ (see \cite[\S7.3]{dries:1998}). For each $p$ let $\reg^p(f) \sub \VF^n$ be the definable subset of regular $C^p$-points of $f$. If $p=0$ then we write $\reg(f)$, which is simply the subset of regular points of $f$. Obviously if $a \in \reg(f)$ and $f$ is $C^1$ in a neighborhood of $a$ then $\reg^1(f)$ contains a neighborhood of $a$ on which the sign of the Jacobian of $f$, which is denoted by $\jcb_{\VF} f$, is constant. If $n=m$ and $f$ is locally injective on a definable open subset $A \sub \VF^n$ then $f$ is regular almost everywhere on $A$ and hence
\[
\dim_{\VF}(A \mi \reg^p(f)) < n
\]
for all $p$. By \cite[Theorem~A]{Dries:tcon:97}, the situation is the same if $f$ is a definable function of the form
\[
(\vrv^{-1}(\pm \alpha)\cup \{0\})^n \fun (\vrv^{-1}(\pm \beta)\cup \{0\})^m,
\]
in particular, from $\K^n$ into $\K^m$, or more generally, from $\tor(u)$ into $\tor(v)$, where $u \in \RV^n_{\alpha}$ and $v \in \RV^m_{\beta}$ (see Remark~\ref{rem:K:aff}).

Suppose that $f$ is a definable function from $\UU$ into $\OO$. By monotonicity, there are a definable finite subset $B \sub \UU$ and a definable finite partition of $A \coloneqq \UU \mi B$ into infinite $\vv$-intervals $A_i$ such that $f$ and $\frac{d}{d x} f$ are quasi-$\lan{T}{}{}$-definable, continuous, and monotone on each $A_i$. If $\rv(A_i)$ is not a singleton then let $U_i \sub \K$ be the largest open interval contained in $\rv(A_i)$. Let $U = \bigcup_i U_i$ and $f^* = f \rest \rv^{-1}(U)$. By Corollary~\ref{part:rv:cons}, we may refine the partition such that both $f^*$ and $\frac{d}{d x} f^*$ are contractible. By Lemma~\ref{gk:ortho},
\[
\vv \rest f^*(\rv^{-1}(U_i)) \quad \text{and} \quad \vv \rest \tfrac{d}{d x} f^*(\rv^{-1}(U_i))
\]
must be constant, say $\alpha_i$ and $\beta_i$, respectively. So it makes sense to speak of $\frac{d}{d x} f^*_{\downarrow}$ on each $U_i$. Deleting finitely many points from $U$ if necessary, we may assume that $f^*_{\downarrow}$, $(\frac{d}{d x} f^*)_{\downarrow}$, and $\frac{d}{d x} f^*_{\downarrow}$ are all continuous monotone functions on each $U_i$. Note that
\begin{itemize}
  \item $|\beta_i| \geq |\alpha_i|$,
  \item either $f^*_{\downarrow} \rest U_i$ is constant or $| \vv(\frac{d}{d \! x} f^*_{\downarrow}(U_i)) | = \{|\alpha_i|\}$.
\end{itemize}
A moment of reflection shows that we must have $|\beta_i| = |\alpha_i|$ unless $f^*_{\downarrow} \rest U_i$ is constant, for otherwise $f^* \rest \rv^{-1}(U_i)$ would increase or decrease too slowly to make $f^*_{\downarrow}(U_i)$ contain more than one point; in fact, since $f^* \rest \rv^{-1}(U_i)$ cannot increase or decrease too fast either, we see that $|\beta_i| = |\alpha_i| < \infty$ if and only if $f^*_{\downarrow} \rest U_i$ is not constant; by a standard estimate argument, if $|\beta_i| = |\alpha_i|$ then $(\frac{d}{d x} f^*)_{\downarrow} = \frac{d}{d x} f^*_{\downarrow}$ on each $U_i$. More generally:

\begin{lem}\label{univar:der:contr}
Let $A  \sub \UU^n$ be a definable $\RV$-pullback with $\dim_{\RV}(\rv(A)) = n$ and $f : A \fun \OO$ a definable function. Let $p \in \N^n$ be a multi-index of order $|p| = d$ and $k \in \N$ with $k \gg d$. Suppose that $f$ is $C^k$ and, for all $q \leq p$, $\frac{\partial^q}{\partial x^q} f$ is contractible and its contraction $(\frac{\partial^q}{\partial x^q} f)_{\downarrow}$ is also $C^k$. Then there is a definable subset $V \sub \rv(A)$ with $\dim_{\RV}(V) < n$ and $U \coloneqq \rv(A) \mi V$ open such that, for all $a \in \rv^{-1}(U)$ and all $q' < q \leq p$ with $|q'| + 1 = q$, exactly one of the following two conditions holds:
\begin{itemize}
 \item either $\frac{\partial^{q}}{\partial x^{q}} f(a) = 0$ or $| \vv(\frac{\partial^{q'}}{\partial x^{q'}} f(a)) | < | \vv(\frac{\partial^{q}}{\partial x^{q}} f(a)) |$,
 \item $(\frac{\partial^{q - q'}}{\partial x^{q - q'}} \frac{\partial^{q'}}{\partial x^{q'}} f)_{\downarrow}(\rv (a)) = \frac{\partial^{q - q'}}{\partial x^{q - q'}}(\frac{\partial^{q'}}{\partial x^{q'}} f)_{\downarrow}(\rv( a)) \neq 0$;
\end{itemize}
if the first condition never occurs then, for all $q \leq p$, we actually have $(\frac{\partial^q}{\partial x^q} f )_{\downarrow} = \frac{\partial^{q}}{\partial x^{q}} f_{\downarrow}$ on $U$; at any rate, for all $q \leq p$, we have $(\frac{\partial^q}{\partial x^q} f )_{\downarrow_{\res}} = \frac{\partial^{q}}{\partial x^{q}} f_{\downarrow_{\res}}$ on $U$.
\end{lem}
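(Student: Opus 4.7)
My plan is to proceed by induction on $|p|=d$, where the base case $d=0$ is trivial (only $q=0$ appears and $f$ is contractible by hypothesis). For the inductive step, it suffices to handle one pair $q' < q \leq p$ with $q - q' = e_i$ at a time, because a finite union of definable subsets of $\rv(A)$ of $\RV$-dimension $<n$ is again of that form. Fix such a pair and set $g \coloneqq \partial^{q'} f$. For each fixed tuple $\hat a_i$ of the complementary $\RV$-coordinates, view $g(\cdot,\hat a_i)$ as a one-variable function $\UU \to \OO$; here inductively (via the inequality $|\beta|\geq|\alpha|$ from the preceding one-variable discussion) one verifies $g \in \OO$ off a low $\RV$-dimensional exceptional set, so that the preceding one-variable analysis applies. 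Since both $g$ and $\partial g/\partial x_i = \partial^q f$ are contractible with $C^k$ contractions by hypothesis, that analysis, run uniformly in the parameter $\hat a_i$, produces, outside a finite exceptional subset of $\K$, one of two alternatives: either $g_\downarrow(\cdot,\hat a_i)$ is locally constant in $x_i$, which yields the first condition ($\partial^q f = 0$ or $|\vv|(\partial^{q'} f) < |\vv|(\partial^q f)$), or $|\vv|(g) = |\vv|(\partial g/\partial x_i)$ and $(\partial g/\partial x_i)_\downarrow = \partial g_\downarrow /\partial x_i$, which is the second condition.

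To globalize, compactness and definability produce a definable bad set $V_{q',q} \sub \rv(A)$ whose fiber over each $\hat a_i$ is finite, so by the fibered additivity of $\RV$-dimension (available via the dimension theory of weakly \omin-minimal structures \cite[\S4]{mac:mar:ste:weako} applied to the $\RV$-sort, as in Remark~\ref{rem:RV:weako}) one has $\dim_\RV(V_{q',q}) < n$. Taking $V$ to be the union of all such $V_{q',q}$ for pairs $q' < q \leq p$ with $|q'|+1 = |q|$, enlarged if necessary by $\RV$-boundary data from Lemma~\ref{RV:bou:dim} so that $U \coloneqq \rv(A) \mi V$ is open, still satisfies $\dim_\RV(V) < n$, and the pointwise dichotomy holds on $\rv^{-1}(U)$ for every pair $(q',q)$.

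The ``if the first condition never occurs'' clause then follows by induction on $|q|$: assuming $(\partial^{q'} f)_\downarrow = \partial^{q'}(f_\downarrow)$ on $U$, the second condition at $(q',q)$ gives
\[
(\partial^q f)_\downarrow = \partial^{q-q'}(\partial^{q'} f)_\downarrow = \partial^{q-q'}\partial^{q'}(f_\downarrow) = \partial^q(f_\downarrow).
\]
For the $\res$-contraction identity, one applies $\res$ to both sides of the $\rv$-identity where the second condition holds, using that $\res$ commutes with $\partial$ on matching $\K$-torsors. Where the first condition holds, both sides vanish: the ambient induction together with $|\vv|(\partial^q f) \geq |\vv|(\partial^{q'} f) \geq 0$ keeps $\partial^q f \in \OO$, and strict inequality in case one forces $\partial^q f \in \MM$, so $(\partial^q f)_{\downarrow_{\res}}(\rv(a)) = 0$; on the other side, constancy of $g_\downarrow$ in the $x_i$-direction passes through $\res$ to constancy of $g_{\downarrow_{\res}} = \res \circ g_\downarrow$, whose $x_i$-derivative is $0$, and the $\res$-analog of the inductive hypothesis yields $\partial^q(f_{\downarrow_{\res}}) = \partial^{q-q'}(\partial^{q'} f)_{\downarrow_{\res}} = 0$ as well.

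The main obstacle I anticipate is the uniform extraction of $V$ with $\dim_\RV(V) < n$ for all pairs simultaneously, which hinges on a Fubini-type decomposition in the weakly \omin-minimal $\RV$-sort. A secondary technical point is the bookkeeping around the torsor identifications of Remark~\ref{rem:K:aff}: the successive derivatives $\partial^{q'} f$ and $\partial^q f$ and their contractions naturally live in distinct $\K$-torsors $\tor(\dot\ga)$ determined by the constant valuations on each contractibility fiber, and the verification that the affine identifications $\aff_{\dot\ga, t}$ intertwine $\partial$ with $\downarrow$ correctly---in both the $\rv$ and $\res$ senses, and compatibly with differentiating through $\res$---is where the bulk of the routine technical work will reside.
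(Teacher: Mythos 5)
Your overall plan matches the paper's: reduce to one derivative step at a time by induction, run the one-variable analysis on each coordinate direction, and collect a low-$\RV$-dimensional bad set. But there is a genuine gap in the dimension-counting step, and it traces back to a type confusion in how you set up the parameterized one-variable analysis.

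You write that you ``fix a tuple $\hat a_i$ of the complementary $\RV$-coordinates'' and view $g(\cdot,\hat a_i)$ as a one-variable function $\UU\to\OO$. But $g = \partial^{q'}f$ is a function on a subset of $\VF^n$; fixing $\RV$-coordinates restricts $g$ to an $\RV$-polydisc in the $(n-1)$ complementary $\VF$-variables, which is not a one-variable function. To apply the preceding one-variable discussion you must fix a genuine $\VF$-tuple $a\in\pr_{\wt i}(A)$, and the finite exceptional set $V_a\sub\K$ it produces then depends on $a$ itself, not only on $\rv(a)$. This is precisely why the claim that the resulting definable bad subset of $\rv(A)$ ``has finite fiber over each $\hat a_i$'' does not follow: as $a$ ranges over a single $\RV$-polydisc $\rv^{-1}(s)\sub\pr_{\wt i}(A)$, the sets $V_a$ may vary, and $\bigcup_{\rv(a)=s}V_a$ can a priori be infinite, even though each $V_a$ is finite. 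Fibered additivity of $\RV$-dimension would then not give $\dim_{\RV}(V_{q',q})<n$.

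The paper closes this gap by staying in the $\VF$-sort one step longer. It forms the definable subset $A^*=\bigcup_{a}\rv^{-1}(V_a)\sub\VF^n$ and applies Lemma~\ref{RV:bou:dim} to $A^*$ to obtain $\dim_{\RV}(\partial_{\RV}A^*)<n$. The remaining part of $\rv(A^*)$, namely $\ito_{\RV}(A^*)$, consists of those $(s,u)$ with $u\in V_a$ for \emph{every} $a\in\rv^{-1}(s)$; each such fiber $\bigcap_{\rv(a)=s}V_a$ is finite because a single $V_a$ is, so this part has $\RV$-dimension $<n$ too, and hence $\dim_{\RV}(\rv(A^*))<n$. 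The $\partial_{\RV}$-argument (and its underlying ingredients, Corollary~\ref{open:disc:def:point} and Lemma~\ref{atom:self}) is therefore not an optional enlargement as in your last step, but the mechanism that makes the uniformity over $\RV$-polydiscs hold at all. Once this is in place, your treatment of the ``first condition never occurs'' clause and the $\res$-contraction identity is essentially the intended one.
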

\begin{proof}
Observe that, by induction on $d$, it is enough to consider the case $d =1$. In that case, for each $a \in \pr_{<n}(A)$, by the discussion above, there is an $a$-definable finite subset $V_{a} \sub \fib(\rv(A), \rv(a))$ such that the assertion holds for the restriction $f \rest (A_a \mi \rv^{-1}(V_{a}))$. Let
\[
A^* = \bigcup_{a \in \pr_{<n}(A)} \rv^{-1}(V_{a}).
\]
By Lemma~\ref{RV:bou:dim}, $\dim_{\RV}(\partial_{\RV} A^*) < n$ and hence $\dim_{\RV}(\rv(A^*)) < n$. Therefore, by Lemma~\ref{fn:alm:cont}, there is a definable open subset $U \sub \ito(\rv(A) \mi \rv(A^*))$ that is as desired.
\end{proof}

Let $f = (f_1, \ldots, f_m) : A \fun \OO$ be a sequence of definable $\res$-contractible functions, where $A$ is as in Lemma~\ref{univar:der:contr}. Let $P(X_1, \ldots, X_m)$ be a partial differential operator with definable $\res$-contractible coefficients $a_i : A \fun \OO$ and $P_{\downarrow}(X_1, \ldots, X_m)$ the corresponding operator with $\res$-contracted coefficients $a_{i\downarrow} : \res(A) \fun \K$. Note that both $P(f) : A \fun \OO$ and $P_{\downarrow}(f_{\downarrow}) : \res(A) \fun \K$ are defined almost everywhere. By Lemma~\ref{univar:der:contr}, such an operator $P$ almost commutes with $\res$:

\begin{cor}\label{rv:op:comm}
For almost all $t \in \rv(A)$ and almost all $a \in \rv^{-1}(t)$,
\[
\res(P(f)(a)) = P_{\downarrow}(f_{\downarrow})(\res(a)).
\]
\end{cor}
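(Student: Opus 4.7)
The plan is to reduce the corollary to Lemma~\ref{univar:der:contr} applied componentwise, exploiting the fact that $\res : \OO \fun \K$ is a ring homomorphism. First expand $P$ as a finite sum
\[
P(X_1, \ldots, X_m) = \sum_{\alpha} a_{i_\alpha} \prod_{\beta} \frac{\partial^{q_{\alpha,\beta}}}{\partial x^{q_{\alpha,\beta}}} X_{j_{\alpha,\beta}},
\]
and pick a multi-index $p$ dominating every $q_{\alpha,\beta}$ that appears. Since there are only finitely many $a_i$ and $f_j$ under consideration, all subsequent shrinking operations can be done by a finite intersection.

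Next, arrange the hypotheses of Lemma~\ref{univar:der:contr} for each $f_j$ by shrinking $A$. By Lemma~\ref{fun:suba:fun} combined with \omin-minimal differentiability, each $f_j$ is $C^k$ outside a definable subset of $\VF$-dimension $< n$; by Corollary~\ref{part:rv:cons} and Lemma~\ref{fn:alm:cont}, the partial derivatives $\frac{\partial^q}{\partial x^q} f_j$ for $q \leq p$ are contractible with $C^k$ contractions away from a definable subset of $\rv(A)$ of $\RV$-dimension $< n$. Since $f_j$ is $\res$-contractible on $A$ by hypothesis, its contraction and those of its derivatives coincide with the relevant $\res$-contractions where both are defined.

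With these hypotheses in place, apply Lemma~\ref{univar:der:contr} to each $f_j$ to obtain a definable open subset $U_j \sub \rv(A)$ with $\dim_{\RV}(\rv(A) \mi U_j) < n$ on which
\[
\bigl(\tfrac{\partial^q}{\partial x^q} f_j\bigr)_{\downarrow_{\res}} = \tfrac{\partial^q}{\partial x^q} f_{j \downarrow_{\res}}
\]
for every $q \leq p$. Set $U = \bigcap_j U_j$; then $U$ is definable and $\dim_{\RV}(\rv(A) \mi U) < n$. Fix $t \in U$ and any point $a \in \rv^{-1}(t)$ lying outside the (lower-$\VF$-dimensional) exceptional set where some $a_i(a)$ or $\frac{\partial^{q_{\alpha,\beta}}}{\partial x^{q_{\alpha,\beta}}} f_{j_{\alpha,\beta}}(a)$ fails to be defined or fails to lie in $\OO$. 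Since $\res$ is a ring homomorphism on $\OO$, it commutes with the finite sums and products:
\[
\res(P(f)(a)) = \sum_{\alpha} \res(a_{i_\alpha}(a)) \prod_{\beta} \res\bigl(\tfrac{\partial^{q_{\alpha,\beta}}}{\partial x^{q_{\alpha,\beta}}} f_{j_{\alpha,\beta}}(a)\bigr).
\]
Apply the $\res$-contractibility of $a_{i_\alpha}$ to the first factor and the identity furnished by Lemma~\ref{univar:der:contr} to the second; the right-hand side becomes
\[
\sum_{\alpha} a_{i_\alpha \downarrow_{\res}}(\res(a)) \prod_{\beta} \tfrac{\partial^{q_{\alpha,\beta}}}{\partial x^{q_{\alpha,\beta}}} f_{j_{\alpha,\beta} \downarrow_{\res}}(\res(a)) = P_{\downarrow}(f_{\downarrow})(\res(a)),
\]
as required.

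The only non-routine step is the initial shrinking of $A$ to ensure the differentiability and derivative-contractibility hypotheses of Lemma~\ref{univar:der:contr}; once these are in place, the rest is bookkeeping with finitely many exceptional sets of smaller dimension in both the $\RV$- and the $\VF$-sense.
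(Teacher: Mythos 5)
Your proposal is correct and takes essentially the same approach the paper intends by its terse justification ``By Lemma~\ref{univar:der:contr}'': shrink $A$ to secure the differentiability and contractibility hypotheses, apply the lemma componentwise to each $f_j$ (and each coefficient $a_i$), and exploit the fact that $\res$ is a ring homomorphism on $\OO$ to push the identity through the finite sums and products in $P$. You have merely made explicit the bookkeeping over finitely many exceptional sets of smaller $\VF$- and $\RV$-dimension, which the paper leaves implicit.
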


The following corollary will not be needed below.

\begin{cor}
Let $f : \rv^{-1}(U) \fun \rv^{-1}(V)$ be a definable contractible function, where $U$, $V$ are definably connected subsets of $(\K^{\times})^n$. Suppose that $f_{\downarrow}$ is continuous and locally injective. Then there is a definable subset $U^* \sub U$ of $\RV$-dimension $< n$ such that the sign of $\jcb_{\VF} f$ is constant on $\rv^{-1}(U \mi U^*)$.
\end{cor}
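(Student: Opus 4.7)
The plan is to transfer the constancy of $\sgn(\jcb_\K(f_\downarrow))$, which will come from o-minimal invariance of domain in the $\K$-sort, to $\sgn(\jcb_{\VF} f)$ via the $\res$-contraction identities established just above.

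First I would work entirely in the $\K$-sort. Since $\K$ is a model of the polynomial-bounded o-minimal theory $T$ (by Ax.~\ref{ax:t:model}), and $f_\downarrow : U \fun V$ is a continuous definable locally injective map between definably connected subsets of $(\K^\times)^n$, standard o-minimal results (o-minimal differentiability together with invariance of domain, or equivalently the observation that a continuous locally injective definable map between open definable subsets of $\K^n$ has nonvanishing Jacobian away from a set of smaller $\K$-dimension) yield an open dense definable subset $U_0 \sub U$ with $\dim_\RV(U \mi U_0) < n$ on which $f_\downarrow$ is $C^1$ and $\jcb_\K(f_\downarrow)$ never vanishes. Since $U_0$ is dense in the definably connected set $U$ and $f_\downarrow$ is locally injective, a continuity/connectedness argument forces the sign $\epsilon \coloneqq \sgn(\jcb_\K(f_\downarrow))$ to be constant on $U_0$.

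Second, I would invoke the $\res$-contraction machinery. By Lemma~\ref{fun:suba:fun} and o-minimal differentiability applied componentwise, $f$ is $C^k$ (for any prescribed $k$) on $\rv^{-1}(U)$ away from a set of smaller $\VF$-dimension. Applying Lemma~\ref{univar:der:contr} (with multi-index $p = (1,\ldots,1)$) to each component $f_i$, or more directly invoking Corollary~\ref{rv:op:comm} with the first-order partial operators $\partial/\partial x_j$, produces a definable subset $V_1 \sub U$ with $\dim_\RV(V_1) < n$ such that for every $a \in \rv^{-1}(U \mi V_1)$ and every $i,j \in [n]$,
\[
\res\!\left(\tfrac{\partial f_i}{\partial x_j}(a)\right) = \tfrac{\partial (f_\downarrow)_i}{\partial x_j}(\res(a)),
\]
where I identify $\rv$ with $\res$ on $\UU$ (which is legitimate since $V \sub (\K^\times)^n$ forces $\rv^{-1}(V) \sub \UU^n$).

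Third, since $\res$ is a ring homomorphism on $\OO$, taking determinants of the two $n \times n$ matrices of partial derivatives yields
\[
\res(\jcb_{\VF} f(a)) = \jcb_\K(f_\downarrow)(\res(a)) = \epsilon
\]
for every $a \in \rv^{-1}(U_0 \mi V_1)$. In particular $\jcb_{\VF} f(a) \in \UU$, and hence $\sgn(\jcb_{\VF} f(a)) = \epsilon$. Setting $U^* = (U \mi U_0) \cup V_1$ gives $\dim_\RV(U^*) < n$ and $\sgn(\jcb_{\VF} f) \equiv \epsilon$ on $\rv^{-1}(U \mi U^*)$. The main obstacle is the first step, where I must combine o-minimal invariance of domain with a definable-connectedness argument in $\K$ to ensure the constancy of $\sgn(\jcb_\K(f_\downarrow))$; once that is in hand, everything else is a formal determinant manipulation on top of the already-established contraction identities.
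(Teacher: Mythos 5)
Your proposal follows the same route the paper takes: combine Corollary~\ref{rv:op:comm} (to pass the Jacobian through $\res$, via the determinant being a polynomial in the $\res$-contracted partial derivatives) with the fact that a definable continuous locally injective map between definably connected open subsets of $\K^n$ has Jacobian of constant sign off a lower-dimensional set. The only real difference is that the paper simply quotes Peterzil--Starchenko \cite[Theorem~3.2]{pet:star:otop} for that second ingredient, whereas you try to rederive it from ``standard'' o-minimal results plus a ``continuity/connectedness argument.''

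That is exactly where your sketch has a gap, and you are right to flag it. Deleting from $U$ a definable subset of $\RV$-dimension $< n$ can very well disconnect $U_0$ into several definably connected components, so density of $U_0$ in the definably connected set $U$ does \emph{not} by itself force $\sgn(\jcb_\K(f_\downarrow))$ to be constant across components; one would also need that $f_\downarrow$ cannot ``fold'' across the removed set, i.e.\ a one-sided degree argument. This is precisely the content Peterzil and Starchenko develop (o-minimal degree theory, in particular that a definable continuous locally injective map between definably connected open subsets of $\K^n$ is in fact a local homeomorphism of constant degree $\pm 1$). So your plan is correct, but Step~1 is not a freebie --- it is the cited theorem, not a corollary of routine o-minimal facts. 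Once you grant that theorem, the rest of your argument (passing to $\res$, taking determinants, concluding $\jcb_{\VF} f(a) \in \UU$ with the right sign) is sound and matches the paper's intent.
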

\begin{proof}
This follows immediately from Corollary~\ref{rv:op:comm} and \cite[Theorem~3.2]{pet:star:otop}.
\end{proof}

\begin{lem}\label{atom:type}
In $\gC^{\bullet}$, let $\ga \sub \VF$ be an atomic subset and $f : \ga \fun \VF$ a definable injection. Then $\ga$ and $f(\ga)$ must be of the same type (see Remark~\ref{rem:type:atin}).
\end{lem}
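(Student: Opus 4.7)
By Corollary~\ref{mono} applied to the atomic subset $\ga$, any definable finite partition of $\ga$ witnessing monotonicity must be trivial, so $f$ is already quasi-$\lan{T}{}{}$-definable, continuous, and strictly monotone (strictly, because $f$ is injective) on $\ga$. Without loss of generality take $f$ strictly increasing, and extend it to an $\lan{T}{}{}$-definable continuous strictly increasing $F \colon \VF \fun \VF$. Moreover, $f(\ga)$ is itself atomic, since any $\mdl S$-definable subset of $f(\ga)$ pulls back through $f^{-1}$ to an $\mdl S$-definable subset of $\ga$.

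By Remark~\ref{rem:type:atin}, both $\ga$ and $f(\ga)$ lie in the list: point, open disc, closed disc, half thin annulus. The point case is immediate, and the open disc case follows from Lemma~\ref{open:rv:cons} applied with $n=1$, using that the injective $f$ is non-constant on any infinite atomic set. Applying these two observations to $f^{-1}$ yields the converses and reduces the problem to the dichotomy between closed disc and half thin annulus for the pair $(\ga, f(\ga))$.

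For this last dichotomy I would exploit the following characterisation, read off from HNF (Remark~\ref{rem:HNF}): every atomic $\vv$-interval has a canonical pair of $\mdl S$-definable end-discs, and $\ga$ is a half thin annulus if and only if at least one of those end-discs is an open disc disjoint from $\ga$ (namely $\ga_0$ in $(\ga_0, \gb_0]$), whereas for a closed disc (as for a point or an open disc) both end-discs coincide with $\ga$ itself. The property ``$\ga$ has an end-disc that is an open disc disjoint from $\ga$'' is then transported through $f$ as follows. Since $F$ is continuous and strictly monotone on all of $\VF$, the left (respectively right) end-disc of $f(\ga)$ is the $F$-image of the left (respectively right) end-disc of $\ga$: the end-discs are defined by the order-theoretic boundary of the $\vv$-interval, and continuous strictly monotone bijections take such boundaries to boundaries. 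The disc-theoretic shape ``open disc'' is preserved by applying Lemma~\ref{open:rv:cons} to any atomic open subdisc, together with the local linearity of $\lan{T}{}{}$-definable functions supplied by polynomial-boundedness of $T$; and disjointness from $\ga$ maps to disjointness from $f(\ga)$ by injectivity of $F$ on $\ga$ together with its end-discs. The converse goes through the symmetric argument applied to $F^{-1}$.

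The main obstacle is the rigorous verification that the end-disc of $f(\ga)$ extracted from HNF does coincide with the $F$-image of the corresponding end-disc of $\ga$, and in particular that the latter is indeed an open disc; this amounts to analysing the order-theoretic ``cut'' at the boundary of $\ga$ and showing that its disc-theoretic shape is preserved by $F$, which is where the $T$-convex valuation-theoretic structure, monotonicity, and Lemma~\ref{open:rv:cons} do the real work.
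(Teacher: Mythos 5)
Your reduction to the closed-disc-versus-half-thin-annulus dichotomy matches the paper's: the point and open-disc cases are handled the same way (triviality and Lemma~\ref{open:rv:cons}, respectively). But the third case is the entire content of the lemma, and your proposed argument for it has a genuine gap that the paper closes by a completely different route.

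You propose to transport the shape of the end-discs of $\ga$ through the extension $F$ of $f$, arguing that a continuous strictly monotone $\lan{T}{}{}$-definable bijection carries the boundary cut of $\ga$ to the boundary cut of $f(\ga)$ and preserves the disc-theoretic type of each end. The last sentence of your proposal concedes that this is ``where the real work is'' --- and indeed, the assertion that $F$ sends the order-theoretic cut at the edge of a closed disc to a cut of the same disc-theoretic shape is logically equivalent to the statement you are trying to prove; nothing in monotonicity, otop, or continuity gives it to you, because the relevant distinction (whether the supremum/infimum is realized inside the set, or ``just missing'' in the manner of an open end-disc) is a \emph{valuative}, not an order-topological, distinction, and a monotone continuous bijection can in principle distort radii wildly near the boundary. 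Your reference to ``local linearity supplied by polynomial-boundedness'' gestures at the right hypothesis but does not say why it forbids the closed-disc-to-half-thin-annulus shape change; note that polynomial-boundedness is essential (with exponentiation the statement fails), so any correct proof must use it at a concrete point.

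The paper's proof supplies exactly this missing step, and by a different mechanism. Assuming for contradiction that $\ga$ is a closed disc (so $\dot\ga = \tor(\dot\gm)$ is identified with a full $\K$-torsor) and $f(\ga)$ is a half thin annulus (so $\dot{f(\ga)} = \tor^+(\dot\gn)$ is a positive half-torsor), one uses otop (Lemma~\ref{open:pro}) and monotonicity to contract $f$ to an increasing bijection $f_{\downarrow}\colon \tor(\dot\gm)\fun\tor^+(\dot\gn)$. The crucial structural input is that, because $\ga$ is atomic, for every $p$ the $p$-th derivative $\tfrac{d^p}{dx^p}f_{\downarrow}$ takes values in the single $\rv$-fiber $\vrv^{-1}(\delta\gamma^{-p})$ --- so it is never zero, cannot be constant, and by monotonicity and the shape of the codomain must be everywhere increasing. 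Having all derivatives increasing and nonvanishing forces super-polynomial growth, contradicting polynomial-boundedness of $T$. This derivative argument is the content your sketch is missing; the cut-preservation heuristic does not substitute for it.

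As a minor point, ``extend it to an $\lan{T}{}{}$-definable continuous strictly increasing $F\colon\VF\fun\VF$'' overreaches: by Lemma~\ref{fun:suba:fun} and atomicity $f$ agrees on $\ga$ with an $\lan{T}{}{}$-definable function $F$, but $F$ need not be globally monotone or continuous, and in $\gC^{\bullet}$ the underlying disc parameters need not reduce to $\VF$-parameters. The paper avoids ever leaving $\ga$, working instead with the contracted map on the residue-field torsor.
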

\begin{proof}
This is trivial if $\ga$ is a point. The case of $\ga$ being an open disc is covered by Lemma~\ref{open:rv:cons}. So we only need to show that if $\ga$ is a closed disc then $f(\ga)$ cannot be a half thin annulus. Suppose for contradiction that $\dot \ga = \tor(\dot \gm) \sub \RV_{\gamma}$ and $\dot{f(\ga)} = \tor^+(\dot \gn) \sub \RV_{\delta}$. By Lemma~\ref{open:pro} and monotonicity, $f$ induces an increasing (or decreasing, which can be handled similarly) bijection $f_{\downarrow} : \tor(\dot \gm) \fun \tor^+(\dot \gn)$. In fact, it is not hard to see that, for all $p$,
\[
\tfrac{d^p}{d x^p} f_{\downarrow} : \tor(\dot \gm) \fun \vrv^{-1}(\delta \gamma^{-p})
\]
cannot be constant and hence must be continuous and increasing. This means that there is a parametrically definable function $\K \fun \K^+$ that is not polynomial-bounded, which is a contradiction.
\end{proof}

\begin{defn}
Let $\ga$ be an open disc and $f : \ga \fun \VF$ an injection. We say that $f$ is \emph{$\Gamma$-linear} if there is a (necessarily unique) $\gamma \in \Gamma$ such that, for all $a, a' \in \ga$,
\[
\vv(f(a) - f(a')) = \gamma + \vv(a - a').
\]
We say that $f$ is \emph{$\rv$-linear} if there is a (necessarily unique) $t \in \RV$ such that, for all $a, a' \in \ga$,
\[
\rv(f(a) - f(a')) = t \rv(a - a').
\]
\end{defn}

Obviously $\rv$-linearity implies $\Gamma$-linearity. With the extra structure afforded by the total ordering, we can reproduce \cite[Lemma~3.18]{Yin:int:acvf} with a somewhat simpler proof:

\begin{lem}\label{rv:lin}
In $\gC^{\bullet}$, let $f : \ga \fun \gb$ be a definable bijection between two atomic open discs. Then $f$ is $\rv$-linear and hence $\Gamma$-linear with respect to $\rad(\gb) - \rad(\ga)$.
\end{lem}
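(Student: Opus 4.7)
The approach is to extend $f\rest\ga$ to an $\lan{T}{}{}$-definable $C^{1}$ function $F$, use Lemma~\ref{open:rv:cons} to identify a constant slope $t=\rv\circ F'$ on $\ga$, and then read off $\rv$-linearity from the Mean Value Theorem in the o-minimal theory $T$.

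First I would apply Corollary~\ref{mono} to $f:\ga\fun\VF$, which yields a definable finite partition of $\ga$ into $\vv$-intervals on each of which $f$ is continuous, monotone, and quasi-$\lan{T}{}{}$-definable. Since $\ga$ is atomic, this partition must be trivial, so $f\rest\ga=F\rest\ga$ for some $\lan{T}{}{}$-definable $F:\VF\fun\VF$; by Lemma~\ref{ima:par:red}, the $\VF$-parameters of $F$ may be taken in $\VF(\mdl S)$. The set of non-$C^{1}$ points of $F$ is a finite $\lan{T}{}{}$-definable (hence $\mdl S$-definable) subset of $\VF$; being infinite and atomic, $\ga$ is disjoint from it. Thus $F$ is $C^{1}$ on $\ga$ and $F':\ga\fun\VF$ is $\lan{T}{}{}$-definable.

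Now I would apply Lemma~\ref{open:rv:cons} to the definable $F':\ga\fun\VF$ on the atomic open disc $\ga$: either $F'$ is constant on $\ga$ or $F'(\ga)$ is an open disc, so in either case $\rv\circ F'$ takes a single value $t\in\RV$. For distinct $a<a'$ in $\ga$, since the ordering and the valuation are compatible, $\ga$ is order-convex, so $(a,a')\sub\ga$, and the o-minimal Mean Value Theorem applied to $F\rest[a,a']$ yields $c\in(a,a')\sub\ga$ with $F(a)-F(a')=F'(c)(a-a')$. Applying $\rv$ gives
\[
\rv(f(a)-f(a'))=\rv(F'(c))\cdot\rv(a-a')=t\cdot\rv(a-a'),
\]
which is $\rv$-linearity with slope $t$. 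Passing to valuations yields $\Gamma$-linearity with constant $\vrv(t)$; as $a,a'$ range over distinct pairs in $\ga$ the difference $a-a'$ ranges through $\MM_{\rad(\ga)}\mi\{0\}$ and, by bijectivity of $f$, $f(a)-f(a')$ ranges through $\MM_{\rad(\gb)}\mi\{0\}$, forcing $\vrv(t)=\rad(\gb)-\rad(\ga)$.

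The main delicacy is the first two moves: using the atomicity of $\ga$ both to trivialize the piecewise $\lan{T}{}{}$-definable description of $f$ and to exclude the finite $\lan{T}{}{}$-definable set of non-$C^{1}$ points of $F$ from meeting $\ga$, so that a single smooth $\lan{T}{}{}$-definable extension of $f\rest\ga$ is available. Once that is in place, the MVT combined with Lemma~\ref{open:rv:cons} reduces the statement to the fact that $\rv\circ F'$ is constant on $\ga$, which is almost immediate.
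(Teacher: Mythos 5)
Your proof is correct, and it reaches the conclusion by a slightly different route than the paper's. Both arguments pivot on the same key fact: applying Lemma~\ref{open:rv:cons} to the derivative $F' = \tfrac{d}{dx}f$ on the atomic open disc $\ga$ to get a single value $t = \rv\circ F'$. What differs is how one converts ``constant $\rv$-slope'' into $\rv$-linearity. The paper first invokes Lemma~\ref{open:pro} to get the open-to-open property, then descends to the torsors $\tor(\dot\gc)$ via Lemma~\ref{univar:der:contr} and the affine machinery of Remark~\ref{rem:K:aff}, showing the contracted maps on these $\K$-torsors are linear (which is really the ``constant derivative $\Rightarrow$ affine'' argument carried out in the residue field), and finally assembles $\rv$-linearity from these linear contractions. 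You instead apply the o-minimal Mean Value Theorem directly in $\VF$, to an $\lan{T}{}{}$-definable extension $F$ of $f\rest\ga$, obtaining $F(a)-F(a')=F'(c)(a-a')$ with $c\in(a,a')\sub\ga$ (valid because open discs are order-convex when $\OO$ is convex), and then read off $\rv$-linearity from multiplicativity of $\rv$. This is shorter and avoids the contraction apparatus, at the cost of leaning more on the ambient o-minimal theory $T$ and on the reduction of $f\rest\ga$ to a single quasi-$\lan{T}{}{}$-definable $C^1$ function via atomicity (a reduction the paper's proof also uses implicitly when it writes $\tfrac{d}{dx}f(\ga)$). One small misattribution: the claim that the $\VF$-parameters of $F$ lie in $\VF(\mdl S)$ is already built into the notion of ``quasi-$\lan{T}{}{}$-definable'' delivered by Corollary~\ref{mono}; Lemma~\ref{ima:par:red} concerns points of $\VF$ rather than parameters of functions and is not really the right citation, though it does no harm. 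The final radius calculation via bijectivity is fine; the paper instead derives it from the open-to-open property, but the two observations are essentially equivalent.
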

\begin{proof}
Since $f$ has otop by Lemma~\ref{open:pro}, for all $\rad(\ga) < \gamma \leq \infty$ and all $\gd \coloneqq \tor(\dot \gc) \sub \rv_{\gamma}(\ga)$, it induces a $\dot \gd$-definable $C^1$ function $f_{\dot \gd} : \gd \fun \tor(\dot{f(\gc)})$. The codomain of its derivative $\frac{d}{d x} f_{\dot \gd}$ can be narrowed down to either $\vrv^{-1}(\delta \gamma^{-1})$ or $\vrv^{-1}(- \delta \gamma^{-1})$, where $\delta = \rad(f(\gc))$. In fact, by (the proof of) Lemma~\ref{atom:type}, $\frac{d}{d x} f_{\dot \gd}$ must be constant; but this is not actually needed.

By Lemma~\ref{open:rv:cons}, there is a $t \in \RV^{\times}$ such that $\frac{d}{d x} f(\ga) \sub \rv^{-1}(t)$. By (the proof of) Lemma~\ref{univar:der:contr}, for all $\gd$, all $\dot \gc \in \gd$, and all $a \in \gc$, $\frac{d}{d x} f_{\dot \gd}(\dot \gc) = \rv(\frac{d}{d x} f(a)) = t$ and hence
\[
\aff_{\dot{f(\gc)}} \circ f_{\dot \gd} \circ \aff^{-1}_{\dot \gc} : \vrv^{-1}(\pm \gamma) \cup \{0\} \fun \vrv^{-1}(\pm \delta) \cup \{0\}
\]
is a linear function (see Remark~\ref{rem:K:aff}). It follows that, for
\begin{itemize}
  \item $a$ and $a'$ in $\ga$,
  \item $\gd$ the smallest closed disc containing $a$ and $a'$,
  \item $\gc$ and $\gc'$ the maximal open subdiscs of $\gd$ containing $a$ and $a'$, respectively,
\end{itemize}
we have
\[
\rv(f(a) - f(a')) = \rv(f(\gc) - f(\gc')) = t \rv(\gc - \gc') = t \rv(a - a').
\]
That is, $f$ is $\rv$-linear. It is clear from the open-to-open property that $\vrv(t) = \rad(\gb) - \rad(\ga)$.
\end{proof}

\section{Grothendieck semirings of $\RV$ and $\Gamma$}

The main purpose of this section is to express the Grothendieck semiring of the $\RV$-category $\RV[*]$ as a tensor product of the Grothendieck semirings of the $\RES$-category $\RES[*]$ and the $\Gamma$-category $\Gamma[*]$, which will be defined below.

\begin{lem}\label{gam:red:K}
Let $D$ be a definable subset of $\Gamma^n$ with $\dim_{\Gamma}(D) = k$. Then $\vrv^{-1}(D)$ is definably bijective to a disjoint union of finitely many subsets of the form $(\K^+)^{n-k} \times \vrv^{-1}(D')$, where $D' \sub \Gamma^k$.
\end{lem}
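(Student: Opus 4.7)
The plan is to o-minimally decompose $D$ into cells, trivialize the $\vrv$-fibration over each cell by a definable section of $\vrv$, and then pad using $\K^+ = \vrv^{-1}(1)$ to make the $\K^+$-exponent uniform.

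First I invoke o-minimality of the $\Gamma$-sort (Theorem~B of~\cite{Dries:tcon:97}) and partition $D$ into finitely many disjoint cells $D_1, \ldots, D_m$ with $k_j \coloneqq \dim_{\Gamma} D_j \leq k$. Since the $\vrv^{-1}(D_j)$ are automatically disjoint in $\RV^n$, it suffices to handle each cell separately. By the standard cell structure, after permuting coordinates I may assume that $\pr_{\leq k_j}$ restricts to a bijection of $D_j$ with an open cell $D'_j \sub \Gamma^{k_j}$, and that $D_j$ is the graph of a definable continuous function $\phi_j : D'_j \fun \Gamma^{n-k_j}$.

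The heart of the argument is to produce, possibly after further partitioning $D'_j$ into finitely many definable pieces, a definable section $\psi_j = (\psi_{j,1}, \ldots, \psi_{j,n-k_j}) : D'_j \fun \RV^{n-k_j}$ satisfying $\vrv \circ \psi_j = \phi_j$. For each $\gamma \in D'_j$, the observation preceding Lemma~\ref{gk:ortho}, relativised over $\gamma$, supplies a $\gamma$-definable element of $\vrv^{-1}(\phi_j(\gamma))$; a standard compactness argument then yields the desired uniformly definable $\psi_j$ on a finite partition of $D'_j$. With such a $\psi_j$ in hand, the map
\[
(c, s) \efun (s_1, \ldots, s_{k_j}, c_1 \psi_{j,1}(\vrv(s)), \ldots, c_{n-k_j} \psi_{j,n-k_j}(\vrv(s)))
\]
is a definable bijection $(\K^+)^{n-k_j} \times \vrv^{-1}(D'_j) \fun \vrv^{-1}(D_j)$, since each fibre of $\vrv : \RV^{\times} \fun \Gamma$ is a $\K^+$-torsor under coordinatewise multiplication.

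It remains to make the exponent uniformly $n - k$. When $k_j < k$, the identity $\K^+ = \vrv^{-1}(1)$ gives
\[
(\K^+)^{n-k_j} \times \vrv^{-1}(D'_j) = (\K^+)^{n-k} \times \vrv^{-1}(\{1\}^{k-k_j} \times D'_j),
\]
and $\{1\}^{k-k_j} \times D'_j \sub \Gamma^k$, putting the piece in the required form. The main obstacle will be the construction of the section $\psi_j$: the cited observation is pointwise, so one must extract a uniformly definable lift by a compactness/type-splitting argument and a finite refinement of $D'_j$; all remaining steps are essentially bookkeeping.
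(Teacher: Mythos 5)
The cell decomposition, the $\K^+$-torsor framing, and the padding by $\K^+ = \vrv^{-1}(1)$ are all fine. The gap is in the construction of the section $\psi_j : D'_j \fun \RV^{n-k_j}$, and it is fatal as stated: no such $\psi_j$ can exist when $\phi_j$ is nonconstant. Indeed, for each $\gamma \in D'_j$ the value $\psi_j(\gamma)$ would be $\gamma$-definable in $\RV$, hence by Lemma~\ref{RV:no:point} already $\mdl S$-definable; compactness then forces $\psi_j$, and with it $\phi_j = \vrv \circ \psi_j$, to have finite image. Put differently, the observation preceding Lemma~\ref{gk:ortho} does \emph{not} relativise over parameters in $\Gamma$ --- that this relativisation is impossible is precisely the content of Lemma~\ref{RV:no:point} --- so the compactness argument you sketch never gets off the ground.

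The correct move, which is what the paper's terse ``the assertion follows'' is invoking, is to lift $\phi_j$ as a function of the $\RV$-tuple $s \in \vrv^{-1}(D'_j)$ rather than of $\gamma = \vrv(s)$. First reduce to $D \sub (\Gamma^+)^n$, so that $\vrv^{-1}(D) \sub (\RV^+)^n$. The ingredient the paper cites, \cite[Theorem~4.4]{Dries:tcon:97}, shows that after further partitioning each coordinate of $\phi_j$ is $\Q$-linear, i.e.\ $\phi_{j,i}(\gamma) = c_{j,i}\prod_l \gamma_l^{a_{il}}$ with $a_{il} \in \Q$ and $c_{j,i} \in \Gamma$ a definable constant. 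Since $\RV^{+}$ is a uniquely divisible ordered abelian group, $s \efun s^{a}$ is definable on $\RV^+$ for each fixed $a \in \Q$; choosing a definable $d_{j,i} \in \vrv^{-1}(c_{j,i})$, the formula $\Psi_{j,i}(s) = d_{j,i}\prod_l s_l^{a_{il}}$ defines $\Psi_j : \vrv^{-1}(D'_j) \fun (\RV^+)^{n-k_j}$ with $\vrv \circ \Psi_j = \phi_j \circ \vrv$. Substituting $\Psi_j(s)$ for $\psi_j(\vrv(s))$ in your displayed bijection then repairs the proof, since each fibre of $\vrv$ over $\Gamma^+$ remains a $\K^+$-torsor under coordinatewise multiplication.
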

\begin{proof}
Over a definable finite partition of $D$, we may assume that $D$ is a subset of $(\Gamma^+)^n$ and $\pr_{\leq k} \rest D$ is injective. By \cite[Theorem~4.4]{Dries:tcon:97}, the definable function $\pr_{\leq k}(D) \fun \pr_{>k}(D)$ is piecewise $\Q$-linear. The assertion follows.
\end{proof}

\begin{lem}\label{gam:pulback:mono}
Let $D$, $E$ be subsets of $\Gamma^n$ and $g : D \fun E$ a definable bijection. Then $g$ is definably a piecewise $\mgl_n(\Q)$-transformation (with constant terms).
\end{lem}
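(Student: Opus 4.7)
The plan is to reduce to the structural fact that $\Gamma$, as a pure \omin-minimal structure, is a nontrivially ordered $\Q$-vector space (Dries's Theorem~B, as recalled in the paper). In such a structure every definable function $\Gamma^k \to \Gamma$ is piecewise $\Q$-affine; this is a standard consequence of \omin-minimal cell decomposition together with the classification of definable functions in one variable as piecewise $\Q$-affine. Taking a common refinement over the $n$ coordinate functions of $g$, I obtain a definable finite partition $D = \bigsqcup_i C_i$ such that, on each $C_i$, the map $g$ agrees with the restriction of some affine map
\[
\phi_i(x) = M_i x + b_i, \qquad M_i \in M_n(\Q), \ b_i \in \Gamma^n.
\]
The only thing left to establish is that one may take $M_i \in \mgl_n(\Q)$, i.e., invertible.

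The key observation here is that the value of $\phi_i$ on $C_i$ depends only on the action of $M_i$ on the $\Q$-affine hull of $C_i$, so we have complete freedom to redefine $M_i$ off of that affine hull. Concretely, let $k_i = \dim_{\Gamma}(C_i)$ and let $W_i \sub \Q^n$ be the $k_i$-dimensional $\Q$-subspace parallel to the affine hull of $C_i$. Choose a $\Q$-complement $W_i'$ of $W_i$ in $\Q^n$; then $M_i(W_i)$ has dimension at most $k_i$, so a $\Q$-complement of it in $\Q^n$ has dimension at least $n - k_i = \dim W_i'$, and we can redefine $M_i$ on $W_i'$ to send it injectively onto such a complement. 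The redefined $M_i$ is invertible while $\phi_i \rest C_i$ is unchanged.

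To see that the redefinition is harmless even when $k_i = n$ (nothing to do) and that the original $M_i$ is already invertible in that case, use that $g$ is a bijection, hence $o$-minimally preserves $\dim_\Gamma$; on a full-dimensional cell $C_i$ containing an open box, $\phi_i$ is injective on that box, and a $\Q$-affine map on an open subset of $\Gamma^n$ is injective only if its linear part is invertible. Thus on every piece we have produced an element of $\mgl_n(\Q)$ representing $g \rest C_i$ together with a constant translation, which is exactly the conclusion.

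The only genuine content is Step~1, the piecewise $\Q$-affine representation, and for this we simply quote the structure of definable functions in a nontrivial ordered $\Q$-vector space as in \cite[Theorem~B]{Dries:tcon:97} combined with \omin-minimal cell decomposition; the rest is bookkeeping in linear algebra over $\Q$. I anticipate no serious obstacle beyond ensuring that the common refinement of cell decompositions coming from the $n$ coordinate functions is taken before extracting the matrices, and taking care that the extension of $M_i$ to an invertible matrix on lower-dimensional cells is purely an algebraic choice and does not need to be made definably in $g$.
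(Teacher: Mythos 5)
Your approach is essentially the one the paper has in mind: the paper's one-line proof cites van den Dries, \emph{$T$-convexity and tame extensions II}, Theorem~4.4, for the piecewise $\Q$-affine description of definable maps in the $\Gamma$-sort, and \cite[Lemma~2.29]{Yin:int:expan:acvf} for precisely the linear-algebra step that promotes the matrices to elements of $\mgl_n(\Q)$. So you are unpacking the two citations rather than taking a different route.

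There is, however, a gap in your treatment of cells with $k_i < n$. You note only that $\dim M_i(W_i) \leq k_i$ and then propose to map $W_i'$ ``injectively onto'' a complement of $M_i(W_i)$. If $\dim M_i(W_i) < k_i$, any complement of $M_i(W_i)$ has dimension strictly larger than $n - k_i = \dim W_i'$, so no such surjection from $W_i'$ exists, and in fact \emph{no} extension of $M_i \rest W_i$ to all of $\Q^n$ can be invertible. What makes the construction go through is that $M_i \rest W_i$ is injective for \emph{every} $i$, not just when $k_i = n$: since $g$ is a bijection, $\phi_i$ is injective on a relatively open subset of the affine hull $a_i + W_i \otimes_{\Q} \Gamma$, and an affine map injective on such a set must have injective linear part on the parallel $k_i$-dimensional space. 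You invoke exactly this reasoning, but only for the full-dimensional case; it should be stated and used uniformly, which is what guarantees $\dim M_i(W_i) = k_i$ and makes your complement argument valid. A further minor point: replacing $M_i$ by the modified matrix $M_i'$ while keeping $b_i$ fixed does change $\phi_i \rest C_i$ whenever the affine hull of $C_i$ misses the origin; one must adjust the constant to $b_i' = b_i + (M_i - M_i')a_i$ for a basepoint $a_i$ of the affine hull, which is a definable (indeed $C_i$-definable) element of $\Gamma^n$, so this poses no problem.
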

\begin{proof}
This is immediate by \cite[Theorem~4.4]{Dries:tcon:97} and \cite[Lemma~2.29]{Yin:int:expan:acvf}.
\end{proof}

\begin{lem}\label{resg:decom}
Let $A \sub \RV^k \times \Gamma^l$ be a definable subset. Set $\pr_{\leq k}(A) = U$ and suppose that $\vrv(U)$ is finite. Then there is a finite definable partition $U_i$ of $U$ such that, for each $i$ and all $t, t' \in U_i$, we have $A_t = A_{t'}$.
\end{lem}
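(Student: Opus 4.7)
My plan is to reduce first to a single value of $\vrv$ on $U$, then to exploit the orthogonality of $\K$ and $\Gamma$ furnished by Lemma~\ref{gk:ortho}.

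First, since $\vrv(U) \sub \Gamma^k$ is finite and definable, each of its elements is definable, and the subsets $U_\gamma := U \cap \vrv^{-1}(\gamma)$ for $\gamma \in \vrv(U)$ give a finite definable partition of $U$. It therefore suffices to produce the desired partition on each $U_\gamma$ separately. Fix $\gamma = (\gamma_1, \ldots, \gamma_k)$ with each $\gamma_i$ definable. By the observation just before Lemma~\ref{gk:ortho}, for each $\gamma_i \neq \infty$ there is a definable $s_i \in \vrv^{-1}(\gamma_i)$ (coordinates with $\gamma_i = \infty$ are forced to be $\infty$ and may be ignored). Multiplicative translation by the $s_i^{-1}$, as described in Remark~\ref{rem:K:aff}, gives a definable bijection $\Phi : \vrv^{-1}(\gamma) \fun (\K^+)^k$. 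Pushing $A \cap (U_\gamma \times \Gamma^l)$ forward along $\Phi \times \id$ reduces the claim to the following: for every definable $B \sub V \times \Gamma^l$ with $V \sub \K^k$, there is a finite definable partition of $V$ on which the $\Gamma^l$-fiber of $B$ is constant.

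For this core step I would apply uniform o-minimal cell decomposition to the definable family $\{B_t\}_{t \in V}$ of subsets of $\Gamma^l$. After a preliminary finite definable partition of $V$, such a decomposition describes each fiber $B_t$ as a Boolean combination of cells in $\Gamma^l$ whose boundaries are given by continuous definable functions $f_1, \ldots, f_N : V \fun \Gamma$; in particular the tuple $(f_1(t), \ldots, f_N(t))$ determines $B_t$. By Lemma~\ref{gk:ortho}, every definable function $V \fun \Gamma$ has finite image, so each $f_j$ takes only finitely many values. Hence $(f_1, \ldots, f_N) : V \fun \Gamma^N$ has finite image, and its level sets refine $V$ into the required finite definable partition on which $B_t$ is constant.

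The main obstacle is the uniform cell decomposition step: one must be sure that the parameters defining the cells of $B_t$ can be chosen as definable functions of $t$ valued in $\Gamma$. This rests on the stable embeddedness of $\Gamma$ noted after Lemma~\ref{gk:ortho} together with $\lan{}{TRV}{}$-quantifier elimination (Theorem~\ref{theos:qe}), which together let us extract uniform $\Gamma$-valued parameters describing each $B_t$ as a definable function of the $\K$-parameter $t$. Once this is in place, orthogonality closes the argument at once.
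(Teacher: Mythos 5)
Your proof is correct, but it takes a different route from the paper's. The paper's argument is more direct: stable embeddedness of $\Gamma$ shows that each fiber $A_t$ is $\vrv(t)$-definable in the $\Gamma$-sort, and since $\vrv(U)$ is finite, compactness over the finitely many $\gamma \in \vrv(U)$ immediately yields the finite partition --- no invocation of orthogonality is made. Your reduction, in contrast, spends the finiteness of $\vrv(U)$ up front to fix a single $\gamma$ and translate the $\vrv$-fibers to $(\K^+)^k$; you then argue via uniform cell decomposition that the $\Gamma$-parameters of the fibers $B_t$ depend definably on $t \in V \sub \K^k$, and appeal to orthogonality (Lemma~\ref{gk:ortho}) to conclude those $\Gamma$-valued parameter functions have finite image. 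Both approaches rest on stable embeddedness and a compactness-type finitization; yours brings in orthogonality as an extra tool, which the paper's two-line argument sidesteps. Two minor points worth tightening: (i) the ``boundaries'' of the cells of $B_t$ are functions of $\Gamma^{l-1}$, not of $V$ --- what you really need, and what the compactness step supplies, is that the $\Gamma$-constants appearing in the piecewise $\Q$-linear description of $B_t$ vary definably with $t$; and (ii) Lemma~\ref{gk:ortho} is stated for a single variable, so you should remark that the multi-variable analogue $\K^k \fun \Gamma$ follows by a routine induction. Neither is a real gap, but the paper's proof reaches the conclusion more economically.
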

\begin{proof}
By stable embeddedness, for every $t \in U$, $A_t$ is $\vrv(t)$-definable in the $\Gamma$-sort alone. Since $\vrv(U)$ is finite, the assertion simply follows from compactness.
\end{proof}

\begin{lem}\label{gam:tup:red}
Let $\beta$, $\gamma = (\gamma_1, \ldots, \gamma_m)$ be tuples in $\Gamma$. If there is a $\beta$-definable nonempty proper subset of $\vrv^{-1}(\gamma)$ then, for some $\gamma_i$, $\vrv^{-1}(\gamma_i)$ contains a $t$-definable point for every $t \in \vrv^{-1}(\wh \gamma_i)$. Consequently, if $U$ is a $\beta$-definable subset of $\vrv^{-1}(\gamma)$ then there is a (possibly empty) subtuple  $\gamma^* \sub  \gamma$ such that $\pr_{\gamma^*}(U) = \vrv^{-1}(\gamma^*)$, where $\pr_{\gamma^*}$ denotes the obvious coordinate projection, and there is a $\beta$-definable function from $\vrv^{-1}(\gamma^*)$ into $\vrv^{-1}(\gamma \mi \gamma^*)$.
\end{lem}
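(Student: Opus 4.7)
The plan is to bootstrap both assertions from the observation stated just before Lemma~\ref{gk:ortho} --- that if a point of $\Gamma$ is $\delta$-definable, then its $\vrv$-preimage contains a $\delta$-definable point of $\RV$ (the parameterized version follows from the unparameterized one by working over $\mdl S$ augmented by $\delta$).

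For the first assertion, I would argue as follows. The hypothesis provides a $\beta$-definable nonempty $U \sub \vrv^{-1}(\gamma)$. Each coordinate projection $\pr_i(U)$ is then a $\beta$-definable nonempty subset of $\vrv^{-1}(\gamma_i)$, so $\vrv(\pr_i(U)) = \{\gamma_i\}$ is $\beta$-definable, forcing $\gamma_i \in \dcl(\beta)$ for every $i$. By the observation, $\vrv^{-1}(\gamma_i)$ contains a $\beta$-definable point $s_i^0$, which is trivially $t$-definable for every $t \in \vrv^{-1}(\wh\gamma_i)$. This yields the conclusion in fact for every index $i$; the ``proper'' half of the hypothesis is never used.

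For the second assertion, assume $U$ is nonempty (otherwise there is nothing to prove) and choose $\gamma^* \sub \gamma$ maximal (as a set of indices) with $\pr_{\gamma^*}(U) = \vrv^{-1}(\gamma^*)$; such a $\gamma^*$ exists since $\gamma^* = \0$ trivially works and $\gamma$ is finite. For every index $j$ with $\gamma_j \notin \gamma^*$, maximality forces $\pr_{\gamma^* \cup \{\gamma_j\}}(U)$ to be a proper $\beta$-definable subset of $\vrv^{-1}(\gamma^* \cup \{\gamma_j\})$ whose further projection to $\vrv^{-1}(\gamma^*)$ is still all of $\vrv^{-1}(\gamma^*)$. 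Hence for every $t \in \vrv^{-1}(\gamma^*)$ the $\gamma_j$-fiber of $\pr_{\gamma^* \cup \{\gamma_j\}}(U)$ over $t$ is a $(\beta, t)$-definable nonempty subset of $\vrv^{-1}(\gamma_j)$, so the same projection-and-observation argument as in the first assertion (with $(\beta, t)$ in place of $\beta$) exhibits a $(\beta, t)$-definable point of $\vrv^{-1}(\gamma_j)$.

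The remaining task is to patch these pointwise $(\beta, t)$-definable witnesses into a single $\beta$-definable function $g_j : \vrv^{-1}(\gamma^*) \fun \vrv^{-1}(\gamma_j)$, and then combine the $g_j$'s into the required $g : \vrv^{-1}(\gamma^*) \fun \vrv^{-1}(\gamma \mi \gamma^*)$. I would do this by a standard compactness argument: the formulas $\phi(x, y, \beta)$ each of which defines a singleton in $\vrv^{-1}(\gamma_j)$ for at least one $t$ cover $\vrv^{-1}(\gamma^*)$, so by saturation finitely many suffice; weak \omin-minimality of the $\RV$-sort (Remark~\ref{rem:RV:weako}) then refines this finite cover into a $\beta$-definable partition on each piece of which a uniform formula works, allowing $g_j$ to be assembled globally. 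This uniformity/gluing is the only real obstacle; everything else reduces to the observation preceding Lemma~\ref{gk:ortho} together with routine manipulations of coordinate projections.
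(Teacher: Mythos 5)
The opening move is the flaw. You claim that the observation preceding Lemma~\ref{gk:ortho} relativizes to arbitrary parameter sets (``if $\gamma$ is $\delta$-definable, then $\vrv^{-1}(\gamma)$ has a $\delta$-definable point''), justified by ``working over $\mdl S$ augmented by $\delta$.'' This fails precisely when $\delta$ is a tuple in $\Gamma$, which is the case you need since you apply it with $\delta = \beta$. In fact it directly contradicts the second clause of Lemma~\ref{RV:no:point}: any $\RV$-point that is $\beta$-definable with $\beta \in \Gamma$ is already $\mdl S$-definable, so a $\beta$-definable point of $\vrv^{-1}(\gamma_i)$ would force $\gamma_i \in \dcl(\mdl S)$. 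The unparametrized observation is a special property of the fixed base $\mdl S$; adjoining a $\Gamma$-parameter $\beta$ does not yield a base of the same kind, because $\dcl(\mdl S, \beta) \cap \RV = \dcl(\mdl S) \cap \RV$ while $\dcl(\mdl S, \beta) \cap \Gamma$ genuinely grows, so $\vrv$ loses its definable section over the larger base. Concretely: take $\delta \in \Gamma \mi \dcl(\mdl S)$, $\gamma = (\delta, \delta)$, $\beta = \delta$, and let $U$ be the diagonal in $\vrv^{-1}(\gamma)$. Then $U$ is $\beta$-definable, nonempty, and proper, yet $\vrv^{-1}(\delta)$ has no $\beta$-definable point.

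You note that ``proper'' is never used and that your conclusion holds for every index $i$; both should be warning signs. The strengthened conclusion is actually false: with $\gamma = (\delta, \delta, \epsilon)$ for $\Q$-independent $\delta, \epsilon \notin \dcl(\mdl S)$, $\beta = (\delta,\epsilon)$, and $U = \{(s,s,r)\} \sub \vrv^{-1}(\gamma)$, the third coordinate fails the conclusion while the first two satisfy it. The paper instead proves the first assertion by induction on $m$: the base case $m = 1$ shows, via \omin-minimality in the $\K$-sort together with Lemma~\ref{RV:no:point}, that $\vrv^{-1}(\gamma_1)$ has an $\mdl S$-definable point; the inductive step applies the inductive hypothesis to the $\beta$-definable set $\{t \in \pr_{>1}(U) : U_t \neq \vrv^{-1}(\gamma_1)\} \sub \vrv^{-1}(\wh\gamma_1)$, and this is exactly where ``proper'' is used. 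The second assertion is then derived from the correctly proved first one; your version of it inherits the same flawed relativization.
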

\begin{proof}
For the first assertion we do induction on $m$. The base case $m = 1$ simply follows from \omin-minimality in the $\K$-sort and Lemma~\ref{RV:no:point}. For the inductive step $m > 1$, let $U$ be a $\beta$-definable nonempty proper subset of $\vrv^{-1}(\gamma)$. By the inductive hypothesis, we may assume
\[
\{ t \in \pr_{>1}(U) : U_t \neq \vrv^{-1}(\gamma_1)\} = \vrv^{-1}(\wh \gamma_1).
\]
Then $\gamma_1$ is as desired.

The second assertion follows easily from the first.
\end{proof}

\begin{lem}\label{dim:cut:gam}
Let $U \sub \RV^n$ be a definable subset with $\dim_{\RV}(U) = k$. Then $\dim_{\RV}(U_{\gamma}) = k$ for some $\gamma \in \vrv(U)$.
\end{lem}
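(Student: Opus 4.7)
The plan is to argue in three stages: reduce to a question about $V := \pr_{\leq k}(U) \sub \RV^k$, explicitly produce $\delta \in \Gamma^k$ with $\dim_{\RV}(V_\delta) = k$, and then transfer this up to a $\gamma \in \Gamma^n$ for $U$ by invoking orthogonality of the $\K$- and $\Gamma$-sorts.

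First I would appeal to the characterization of $\RV$-dimension for weakly \omin-minimal structures from \cite[\S 4]{mac:mar:ste:weako}: after reindexing we may assume $V := \pr_{\leq k}(U) \sub \RV^k$ has non-empty interior, so contains an open box $\prod_i (a_i, b_i)$, which we may arrange to avoid the middle element $\infty \in \RV$. The key geometric fact is that each $\vrv$-fiber in $\RV$ is a coset of the convex subgroup $\K^+$, hence is a convex subset of the ordered set $\RV$. For each $i$, either $\vrv((a_i, b_i))$ collapses to a singleton $\{\delta_i\}$, in which case $(a_i, b_i) \sub \vrv^{-1}(\delta_i)$, or it is a non-trivial interval in $\Gamma$; in the latter case, choose $\delta_i$ in its interior so that by convexity the entire fiber $\vrv^{-1}(\delta_i)$ lies inside $(a_i, b_i)$. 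Assembling $\delta = (\delta_1, \ldots, \delta_k)$, the intersection $\prod_i (a_i, b_i) \cap \vrv^{-1}(\delta)$ is then an open subset of $\vrv^{-1}(\delta) \cong (\K^+)^k$ (via Remark~\ref{rem:K:aff}), giving $\dim_{\RV}(V_\delta) = k$ by Remark~\ref{rem:RV:weako}.

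Next I would lift back to $U$. Set $W = U \cap (\vrv^{-1}(\delta) \times \RV^{n-k})$, so $\pr_{\leq k}(W) \supseteq V_\delta$. Since $\dim_{\RV}(W) \leq \dim_{\RV}(U) = k$ while $\pr_{\leq k}$ surjects onto a set of dimension $k$, we get $\dim_{\RV}(W) = k$, and the fiber-additivity of dimension in weakly \omin-minimal structures forces the generic fiber $W_t$ ($t \in V_\delta$) to be finite, say on a definable $V_\delta^* \sub V_\delta$ of the same dimension. Define a definable function $h : V_\delta^* \fun \Gamma^{n-k}$ by sending $t$ to the lexicographic minimum of the finite set $\vrv(W_t)$. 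Since $V_\delta^*$ sits inside a $\K^+$-torsor power and therefore, after translation, identifies with a subset of $\K^k$, the $\K$-$\Gamma$ orthogonality of Lemma~\ref{gk:ortho} (iterated coordinate-wise in $\Gamma^{n-k}$) forces $h(V_\delta^*)$ to be finite. Hence some preimage $h^{-1}(\gamma')$ has $\RV$-dimension $k$, and for every $t$ in this preimage there is, by construction, some $s \in W_t$ with $\vrv(s) = \gamma'$. Setting $\gamma = (\delta, \gamma')$, we get $\pr_{\leq k}(U_\gamma) \supseteq h^{-1}(\gamma')$, so $\dim_{\RV}(U_\gamma) \geq k$; the reverse inequality is immediate from $U_\gamma \sub U$.

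The step I expect to be subtlest is the fiber-additivity input: checking that $\pr_{\leq k} : W \fun V_\delta$ has generically finite fibers when $\dim_{\RV}(W) = \dim_{\RV}(V_\delta) = k$. This is part of the dimension theory of \cite[\S 4]{mac:mar:ste:weako}, but one should verify it carefully in the current two-sorted setting. The other delicate point is the passage from the one-variable orthogonality Lemma~\ref{gk:ortho} to the finiteness of a definable function from a $\K^k$-subset into $\Gamma^{n-k}$, which one handles by projecting to each $\Gamma$-coordinate separately and combining. The remaining bookkeeping (convexity of $\vrv$-fibers, the identification $\vrv^{-1}(\delta) \cong (\K^+)^k$, and arranging $\delta$ to avoid the middle element) is routine.
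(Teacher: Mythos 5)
Your argument is correct, and it reaches the conclusion by a considerably more explicit route than the paper's proof, which appeals directly to \cite[Theorem~4.11]{mac:mar:ste:weako} and \omin-minimality in the $\K$-sort. You make explicit three supporting facts that this appeal compresses: that $\vrv$-fibers are convex (being cosets of the convex subgroup $\K^+$), which lets you pin down $\delta$ on the first $k$ coordinates; a one-sided additivity of $\RV$-dimension under projection, which forces the fibers of $W \fun V_\delta$ to be generically finite; and a multi-variable, parametrized version of the $\K$--$\Gamma$ orthogonality of Lemma~\ref{gk:ortho}, which controls the remaining $\Gamma$-data on the last $n-k$ coordinates. The two points you flag as needing verification are both fine: the additivity you use is exactly the inequality $\rk(ts) \geq \rk(t) + \rk(s/t)$, which is available in the $\RV$-sort because the exchange principle holds there (Remark~\ref{rem:RV:weako}); and the iterated orthogonality follows from stable embeddedness and orthogonality of $\K$ and $\Gamma$ over arbitrary parameters, which is what van den Dries's result gives. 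A shorter argument, closer in spirit to the paper's citation, runs through the $\acl$-rank characterization of $\dim_{\RV}$ from \cite[\S4.12]{mac:mar:ste:weako}: take $t \in U$ of rank $k$, set $\gamma = \vrv(t)$, and observe that adjoining $\Gamma$-parameters does not lower $\RV$-rank (an algebraic-closure analogue of Lemma~\ref{RV:no:point}), so $\dim_{\RV}(U_\gamma) \geq \rk(t/\gamma) = k$. Your proof does essentially the same work, just unpacked into a concrete geometric construction rather than a rank computation; it is longer but more self-contained.
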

\begin{proof}
This follows from \cite[Theorem~4.11]{mac:mar:ste:weako} and \omin-minimality in the $\K$-sort.
\end{proof}

\begin{defn}[$\RV$-categories]\label{defn:c:RV:cat}
An object $U$ of the category $\RV_k$ is a definable subset in $\RV$ with $\dim_{\RV}(U) \leq k$. Any definable bijection between two such objects is a \emph{morphism} of $\RV_k$. Set $\RV_* = \bigcup_k \RV_k$.

An object of the category $\RV[k]$ is a definable pair $(U, f)$, where $U$ is a subset in $\RV$ and $f : U \fun (\RV^{\times})^k$ is a finite-to-one function. Given two such objects $(U, f)$ and $(V, g)$, any definable bijection $F : U \fun V$ is a \emph{morphism} of $\RV[k]$. Note that such a morphism $F$ induces a finite-to-finite correspondence between $f(U)$ and $g(V)$:
\[
F^{\rightleftharpoons} \coloneqq \{(t, s) \in f(U) \times g(V) : \ex{u \in U} (f(u) = t \wedge (g \circ F)(u) = s) \}.
\]
Set $\RV[\leq k] = \coprod_{i \leq k} \RV[i]$ and $\RV[*] = \coprod_k \RV[k]$.
\end{defn}

\begin{lem}\label{RV:decom:RES:G}
Let $U \sub \RV^m$ be a definable subset. Then there are finitely many objects $V_i \times \vrv^{-1}(D_i) \sub \K^{k_i} \times \vrv^{-1}(\Gamma)^{l_i}$ in $\RV_*$ such that $k_i + l_i = m$ for all $i$ and $[U] = \sum_i [V_i \times \vrv^{-1}(D_i)]$ in $\gsk \RV_*$.
\end{lem}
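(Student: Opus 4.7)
The plan is an induction on $m$, reducing at each step by a combination of $\Gamma$-cell decomposition, Lemma~\ref{gam:red:K}, orthogonality of $\K$ and $\Gamma$ (Lemma~\ref{gk:ortho}), and Lemma~\ref{gam:tup:red} for the top-dimensional case. The base case $m = 0$ is trivial. For the inductive step, I first partition $U$ according to the set $S \sub [m]$ of coordinates taking the value $\infty$: writing $U_S \cong \{\infty\}^S \times U'_S$ with $U'_S \sub (\RV^{\times})^{[m] \setminus S}$, and noting $\{\infty\}^S \sub \K^{|S|}$ (since $\infty$ plays the role of $0$ in $\K$), I reduce to the case $U \sub (\RV^{\times})^m$, at the cost of absorbing $|S|$ extra coordinates into the $\K$-factor of the eventual decomposition.

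Set $D = \vrv(U) \sub \Gamma^m$ and apply o-minimal cell decomposition in $\Gamma$. Handling each cell separately, I may assume $D$ is a single $\Gamma$-cell of dimension $k \leq m$. Lemma~\ref{gam:red:K} then produces a definable bijection $\vrv^{-1}(D) \cong (\K^+)^{m-k} \times \vrv^{-1}(D')$ for some $D' \sub \Gamma^k$, which transfers $U$ to a definable subset $W \sub (\K^+)^{m-k} \times \vrv^{-1}(D')$. Viewing $W$ as a $\K$-parameterized family of subsets of $\vrv^{-1}(D')$ and noting that each fiber $W_v$ is $v$-definable in $\RV$, stable embeddedness of the relevant sorts together with Lemma~\ref{gk:ortho} yield, via a compactness argument in the spirit of Lemma~\ref{resg:decom}, a finite partition $(\K^+)^{m-k} = \bigsqcup_j V_j$ and fixed definable subsets $X_j \sub \vrv^{-1}(D')$ such that $W = \bigsqcup_j V_j \times X_j$; the essence is that only finitely many $\RV$-definitions of the fiber $W_v$ can occur as $v$ varies over $\K$.

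If $k < m$, then each $X_j \sub \RV^k$ falls under the inductive hypothesis, yielding $[X_j] = \sum_p [V'_p \times \vrv^{-1}(D'_p)]$ with $k_p + l_p = k$; combining with the $V_j$ factor gives $[V_j \times X_j] = \sum_p [(V_j \times V'_p) \times \vrv^{-1}(D'_p)]$, and $(m - k + k_p) + l_p = m$ as required. The remaining top-dimensional case $k = m$ is the main technical obstacle, since Lemma~\ref{gam:red:K} yields no reduction. Here I apply Lemma~\ref{gam:tup:red} to $U_\gamma \sub \vrv^{-1}(\gamma)$ pointwise and uniformize via compactness over a partition of $D$: either $U = \vrv^{-1}(D)$ already, which is of the desired form with $V = \{\mathrm{pt}\}$, $k_i = 0$, $l_i = m$, or a fixed proper sub-tuple $I \subsetneq [m]$ together with a uniformly definable function $f$ from $\vrv^{-1}(D|_I)$ into $\vrv^{-1}(D|_{[m]\setminus I})$ allows one to definably trivialize (by the substitution $u_{[m]\setminus I} \mapsto u_{[m]\setminus I}/f(u_I)$ in the torsor structure) the coordinates outside $I$, reducing the problem to a definable subset of $\K^{|[m]\setminus I|} \times \RV^{|I|}$ with strictly fewer $\RV$-coordinates, to which the inductive hypothesis applies after the $\K$-factor is absorbed as in the first paragraph.
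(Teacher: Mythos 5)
Your proposal goes wrong at the step where, for $k<m$, you claim that the transferred set $W\sub(\K^+)^{m-k}\times\vrv^{-1}(D')$ decomposes as an honest finite disjoint union $W=\bigsqcup_j V_j\times X_j$ with the $V_j$ partitioning $(\K^+)^{m-k}$ and the $X_j$ fixed definable subsets of $\vrv^{-1}(D')$, justified by ``only finitely many $\RV$-definitions of the fiber $W_v$ occur''. This is false in general: there can be infinitely many distinct fibers. For instance, if $s\in\RV(\mdl S)$ is a parameter with $\delta\coloneqq\vrv(s)\in D'$, the set $W=\{(v,u): v\in\K^+,\ u\in\vrv^{-1}(D'),\ u<sv\}$ has fibers $W_v$ that vary continuously with $v$ on the level $\vrv^{-1}(\delta)$, since $sv$ sweeps out that entire fiber as $v$ ranges over $\K^+$. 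The issue is that Lemma~\ref{resg:decom}, which you invoke ``in spirit'', has the crucial hypothesis that the fibers live in $\Gamma^l$; its proof uses stable embeddedness of $\Gamma$ to conclude each fiber is definable with $\Gamma$-parameters, and compactness over a finite $\vrv$-image then forces finiteness. Here the fibers live in $\vrv^{-1}(D')\sub\RV^k$, which is the ambient sort, and neither stable embeddedness of $\K$ nor Lemma~\ref{gk:ortho} gives you control of $\K$-parameterized families of $\RV$-subsets.

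The correct mechanism is exactly what you reserve for the $k=m$ case: Lemma~\ref{gam:tup:red} supplies, for each $\gamma$, a subtuple $\gamma^*$ with $\pr_{\gamma^*}(U_\gamma)=\vrv^{-1}(\gamma^*)$ and a $\gamma$-definable section from $\vrv^{-1}(\gamma^*)$ into the remaining fibers; the torsor translation by this section straightens $U_\gamma$ so that the remaining coordinates become $\K$-coordinates and the genuinely $\Gamma$-shaped ones become unions of full fibers. Only after this normalization does the parameterized family land inside $\K^k\times\Gamma^l$ in the sense required by Lemma~\ref{resg:decom}, and the paper's proof applies it at precisely that point. In short, Lemma~\ref{gam:red:K} lets you peel off $(\K^+)^{m-k}$, but it does not remove the need for the Lemma~\ref{gam:tup:red} straightening when $k<m$; you need to run the gam:tup:red/compactness/induction reduction uniformly, not just in the top-dimensional case. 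Everything else in your write-up (the $\infty$-coordinate bookkeeping, the $\Gamma$-cell decomposition, the $k=m$ argument) is sound and consistent with the paper's approach.
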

\begin{proof}
The case $m=1$ is an immediate consequence of HNF or weak \omin-minimality in the $\RV$-sort. For the case $m>1$, by Lemma~\ref{gam:tup:red}, compactness, and a routine induction on $m$, we may assume $U \sub \K^{k} \times \Gamma^{l}$ with $k + l = m$, that is, $U$ is a definable subset of $\K^{k} \times \RV^{l}$ that may be written as a union of subsets of the form $\{t\} \times \vrv^{-1}(\gamma)$, where $t \in \K^k$ and $\gamma \in \Gamma^l$. Then the assertion follows from Lemma~\ref{resg:decom}.
\end{proof}

Clearly for all $\bm U \coloneqq (U, f) \in \RV[k]$ there is a definable finite partition $\bm U_i \coloneqq (U_i, f_i)$ of $\bm U$ such that each $f_i$ is injective; in other words, $[\bm U] = \sum_i [\bm U_i]$ in $\gsk \RV[k]$. The forgetful map $\ob \RV[k] \fun \ob \RV_k$ given by $(U, f) \efun U$ induces a semigroup isomorphism $\gsk \RV[k] \simeq \gsk \RV_k$.

\begin{defn}[$\RES$-categories]\label{defn:RES:cat}
The category $\RES_k$ is the full subcategory of $\RV_k$ such that $U \in \RES_k$ if and only if $\vrv(U)$ is finite. Set $\RES_* = \bigcup_k \RES_k$.

The category $\RES[k]$ is the full subcategory of $\RV[k]$ such that $(U, f) \in \RES[k]$ if and only if $\vrv(U)$ is finite. Set $\RES[\leq k] = \coprod_{i \leq k} \RES[i]$ and $\RES[*] = \coprod_k \RES[k]$.
\end{defn}

Observe that the semiring $\gsk \RES_*$ is generated by isomorphism classes $[U]$ with $U$ a subset in $\K^+$. Since \cite[\S 8.2.11]{dries:1998} holds for any \omin-minimal expansion of an ordered field, we have the following explicit description of $\gsk \RES_*$: its underlying set is $(\{0\} \times \N) \cup (\N^+ \times \Z)$ and, for $(a, b), (c, d) \in \gsk \RES_*$,
\[
(a, b) + (c, d) = (\max\{a, c\}, b+d), \quad (a, b) \times (c, d) = (a + c, b \times d).
\]
By the computation in \cite{kage:fujita:2006}, the dimensional part is lost in the groupification of $\gsk \RES_*$, that is, $\ggk \RES_* = \Z$, which is of course much simpler than $\gsk \RES_*$. However, following the philosophy of \cite{hrushovski:kazhdan:integration:vf}, we shall work with Grothendieck semirings whenever possible.

By Lemma~\ref{gk:ortho}, if $(U, f) \in \RES[*]$ then $\vrv(f(U))$ is finite as well. Thus $\gsk \RES[*]$ is generated by isomorphism classes $[(U, f)]$ with $f$ a bijection between two subsets in $\K^+$. As above, each $\gsk \RES[k]$ may be described explicitly: $\gsk \RES[0]$ is canonically isomorphic to the semiring $\{(0, 0)\} \times \N$ and if $k > 0$ then its underlying set is $\bigcup_{0 \leq i \leq k}(\{(k, i)\} \times \Z)$ and its semigroup operation is given by
\[
(k, i, a) + (k, i', a') = (k, \max\{i, i'\}, a + a').
\]
Moreover, multiplication in $\gsk \RES[*]$ is given by
\[
(k, i, a) \times (l, j, b) = (k+l, i + j, a \times b).
\]

\begin{defn}[$\Gamma$-categories]\label{def:Ga:cat}
An object of the category $\Gamma[k]$ is simply a definable subset of $\Gamma^k$. Any definable bijection between two such objects is a \emph{morphism} of $\Gamma[k]$. The category $\Gamma^{c}[k]$ is the full subcategory of $\Gamma[k]$ such that $I \in \Gamma^{c}[k]$ if and only if $I$ is finite.

Set $\Gamma[\leq k] = \coprod_{i \leq k} \Gamma[i]$ and $\Gamma[*] = \coprod_k \Gamma[k]$; similarly for $\Gamma^c[\leq k]$ and $\Gamma^c[*]$.
\end{defn}

We clearly have $\gsk \Gamma^c[k] = \N$ for all $k$ and hence there is a canonical isomorphism $\gsk \Gamma^c[*] \simeq \N[X]$.

\begin{lem}\label{G:red}
For all $I \in \Gamma[k]$ there are finitely many definable subsets $H_i \sub \Gamma^{n_i}$ and tuples $\gamma_i \in \Gamma^{m_i}$ such that $\dim_{\Gamma}(H_i) = n_i$, $n_i + m_i = k$, and $[I] = \sum_i [H_i] \times [\{ \gamma_i \}]$ in $\gsk \Gamma[*]$.
\end{lem}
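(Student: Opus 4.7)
The strategy is to invoke cell decomposition for the o-minimal reduct on the $\Gamma$-sort. By Theorem~B of \cite{Dries:tcon:97}, $\Gamma$ is (up to the identification of its ordering) a nontrivially ordered $\Q$-vector space, so standard o-minimal cell decomposition is available. Partition $I$ as a finite disjoint union of definable cells $C_1, \ldots, C_r \sub \Gamma^k$, and argue that each class $[C_j] \in \gsk \Gamma[*]$ can be rewritten as a product of the desired form.

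Fix one such cell $C$ with $n \coloneqq \dim_{\Gamma}(C) \leq k$ and $m \coloneqq k - n$. By the usual presentation of o-minimal cells, after a suitable permutation $\sigma$ of the $k$ coordinates, $\sigma(C)$ is the graph of a continuous definable function $f : H \fun \Gamma^m$ over an open cell $H \sub \Gamma^n$, and $H$ being open forces $\dim_{\Gamma}(H) = n$. Pick any definable tuple $\gamma \in \Gamma^m$ (e.g.\ the $m$-tuple whose entries are the identity of the group $\Gamma$). Then the map
\[
\Phi : C \fun H \times \{\gamma\}, \quad x \efun \bigl(\pr_{\leq n}(\sigma(x)),\, \gamma\bigr)
\]
is a definable bijection, since $\pr_{\leq n} \circ \sigma$ restricts to a bijection $C \fun H$. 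Hence $\Phi$ is a morphism in $\Gamma[k]$, and in $\gsk \Gamma[*]$ we have
\[
[C] \;=\; [H \times \{\gamma\}] \;=\; [H] \times [\{\gamma\}].
\]
Summing over $j = 1, \ldots, r$ produces the required expression $[I] = \sum_j [H_j] \times [\{\gamma_j\}]$ with $\dim_{\Gamma}(H_j) = n_j$ and $n_j + m_j = k$.

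No serious obstacle is anticipated: the content of the lemma is a shape statement about $\gsk \Gamma[*]$, and the only real ingredient is cell decomposition together with the freedom to permute coordinates and to replace the graph of a definable function over $H$ by the straight product $H \times \{\gamma\}$ via an explicit definable bijection. The one point to keep in mind is to choose $\gamma_j$ definably so that $\{\gamma_j\}$ is a legitimate object of $\Gamma[m_j]$ over our base $\mdl S$, which is automatic since $\Gamma$ has definable constants (such as the group identity).
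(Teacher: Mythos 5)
Your proof is correct, but it takes a genuinely different route from the paper's. The paper argues by induction on $k$: when $\dim_{\Gamma}(I) < k$ it partitions $I$ into pieces each lying in a hyperplane $\prod_j Z_j^{e_{ij}} = \gamma_i$ with coprime integer exponents, extends the row $(e_{i1}, \ldots, e_{ik})$ to a unimodular matrix $M_i \in \mgl_k(\Z)$, and uses the induced coordinate change to carry each piece onto a set of the form $I'_i \times \{\gamma_i\}$ with $I'_i \in \Gamma[k-1]$, so that the inductive hypothesis applies; the singleton $\gamma_i$ is thus read off from the linear data of the piece itself. You instead invoke \omin-minimal cell decomposition directly (legitimate since the definable structure on $\Gamma$ is that of a nontrivially ordered $\Q$-vector space by \cite[Theorem~B]{Dries:tcon:97}), use the standard fact that a cell of dimension $n$ is, after a coordinate permutation, the graph of a continuous definable map over a full-dimensional open cell $H \sub \Gamma^n$, and then replace that graph by $H \times \{\gamma\}$ via the explicit definable bijection that forgets the dependent coordinates and substitutes a fixed definable constant $\gamma$ (such as the identity tuple, which is $\0$-definable). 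Both arguments are sound. Yours is more uniform, avoiding the induction and the hyperplane/unimodular-matrix bookkeeping; the paper's stays within the explicitly semilinear/$\mgl_k(\Q)$ framework it also uses in the neighbouring Lemmas~\ref{gam:red:K} and~\ref{gam:pulback:mono}, and its $\gamma_i$ are canonically attached to $I$. Since the lemma asserts only the existence of some such decomposition, the arbitrariness of your $\gamma$ is harmless.
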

\begin{proof}
We do induction on $k$. The base case $k = 0$ is trivial. For the inductive step $k > 0$, the claim is also trivial if $\dim_{\Gamma}(I) = k$; so let us assume that $\dim_{\Gamma}(I) < k$ and, for simplicity, $I$ is a subset in $\Gamma^+$. We may partition $I$ into finitely many definable pieces $I_i$ such that each $I_i$ is contained in a hyperplane, that is, a subset defined by an equation of the form $\prod_{j=1}^k Z^{e_{ij}}_{ij} = \gamma_i$, where $\gamma_i$ is definable and the exponents $e_{ij} \in \Z$ are not all zero and are pairwise relatively prime. It is easy to see that for each $i$ we may find a matrix $M_i \in \mgl_k(\Z)$ such that the first row of $M_i$ is $(e_{i1}, \ldots, e_{ik})$. Since $M_i$ maps $I_i$ to a subset of the form $I'_i \times \{ \gamma_i \}$, where $I'_i \in \Gamma[k-1]$, and $[I_i] = [I'_i] \times [\{ \gamma_i \}]$ in $\gsk \Gamma[*]$, the claim simply follows from the inductive hypothesis.
\end{proof}

There is an obvious map $\ob \Gamma[*] \fun \ob \RV[*]$ given by $I \efun \bm I \coloneqq (\vrv^{-1}(I), \id)$, which is simply denoted by $\vrv^{-1}$. It follows from \cite[Theorem~A]{Dries:tcon:97} and \cite[Proposition~4.2.4]{dries:1998} (or Lemma~\ref{Gamma:lift} below) that this map $\vrv^{-1}$ induces a canonical injective homomorphism \[
\gsk \Gamma^{c}[*] \fun \gsk \RES[*]
\]
of graded semirings. Actually it also induces such a homomorphism $\gsk \Gamma[*] \fun \gsk \RV[*]$, which we shall establish later in this section.

Note that there is a similar semiring homomorphism $\gsk \Gamma^c[*] \fun \gsk \RES_*$, but it is not injective.

The following lemma is crucial for the tensor product expression we seek. It is easy to prove in algebraically closed valued fields or more generally in \cmin-minimal structures, but in the current context considerably more sophisticated argument is needed.

\begin{lem}\label{Gamma:lift}
Let $\bm D$ be a disjoint union of finitely many (not necessarily disjoint) definable subsets of $\Gamma^m$; similarly for $\bm E$. Set $A = \vrv^{-1}(\bm D)$ and $B = \vrv^{-1}(\bm E)$. Suppose that there is a definable bijection
\[
f : (\K^+)^{n} \times A \fun (\K^+)^{n} \times B.
\]
Then $f$ induces a definable bijection $e: \bm D \fun \bm E$ in the sense that if $e(\alpha) = \beta$ then there are $t \in (\K^+)^{n} \times \vrv^{-1}(\alpha)$ and $s \in (\K^+)^{n} \times \vrv^{-1}(\beta)$ such that $f(t) = s$.
\end{lem}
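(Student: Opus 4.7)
The plan is to leverage orthogonality of the $\K$- and $\Gamma$-sorts (Lemma~\ref{gk:ortho}), together with the $\K$-torsor structure on $\vrv$-fibers from Remark~\ref{rem:K:aff}, to force $f$ to respect the $\vrv$-fibration up to a refinement of the given disjoint-union decompositions of $\bm D$ and $\bm E$.

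First I would define the incidence correspondence
\[
Z \coloneqq \{(\alpha, \beta) \in \bm D \times \bm E : Y_{\alpha, \beta} \neq \0\}, \quad Y_{\alpha, \beta} \coloneqq f\bigl((\K^+)^n \times \vrv^{-1}(\alpha)\bigr) \cap \bigl((\K^+)^n \times \vrv^{-1}(\beta)\bigr),
\]
with the indices of the disjoint unions tacitly carried. Each $Y_{\alpha, \beta}$ is uniformly definable in $(\alpha, \beta)$, and for fixed $\alpha$ the collection $\{Y_{\alpha, \beta}\}_{\beta \in Z_{\alpha}}$ partitions $(\K^+)^n \times \vrv^{-1}(\alpha)$. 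The latter is a finite disjoint union of definable $\K^+$-torsors (one for each choice of sign on the coordinates of $\vrv^{-1}(\alpha)$, via Remark~\ref{rem:K:aff}); on each such torsor the composite $\vrv \circ \pi_B \circ f$ is a definable map into $\Gamma^m$, so Lemma~\ref{gk:ortho} forces its image to be finite. Hence $Z_{\alpha}$ is finite, and applying the symmetric argument to $f^{-1}$ shows $Z^{\beta}$ is finite.

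Next I would convert $Z$ into a bijection. By compactness $|Z_{\alpha}|$ and $|Z^{\beta}|$ are uniformly bounded, so using stable embeddedness of the $\Gamma$-sort and definable choice (via Lemma~\ref{gam:tup:red} applied fiberwise), I can parameterize $Z$ definably and refine the disjoint union structures: replace $\bm D$ by the disjoint union, over all $(\alpha, \beta) \in Z$, of a copy of the piece of $\bm D$ containing $\alpha$ (one copy per $\beta \in Z_{\alpha}$), and refine $\bm E$ analogously. These refinements are still valid disjoint unions of finitely many definable subsets of $\Gamma^m$, and on them the desired map $e$ is induced by the second projection $Z \to \bm E$.

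The main obstacle is verifying that $e$ really is a bijection between the two refined disjoint unions, rather than merely a many-to-many correspondence. This reduces to the tautological observation that $f$ restricts to a bijection $Y_{\alpha, \beta} \to Y_{\alpha, \beta}$ for each $(\alpha, \beta) \in Z$, so both refinements are indexed by the same set $Z$ and $e$ is the identity on $Z$ read through the two projections. The compatibility clause ``$e(\alpha) = \beta$ implies there exist $t, s$ with $f(t) = s$'' is then immediate: any $s \in Y_{\alpha, \beta}$ paired with $t = f^{-1}(s)$ witnesses it.
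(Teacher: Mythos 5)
Your argument correctly sets up the finite-to-finite correspondence $Z$: the observation that $Z_\alpha$ and $Z^\beta$ are finite via orthogonality (Lemma~\ref{gk:ortho}) is sound, as is the uniform boundedness by compactness. But this is only the starting point, and the step where the real content of the lemma lives is precisely where you take a shortcut that does not work.

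The lemma asserts a definable bijection $e : \bm D \fun \bm E$ between the \emph{original} disjoint unions. It is a cancellation statement: if $(\K^+)^n \times \vrv^{-1}(\bm D)$ and $(\K^+)^n \times \vrv^{-1}(\bm E)$ are definably bijective, then $\bm D$ and $\bm E$ are definably bijective (this is exactly how the lemma is used in Proposition~\ref{red:D:iso} and Corollary~\ref{gsk:gamma:inj}). A finite-to-finite correspondence is not a bijection, and in general cannot be trimmed to one: if, say, $f$ splits $(\K^+)^n \times \vrv^{-1}(\alpha)$ across two distinct fibers $\vrv^{-1}(\beta_1)$ and $\vrv^{-1}(\beta_2)$ then $|Z_\alpha| = 2$, and there is no reason the counts on the two sides reconcile into a bijection without further work. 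Your ``refinement'' step sidesteps this by replacing $\bm D$ with $|Z_\alpha|$ copies of each $\alpha$ (and likewise for $\bm E$), so that both refined objects become isomorphic to $Z$ and $e$ is the tautological identity. But that proves a statement about the refined disjoint unions $\bm D' \simeq Z \simeq \bm E'$, which are different definable objects (with strictly larger multiplicity wherever $|Z_\alpha| > 1$), not about $\bm D$ and $\bm E$. You have changed the problem rather than solved it.

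The paper's proof confronts exactly this difficulty. It proceeds by induction on $m$, isolating the sub-correspondence $M$ where $f$ sends whole fibers to whole fibers, and showing that wherever this fails, the offending $\alpha$ (or $\beta$) is ``reducible'' and a coordinate of $\Gamma$ can be traded for a $\K^+$-factor, dropping $m$. The delicate part --- the Euler-characteristic bookkeeping via \cite[\S8.2.11]{dries:1998} and the explicit modifications of $f$ so that the correspondence is progressively straightened out --- is precisely what turns the finite-to-finite correspondence into an honest bijection on the original data. That machinery has no analogue in your write-up, so the gap is genuine.
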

\begin{proof}
We do induction on $m$. The base case $m=0$ simply means that $\bm D$ and $\bm E$ are of the same finite size, which again follows from \cite[Theorem~A]{Dries:tcon:97} and \cite[Proposition~4.2.4]{dries:1998}. We proceed to the inductive step. We say that a tuple $\alpha = (\alpha_1, \ldots, \alpha_m) \in \bm D$ is \emph{reducible} if $\vrv^{-1}(\alpha)$ is $\alpha$-definably bijective to $\K^+ \times \vrv^{-1}(\wh \alpha_i)$ for some $i$; similarly for $\beta \in \bm E$.

Let $M \sub (\K^+)^n \times A$ be the union of the subsets $\{t\} \times \vrv^{-1}(\alpha)$ such that $f(\{t\} \times \vrv^{-1}(\alpha))$ is also of the form $\{s\} \times \vrv^{-1}(\beta)$. Let $\wh{\bm D} \sub \bm D$ consist of those $\alpha$ such that $\{t\} \times \vrv^{-1}(\alpha) \sub M$ for all $t \in (\K^+)^n$; the subset $\wh{\bm E} \sub \bm E$ is constructed similarly. Applying Lemma~\ref{resg:decom} to (the graph of) $f \rest M$, we obtain a definable finite partition
\[
f_i : U_i \times \vrv^{-1}(\bm D_i) \fun V_i \times \vrv^{-1}(\bm E_i)
\]
of $f \rest M$ such that each $f_i$ may be written in the form $(t, s) \efun (f'_i(t), f''_{i, t}(s))$, where
\begin{itemize}
  \item $f'_i : U_i \fun V_i$ is a definable bijection,
  \item $f''_{i, t} : \vrv^{-1}(\bm D_i) \fun \vrv^{-1}(\bm E_i)$ is a $t$-definable bijection that $\Gamma$-contracts to a definable bijection $\bm D_i \fun \bm E_i$.
\end{itemize}
In fact, by definable choice in the $\K$-sort, there is a definable $t \in U_i$ and hence each $f_i$ naturally yields a definable bijection between $\vrv^{-1}(\bm D_i)$ and $\vrv^{-1}(\bm E_i)$. We may assume that for no $i$, $j$ do we have
\[
f((\K^+)^n \times \vrv^{-1}(\bm D_i)) = (\K^+)^n \times \vrv^{-1}(\bm E_j),
\]
for otherwise they may simply be deleted. We may also assume that, for some $k > 1$,
\begin{itemize}
  \item $\bm D_1 = \ldots = \bm D_k \sub \wh{\bm D}$ and $\bm E_1, \ldots, \bm E_k \sub \wh{\bm E}$,
  \item $\bigcup_{1 \leq i \leq k} U_i = (\K^+)^n$.
\end{itemize}
The reason is as follows. If this does not happen to either the subsets $\bm D_i$ or the subsets $\bm E_j$ then, by definable choice in the $\K$-sort and Lemma~\ref{gam:tup:red}, all $\alpha \in \bm D$ and all $\beta \in \bf E$ are actually reducible. Hence, by compactness, upon further partitioning and re-indexing the coordinates, we may assume that $A$ and $B$ are respectively of the forms $(\K^+)^{n+1} \times \vrv^{-1}(\bm D')$ and $(\K^+)^{n+1} \times \vrv^{-1}(\bm E')$, where $\bm D'$ and $\bm E'$ are subsets of $\Gamma^{m-1}$. Then the inductive hypothesis may be applied.

For notational simplicity, let us further assume that $\bm E_1, \ldots, \bm E_k$ are pairwise distinct. By \cite[\S8.2.11]{dries:1998}, some $V_i$, say $V_1$, is a proper subset of $(\K^+)^n$. Without loss of generality, we may assume
\[
f^{-1}(((\K^+)^n \mi V_1) \times \vrv^{-1}(\bm E_1)) = U_p \times \vrv^{-1}(\bm D_p)
\]
for some $p > k$ and
\[
((\K^+)^n \mi U_p) \times \vrv^{-1}(\bm D_p) = U_q \times \vrv^{-1}(\bm D_q)
\]
for some $q > k$. Set $W = \biguplus_{1 \leq i \leq k} ((\K^+)^n \mi V_i)$ and suppose that $\dim_{\RV}(W) < n$. Let $K' \subsetneq K \sub V_1$ and $L' \subsetneq L \sub U_q$ be definable $n$-dimensional cells such that $K \mi K'$ and $L \mi L'$ are also $n$-dimensional. By \cite[\S8.2.11]{dries:1998} again, we have
\begin{itemize}
  \item $K'$ is definably bijective to $K$ and $L'$ is definably bijective to $L$,
  \item $K \mi K'$ is definably bijective to $L \mi L'$.
\end{itemize}
Therefore we may modify $f \rest M$ such that
\[
f((L \mi L') \times \vrv^{-1}(\bm D_q)) = (K \mi K') \times \vrv^{-1}(\bm E_1).
\]
Now, after such a modification of $f$ if necessary, we have
\[
\dim_{\RV} (W) = n \quad \text{and} \quad \chi(W) = (-1)^n(k-1).
\]
Applying \cite[\S8.2.11]{dries:1998} once more, we obtain a definable bijection $g$ from
\[
\bigcup_{1 \leq i \leq k} ((\K^+)^n \times \vrv^{-1}(\bm E_i))
\]
onto itself such that
\begin{itemize}
        \item $g(\bigcup_{1 \leq i \leq k} (V_i \times \vrv^{-1}(\bm E_i))) = (\K^+)^n \times \vrv^{-1}(\bm E_1)$,
        \item every $g(\{t\} \times \vrv^{-1}(\alpha))$ is of the form $\{s\} \times \vrv^{-1}(\beta)$.
\end{itemize}
Thus we have a commutative diagram
\[
\bfig
  \Vtriangle(0,0)/->`->`<-/<800,400>[M \mi ((\K^+)^n \times \vrv^{-1}(\bm D_1))`f(M) \mi ((\K^+)^n \times \vrv^{-1}(\bm E_1))`f(M) \mi \bigcup_{1 \leq i \leq k} (V_i \times \vrv^{-1}(\bm E_i)); f^*`f`g^*]
\efig
\]
where $g^*$ is the function naturally induced by $g$.

We may further partition the subsets $U_i$ for $i > 1$ if necessary, but leave the subsets $\bm D_i$ for $i > 1$ unchanged, so that the above procedure may be repeated with respect to $f^*$. Notice that the subsets $\bm E_i$ for $i > 1$ are also unchanged and the above procedure may be repeated with respect to $(f^*)^{-1}$ as well. Therefore, we will eventually reach a situation where the above procedure can no longer be carried out. As we have seen, this means that all $\alpha \in \bm D$ and all $\beta \in \bf E$ are reducible and the inductive hypothesis may be applied.

The additional requirement for the desired bijection is by extra bookkeeping. The details are left to the reader.
\end{proof}

\begin{rem}\label{rem:Ga:match:impr}
We have stated Lemma~\ref{Gamma:lift} with simplified assumptions so that the key points in the proof become more apparent. There are more general versions of this lemma. Here we describe one, which is only slightly more complicated but attains extra flexibility in application.

The first improvement is that, instead of $(\K^+)^{n}$, we may take any definable subset $W$ of $\K^{n}$ as the first factor in the two products, that is, the domain and the range of the bijection $f$, and the same proof works.

For the second improvement, we introduce the following terminology, which is based on the content of \cite[\S 8.2.11]{dries:1998}. We say that a subset $U \sub \RV^n$ is \emph{$\K$-homogeneous of $\K$-type $(k, l)$} if $\dim_{\RV}(U_{\gamma}) = k$ and $\chi(U_{\gamma}) = l$ for all $\gamma \in \vrv(U)$; two $\K$-homogeneous subsets are \emph{$\K$-isotypical} if they are of the same $\K$-type.

Let $\bm D$ (resp.\ $\bm E$) be a disjoint union of finitely many (not necessarily disjoint) definable subsets of $\Gamma^{m_1}$ (resp.\ $\Gamma^{m_2}$). Fix a pair of integers $k$ and $l$, where $0 \leq k \leq \min\{m_1, m_2\}$. Let $A$ be a definable subset of $W \times \vrv^{-1}(\bm D)$ such that, for all $t \in W$, $\vrv(A_t) = \bm D$ and $A_t$ is $\K$-homogeneous of $\K$-type $(k, l)$; similarly for $B \sub W \times \vrv^{-1}(\bm E)$. Note that $\vrv(W) \in \Gamma$ is written as $1$. Suppose that there is a definable bijection $f : A \fun B$. Then the conclusion of Lemma~\ref{Gamma:lift} holds. To see this, we recall \cite[\S\S 4.2.10-11]{dries:1998} and then observe that, by Lemma~\ref{gam:tup:red}, for each $ \gamma \in \bm D$ there is a (possibly empty) subtuple $ \gamma^* \sub  \gamma$ with $\lh( \gamma^*) \leq k$ such that
\begin{itemize}
  \item $\pr_{1,  \gamma^*}(A_{1,  \gamma}) = W \times \vrv^{-1}(\gamma^*)$, where $\pr_{1,  \gamma^*}$ is the obvious coordinate projection,
  \item there is a $\gamma^*$-definable function from $\pr_{1,  \gamma^*}(A_{1,  \gamma})$ into $\vrv^{-1}(\gamma \mi \gamma^*)$,
  \item for every $ t \in \pr_{1,  \gamma^*}(A_{1,  \gamma})$, $\fib(A_{ 1,  \gamma},  t)$ is of $\K$-type
  \[
  (k - \lh( \gamma^*), (-1)^{\lh( \gamma^*)} l);
  \]
\end{itemize}
similarly for each $\gamma \in \bm E$. By Lemma~\ref{gk:ortho}, the definable functions on $\bm D$ and $\bm E$ given by $\gamma \efun \gamma^*$ is finite to one; since $\bm D$ and $\bm E$ are already disjoint unions, upon further partitioning if necessary, we may assume that these functions are actually injective. Furthermore, by \cite[\S 8.2.11]{dries:1998}, we may adjust $A$, $B$ so that the lengths of the tuples of the form $\gamma^*$, $\gamma \in \bm D$ or $\gamma \in \bm E$, are all equal to the maximum length of such tuples before the adjustment. Now, each $A_{ 1,  \gamma}$ (resp.\ $B_{ 1,  \gamma}$) is $ \gamma$-definably bijective to a subset of the form $W \times Q \times \vrv^{-1}(\gamma^*)$, where
\[
Q \sub (\K^+ \cup \{0\})^{\lh(\gamma) - \lh(\gamma^*)}
\]
does not depend on $ \gamma$ and is of $\K$-type $(k - \lh( \gamma^*), (-1)^{\lh( \gamma^*)} l)$, and hence we are back in the situation that has been handled by the first improvement above.
\end{rem}

\begin{nota}\label{nota:RV:short}
Recall that $\Gamma$ is written multiplicatively and its ordering is inverse to that of $|\Gamma|$. We introduce the following shorthand for some distinguished elements in various Grothendieck semigroups and their groupifications (and closely related constructions):
\begin{gather*}
\bm 1_{\K} = [\{1\}] \in \gsk \RES[0], \quad [1] = [(\{1\}, \id)] \in \gsk \RES[1],\\
[\bm T] = [(\K^+, \id)] \in \gsk \RES[1], \quad [\bm A] = 2 [\bm T] +  [1] \in \gsk \RES[1],\\
\bm 1_{\Gamma} = [\Gamma^0] \in \gsk \Gamma[0], \quad [e] = [\{1\}] \in \gsk \Gamma[1], \quad [\bm H] = [(\infty, 1)] \in \gsk \Gamma[1],\\
[\bm J] = [(\RV^{\circ \circ} \mi \{\infty\}, \id)] -  [1] \in \ggk \RV[1].
\end{gather*}
As in~\cite{hrushovski:kazhdan:integration:vf}, the elements $[\bm J]$ and $\bm 1_{\K} + [\bm J]$ in $\ggk \RV[*]$ will play a special role in the discussion below (see Propositions~\ref{kernel:L:dag:coa} and \ref{kernel:volL:dag:coa} and the remarks thereafter).
\end{nota}

By Lemma~\ref{gam:pulback:mono}, the map from $\gsk \RES[*] \times \gsk \Gamma[*]$ to $\gsk \RV[*]$ naturally determined by the assignment
\[
([(U, f)], [I]) \efun [(U \times \vrv^{-1}(I), f \times \id)]
\]
is well-defined and is clearly $\gsk \Gamma^{c}[*]$-bilinear. Hence it induces a $\gsk \Gamma^{c}[*]$-linear map
\[
\bb D: \gsk \RES[*] \otimes_{\gsk \Gamma^{c}[*]} \gsk \Gamma[*] \fun \gsk \RV[*],
\]
which is a homomorphism of graded semirings. We shall abbreviate ``$\otimes_{\gsk \Gamma^{c}[*]}$'' as ``$\otimes$'' below. Note that, by the universal property, groupifying $\otimes_{\gsk \Gamma^{c}[*]}$ in the category of $\gsk \Gamma^{c}[*]$-semimodules is the same as taking the corresponding tensor product in the category of $\ggk \Gamma^{c}[*]$-modules.

\begin{prop}\label{red:D:iso}
$\bb D$ is an isomorphism of graded semirings.
\end{prop}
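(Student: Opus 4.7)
The plan is to verify surjectivity directly from the decomposition lemma and then injectivity by constructing an explicit inverse, whose well-definedness is the entire substance of the argument.

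\textbf{Surjectivity.} Given an object $(U, f)$ of $\RV[m]$, I first partition $U$ so that on each piece $f$ is injective, so without loss of generality $U \sub (\RV^{\times})^m$. By Lemma~\ref{RV:decom:RES:G}, $[U]$ equals in $\gsk \RV_*$ a finite sum $\sum_i [V_i \times \vrv^{-1}(D_i)]$ with $V_i \sub \K^{k_i}$, $D_i \sub \Gamma^{l_i}$, and $k_i + l_i = m$. Under the isomorphism $\gsk \RV[k] \simeq \gsk \RV_k$, each summand lifts to a class in $\gsk \RV[m]$ for which the coordinate projection splits naturally into a $\K$-part and a $\Gamma$-part, and that class is precisely $\bb D([(V_i, \id)] \otimes [D_i])$. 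Hence $\bb D$ hits every generator of $\gsk \RV[*]$.

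\textbf{Injectivity via an inverse.} I will define a map
\[
\bb E : \ob \RV[*] \fun \gsk \RES[*] \otimes \gsk \Gamma[*]
\]
by sending $(U, f)$ (with $f$ injective, after partition) to $\sum_i [(V_i, \id)] \otimes [D_i]$, where $V_i \times \vrv^{-1}(D_i)$ is a decomposition as above. Provided $\bb E$ descends to a semiring homomorphism $\gsk \RV[*] \fun \gsk \RES[*] \otimes \gsk \Gamma[*]$, compatibility with $\bb D$ on generators is immediate, which yields the inverse.

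\textbf{Well-definedness.} Two issues must be addressed: (i) independence of the chosen decomposition, and (ii) invariance under definable bijections $(U, f) \to (U', f')$ in $\RV[*]$. For (i), two decompositions of the same $U$ admit a common refinement, and Lemma~\ref{resg:decom} lets us arrange that the refinement factors as a product of a $\K$-partition and a $\Gamma$-partition, so the resulting tensors differ only by moves of the form $[V \times \vrv^{-1}(J)] \otimes [D] \leftrightarrow [V] \otimes [J \times D]$ for finite $J \in \Gamma^c[*]$, which are identified in the tensor product over $\gsk \Gamma^c[*]$. Issue (ii) is the heart of the matter: given a definable bijection between $V \times \vrv^{-1}(D)$ and $V' \times \vrv^{-1}(D')$, I must extract a bijection $D \fun D'$ (giving equality of the $\Gamma$-factors) together with a compatible bijection on the $\K$-sides. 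This is exactly what Lemma~\ref{Gamma:lift} delivers, and the more general form in Remark~\ref{rem:Ga:match:impr} handles the situation where the factorization of $U$ into a $\K$-part and a $\Gamma$-part is not literal but only holds fiberwise up to a $\K$-homogeneous correction, which is the form actually produced by an arbitrary decomposition.

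\textbf{Main obstacle.} The surjectivity step is essentially a direct invocation of Lemma~\ref{RV:decom:RES:G} together with Lemma~\ref{gk:ortho}. The genuine difficulty lies in the isomorphism-invariance step of well-definedness: a morphism in $\RV[*]$ between two products $V \times \vrv^{-1}(D)$ and $V' \times \vrv^{-1}(D')$ need not preserve the product structure, and the contraction of such a morphism to a bijection $D \fun D'$ requires exactly the delicate inductive argument of Lemma~\ref{Gamma:lift} (combined with its refinement in Remark~\ref{rem:Ga:match:impr} to cover the $\K$-homogeneous case). Once this extraction is in hand, the remaining bijection on the $\K$-sides is obtained by picking a definable point in $\vrv^{-1}(\gamma^*)$ and specializing, and multiplicativity and grading of $\bb E$ are formal consequences of the construction.
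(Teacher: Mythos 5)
Your overall framing --- surjectivity from Lemma~\ref{RV:decom:RES:G}, injectivity via constructing an inverse $\bb E$ on generators and checking well-definedness --- is logically equivalent to the paper's direct injectivity argument, and you correctly identify Lemma~\ref{Gamma:lift} as the key tool. The surjectivity step is fine.

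The gap is in the well-definedness step. You reduce it to a definable bijection between two single products $V \times \vrv^{-1}(D)$ and $V' \times \vrv^{-1}(D')$ and then invoke Lemma~\ref{Gamma:lift} together with Remark~\ref{rem:Ga:match:impr}. But the data actually in hand is a definable bijection $f\colon \biguplus_i V_i \times \vrv^{-1}(D_i) \fun \biguplus_j V'_j \times \vrv^{-1}(D'_j)$ in which the $\K$-dimensions $k_i$ and $k'_j$ vary across pieces, $f$ does not respect the pieces (it can carry a single $\Gamma$-fiber $\{t\} \times \vrv^{-1}(\gamma)$ of one $M_i$ across several $N_j$), and neither side has the $\K$-homogeneous structure over a common base $W$ that Remark~\ref{rem:Ga:match:impr} requires. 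Also note that the decomposition in Lemma~\ref{RV:decom:RES:G} already holds only up to definable bijection (the pieces $V_i \times \vrv^{-1}(D_i)$ are not literal subsets of $U$), so your issue (i) is not merely a common-refinement problem and cannot be separated cleanly from issue (ii); both are instances of the same general bijection problem. The paper copes with all of this by first cell-decomposing each $V_i$ into copies of $(\K^+)^i$ using \cite[\S 8.2.11]{dries:1998}, balancing the maximal $\Gamma$-levels on both sides via Lemmas~\ref{gk:ortho} and~\ref{gam:red:K}, and then running a lexicographic induction on the pair $(p,q)$ --- the top $\RV$-dimension and the top $\Gamma$-dimension occurring at that level --- in which the fibers of $M_{p-q,q}$ that $f$ maps nicely into $N_{p-q,q}$ are deleted and the rest are ``reduced'' in the sense of the proof of Lemma~\ref{Gamma:lift}. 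That induction is the substance of the proof; nothing in your plan supplies a replacement for it, and Lemma~\ref{Gamma:lift} alone covers only a special case, not the inductive step.
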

\begin{proof}
Surjectivity of $\bb D$ follows immediately from Lemma~\ref{RV:decom:RES:G}. For injectivity, let $\bm U_i \coloneqq (U_i, f_i)$, $\bm V_j \coloneqq (V_j, g_j)$ be objects in $\RES[*]$ and $I_i$, $J_j$ objects in $\Gamma[*]$ such that $\bb D([\bm U_i] \otimes [I_i])$, $\bb D([\bm V_j] \otimes [J_j])$ are objects in $\gsk \RV[l]$ for all $i$, $j$. Set
\begin{gather*}
M_i = U_i \times \vrv^{-1}(I_i), \quad N_i = V_j \times \vrv^{-1}(J_j),\\
M \coloneqq \biguplus_i M_i, \quad N \coloneqq \biguplus_j N_j.
\end{gather*}
Suppose that there is a definable bijection $f : M \fun N$. We need to show
\[
\sum_i [\bm U_i] \otimes [I_i] = \sum_j [\bm V_j] \otimes [J_j].
\]
By \omin-minimal cell decomposition and \cite[\S 8.2.11]{dries:1998}, without changing the sums, we may assume that each $U_i$ is a disjoint union of finitely many copies of $(\K^+)^i$ and thereby re-index $M_i$ more informatively as
\[
M_{i, m} \coloneqq U_i \times \vrv^{-1}(I_m),
\]
where $I_m$ is an object in $\Gamma[m]$; similarly each $N_j$ is re-indexed as $N_{j, n}$. The respective maximums of the numbers $i+m$, $j+n$ are the $\RV$-dimensions of $M$, $N$ and hence must be equal; it is denoted by $p$. Let $q$ be the largest $m$ such that $i + m = p$ for some $M_{i, m}$ and $q'$ the largest $n$ such that $j + n = p$ for some $N_{j, n}$. It is not hard to see that, by Lemmas~\ref{gk:ortho} and \ref{gam:red:K}, without changing the sums, we may arrange $q = q'$.

We now proceed by induction on the pair $(p, q)$ with respect to the lexicographic ordering. The base case $(0,0)$ is rather trivial. For the inductive step, let $M'_{p-q, q}$ be the union of the subsets of $M_{p-q, q}$ of the form $\{t\} \times \vrv^{-1}(\gamma)$ such that $f(\{t\} \times \vrv^{-1}(\gamma))$ is a subset of $N_{p-q, q}$ of this form too. For all $\gamma \in I_q$, if $U_{p-q} \times \vrv^{-1}(\gamma)$ is not contained in $M'_{p-q, q}$ then $\gamma$ is reducible (in the sense defined in the proof of Lemma~\ref{Gamma:lift}). On the other hand, applying Lemma~\ref{resg:decom} to (the graph of) $f \rest M_{p-q, q}$ and then following the main argument in the proof of Lemma~\ref{Gamma:lift}, we see that $M'_{p-q, q}$ and $f(M'_{p-q, q})$ may be deleted from $M_{p-q, q}$ and $N_{p-q, q}$, respectively, or they may be reduced (in the sense that has just been recalled). Therefore, by compactness, $q$ will decrease unless it is zero, in which case $p$ will decrease, and the inductive hypothesis may be applied.
\end{proof}

\begin{cor}\label{gsk:gamma:inj}
Every nonzero $[\bm U] \in \gsk \RES[*]$ induces an injective semigroup homomorphism:
\[
[\bm U] \otimes - : \gsk \Gamma[*] \fun \gsk \RES[*] \otimes \gsk \Gamma[*].
\]
In particular, there is a canonical injective homomorphism of graded semirings:
\[
\bb L_{\Gamma} : \gsk \Gamma[*] \fun \gsk \RV[*].
\]
\end{cor}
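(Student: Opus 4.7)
The plan is to derive the second assertion from the first as a formal consequence, and to prove the first by using the isomorphism $\bb D$ of Proposition~\ref{red:D:iso} to convert the given equality of tensors into a definable bijection in $\RV[*]$, then extracting from it the desired equality in $\Gamma[*]$ via the generalized matching statement in Remark~\ref{rem:Ga:match:impr}.

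For the second assertion, let $\bm 1_\K = [\{1\}] \in \gsk \RES[0]$ denote the multiplicative identity and observe that $\bb L_\Gamma$ coincides with the composite
\[
\gsk \Gamma[*] \fun \gsk \RES[*] \otimes \gsk \Gamma[*] \fun \gsk \RV[*],
\]
where the first arrow sends $[I]$ to $\bm 1_\K \otimes [I]$ and the second is $\bb D$. Since $\bb D$ is an isomorphism by Proposition~\ref{red:D:iso} and the first arrow is injective by the first assertion applied to $\bm 1_\K \neq 0$, the composite $\bb L_\Gamma$ is injective. That it is a homomorphism of graded semirings is immediate, since $\vrv^{-1}$ commutes with finite disjoint unions and Cartesian products and sends $\Gamma[k]$ into $\RV[k]$.

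For the first assertion, fix $[\bm U] \in \gsk \RES[*]$ with $[\bm U] \neq 0$ and suppose $[\bm U] \otimes [\bm I_1] = [\bm U] \otimes [\bm I_2]$ in $\gsk \RES[*] \otimes \gsk \Gamma[*]$. Separating by grading, we may assume $\bm I_1, \bm I_2 \in \Gamma[k]$ for a common $k$, each represented by a disjoint union of definable subsets of $\Gamma^k$. Applying $\bb D$ to both sides converts the equality into a definable bijection
\[
f : U \times \vrv^{-1}(\bm I_1) \fun U \times \vrv^{-1}(\bm I_2).
\]
To feed this to Remark~\ref{rem:Ga:match:impr}, one first arranges the first factor to be a definable subset of some $(\K^+)^{n'}$. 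Since $\vrv(U)$ is finite, $U$ decomposes as $\bigsqcup_i U_{\gamma_i}$ over the definable elements $\gamma_1, \ldots, \gamma_m$ of $\vrv(U)$. Each torsor $\vrv^{-1}(\gamma_i)$ contains a definable point $p_i$ by the observation preceding Lemma~\ref{gk:ortho}, and multiplicative translation by $p_i^{-1}$ sends $U_{\gamma_i}$ definably onto a subset of $(\K^+)^n$. Reindexing via a definable $\K$-valued separating coordinate (drawn, e.g., from $\Q \sub \K$), these pieces assemble into a definable subset $W \sub (\K^+)^{n+1}$ together with a definable bijection $U \fun W$; this transports $f$ to a definable bijection $f' : W \times \vrv^{-1}(\bm I_1) \fun W \times \vrv^{-1}(\bm I_2)$.

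For each $t \in W$, the fiber $\{t\} \times \vrv^{-1}(\bm I_j)$ surjects via $\vrv$ onto $\bm I_j$, and every subfiber $\vrv^{-1}(\gamma)$ with $\gamma \in \Gamma^k$ is a torsor of $(\K^+)^k$, so this fiber is $\K$-homogeneous of $\K$-type $(k, (-1)^k)$. The hypotheses of Remark~\ref{rem:Ga:match:impr} are thus met, and the remark produces a definable bijection $e : \bm I_1 \fun \bm I_2$, giving $[\bm I_1] = [\bm I_2]$ in $\gsk \Gamma[*]$. The main technical obstacle is the reduction from $\bm U$ to $W \sub (\K^+)^{n'}$: one must keep the multiplicative translations by the various $p_i$ compatible across the finitely many $\vrv$-fibers of $U$, which is handled by definable choice in the $\K$-sort and a routine coordinate adjustment.
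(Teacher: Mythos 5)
Correct, and essentially the paper's own proof: it too applies $\bb D$ from Proposition~\ref{red:D:iso} to obtain a definable bijection $U \times \vrv^{-1}(I) \fun U \times \vrv^{-1}(J)$, concludes $[I] = [J]$ via Remark~\ref{rem:Ga:match:impr}, and derives $\bb L_{\Gamma}$ by specializing to $[\bm U] = \bm 1_{\K}$ and composing with $\bb D$. Your explicit translation of $U$ into a subset of $\K^{n'}$ (via definable points in the finitely many fibers $\vrv^{-1}(\gamma_i)$) so as to match the hypotheses of the remark is a sound elaboration of a step the paper leaves tacit.
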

\begin{proof}
Suppose that $[\bm U] \otimes [I] = [\bm U] \otimes [J]$. By Proposition~\ref{red:D:iso}, $U \times \vrv^{-1}(I)$ is definably bijective to $U \times \vrv^{-1}(J)$. By Remark~\ref{rem:Ga:match:impr}, $[I] = [J]$. For the second assertion we may simply take $[\bm U] = \bm 1_{\K}$ in $[\bm U] \otimes -$ and then compose it with the isomorphism $\bb D$.
\end{proof}

Any definable subset of $\Gamma^n$ is definably bijective to a subset of  $|\Gamma|^{n+1}$ and hence we can associate two Euler characteristics $\chi_{\Gamma, g}$ and $\chi_{\Gamma, b}$ with the $\Gamma$-sort, which are induced by those on $|\Gamma|$ (see \cite{kage:fujita:2006} and also~\cite[\S 9]{hrushovski:kazhdan:integration:vf}). They are distinguished by
\[
\chi_{\Gamma, g}((\infty, 1)) = -1 \quad \text{and} \quad \chi_{\Gamma, b}((\infty, 1)) = 0.
\]

Similarly, there is an Euler characteristic $\chi_{\K}$ associated with the $\K$-sort (there is only one). Note that this is not available in algebraically closed valued fields.

We shall denote all of these Euler characteristics simply by $\chi$ if no confusion can arise. Using $\chi$ and the groupification of $\bb D$ (also denoted by $\bb D$), we can obtain various retractions from the Grothendieck ring $\ggk \RV[*]$ to (certain localizations of) the Grothendieck rings $\ggk \RES[*]$, $\ggk \Gamma[*]$.

\begin{lem}\label{gam:euler}
The Euler characteristics induce naturally three graded ring homomorphisms:
\[
\mdl E_{\K} : \ggk \RES[*] \fun \Z[X] \quad \text{and} \quad \mdl E_{\Gamma, g}, \mdl E_{\Gamma, b} : \ggk \Gamma[*] \fun \Z[X].
\]
\end{lem}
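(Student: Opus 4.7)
The plan is to define each of the three maps by the assignment $[A] \mapsto \chi(A)\,X^k$ on $\ggk \CC[k]$ (with $\CC \in \{\RES, \Gamma\}$ and $\chi$ the appropriate Euler characteristic), and then verify that the prescription is a well-defined graded ring homomorphism. The content is essentially formal on top of the Euler characteristics that have just been put in place: one must check invariance under definable bijection, additivity on disjoint unions, multiplicativity on Cartesian products, and compatibility with the grading.

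First, well-definedness on the Grothendieck semirings. Morphisms in $\RES[k]$ (resp.\ $\Gamma[k]$) are definable bijections between the underlying subsets, the auxiliary finite-to-one data in the $\RES$ case playing no role in the isomorphism class of the underlying $U$. Invariance of the standard o-minimal Euler characteristic \cite[\S 4.2]{dries:1998} in the residue field (which carries the canonical $T$-model structure by \cite[Theorem A]{Dries:tcon:97}) and of the two Kageyama-Fujita Euler characteristics on the $\Gamma$-sort therefore gives well-definedness of $[A] \mapsto \chi(A)$ on isomorphism classes. Additivity $\chi(A \uplus B) = \chi(A) + \chi(B)$ is standard; in the explicit presentation $\gsk \RES[k] = \bigcup_{0 \leq i \leq k} \{(k, i)\} \times \Z$ recalled just before Definition~\ref{def:Ga:cat}, the addition rule $(k, i, a) + (k, i', a') = (k, \max\{i, i'\}, a + a')$ is visibly compatible with $(k,i,a) \mapsto a\,X^k$ because the target ignores the $i$-coordinate.

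Next, multiplicativity and compatibility with the grading. Cartesian product sends $\CC[k] \times \CC[l]$ into $\CC[k+l]$ and $\chi(A \times B) = \chi(A)\,\chi(B)$ holds in all three cases, so
\[
\chi(A \times B)\,X^{k+l} \;=\; \bigl(\chi(A)\,X^k\bigr)\bigl(\chi(B)\,X^l\bigr)
\]
matches the multiplication rule $(k, i, a) \times (l, j, b) = (k+l, i+j, ab)$ in $\gsk \RES[*]$ and the analogous rule in $\gsk \Gamma[*]$. Hence $[A] \mapsto \chi(A)\,X^k$ yields graded semiring homomorphisms $\gsk \RES[*] \fun \Z[X]$ and $\gsk \Gamma[*] \fun \Z[X]$, and by the universal property of groupification each extends uniquely to a graded ring homomorphism on $\ggk \CC[*]$, yielding $\mdl E_{\K}$ and the two maps $\mdl E_{\Gamma, g}$, $\mdl E_{\Gamma, b}$. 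No genuine obstacle is expected here; the argument reduces entirely to classical additivity, multiplicativity, and definable-bijection invariance of $\chi_{\K}$, $\chi_{\Gamma, g}$, and $\chi_{\Gamma, b}$, the only pitfall being the bookkeeping issue that the $\max$-structure in the $i$-coordinate must be harmless to the target, which it is since that coordinate is forgotten.
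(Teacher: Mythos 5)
Your proposal is correct and takes essentially the same approach as the paper, which defines $\mdl E_{\K, k}([(U, f)]) = \chi(U)$ and $\mdl E_{\Gamma, k}([I]) = \chi(I)$, assembles $\mdl E_{\K} = \sum_k \mdl E_{\K, k} X^k$ and $\mdl E_{\Gamma} = \sum_k \mdl E_{\Gamma, k} X^k$, and declares well-definedness and the homomorphism axioms ``routine to check.'' You supply exactly those routine checks (bijection-invariance, additivity, multiplicativity, grading), so there is no substantive difference.
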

\begin{proof}
For $(U, f) \in \RES[k]$ and $I \in \Gamma[k]$, we simply set $\mdl E_{\K, k}([(U, f)]) = \chi(U)$ (see Remark~\ref{rem:RV:weako}) and $\mdl E_{\Gamma, k}([I]) = \chi(I)$. It is routine to check that these maps are well-defined and they induce graded ring homomorphisms $\mdl E_{\K} \coloneqq \sum_k \mdl E_{\K, k} X^k$ and $\mdl E_{\Gamma} \coloneqq \sum_k \mdl E_{\Gamma, k} X^k$ as desired.
\end{proof}

\begin{rem}\label{rem:poin}
Of course $\mdl E_{\K}$ is actually an isomorphism; on the other hand, $\ggk \Gamma^{c}[*]$ is canonically isomorphic to $\ggk \RES[*]$ and $\ggk \Gamma[*]$ is canonically isomorphic to the graded ring
\[
\Z[X, Y^{(2)}] \coloneqq \Z \oplus X\Z[X, Y]/ (Y^2+Y),
\]
where $Y$ represents the class $[\bm H]$ in $\ggk \Gamma[1]$. Thus there are two ``Euler characteristics'' $\chi_{\Gamma[*], g}$ and $\chi_{\Gamma[*], b}$ on $\ggk \Gamma[*]$ given by
\[
\Z[X, Y^{(2)}] \two^{Y \efun -1}_{Y \efun 0} \Z[X].
\]
By Proposition~\ref{red:D:iso} and Lemma~\ref{gam:euler}, there is a graded ring isomorphism
\[
\ggk \RV[*] \to^{\sim} \Z[X, Y^{(2)}] \quad \text{with} \quad \bm 1_{\K} + [\bm J] \efun 1 + 2YX + X.
\]
Setting
\[
 \Z^{(2)} = \Z[X, Y^{(2)}] / (1 + 2YX + X),
\]
we see that there is a canonical ring isomorphism
\[
\bb E_{\Gamma}: \ggk \RV[*] / (\bm 1_{\K} + [\bm J]) \to^{\sim} \Z^{(2)}.
\]
\end{rem}

\begin{prop}\label{prop:eu:retr:k}
There are two ring homomorphisms
\[
\bb E_{\K, g}: \ggk \RV[*] \fun \ggk \RES[*][[\bm A]^{-1}] \quad \text{and} \quad \bb E_{\K, b}: \ggk \RV[*] \fun \ggk \RES[*][[1]^{-1}]
\]
such that
\begin{itemize}
  \item their ranges are precisely the zeroth graded pieces of their respective codomains,
  \item $\bm 1_{\K} + [\bm J]$ vanishes under both of them,
  \item for all $x \in \ggk \RES[k]$, $\bb E_{\K, g} (x) = x [\bm A]^{-k}$ and $\bb E_{\K, b}(x) = x [1]^{-k}$.
\end{itemize}
\end{prop}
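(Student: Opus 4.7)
The plan is to exploit the groupified form of Proposition~\ref{red:D:iso}, namely the isomorphism $\ggk \RV[*] \simeq \ggk \RES[*] \otimes_{\ggk \Gamma^{c}[*]} \ggk \Gamma[*]$, and to construct each of $\bb E_{\K, g}$ and $\bb E_{\K, b}$ by assembling two ring homomorphisms on the tensor factors that agree on the amalgamating subring $\ggk \Gamma^{c}[*]$.

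On the $\RES$ side I would introduce the ring homomorphism $\phi_{g}\colon \ggk \RES[*] \fun \ggk \RES[*][[\bm A]^{-1}]$ sending a homogeneous $x \in \ggk \RES[k]$ to $x [\bm A]^{-k}$; by direct inspection this lands in the zeroth graded piece of the codomain and preserves ring structure. Define $\phi_{b}$ analogously by dividing by powers of $[1]$ instead. On the $\Gamma$ side I would compose the graded ring homomorphism $\mdl E_{\Gamma, g}\colon \ggk \Gamma[*] \fun \Z[X]$ from Lemma~\ref{gam:euler} with the ring homomorphism $\Z[X] \fun \ggk \RES[*][[\bm A]^{-1}]$ determined by $X \efun 2[\bm T][\bm A]^{-1}$, yielding $\psi_{g}\colon \ggk \Gamma[*] \fun \ggk \RES[*][[\bm A]^{-1}]$ whose image again lies in degree zero; define $\psi_{b}$ analogously using $\mdl E_{\Gamma, b}$ and $X \efun 2[\bm T][1]^{-1}$.

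The crucial compatibility check is that $\phi_{g} \circ \vrv^{-1} = \psi_{g}|_{\ggk \Gamma^{c}[*]}$, and likewise for the $b$-variant. Since $\ggk \Gamma^{c}[*] \simeq \Z[X]$ is generated as a ring by the single class $[e] \in \ggk \Gamma^{c}[1]$, it suffices to verify the generator. On the $\RES$ side, $\vrv^{-1}([e]) = [(\vrv^{-1}(1), \id)]$ equals $2[\bm T]$ in $\ggk \RES[1]$ after splitting $\K^{\times}$ into its positive and negative halves; hence $\phi_{g}$ sends it to $2[\bm T][\bm A]^{-1}$. On the $\Gamma$ side, $\mdl E_{\Gamma, g}([e]) = X$, which $\psi_{g}$ also sends to $2[\bm T][\bm A]^{-1}$. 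The two agree, and the same holds in the $b$-variant. The universal property of the tensor product then provides the ring homomorphisms $\bb E_{\K, g}$ and $\bb E_{\K, b}$; both images lie in the zeroth graded piece by construction, and every degree-zero generator $x [\bm A]^{-k}$ (resp.\ $x[1]^{-k}$) with $x \in \ggk \RES[k]$ is already hit by $\phi_{g}$ (resp.\ $\phi_{b}$), establishing surjectivity onto that piece.

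The last computational step is to check the vanishing of $\bm 1_{\K} + [\bm J]$. Using the tensor decomposition, $[\bm J] = \bm 1_{\K} \otimes [\bm H] - [1] \otimes \bm 1_{\Gamma}$, so
\[
\bb E_{\K, g}(\bm 1_{\K} + [\bm J]) = 1 + \psi_{g}([\bm H]) - \phi_{g}([1]) = 1 - 2[\bm T][\bm A]^{-1} - [1][\bm A]^{-1},
\]
which collapses to $1 - [\bm A][\bm A]^{-1} = 0$ thanks precisely to the definitional identity $[\bm A] = 2[\bm T] + [1]$. For $\bb E_{\K, b}$ the same computation yields $1 + 0 - [1][1]^{-1} = 0$ because $\mdl E_{\Gamma, b}([\bm H]) = 0$. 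The third bullet in the statement holds tautologically by the definitions of $\phi_{g}$ and $\phi_{b}$. The main obstacle is really just the bookkeeping of the grading conventions on the three Grothendieck rings and tracing the $\ggk \Gamma^{c}[*]$-action through both tensor factors; once $[e] \efun 2[\bm T]$ is verified and $[\bm A] = 2[\bm T] + [1]$ is recognized as exactly the identity that forces $\bm 1_{\K} + [\bm J]$ to cancel, nothing further is required.
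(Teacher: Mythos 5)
Your overall approach --- decompose via Proposition~\ref{red:D:iso}, define ring homomorphisms $\phi$ and $\psi$ on each tensor factor, verify that they agree on the amalgamating subring $\ggk \Gamma^c[*]$, and then invoke the universal property --- is sound and amounts to a repackaging of the paper's own construction. The problem is that the compatibility verification fails as written. Since $\Gamma = \RV^\times/\K^+$ is the \emph{signed} value group, the kernel of $\vrv$ is $\K^+$ and not $\K^\times$; hence $\vrv^{-1}(\{1\}) = \K^+$ and $\vrv^{-1}([e]) = [(\K^+, \id)] = [\bm T]$, \emph{not} $2[\bm T]$. With the correct value, $\phi_g(\vrv^{-1}([e])) = [\bm T][\bm A]^{-1}$ while $\psi_g([e]) = 2[\bm T][\bm A]^{-1}$ under your assignment $X \efun 2[\bm T][\bm A]^{-1}$, so the two maps disagree on $\ggk \Gamma^c[*]$ and the universal property of the amalgamated tensor product does not produce a well-defined $\bb E_{\K, g}$; the same obstruction affects $\bb E_{\K, b}$.

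The reason your vanishing computation nonetheless returned $0$ is a second, compensating slip of the same origin: $\vrv$ identifies $\RV^{\circ\circ} \mi \{\infty\}$ with the sign-symmetric set $(\infty, 1) \cup (-1, \infty) \sub \Gamma$, whose class is $2[\bm H]$, so that $[(\RV^{\circ\circ} \mi \{\infty\}, \id)] = \bm 1_\K \otimes 2[\bm H]$ and hence $[\bm J] = 2\bm 1_\K \otimes [\bm H] - [1] \otimes \bm 1_\Gamma$, not $\bm 1_\K \otimes [\bm H] - [1] \otimes \bm 1_\Gamma$. Both errors arise from conflating $\Gamma$ with $|\Gamma|$; the two factors of $2$ cancel in the final arithmetic, but the compatibility step that they jointly conceal is genuinely broken. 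The fix is to define $\psi_g$ by $X \efun [\bm T][\bm A]^{-1}$ (and $\psi_b$ by $X \efun [\bm T][1]^{-1}$); then $\phi$ and $\psi$ agree on $\ggk \Gamma^c[*]$, the universal property applies, and with the corrected expansion of $[\bm J]$ one computes $\bb E_{\K, g}(\bm 1_\K + [\bm J]) = 1 - 2[\bm T][\bm A]^{-1} - [1][\bm A]^{-1} = [\bm A]^{-1}\bigl([\bm A] - 2[\bm T] - [1]\bigr) = 0$, and similarly for $\bb E_{\K, b}$.
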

\begin{proof}
We first define a homomorphisms $\bb E_{g, n}: \ggk \RV[n] \fun \ggk \RES[n]$ for each $n$ as follows. By Proposition~\ref{red:D:iso}, there is an isomomorphism
\[
\bb D_n : \bigoplus_{i + j = n} \ggk \RES[i] \otimes \ggk \Gamma[j] \to^{\sim} \ggk \RV[n].
\]
By the universal property, there is a group homomorphism
\[
E_{g}^{i, j}: \ggk \RES[i] \otimes \ggk \Gamma[j] \fun \ggk \RES[i+j]
\]
given by
\[
x \otimes y \efun \mdl E_{g, j}(y) x [\bm T]^{j},
\]
where $\mdl E_{g, j}$ is defined with respect to $\chi_{\Gamma, g}$ as in Lemma~\ref{gam:euler}. Let $E_{g, n} = \sum_{i + j = n} E_{g}^{i, j}$ and then $\bb E_{g, n} = E_{g, n} \circ \bb D_n^{-1}$. It is straightforward to check the equality
\[
\bb E_{g, n}(x)\bb E_{g, m}(y) = \bb E_{g, n+m}(xy).
\]

The group homomorphisms $\tau_{m, k} : \ggk \RES[m] \fun \ggk \RES[m+k]$ given by $x \efun x [\bm A]^k$ determine a colimit system and the group homomorphisms
\[
\bb E_{g, \leq n} \coloneqq \sum_{m \leq n} \tau_{m, n-m} \circ \bb E_{g, m} : \ggk \RV[\leq n] \fun \ggk \RES[n]
\]
determine a homomorphism of colimit systems. Hence we have a ring homomorphism:
\[
\colim{n} \bb E_{g, \leq n} : \ggk \RV[*] \fun \colim{\tau_{n, k}} \ggk \RES[n].
\]
For all $n \geq 1$ we have
\[
\bb E_{g, \leq n}(\bm 1_{\K} + [\bm J]) = [\bm A]^n - 2[\bm T][\bm A]^{n-1} - [1] [\bm A]^{n-1} = 0.
\]
This yields the desired homomorphism $\bb E_{\K, g}$ since the colimit in question can be embedded into the zeroth graded piece of $\ggk \RES[*][[\bm A]^{-1}]$.

The construction of $\bb E_{\K, b}$ is completely analogous, where $[\bm A]$ is replaced by $[1]$ and $\chi_{\Gamma, g}$ by $\chi_{\Gamma, b}$.
\end{proof}

The homomorphisms $\bb E_{\K, g}$ and $\bb E_{\K, b}$ should be understood as ``Euler characteristics'' since the zeroth graded pieces of both $\ggk \RES[*][[\bm A]^{-1}]$ and $\ggk \RES[*][[1]^{-1}]$ are canonically isomorphic to $\Z$. They may also be obtained by specializing $\bb E_{\Gamma}$.

\begin{rem}
Alternatively, we may reformulate the $\Gamma$-categories in terms of the value group $|\Gamma_{\infty}|$ instead of the signed value group $\Gamma_{\infty}$ and denote them by $|\Gamma|[k]$, $|\Gamma|[*]$, etc. There is a natural injective homomorphism
\[
\ggk \Gamma[*] \fun \ggk |\Gamma|[*] \otimes \Z{[\tfrac{1}{2}]}
\]
induced by the assignment
\[
[\bm H] \efun \tfrac{1}{2} [(0, \infty)] \in \ggk |\Gamma|[1] \otimes \Z[\tfrac{1}{2}].
\]
This will have the effect of introducing the tensor factor $\otimes \Z[\frac{1}{2}]$ into the various constructions above, which is akin to the scenario described in \cite{Comte:fichou}.
\end{rem}

In the remainder of this section we shall introduce the simplest volume form, namely the constant $\Gamma$-volume form $1$, and reproduce some of the constructions above for the resulting categories; this will be used in the construction of topological zeta functions in \S\ref{sec:zeta}, where we shall work exclusively with power series fields over $\Q$.

Let $U \sub (\RV^{\times})^n \times \Gamma^m$, $V \sub (\RV^{\times})^{n'} \times \Gamma^{m'}$, and $C \sub U \times V$ be subsets (not necessarily definable). For every $((u, \alpha), (v, \beta)) \in C$, the \emph{$\Gamma$-Jacobian} of $C$ at $((u, \alpha), (v, \beta))$, written as $\jcb_{\Gamma} C((u, \alpha), (v, \beta))$, is the element
\[
\Pi (\vrv(v), \alpha) / \Pi (\vrv(u), \beta)  \in \Gamma,
\]
where $\Pi (\gamma_1, \ldots, \gamma_n) = \gamma_1 * \cdots * \gamma_n$; set
\[
\jcb_{|\Gamma|} C((u, \alpha), (v, \beta)) \coloneqq \Sigma (|\vrv|(v), |\alpha|) - \Sigma (|\vrv|(u), |\beta|),
\]
which is equal to $|\jcb_{\Gamma} C((u, \alpha), (v, \beta))| \in |\Gamma|$, where $\Sigma (\gamma_1, \ldots, \gamma_n) = \gamma_1 + \cdots + \gamma_n$.

\begin{defn}[$\vol \RV$- and $\vol \RES$-categories]\label{defn:RV:cat:vol}
The category $\vol \RV[k]$ has the same objects as $\RV[k]$. Let $(U, f)$, $(V, g)$ be objects in $\vol \RV[k]$ and $F : U \fun V$ an $\RV[k]$-morphism. Then $F$ is a \emph{morphism} of $\vol\RV[k]$ if and only if $\jcb_{\Gamma} F^{\rightleftharpoons}(u, v) = 1$ for all $(u, v) \in F^{\rightleftharpoons}$.

The category $\vol\RV^{\db}[k]$ is the full subcategory of $\vol\RV[k]$ such that $(U, f) \in \vol \RV^{\db}[k]$ if and only if (the graph of) $f$ is doubly bounded.

The category $\vol \RES[k]$ is the full subcategory of $\vol \RV[k]$ such that $(U, f) \in \vol \RES[k]$ if and only if $\vrv(U)$ is finite.

Set $\vol\RV[*] = \coprod_k \vol\RV[k]$; similarly for $\vol\RV^{\db}[*]$ and $\vol\RES[*]$.
\end{defn}

\begin{defn}[$\vol\Gamma$-categories]\label{def:Ga:cat:vol}
The category $\vol\Gamma[k]$ has the same objects as $\Gamma[k]$. Let $I$, $J$ be objects in $\vol\Gamma[k]$ and $F : I \fun J$ a $\Gamma[k]$-morphism. Then $F$ is a \emph{morphism} of $\vol\Gamma[k]$ if and only if $\jcb_{\Gamma} F(\alpha, \beta) = 1$ for all $(\alpha, \beta) \in F$.

The category $\vol\Gamma^{\db}[k]$ is the full subcategory of $\vol\Gamma[k]$ such that $I \in \vol\Gamma^{\db}[k]$ if and only if $I$ is doubly bounded.

The category $\vol\Gamma^{c}[k]$ is the full subcategory of $\vol\Gamma[k]$ such that $I \in \vol\Gamma^{c}[k]$ if and only if $I$ is finite.

Set $\vol\Gamma[*] = \coprod_k \vol\Gamma[k]$; similarly for $\vol\Gamma^{\db}[*]$ and $\vol\Gamma^c[*]$.
\end{defn}

Obviously we may identify $\gsk \vol \RES[*]$ as a sub-semiring of $\gsk \vol\RV^{\db}[*]$ and $\gsk \vol \Gamma^{c}[*]$ of $\gsk \vol\Gamma^{\db}[*]$. The identity map $\ob \vol \RES[*] \fun \ob \RES[*]$ induces a surjective semiring homomorphism
\[
\bb F : \gsk \vol \RES[*] \fun \gsk \RES[*];
\]
similarly for $\bb F : \gsk \vol \Gamma[*] \fun \gsk \Gamma[*]$. By Lemma~\ref{Gamma:lift}, the map $\vrv^{-1}$ induces a canonical injective homomorphism
\[
\gsk \vol \Gamma^{c}[*] \fun \gsk \vol \RES[*]
\]
of graded semirings. As above, it also induces a $\gsk \vol\Gamma^{c}[*]$-linear map
\[
\vol \bb D: \gsk \vol\RES[*] \otimes_{\gsk \vol\Gamma^{c}[*]} \gsk \vol\Gamma[*] \fun \gsk \vol\RV[*],
\]
which is a homomorphism of graded semirings. We shall again abbreviate ``$\otimes_{\gsk \vol\Gamma^{c}[*]}$'' as ``$\otimes$'' below.

We need an improved version of Lemma~\ref{RV:decom:RES:G}:

\begin{lem}\label{RV:decom:RES:vol}
Let $U \sub (\RV^{\times})^m$ be a definable subset and $E = \vrv(U)$. Then there are
\begin{itemize}
  \item a definable finite subset $E_0 \sub E$,
  \item finitely many definable products $V_i \times \vrv^{-1}(D_i) \sub \K^{k_i} \times \vrv^{-1}(\Gamma)^{l_i}$ with $k_i + l_i = m$,
  \item definable injections $F_i : V_i \times \vrv^{-1}(D_i) \fun U \mi \vrv^{-1}(E_0)$
\end{itemize}
such that
\begin{itemize}
  \item the subsets $U_i \coloneqq F_i(V_i \times \vrv^{-1}(D_i))$ form a partition of $U \mi \vrv^{-1}(E_0)$,
  \item $\jcb_{\Gamma} F_i(t, u) = 1$ for all $(t, u) \in F_i$.
\end{itemize}
Moreover, if $U$ is doubly bounded then every $D_i$ is doubly bounded as well.
\end{lem}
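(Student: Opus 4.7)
The plan is to refine the induction on $m$ from Lemma~\ref{RV:decom:RES:G}, with extra bookkeeping to ensure each bijection $F_i$ has $\Gamma$-Jacobian equal to $1$. Because the $\K^{k_i}$-factor of the source contributes only $1$'s to the product $\Pi\circ\vrv$, the requirement $\jcb_\Gamma F_i = 1$ reduces to the condition that $\Pi(\vrv(F_i(t,u)))$ equals $\Pi(\vrv(u))$ for the $\vrv^{-1}(D_i)$-part $u$ of the source; equivalently, the sum in $|\Gamma|$ of coordinate valuations is preserved. The base case $m=1$ is handled by weak o-minimality in the $\RV$-sort (Remark~\ref{rem:RV:weako}): $U\subseteq\RV^\times$ decomposes into finitely many convex subsets, each of which either lies in a single fiber $\vrv^{-1}(\gamma_0)$ (in which case $\gamma_0$ joins $E_0$ and the piece is discarded) or spans an open convex $D\subseteq\Gamma$ with at most two possibly partial endpoint fibers (also joining $E_0$); the remaining full piece $\vrv^{-1}(D)$ is parametrized by the identity with $(k_i,l_i)=(0,1)$, trivially volume-preserving, and $D$ inherits double-boundedness from $U$.

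For the inductive step $m>1$, I would project $U$ onto the last coordinate, apply the base case to $\pr_m(U)$, and discard the finite-over-$\Gamma$ leftover into $E_0$. Compactness together with the inductive hypothesis applied fiberwise then produces, uniformly in the last coordinate, a volume-preserving decomposition of each fiber $U_v\subseteq(\RV^\times)^{m-1}$ into pieces of the required form. Stable embeddedness of the $\K$- and $\Gamma$-sorts, together with Lemma~\ref{resg:decom} (which is identity-based and hence Jacobian-$1$), allow us to glue these fiberwise decompositions into a single uniform decomposition over $\vrv^{-1}(D')$. Because the last coordinate is carried through by the identity and the earlier coordinates inherit Jacobian $1$ from the inductive hypothesis, the assembled $F_i$ satisfy $\jcb_\Gamma F_i = 1$.

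The main obstacle is the moment in Lemma~\ref{RV:decom:RES:G} where Lemma~\ref{gam:tup:red} is invoked to trivialize a $\vrv^{-1}(\gamma_j)$-valued coordinate by multiplication with a $\hat\gamma_j$-definable point $p\in\vrv^{-1}(\gamma_j)$; this multiplication divides $\Pi\circ\vrv$ by $\gamma_j$ and so is not Jacobian-$1$. The remedy is to always pair such a trivialization with a compensating shear: the map $(u_i,u_j)\mapsto(u_i\cdot p,\, u_j\cdot p^{-1})$ leaves $\Pi\circ\vrv$ invariant while transferring the value $\gamma_j$ from the $j$-th onto the $i$-th coordinate, whose $\Gamma$-label becomes $\gamma_i\gamma_j$. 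This corresponds (cf.\ Lemma~\ref{gam:pulback:mono}) to applying an $\mgl_l(\Z)$-transformation to the $\Gamma$-labels of the $\vrv^{-1}(D_i)$-factor whose column sums in $|\Gamma|$ are all $1$. Iterating such Jacobian-$1$ reductions until $U$ is essentially contained in $\K^k\times\vrv^{-1}(\Gamma)^l$ (outside a finite contribution to $E_0$), and then invoking Lemma~\ref{resg:decom}, yields the stated decomposition. Double-boundedness is preserved at every step since both the shears and the unimodular coordinate changes used take doubly-bounded sets in $|\Gamma|$ to doubly-bounded sets.
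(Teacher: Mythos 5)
Your central idea is the right one, and it is essentially what the paper does: when Lemma~\ref{gam:tup:red} trivializes a $\vrv^{-1}(\gamma_j)$-coordinate, one must simultaneously multiply a surviving $\RV$-coordinate by the same definable point so that $\Pi\circ\vrv$ is unchanged. The paper encodes this exactly in the reparametrization $\gamma' = (\gamma_{k+1}, \ldots, \gamma_{m-1}, \gamma_m * \Pi\phi(\gamma))$ and the single bijection $F : U \to U'$ with $\pr_{\leq k}(U') \subseteq \K^k$ and $\pr_{>k}(U') = \vrv^{-1}(D)$, after which Lemma~\ref{resg:decom} finishes the job. Your final sentence, ``iterating such Jacobian-$1$ reductions until $U$ is essentially contained in $\K^k\times\vrv^{-1}(\Gamma)^l$ and then invoking Lemma~\ref{resg:decom},'' is exactly this. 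Your base case $m=1$ also matches the paper.

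The weak point is your first sketch of the inductive step. Applying the base case to $\pr_m(U)$ produces a finite set $E_0'\subseteq\Gamma$ of bad \emph{last}-coordinates, but discarding the corresponding part of $U$ removes $\{\gamma\in E : \gamma_m\in E_0'\}$, which is usually infinite, so this does not stay within the required ``finite $E_0 \subseteq E$.'' Likewise, applying the inductive hypothesis fiberwise in the last coordinate produces $v$-definable $E_0(v)$'s whose union is generically infinite, and Lemma~\ref{resg:decom} (which requires $\vrv(U)$ finite) plus stable embeddedness alone do not give you a uniform gluing over $\vrv^{-1}(D')$. The paper circumvents this by normalizing $E$ \emph{globally} before any fiberwise work: after deleting a genuinely finite $E_0$ it arranges (via Lemmas~\ref{gk:ortho} and~\ref{gam:tup:red}, compactness, and induction on $m$) that $\pr_{>k}(U)$ is a full pullback, $E$ is the graph of $\phi: \pr_{>k}(E)\to\Gamma^k$, and the trivializations $f_\gamma$ exist uniformly. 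You should restructure the induction around that global normalization rather than a fiberwise decomposition; with that change, your shear-based Jacobian bookkeeping carries the argument through. (Minor point: the ambient transformation on the $\Gamma$-labels is piecewise $\mgl(\Q)$, as in Lemma~\ref{gam:pulback:mono}, not necessarily $\mgl(\Z)$; and ``column sums equal to $1$'' is only the right constraint once you separate off the coordinates being sent to $\K$.)
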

\begin{proof}
The case $m=1$ may be handled exactly as in the proof of Lemma~\ref{RV:decom:RES:G}. For the case $m>1$, by Lemmas~\ref{gk:ortho} and~\ref{gam:tup:red}, compactness, and a routine induction on $m$, we may delete a definable finite subset $E_0$ from $E$ and thereby assume that, for some $k \leq m$,
\begin{itemize}
  \item $\vrv^{-1}(\pr_{>k}(E)) = \pr_{>k}(U)$,
  \item $E$ is the graph of a definable function $\phi: \pr_{>k}(E) \fun \Gamma^k$,
  \item for all $\gamma \in \pr_{>k}(E)$ there is a $\gamma$-definable function $f_{\gamma} : \vrv^{-1}(\gamma) \fun \vrv^{-1}(\phi(\gamma))$.
\end{itemize}
For each $\gamma = (\gamma_{k+1}, \ldots, \gamma_{m}) \in \pr_{>k}(E)$ let $\gamma' = (\gamma_{k+1}, \ldots, \gamma_{m-1}, \gamma_{m} * \Pi \phi(\gamma))$. By Lemma~\ref{gam:pulback:mono}, we may also assume that there is a $\mgl_{m-k}(\Q)$-transformation between $\pr_{>k}(E)$ and the subset $D \coloneqq \{\gamma' : \gamma \in \pr_{>k}(E)\}$. Thus there is a definable bijection $F : U \fun U' \sub (\RV^{\times})^m$, where $\pr_{\leq k}(U') \sub \K^{k}$ and $\pr_{>k}(U') = \vrv^{-1}(D)$, such that $\jcb_{\Gamma} F(u, u') = 1$ for all $(u, u') \in F$. Now the first assertion follows from Lemma~\ref{resg:decom}.

The second assertion is clear.
\end{proof}

\begin{prop}\label{red:volD:iso}
$\vol \bb D$ is an isomorphism of graded semirings.
\end{prop}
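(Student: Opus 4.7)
The plan is to mirror the proof of Proposition~\ref{red:D:iso}, replacing Lemma~\ref{RV:decom:RES:G} with Lemma~\ref{RV:decom:RES:vol} and tracking the $\Gamma$-volume form at every step.

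For surjectivity, let $[U] \in \gsk \vol \RV[m]$ with $U \sub \RV^m$. By Lemma~\ref{RV:decom:RES:vol}, there is a definable finite subset $E_0 \sub \vrv(U)$ together with volume-preserving bijections $F_i : V_i \times \vrv^{-1}(D_i) \fun U_i$ whose images form a partition of $U \mi \vrv^{-1}(E_0)$, with $V_i \sub \K^{k_i}$ and $D_i \sub \Gamma^{l_i}$. Each $[U_i] = \vol \bb D([(V_i, \id)] \otimes [D_i])$ lies in the image of $\vol \bb D$. The residual $U \cap \vrv^{-1}(E_0)$ has $\vrv$-image contained in $E_0$, hence is an object of $\vol \RES[*]$ and equals $\vol \bb D([U \cap \vrv^{-1}(E_0)] \otimes \bm 1_{\Gamma})$, which finishes surjectivity.

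For injectivity, suppose $\sum_i [\bm U_i] \otimes [I_i]$ and $\sum_j [\bm V_j] \otimes [J_j]$ have equal images under $\vol \bb D$, witnessed by a volume-preserving bijection $f : M \fun N$. We proceed by induction on the pair $(p, q)$ exactly as in Proposition~\ref{red:D:iso}. The cell manipulations of \cite[\S 8.2.11]{dries:1998} act on $\K^+$-coordinates only, where $\vrv$ is constantly $1$ and therefore contributes trivially to the $\Pi$-invariant; so the preliminary rearrangement to the case in which each $U_i$ is a disjoint union of copies of $(\K^+)^i$ preserves the volume form. On the good part $M'_{p-q, q}$, apply Lemma~\ref{resg:decom} to decompose $f$ fiberwise as $(t, s) \efun (f'_i(t), f''_{i, t}(s))$. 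Since $\Pi \vrv(f'_i(t)) = \Pi \vrv(t) = 1$ and $\jcb_{\Gamma} f = 1$, we automatically have $\jcb_{\Gamma} f''_{i, t} = 1$, and the $\Gamma$-contraction of $f''_{i, t}$ is therefore a morphism in $\vol \Gamma[*]$.

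The main obstacle is the bad part of the induction, where some $U_{p-q} \times \vrv^{-1}(\gamma)$ is not contained in $M'_{p-q, q}$. In Proposition~\ref{red:D:iso} this is dispatched via reducibility, which identifies $\vrv^{-1}(\gamma)$ with $\K^+ \times \vrv^{-1}(\wh \gamma_i)$; this identification has $\Gamma$-Jacobian $\gamma_i \neq 1$ and is therefore not a morphism in $\vol \RV[*]$. In its place we use the volume-preserving transformation from the proof of Lemma~\ref{RV:decom:RES:vol}: compose with a fiberwise bijection $\vrv^{-1}(\phi(\gamma)) \times \vrv^{-1}(\gamma) \fun \K^k \times \vrv^{-1}(\gamma')$ where $\gamma' = (\gamma_{k+1}, \ldots, \gamma_{m-1}, \gamma_m * \Pi \phi(\gamma))$, which preserves $\Pi$ and hence the $\Gamma$-volume form. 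The volumetric analog of Lemma~\ref{Gamma:lift} and Remark~\ref{rem:Ga:match:impr} then closes the induction as before; the extra bookkeeping to track the shifted coordinate $\gamma_m * \Pi \phi(\gamma)$ is routine.
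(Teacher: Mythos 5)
Your proposal tracks the paper's approach, which the paper itself only sketches: surjectivity from Lemma~\ref{RV:decom:RES:vol}, and injectivity by rerunning the proof of Proposition~\ref{red:D:iso} with the $\Pi$-preserving coordinate shift from the proof of Lemma~\ref{RV:decom:RES:vol} in place of the reducibility identification. You correctly isolate reducibility as the one step that is genuinely not volume-preserving and whose replacement is the whole content of the modification.

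Two of your supporting claims deserve sharper justification. First, the paper explicitly flags the ``extra clause'' of Lemma~\ref{Gamma:lift}---that $e: \bm D \to \bm E$ is \emph{traced} by $f$, i.e.\ $e(\alpha)=\beta$ is witnessed by $f(t)=s$ with $t \in (\K^+)^{n}\times\vrv^{-1}(\alpha)$ and $s\in(\K^+)^{n}\times\vrv^{-1}(\beta)$---as exactly what the volumetric version needs. This clause, and not any claim about the intermediate rearrangements $g$, $g^*$, is what forces $\Pi\alpha=\Pi\beta$ from $\jcb_{\Gamma} f = 1$ and hence makes $e$ a $\vol\Gamma[*]$-morphism. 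You invoke ``the volumetric analog of Lemma~\ref{Gamma:lift}'' without identifying this as its source; the intermediate $g$ produced by \cite[\S 8.2.11]{dries:1998} permutes the $\vrv^{-1}(\bm E_i)$ among one another and is \emph{not} in general a $\vol$-morphism, so the induced $e$ must be certified volumetric through the extra clause rather than by tracking $g$. Second, the preliminary normalization to $U_i$ a disjoint union of copies of $(\K^+)^i$ is a $\vol\RES[*]$-isomorphism only after the residual $\vrv$-data of $f_i(U_i)$ (finite, but not generally $\{1\}$) has been pushed into the $\Gamma$-factor via $\gsk\vol\Gamma^{c}[*]$-bilinearity of the tensor product (e.g.\ $\vrv^{-1}([\{\gamma\}])\otimes[I] = \bm 1_{\K}\otimes[\{\gamma\}\times I]$); ``act on $\K^+$-coordinates only'' is the situation one must first arrange, not what is given. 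Neither point changes the structure of the argument, but both are precisely the sort of detail the paper compresses into its ``left to the reader.''
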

\begin{proof}
Surjectivity of $\vol \bb D$ follows immediately from Lemma~\ref{RV:decom:RES:vol}. The proof of Proposition~\ref{red:D:iso} may be modified, as in the proof of Lemma~\ref{RV:decom:RES:vol}, to show injectivity of $\vol \bb D$. This is straightforward and is left to the reader. Note that here we need the extra clause in the statement of Lemma~\ref{Gamma:lift}.
\end{proof}

\begin{cor}\label{db:vold}
$\vol \bb D$ restricts to an isomorphism of graded semirings
\[
\vol \bb D^{\db}: \gsk \vol\RES[*] \otimes \gsk \vol\Gamma^{\db}[*] \fun \gsk \vol\RV^{\db}[*].
\]
\end{cor}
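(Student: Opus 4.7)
The plan is to check three things in turn: (1) well-definedness of the restriction; (2) surjectivity, which I expect to fall out of the ``moreover'' clause of Lemma~\ref{RV:decom:RES:vol}; and (3) injectivity, which I expect to follow from Proposition~\ref{red:volD:iso} once one controls both the embedding $\gsk \vol \RV^{\db}[*] \hookrightarrow \gsk \vol \RV[*]$ and the tensor product over $\gsk \vol \Gamma^c[*]$. For well-definedness, note that if $\bm U = (U, f) \in \vol \RES[*]$ then $\vrv(U)$ is finite, so $U$ itself is doubly bounded; if $I \in \vol \Gamma^{\db}[*]$ then $\vrv^{-1}(I)$ is doubly bounded. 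Hence $(U \times \vrv^{-1}(I), f \times \id)$ lies in $\vol \RV^{\db}[*]$, and being the restriction of the graded semiring homomorphism $\vol \bb D$, the map $\vol \bb D^{\db}$ is itself a graded semiring homomorphism.

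For surjectivity, given a doubly bounded object $\bm W$, I would apply Lemma~\ref{RV:decom:RES:vol} to $W$ and invoke its ``moreover'' clause: it yields a partition of $W \setminus \vrv^{-1}(E_0)$ whose pieces are $\vol\RV$-isomorphic to products $V_i \times \vrv^{-1}(D_i)$ with each $D_i$ doubly bounded, so each piece lies in $\im \vol \bb D^{\db}$. The deleted piece $\vrv^{-1}(E_0)$, with $E_0$ finite, is a finite disjoint union of $\K^{\times}$-torsors, hence itself belongs to $\vol \RES[*]$ and equals the image of its own class tensored with $\bm 1_{\Gamma}$. Summing then gives $[\bm W] \in \im \vol \bb D^{\db}$.

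For injectivity, the key point I would exploit is that $\vol \RV^{\db}[*]$ is a full subcategory of $\vol \RV[*]$: the graph of any $\vol \RV[*]$-isomorphism between two doubly bounded objects is contained in their doubly bounded product, so it is automatically a $\vol \RV^{\db}[*]$-morphism. Therefore the forgetful map $\gsk \vol \RV^{\db}[*] \hookrightarrow \gsk \vol \RV[*]$ is an injective semiring homomorphism. Given $\vol \bb D^{\db}(\xi) = \vol \bb D^{\db}(\eta)$, the images in $\gsk \vol \RV[*]$ then agree, so Proposition~\ref{red:volD:iso} gives $\xi = \eta$ already in $\gsk \vol \RES[*] \otimes \gsk \vol \Gamma[*]$. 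To lift this equality back to $\gsk \vol \RES[*] \otimes \gsk \vol \Gamma^{\db}[*]$, I would replay the injectivity argument of Proposition~\ref{red:volD:iso}: the tensor is taken over $\gsk \vol \Gamma^c[*]$, whose generators are finite (hence doubly bounded) $\Gamma$-sets, so every bilinearity relation needed to identify two doubly bounded tensors already lives inside the doubly bounded tensor product; equivalently, every inductive reduction in that proof operates only on definable subsets of the original $\bm D_i$ and $\bm E_j$, which remain doubly bounded throughout. This last bookkeeping is what I expect to be the main obstacle, though it is more tedious than conceptually deep: one has to verify that every ``reducibility'' step and every invocation of \cite[\S 8.2.11]{dries:1998} in the proofs of Proposition~\ref{red:volD:iso} and Lemma~\ref{Gamma:lift} rearranges only subsets of fixed doubly bounded ambient $\Gamma$-sets, so that the entire induction stays inside $\vol \Gamma^{\db}[*]$.
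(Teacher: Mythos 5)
Your proposal is correct and supplies the argument that the paper states as a corollary without proof: well-definedness, surjectivity from the ``moreover'' clause of Lemma~\ref{RV:decom:RES:vol} together with the finite piece $\vrv^{-1}(E_0)$ absorbed into $\gsk\vol\RES[*]\otimes\bm 1_{\Gamma}$, and injectivity by checking that the reductions in the proofs of Proposition~\ref{red:volD:iso} and Lemma~\ref{Gamma:lift} operate entirely inside fixed doubly bounded ambient $\Gamma$-sets, so that full-subcategory-ness of $\vol\RV^{\db}[*]$ suffices. One small point to tighten: for well-definedness you need not only that $\vrv(U)$ is finite but also that $\vrv(f(U))$ is finite (Lemma~\ref{gk:ortho}), since the definition of $\vol\RV^{\db}[k]$ requires the \emph{graph} of $f\times\id$, not merely the set $U\times\vrv^{-1}(I)$, to be doubly bounded; and the detour through the forgetful map into $\gsk\vol\RV[*]$ in your injectivity step is unnecessary once you've granted that the induction of Proposition~\ref{red:volD:iso} preserves double boundedness, since that observation already establishes injectivity of $\vol\bb D^{\db}$ directly.
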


\begin{rem}\label{rem:volG:embed}
Due to the extra clause in the statement of Lemma~\ref{Gamma:lift}, Corollary~\ref{gsk:gamma:inj} still holds in the current context. There is an analogue of Proposition~\ref{prop:eu:retr:k} too, which will be established in \S\ref{sec:mor:vol}.
\end{rem}

\begin{nota}
The \emph{sign function} $\sgn : \Gamma^n \fun \{+, -\}$ is given by $\sgn(\gamma) = +$ if $\Pi \gamma \in \Gamma^+$ and $\sgn(\gamma) = -$ if $\Pi \gamma \in \Gamma^-$. Recall that $[(\vrv^{-1}(1), \id)]$ is abbreviated as $[\bm T]$, or more suggestively as $[+, \bm T]$. Note that $[(\vrv^{-1}(1), \id)]$ and $[(\vrv^{-1}(-1), \id)]$ are different elements in $\gsk \vol \RV[*]$; we shall denote the latter by $[-, \bm T]$ and write
\[
[\sgn(\alpha),\bm T][\sgn(\beta),\bm T] \eqqcolon [\sgn(\alpha)\sgn(\beta),\bm T^2].
\]
More generally, for any element $[+, \bm U] \coloneqq [\bm U]$ in $\gsk \vol \RV[*]$, there is a unique element $[-, \bm U]$ such that the equality \[
[+, \bm U][-, \bm T] = [-, \bm U][+, \bm T]
\]
is witnessed by the morphism $(\id, -\id)$.
\end{nota}

Let $I!$ be the semiring congruence relation on $\gsk \vol \RES[*]$ generated by the pairs
\[
([(\vrv^{-1}(\gamma), \id)], [\bm T]), \quad \gamma \in \Gamma^{+}(\mdl S).
\]
We set
\[
\sgsk \vol \RES[*] = \gsk \vol \RES[*] / I!.
\]
Let $\RES^{\pm}[k]$ be the category such that it has the same objects as $\RES[k]$ and its morphisms are the $\RES[k]$-morphisms $F$ with
\[
\sgn (\jcb_{\Gamma} F^{\rightleftharpoons} (u, v)) = 1
\]
for all $(u, v) \in F^{\rightleftharpoons}$. Set $\RES^{\pm}[*] = \coprod_k \RES^{\pm}[k]$. It is easy to see that $\sgsk \vol \RES[*]$ is naturally isomorphic to $\gsk \RES^{\pm}[*]$ and there is a commutative diagram
\[
\bfig
  \Square(0,0)|aaaa|/->`->`<-`->/<400>[{\gsk \vol \RES[*]}`{\gsk \RES[*]}`{\sgsk \vol \RES[*]}`{\gsk \RES^{\pm}[*]}; \bb F``\bb F`\sim]
\efig
\]
The semiring $\sgsk \vol \RES[*]$ will be instrumental in \S\ref{sec:zeta}.

\section{Grothendieck homomorphisms}\label{section:gromor}

We shall establish canonical homomorphisms between the Grothendieck semirings of the $\VF$-categories and the $\RV$-categories in this section. We shall recall much of the formalism and quote quite a few proofs in the relevant sections of \cite{Yin:special:trans, Yin:int:acvf, Yin:int:expan:acvf}.

\subsection{Special bijections}
From now on the underlying substructure $\mdl S$ is assumed to be $\VF$-generated. Also note that Convention~\ref{conv:can} will frequently be in effect.

\begin{defn}\label{defn:special:bijection}
Let $A$ be a (regularized) definable subset whose first coordinate is a $\VF$-coordinate. Let $C \sub \RVH(A)$
be an $\RV$-pullback and $\lambda: \pr_{>1}(C \cap A) \fun \VF$ a definable function such that the graph of $\lambda$ is contained in $C$. Let
\[
C^{\sharp} = \bigcup_{x \in \pr_{>1} (C)} \MM_{|\vrv|(\prv_1(x))} \times \{x\} \quad \text{and} \quad \RVH(A)^{\sharp} = C^{\sharp} \uplus (\RVH(A) \mi C).
\]
The \emph{centripetal transformation $\eta : A \fun \RVH(A)^{\sharp}$ with respect to $\lambda$} is defined by
\[
\begin{cases}
  \eta (a, x) = (a - \lambda(x), x), & \text{on } C \cap A,\\
  \eta = \id, & \text{on } A \mi C.
\end{cases}
\]
Note that $\eta$ is injective. The inverse of $\eta$ is naturally called the \emph{centrifugal transformation with respect to $\lambda$}. The
function $\lambda$ is referred to as the \emph{focus} of $\eta$ and the $\RV$-pullback $C$ as the \emph{locus} of $\lambda$ (or $\eta$).

A \emph{special bijection} $T$ on $A$ is an alternating composition of centripetal transformations and regularizations. The \emph{length} of such a special bijection $T$, denoted by $\lh(T)$, is the number of centripetal transformations in $T$. The range of $T$ is sometimes denoted by $A^{\sharp}$.
\end{defn}

For functions between subsets that have only one $\VF$-coordinate, composing with special bijections on the right and inverses of special bijections on the left obviously preserves otop.

\begin{lem}\label{inverse:special:dim:1}
Let $T$ be a special bijection on $A \sub \VF \times \RV^m$ such that $A^{\sharp}$ is an $\RV$-pullback. Then there is a definable function $\epsilon : \prv (A^{\sharp}) \fun \VF$ such that, for every $\RV$-polydisc $\gp = \rv^{-1}(t) \times \set{s} \sub A^{\sharp}$,
\[
(\pvf \circ T^{-1})(\gp) = \rv^{-1}(t) + \epsilon(s).
\]
\end{lem}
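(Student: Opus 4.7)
The plan is to establish, by induction on the total number of constituent operations in $T$, the following strengthened statement: there exist definable functions $\epsilon : \prv(A^{\sharp}) \fun \VF$ and $\sigma : \prv(A^{\sharp}) \fun \RV^{m'}$ (for appropriate $m'$) such that for every $\RV$-polydisc $\gp = \rv^{-1}(t) \times \{s\} \sub A^{\sharp}$,
\[
T^{-1}(\gp) = \bigl(\rv^{-1}(t) + \epsilon(s)\bigr) \times \{\sigma(s)\}.
\]
Projecting to $\pvf$ then yields the lemma. Tracking the $\RV$-tuple via $\sigma$ is essential: after a single inverse centripetal step the $\VF$-slice is no longer of the form $\rv^{-1}(t')$ but merely a translate of such, so any further inverse centripetal transformations can evaluate their foci only on the surviving $\RV$-coordinates.

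The base case (identity, or a single regularization) is immediate: take $\epsilon \equiv 0$ and let $\sigma$ be the truncation of $s$ dropping any $\rv$-coordinate added by $\can$. For the inductive step write $T = T' \circ \eta_1$, where $\eta_1$ is the innermost operation; since $T'$ still has range $A^{\sharp}$ and one fewer constituent, the inductive hypothesis supplies $\epsilon', \sigma'$ with
\[
(T')^{-1}(\gp) = \bigl(\rv^{-1}(t) + \epsilon'(s)\bigr) \times \{\sigma'(s)\}.
\]
If $\eta_1 = \can$, then $\eta_1^{-1}$ merely drops the first $\rv$-coordinate from $\sigma'(s)$ and leaves the $\VF$-part unaffected; the compatibility requirement built into $\can$ (that this first coordinate equal $\rv(a)$ for every $a$ in the $\VF$-slice) is automatic because $(T')^{-1}(\gp) \sub \can(\cdot)$ and $\sigma'(s)$ is a single tuple. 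If $\eta_1$ is a centripetal transformation with focus $\lambda$ and locus $C$, then since $(T')^{-1}(\gp)$ is a single connected set contained in the disjoint union $\RVH(A)^{\sharp} = C^{\sharp} \uplus (\RVH(A) \mi C)$, it lies wholly in one piece: in the $C^{\sharp}$-case, $\eta_1^{-1}$ shifts the $\VF$-slice by $\lambda(\sigma'(s))$; in the complementary case, $\eta_1^{-1}$ acts as the identity. In either case we set $\sigma = \sigma'$ and put $\epsilon(s) = \epsilon'(s) + \lambda(\sigma'(s))$ or $\epsilon(s) = \epsilon'(s)$, respectively.

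The only delicate point is that the dichotomy in the centripetal case depends on $s$, but it is a definable dichotomy: it asks whether $\sigma'(s) \in \pr_{>1}(C)$ and whether $\rv^{-1}(t) + \epsilon'(s) \sub \MM_{|\vrv|(\prv_1(\sigma'(s)))}$, both conditions expressible in $\lan{}{TRV}{}$ with parameter $s$. We therefore partition $\prv(A^{\sharp})$ accordingly and define $\epsilon$, $\sigma$ piecewise. The main conceptual obstacle, which motivates the strengthening, is precisely that the plain form of the statement --- a $\VF$-slice of shape $\rv^{-1}(t) + \epsilon(s)$ --- is not itself an $\RV$-polydisc, so the induction cannot run without also bookkeeping the accompanying $\RV$-tuple that supplies the inputs for subsequent focus functions.
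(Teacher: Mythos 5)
Your proof is correct and follows the same inductive strategy as the paper's terse argument (which peels off one constituent operation at a time and refers to an external reference for the details); the strengthened form recording the residual $\RV$-tuple $\sigma$ is exactly the bookkeeping needed to feed the next inverse focus map. One small imprecision: the appeal to ``connectedness'' is not the right reason that $(T')^{-1}(\gp)$ lies wholly in $C^{\sharp}$ or in $\RVH(A) \mi C$ --- the correct reason, which you in fact go on to use, is that those two pieces have disjoint $\RV$-projections and $(T')^{-1}(\gp)$ carries the single $\RV$-tuple $\sigma'(s)$.
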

\begin{proof}
By induction on the length $\lh(T)$ of $T$, this is reduced to the case $\lh(T) = 1$, which is clear from Definition~\ref{defn:special:bijection}. (See \cite[Lemma~4.1]{Yin:int:acvf} for more details.)
\end{proof}

Note that, in the above lemma, since $\dom(\epsilon) \sub \RV^l$ for some $l$, by
Corollary~\ref{function:rv:to:vf:finite:image}, $\ran(\epsilon)$ is actually finite.

A definable subset $A$ is called a \emph{deformed $\RV$-pullback} if there is a special bijection $T$ on $A$ such that $A^{\sharp}$ is an $\RV$-pullback.

\begin{lem}\label{simplex:with:hole:rvproduct}
Every definable subset $A \sub \VF \times \RV^m$ is a deformed $\RV$-pullback.
\end{lem}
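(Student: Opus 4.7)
The plan is to proceed by induction on an appropriate complexity measure of $A$, the key structural input being HNF applied uniformly across the $\RV^m$-parameters. First I would use Corollary~\ref{mono} together with Remark~\ref{rem:HNF} (HNF) and compactness to produce a uniform fiberwise description: there is some $N$ and definable functions $f_1,\dots,f_{2N}:\prv(A)\fun\DC$ such that, for every $t\in\prv(A)$, the fiber $A_t\sub\VF$ is the disjoint union of at most $N$ pairwise properly disconnected $\vv$-intervals whose end-discs are given by consecutive pairs from the list $f_1(t),\dots,f_{2N}(t)$. After refining $\prv(A)$ into finitely many definable pieces, we may assume this combinatorial type is constant along the $\RV$-sort.

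Next I would observe that $A$ fails to be an $\RV$-pullback precisely when, for some $t\in\prv(A)$ and some $t_1\in\pr_{1}(\rv(A_t))$, the intersection $\rv^{-1}(t_1)\cap A_t$ is a nonempty proper subset of $\rv^{-1}(t_1)$. By HNF this means one of the end-discs $f_i(t)$ is \emph{proper} inside its ambient $\RV$-ball $\rv^{-1}(t_1)$. Fix such an end-disc $\gb$ (chosen by a definable selection so that $\lambda(t_1,x)\in\gb\sub\rv^{-1}(t_1)$ depends definably on the parameters $x\in\pr_{>1}(C\cap A)$, where the locus $C\sub\RVH(A)$ is the $\RV$-pullback whose first factor lies in $\rv^{-1}(t_1)$). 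If $\gb$ is a closed disc (or a half thin annulus) we use Lemma~\ref{clo:disc:bary} to find a definable point inside it; if $\gb$ is an open disc we use Corollary~\ref{open:disc:def:point} (relative to the fact that the proper end of a $\vv$-interval supplies a definable closed subdisc). In either case we obtain a definable focus $\lambda$ with graph contained in $C$, and we apply the corresponding centripetal transformation, followed by a regularization.

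The point is that after this centripetal transformation, on the piece of $A$ lying inside the ambient ball $\rv^{-1}(t_1)\times\{x\}$, the relevant image lies in $\MM_{|\vrv|(t_1)}\times\{x\}$, i.e.\ strictly inside a smaller $\RV$-ball. By the design of $C^{\sharp}$, the part of the image carried to the new smaller ball has strictly smaller $\vv$-radius than $t_1$; meanwhile the part of $A^{\sharp}$ sitting outside $C$ has one fewer proper end-disc to deal with inside $\rv^{-1}(t_1)$. I would then apply the inductive hypothesis to the (regularized) image subset, which again lies in $\VF\times\RV^{m'}$ for some $m'$, and compose the resulting special bijection with the one just produced; an alternating composition of centripetal transformations and regularizations is again special, so this closes the induction.

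\textbf{Main obstacle.} The genuine difficulty is setting up a well-founded complexity measure on definable $A\sub\VF\times\RV^m$ that strictly decreases under a single centripetal-plus-regularization step. The naive count ``number of proper end-discs among $f_1(t),\dots,f_{2N}(t)$'' can grow again inside $\MM_{|\vrv|(t_1)}\times\{x\}$ after the transformation, because the new fibers over the refined $\RV$-coordinate may themselves have complicated HNF-decompositions. The fix is to use a lexicographic measure whose first coordinate records the combinatorial type of the HNF-decomposition (bounded uniformly by compactness and Corollary~\ref{mono}) at the \emph{outermost} $\RV$-level, and whose later coordinates record the analogous data at the strictly deeper levels entered by $C^{\sharp}$; a centripetal transformation centered at an outermost proper end-disc strictly decreases the first coordinate, while all data at deeper levels can only be pushed further down, never up. Verifying that this measure is well-defined and strictly decreasing—which amounts to checking that no new outermost proper end-disc is created outside the locus $C$—is where all the bookkeeping from Lemma~\ref{inverse:special:dim:1} and Convention~\ref{conv:can} gets used.
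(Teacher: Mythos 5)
Your proposal takes essentially the same route as the paper: use HNF together with compactness to get a uniform description of the fibers, and then repeatedly apply centripetal transformations with a focus obtained from Lemma~\ref{clo:disc:bary}. The tools are the right ones. But the ``main obstacle'' paragraph, where you propose a lexicographic complexity measure graded by ``$\RV$-levels'', both overcomplicates the termination argument and, as stated, does not actually work: the levels you describe are indexed by the valuative radius, an element of the densely ordered group $|\Gamma|$, so the proposed lexicographic order on your tuples has no well-foundedness to induct on, and you would need a separate argument to bound the depth to which the process can recurse.

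The real point, which your sketch circles around but does not isolate, is that the termination is immediate once one chooses the focus correctly. If $\gb$ is a proper end-disc sitting inside an $\RV$-disc $\rv^{-1}(t_1)$, then the smallest closed disc $\ol\gb$ containing $\gb$ is definable and, by Lemma~\ref{clo:disc:bary}, contains a definable point $c_0$ (this is also where the standing hypothesis that $\mdl S$ is $\VF$-generated enters). Taking $\lambda = c_0$ and applying the centripetal transformation, the translate $\gb - c_0$ is a maximal open or closed subdisc of $\OO_{\rad(\gb)}$, hence is either $\MM_{\rad(\gb)}$, or $\OO_{\rad(\gb)}$ itself, or a single $\RV$-disc $\rv^{-1}(s)$: in every case it is an $\RV$-pullback, so $\gb$ ceases to be an obstruction. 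Since a centripetal transformation is a bijection that translates each end-disc by a fixed element, it does not create new proper end-discs; consequently, the plain count of proper end-discs in a fiber strictly decreases, and no layered measure is needed. The paper makes this even more transparent by using compactness and HNF to reduce immediately to the case of a single $\vv$-interval contained in a single $\RV$-disc, where there are at most two end-discs and the case analysis (open disc, closed disc, half thin annulus, point) is finite by inspection.

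So: same method, correct ingredients, but the inductive bookkeeping you flag as the genuine difficulty is a detour. What is actually needed is the observation that a focus taken inside $\ol\gb$ converts $\gb$ into an $\RV$-pullback boundary in one step; once you make that explicit, the naive count suffices and the $\Gamma$-indexed lexicographic measure (which is not well-founded anyway) can be dropped.
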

\begin{proof}
By compactness and HNF this is immediately reduced to the situation where $A \sub \VF$ is contained in an $\RV$-disc and is a $\vv$-interval with end-discs $\ga$, $\gb$. This may be further divided into several cases according to whether $\ga$, $\gb$ are open or closed discs and whether the ends of $A$ are open or closed. In each of these cases Lemma~\ref{clo:disc:bary} is applied in much the same way as its counterpart is applied in the proof of \cite[Lemma~4.26]{Yin:QE:ACVF:min}. It is a tedious exercise and is left to the reader.
\end{proof}

Here is an analogue of \cite[Theorem~5.4]{Yin:special:trans} (see also \cite[Theorem~4.25]{Yin:int:expan:acvf}):

\begin{thm}\label{special:term:constant:disc}
Let $F(X) = F(X_1, \ldots, X_n)$ be an $\lan{T}{}{}$-term. Let $u \in \RV^n$ and $R : \rv^{-1}(u) \fun A$ be a special bijection. Set $f = F \circ R^{-1}$. Then there is a special bijection $T$ on $A$ such that $f \circ T^{-1}$ is contractible.
\end{thm}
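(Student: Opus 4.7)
\textit{Plan.} I would argue by induction on the complexity of the $\lan{T}{}{}$-term $F$, repeatedly invoking the fact that special bijections on $A$ can be iteratively composed (a special bijection on $A$ being transferred through a previous one onto the latter's output set via pullback of the focus) to produce a common refinement of any finite collection of special bijections on $A$.

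For the base cases: if $F$ is a constant, then $f$ is itself constant and thus trivially contractible with $T = \id$; if $F(X) = X_i$, then on $\rv^{-1}(u)$ the function $F$ takes values entirely inside $\rv^{-1}(u_i)$, so $\rv \circ f$ is the constant $u_i$ on $A$, and $f$ is already contractible with $T = \id$. For the inductive step, I would write $F(X) = G(H_1(X), \ldots, H_k(X))$ with $G$ a primitive $\lan{T}{}{}$-function symbol and each $H_j$ a proper subterm. Applying the inductive hypothesis to each $H_j$ with the same $R$ yields special bijections $T_j$ on $A$ making $H_j \circ R^{-1} \circ T_j^{-1}$ contractible, and the combination procedure above produces a single special bijection $T'$ on $A$ such that every $H_j \circ R^{-1} \circ T'^{-1}$ is simultaneously contractible.

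The remaining task is to further refine $T'$ to a special bijection $T$ on $A$ making $F \circ R^{-1} \circ T^{-1}$ contractible. On each $\RV$-polydisc $\gp$ of $T'(A)$, the contracted value of each $H_j \circ R^{-1} \circ T'^{-1}$ is a single element $v_j^{\gp} \in \RV$, so the problem localizes to making $G$ have constant $\rv$-value on $\prod_j \rv^{-1}(v_j^{\gp})$ after a suitable centripetal refinement. When every $v_j^{\gp}$ lies in $\RV^{\circ}$, the inputs to $G$ lie in $\OO$, so Ax.~\ref{ax:tcon} ensures that the $G$-values also lie in $\OO$ and Ax.~\ref{ax:match} gives $\res \circ G = \bar{G} \circ \res$ on that region; the residual $\rv$-variation among points sharing a residue is controlled by the \omin-minimal differentiability of $G$ (Lemma~\ref{fun:suba:fun}) and eliminated by a centripetal transformation whose focus tracks a lift of $\bar{G}(\res \vec{v}^{\gp})$. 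When some $v_j^{\gp}$ lies outside $\RV^{\circ}$, preliminary centripetal transformations in the corresponding $\VF$-coordinates rescale the relevant discs back into $\OO$ before the previous argument applies.

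The main obstacle will be ensuring that the subdivision of $T'(A)$ forced by this analysis is actually realizable by a special bijection on $A$, rather than by an arbitrary definable partition. This entails tracking the critical locus of $G$ along the image of $(H_1, \ldots, H_k)$ and designing centripetal foci that align with that locus: Lemma~\ref{open:rv:cons} and Corollary~\ref{part:rv:cons} provide the pointwise $\rv$-constancy on open polydiscs, while Lemma~\ref{fn:alm:cont} and Lemma~\ref{RV:bou:dim} confine the bad loci to definable subsets of strictly smaller $\RV$-dimension, which drives the outer induction. The overall construction follows the template of \cite[Theorem~5.4]{Yin:special:trans}, with the invocations of \cmin-minimality in the algebraically closed setting replaced here by applications of the $T$-convexity axioms Ax.~\ref{ax:tcon} and Ax.~\ref{ax:match}.
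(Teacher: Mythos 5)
Your plan organizes the argument as an induction on the \emph{complexity of the term} $F$, but this is not the induction the theorem actually calls for, and it breaks down at the first nontrivial step. Consider the degenerate case $F(X) = G(X_1, \ldots, X_n)$ with $G$ a single primitive $\lan{T}{}{}$-function symbol applied directly to variables. Your base cases dispose of constants and coordinate projections, so each $H_j = X_j$ contributes nothing, $T' = \id$, and the ``remaining task'' of controlling the $\rv$-variation of $G$ on a full $\RV$-polydisc is precisely the original problem with no reduction whatsoever. The term-complexity induction is vacuous for this class of terms, which is exactly the class in which all the analytic content of the theorem is concentrated. Moreover, the claim that ``the residual $\rv$-variation $\ldots$ is eliminated by a centripetal transformation whose focus tracks a lift of $\bar{G}(\res\vec{v}^{\gp})$'' is not right even in the simplest example: for $F(X) = X^2 - 1$ on $\rv^{-1}(1) = 1+\MM$ we have $\bar{G}(\res 1) = 0$, so a lift of that value is $0 \in \VF$, whereas the correct focus must be $1$ (the point in the domain where $F$ hits the critical value), after which regularization yields contractibility. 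The focus always sits in the \emph{domain}, not the codomain, and in general the centripetal transformations available to you act on $A \sub \VF^n$ while $G$ operates on the image of $(H_1,\ldots,H_k)$ in $\VF^k$ — a subset that can be badly entangled with the polydisc $\prod_j \rv^{-1}(v_j^{\gp})$ — so there is no straightforward way to align a centripetal focus on $A$ with the critical locus of $G$ on that image.

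The paper's proof instead runs an induction on $n$, the number of $\VF$-variables, and decouples itself from term structure entirely. The base case $n = 1$ is settled by Corollary~\ref{part:rv:cons}, which applies to an arbitrary definable function $F : \rv^{-1}(u) \fun \VF$ and furnishes a finite partition on each piece of which $\rv \circ F$ is constant on every open disc; Lemma~\ref{simplex:with:hole:rvproduct} then converts that partition into a special bijection. The machinery behind Corollary~\ref{part:rv:cons} — immediate automorphisms (Lemma~\ref{atom:self}) and the open-to-open behavior of definable functions on atomic open polydiscs (Lemma~\ref{open:rv:cons}) — is exactly what handles the case $F = G(X)$ with $G$ primitive, because it is a statement about definable functions, not about syntactic decomposition. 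The inductive step is a parameterization-and-compactness argument: on each polydisc one takes a quantifier-free formula defining $(\rv \circ f)\rest\gp$, enumerates its $\lan{T}{}{}$-terms, specializes the first variable, applies the inductive hypothesis in $n-1$ variables fiberwise, uses compactness to uniformize the resulting special bijections, and then applies the inductive hypothesis in one variable to the coefficient terms. You cite the right supporting lemmas and axioms, but your template description of \cite[Theorem~5.4]{Yin:special:trans} misidentifies what drives the induction, and the resulting argument does not close.
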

\begin{proof}
First observe that if the assertion holds for one $\lan{T}{}{}$-term then it holds simultaneously for any finite number of $\lan{T}{}{}$-terms. We do induction on $n$. For the base case $n=1$, by Corollary~\ref{part:rv:cons}, there is a definable finite partition $B_1, \ldots, B_n$ of $\rv^{-1}(u)$ such that, for all $i$, if $\ga \sub B_i$ is an open disc then $\rv \rest F(\ga)$ is constant. By Lemma~\ref{simplex:with:hole:rvproduct}, there is a special bijection $T$ on $A$ such that each $(T \circ R) (B_i)$ is an $\RV$-pullback. Clearly $T$ is as required.

For the inductive step, we may and shall adopt the inductive step in the proof of \cite[Theorem~5.4]{Yin:special:trans} (or \cite[Theorem~4.25]{Yin:int:expan:acvf}), since the construction there only depends on its base case formally.

We may concentrate on a single $\RV$-polydisc $\gp = \rv^{-1}(v) \times \set{(v, r)} \sub A$. Let $\phi(X, Y)$ be a quantifier-free formula that defines the function $(\rv \circ f) \rest \gp$. Let $G_{i}(X)$ enumerate the occurring $\lan{T}{}{}$-terms of $\phi$. For each $a \in \rv^{-1}(v_1)$ let $G_{i,a} = G_{i}(a, X_2, \ldots, X_n)$. By the inductive hypothesis, there is a special bijection $R_{a}$ on $\rv^{-1}(v_2, \ldots, v_n)$ such that every $G_{i,a} \circ R_a^{-1}$ is contractible. Let $U_{k, a}$ enumerate the loci of the components of $R_{a}$ and $\lambda_{k, a}$ the corresponding focus maps. By compactness,
\begin{itemize}
  \item for each $i$ there is a quantifier-free formula $\psi_i$ such that $\psi_i(a)$ defines the contraction of $G_{i,a} \circ R_a^{-1}$,
  \item there is a quantifier-free formula $\theta$ such that $\theta(a)$ determines the sequence $\rv(U_{k, a})$ and the $\VF$-coordinates targeted by $\lambda_{k, a}$.
\end{itemize}
Let $H_{j}(X_1)$ enumerate the occurring $\lan{T}{}{}$-terms of the formulas $\psi_i$, $\theta$. Applying the inductive hypothesis again, we obtain a special bijection $T_1$ on $\rv^{-1}(v_1)$ such that every $H_{j} \circ T_1^{-1}$ is contractible. This means that, for every $\RV$-polydisc $\gq \sub T_1(\rv^{-1}(v_1))$ and all $a_1, a_2 \in T_1^{-1}(\gq)$,
\begin{itemize}
  \item the formulas $\psi_i(a_1)$, $\psi_i(a_2)$ define the same contraction,
  \item the special bijections $R_{a_1}$, $R_{a_2}$ may be naturally glued together to form one special bijection on $\{a_1, a_2\} \times \rv^{-1}(v_2, \ldots, v_n)$.
\end{itemize}
Consequently, $T_1$ and $R_{a}$ naturally induce a special bijection $T$ on $\gp$ such that every $G_{i} \circ T^{-1}$ is contractible. This implies that $f \circ T^{-1}$ is contractible and hence $T$ is as required.
\end{proof}

The proof of \cite[Theorem~4.26]{Yin:int:expan:acvf} works for the following corollary:

\begin{cor}\label{special:bi:term:constant}
Let $A \sub \VF^n$ and $f : A \fun \RV^m$ be a definable function. Then there is a special bijection $T$ on $A$ such that $T(A)$ is an $\RV$-pullback and the function $f \circ T^{-1}$ is contractible.
\end{cor}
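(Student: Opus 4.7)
The plan is to deduce this corollary from Theorem~\ref{special:term:constant:disc} by an iterated compactness argument combined with quantifier elimination.

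First, I would reduce to the case where $A$ is already an $\RV$-pullback. This is done by applying Lemma~\ref{simplex:with:hole:rvproduct} coordinate-by-coordinate: by induction on $n$ and compactness in the $\RV$-parameters, any definable $A \sub \VF^n \times \RV^{\ast}$ admits a special bijection $T_0$ such that $T_0(A) = \rv^{-1}(U)$ for some definable $U \sub \RV^n$. (Along the way we may need to replace $f$ by $f \circ T_0^{-1}$ and continue working with the new function, which is permissible since special bijections compose.)

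Next, by quantifier elimination (Theorem~\ref{theos:qe}) and the normalization remark that every $\lan{T}{}{}$-term appearing in an $\lan{}{TRV}{}$-formula may be placed inside $\rv(\cdot)$, the graph of $f$ is defined by a quantifier-free $\lan{}{TRV}{}$-formula in which only finitely many $\lan{T}{}{}$-terms $F_1(x), \ldots, F_k(x)$ occur, and each only through $\rv(F_j(x))$. Consequently, for any $a, a' \in A$ with $\rv(F_j(a)) = \rv(F_j(a'))$ for every $j$, one has $f(a) = f(a')$. Thus it suffices to produce a special bijection $T$ on $A$ such that $T(A)$ is an $\RV$-pullback and each $F_j \circ T^{-1}$ is contractible; then $f \circ T^{-1}$ will automatically be contractible.

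To produce such a $T$, I would argue polydisc-by-polydisc. Fix $u \in U$ and consider the $\RV$-polydisc $\gp_u = \rv^{-1}(u) \sub A$. Applying Theorem~\ref{special:term:constant:disc} with $R = \id$ to the terms $F_1, \ldots, F_k$ simultaneously (the theorem handles finitely many terms at once, as remarked in its proof), one obtains a $u$-definable special bijection $T_u$ on $\gp_u$ such that every $F_j \circ T_u^{-1}$ is contractible and $T_u(\gp_u)$ is an $\RV$-pullback. By compactness in $u$, these $T_u$ can be chosen uniformly definably over each piece of a finite definable partition of $U$, and the resulting families assemble into a single special bijection $T$ on $A$ whose restriction to each $\RV$-polydisc agrees with the appropriate $T_u$.

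The step requiring the most care will be the patching: verifying that the $u$-definable foci and loci of the $T_u$ fit together into a legitimate global special bijection in the sense of Definition~\ref{defn:special:bijection}, and that a final application of Lemma~\ref{simplex:with:hole:rvproduct} (to the image, if it is not already a pullback after assembly) restores the $\RV$-pullback property without disturbing contractibility of the $F_j \circ T^{-1}$. This is exactly the uniform-bookkeeping argument carried out in the proof of \cite[Theorem~4.26]{Yin:int:expan:acvf}, whose applicability the author has already signalled.
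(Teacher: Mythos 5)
Your proposal is correct and follows essentially the same route as the paper, which itself simply defers to the proof of \cite[Theorem~4.26]{Yin:int:expan:acvf}: reduce via quantifier elimination to the contractibility of finitely many $\lan{T}{}{}$-terms occurring inside $\rv(\cdot)$, apply Theorem~\ref{special:term:constant:disc} polydisc-by-polydisc, and assemble uniformly by compactness. The one minor organizational remark is that the preliminary reduction to an $\RV$-pullback can be folded into the main step — one may just as well work on $\RVH(A)$ from the start and include the characteristic function of $A$ among the functions to be contracted, which simultaneously yields the $\RV$-pullback property for $T(A)$ and sidesteps the need to first establish a standalone version of Corollary~\ref{all:subsets:rvproduct}.
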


\begin{cor}\label{all:subsets:rvproduct}
Every definable subset is a deformed $\RV$-pullback.
\end{cor}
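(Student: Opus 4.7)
The strategy is to deduce the corollary from Corollary~\ref{special:bi:term:constant} by a fiberwise parameterization over the $\RV$-coordinates. Given a definable $A \sub \VF^n \times \RV^m$, the first observation is that $A$ is an $\RV$-pullback if and only if, for every $s \in \prv(A)$, the fiber $A_s \sub \VF^n$ is an $\RV$-pullback in $\VF^n$; this is immediate from Definition~\ref{defn:disc}, since every $\RV$-polydisc in $\VF^n \times \RV^m$ has the form $\rv^{-1}(t) \times \{s\}$.

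For each $s \in \prv(A)$ the fiber $A_s \sub \VF^n$ is $s$-definable, so Corollary~\ref{special:bi:term:constant} (applied with, say, the constant function as $f$) yields an $s$-definable special bijection $T_s$ on $A_s$ with $T_s(A_s)$ an $\RV$-pullback. Because the construction is uniformly definable in the parameter $s$, a standard compactness argument produces a definable finite partition of $\prv(A)$ on each piece of which the length of $T_s$ and the defining formulas for its successive foci and loci are fixed.

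From these uniform pieces I would then assemble a single special bijection $T$ on all of $A$. The key point enabling this assembly is already built into Definition~\ref{defn:special:bijection}: the focus of a centripetal transformation acting on a subset whose first coordinate is in $\VF$ is allowed to be a definable function of \emph{all} remaining coordinates, including the $\RV$-ones. Hence the $s$-parametric family of foci extracted from the $\{T_s\}$ assembles, piece by piece, into genuine definable foci on $A$; iterating for each centripetal step in the (now uniform) sequence produces the desired $T$. By construction $T(A)_s = T_s(A_s)$ is an $\RV$-pullback in $\VF^n$ for every $s$, so by the initial observation $T(A)$ is an $\RV$-pullback.

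The main, though mild, obstacle is the bookkeeping in the last step: one must verify that the sequences of centripetal transformations and intervening regularizations assembled across the finite partition really do constitute a single special bijection on all of $A$, rather than merely a disjoint collection of fiberwise ones. This is routine once the compactness uniformization is in place, since within each piece of the partition the number and ordering of the centripetal steps are fixed and the focus functions are definable across the piece; and after regularizing one may assume the partition is itself $\prv$-invariant, so there is no conflict between the pieces of the assembled sequence.
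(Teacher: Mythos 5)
The paper states this corollary with no proof, treating it as immediate from Corollary~\ref{special:bi:term:constant}; your fiberwise argument is a correct way to supply the omitted details, and it is the natural route. Two small remarks. First, applying Corollary~\ref{special:bi:term:constant} to each fiber $A_s$ with $s\in\prv(A)$ as an extra parameter places you over a base substructure $\mdl S\la s\ra$ that is no longer $\VF$-generated, so one should check that the chain of lemmas feeding into that corollary (in particular Lemma~\ref{clo:disc:bary}, which does the real work at the $n=1$ level, via Lemma~\ref{simplex:with:hole:rvproduct}) tolerates extra $\RV$-data in the base; the paper signals exactly this in the preamble to Lemma~\ref{clo:disc:bary}, so your compactness step is legitimate, but it is a hypothesis you are leaning on and worth naming. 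Second, there is a variant that bypasses the parametrized application, the compactness uniformization, and the gluing bookkeeping altogether: by the remark after Theorem~\ref{theos:qe}, $A$ is defined by a formula $\phi(\rv(F_1(x)),\ldots,\rv(F_k(x)),y)$ where the $F_i$ are $\lan{T}{}{}$-terms in the $\VF$-variables $x$ alone and $y$ ranges over the $\RV$-coordinates; one application of Corollary~\ref{special:bi:term:constant} to $B\coloneqq\pvf(A)\sub\VF^n$ with $f=(\rv\circ F_1,\ldots,\rv\circ F_k)$ yields a special bijection $T$ on $B$ with $T(B)$ an $\RV$-pullback and $f\circ T^{-1}$ contractible, and carrying the $y$-coordinates along unchanged turns $T$ into a special bijection $T'$ on $A$ with $T'(A)$ an $\RV$-pullback, since membership of $(b,y)$ in $T'(A)$ is then governed by $\phi$ evaluated at $(f\circ T^{-1})(b)$ and $y$, which depends only on $\rv(b)$ and $y$. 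This produces the same uniform $T$ that you build fiber by fiber, just directly from quantifier elimination, and the ``assembly'' issue you flag at the end disappears.
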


Recall that the \emph{$\RV$-fiber dimension} of a definable subset $A$, denoted by $\dim_{\RV}^{\fib}(A)$, is the number
\[
\max \{\dim_{\RV}(A_a) : a \in \pvf(A)\}.
\]

\begin{defn}[$\VF$-categories]\label{defn:VF:cat}
The objects of the category $\VF[k]$ are the definable subsets of $\VF$-dimension $\leq k$ and $\RV$-fiber dimension $0$ (that is, all the $\RV$-fibers are finite). Any definable bijection between two such objects is a morphism of $\VF[k]$. Set $\VF_* = \bigcup_k \VF[k]$.
\end{defn}

\begin{defn}\label{def:L}
The \emph{$k$th canonical $\RV$-lifting map} $\mathbb{L}_k: \ob \RV[k] \fun \ob \VF[k]$ is given by
\[
\mathbb{L}_k(U,f) = \bigcup \{\rv^{-1}(f(u)) \times \{ u \}: u \in U\}.
\]
The lifting map $\mathbb{L}_{\leq k}: \ob \RV[\leq k] \fun \ob \VF[k]$ is given by
\[
\bigoplus_{i \leq k} \bm U_i \efun \biguplus_{i \leq k} \bb L_i(\bm U_i).
\]
Set $\mathbb{L} = \bigcup_k \mathbb{L}_{\leq k}$.
\end{defn}

\begin{defn}
Let $F$ be an $\RV[k]$-morphism and $F_i : U_i \fun V_i$ a definable finite partition of the induced finite-to-finite correspondence $F^{\rightleftharpoons}$ such that each $F_i$ is a bijection. Suppose that
\[
F_i^{\uparrow} : \rv^{-1}(U_i) \fun \rv^{-1}(V_i)
\]
is a definable bijection that contracts to $F_i$. Then $F^{\uparrow} \coloneqq \bigcup_i F_i^{\uparrow}$ is called a \emph{lift} of $F$. We shall regard $F^{\uparrow}$ as a definable bijection $\bb L (\bm U) \fun \bb L (\bm V)$ that ``contracts'' to $F^{\rightleftharpoons}$.
\end{defn}

\begin{lem}\label{RV:lift}
For every $\RV[k]$-morphism $F : (U, f) \fun (V, g)$ there is a $\VF[k]$-morphism $F^{\uparrow}$ that lifts $F$.
\end{lem}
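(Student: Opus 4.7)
The strategy is to reduce the lifting problem to a parameterized family of bijections between single $\RV$-polydiscs, and then to define each such fiberwise bijection by multiplicative translation after choosing definable representatives with the Skolem functions from Theorem~\ref{tcon:qe}.

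First, since $f$ and $g$ are finite-to-one and $F$ is a definable bijection, the induced correspondence $F^{\rightleftharpoons}$ is finite-to-finite. By compactness I would partition $U$ into finitely many definable subsets $U_i$ so that, on each $U_i$, the restriction $f\!\restriction\! U_i$ is injective and the composition $g \circ F\!\restriction\! U_i$ is injective as well; then each $F_i \coloneqq (g \circ F) \circ (f\!\restriction\! U_i)^{-1}$ is a definable bijection from $f(U_i)$ onto $g(F(U_i))$ whose union (as a graph) recovers $F^{\rightleftharpoons}$. It suffices to produce a lift of each $F_i$ separately; the disjoint union then gives $F^{\uparrow}$.

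Second, on each piece $U_i$, apply definable Skolem (available by Theorem~\ref{tcon:qe}, since our working substructure is $\VF$-generated) to choose definable sections
\[
a : U_i \fun (\VF^{\times})^k, \qquad b : F(U_i) \fun (\VF^{\times})^k,
\]
satisfying $\rv(a(u)) = f(u)$ and $\rv(b(v)) = g(v)$ coordinatewise. Define
\[
F_i^{\uparrow} : \bb L_k(U_i, f\!\restriction\! U_i) \fun \bb L_k(F(U_i), g\!\restriction\! F(U_i))
\]
by
\[
F_i^{\uparrow}(x, u) = \Bigl( x_1 \cdot \tfrac{b(F(u))_1}{a(u)_1},\, \ldots,\, x_k \cdot \tfrac{b(F(u))_k}{a(u)_k},\, F(u) \Bigr).
\]
Since $\rv$ is a multiplicative homomorphism, for any $(x, u)$ with $\rv(x) = f(u)$ the image lies in $\rv^{-1}(g(F(u))) \times \{F(u)\}$; the inverse is obtained by swapping the roles of $a$ and $b$, so $F_i^{\uparrow}$ is a definable bijection. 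By construction, $F_i^{\uparrow}$ sends $\rv^{-1}(f(u)) \times \{u\}$ bijectively onto $\rv^{-1}((g\circ F)(u)) \times \{F(u)\}$, which is precisely the contraction requirement.

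Finally, set $F^{\uparrow} = \bigcup_i F_i^{\uparrow}$ and check that it is a morphism in $\VF[k]$: the domain and codomain lie in $\VF[k]$ by the definition of $\bb L$, and the $\RV$-fibers of $F^{\uparrow}$ are the graphs of the various $F_i$ restricted to single $\RV$-polydiscs, hence inherit finiteness from $F^{\rightleftharpoons}$. The main point requiring some care is the initial partitioning step, which must simultaneously untangle the finite-to-one data of $f$, $g$, and the bijection $F$; this is however a routine compactness argument given Skolem functions, so I do not expect a serious obstacle, and everything beyond it is a direct calculation with the multiplicative structure of $\rv$.
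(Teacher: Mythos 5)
The proposal hits a genuine obstruction at its central step. You posit definable sections $a : U_i \fun (\VF^{\times})^k$ and $b : F(U_i) \fun (\VF^{\times})^k$ with $\rv(a(u)) = f(u)$ and $\rv(b(v)) = g(v)$, and then build the lift by multiplicative translation. But a definable function from a subset of the $\RV$-sort into the $\VF$-sort necessarily has \emph{finite} image: this is exactly Corollary~\ref{function:rv:to:vf:finite:image} (a direct consequence of Lemma~\ref{RV:no:point}). So unless $f(U_i)$ and $g(F(U_i))$ are finite --- which is not the generic case --- the sections $a$, $b$ simply do not exist. The definable Skolem functions of Theorem~\ref{tcon:qe} give $\lan{T}{}{}(\imath)$-term choice functions \emph{within} the $\VF$-sort; they do not produce a definable cross-section of $\rv$. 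Everything after this point in the proposal rests on the sections, so the argument does not go through.

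The paper gets around this by never choosing a section at all. One reduces (via compactness) to the case $U^* = f(U)$, $V^* = g(V) \sub (\K^+)^k$, so that $F^{\rightleftharpoons}$ is a definable bijection in the $\K$-sort. By stable embeddedness (\cite[Theorem~A]{Dries:tcon:97}) and quantifier elimination in $T$, $F^{\rightleftharpoons}$ is piecewise given by $\lan{T}{}{}$-terms; crucially, those same terms already define total continuous functions at the $\VF$-level, and by \cite[Proposition~2.20]{DriesLew95} (compatibility of $\lan{T}{}{}$-terms with $\res$) together with Lemma~\ref{RV:bou:dim}, they furnish a lift of $F^{\rightleftharpoons}$ over the complement of a subset $W \sub U^*$ of smaller $\RV$-dimension. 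The exceptional set $W$ is then handled by induction on $\dim_{\RV}$, using injective coordinate projections; the base case is a singleton, where a single definable point in the corresponding $\RV$-polydisc does exist (because a definable closed disc, and in particular an $\RV$-disc, contains a definable point when the parameters are $\VF$-generated). The moral is that the lift is not obtained by translating along chosen representatives but by observing that the formula defining $F^{\rightleftharpoons}$ already lives one floor up, so to speak, once you express it in $\lan{T}{}{}$-terms. Your initial partitioning step is fine and is implicitly present in the paper's reduction; it is the section-choosing step that needs to be replaced by this term-lifting mechanism.
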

\begin{proof}
Let $U^* = f(U)$ and $V^* = g(V)$. By compactness, we may assume that $U^*$, $V^*$ are subsets of $(\K^+)^k$ and $F^{\rightleftharpoons} : U^* \fun V^*$ is simply a definable bijection. Then, by \cite[Theorem~A]{Dries:tcon:97}, $F^{\rightleftharpoons}$ is defined by an $\lan{T}{}{}$-formula $\phi$, possibly with parameters in $\K(\mdl S)$. By Theorem~\ref{tcon:qe}, $F^{\rightleftharpoons}$ is piecewise given by $\lan{T}{}{}$-terms. Since all such terms define total continuous functions, by \cite[Proposition~2.20]{DriesLew95} and Lemma~\ref{RV:bou:dim}, there is an $\lan{T}{}{}$-definable subset $W \sub U^*$ such that
\begin{itemize}
  \item $U^* \mi W$ is open and $\dim_{\RV}(W) < k$,
  \item the bijection defined by $\phi$ in the $\lan{T}{}{}$-reduct of $\gC$ contains a lift of $F^{\rightleftharpoons} \rest (U^* \mi W)$.
\end{itemize}
By \omin-minimality, there are an $\lan{T}{}{}$-definable finite partition $W_i$ of $W$ and injective coordinate projections $\pi_i : W_i \fun (\K^+)^{k_i}$ such that $\dim_{\RV}(\pi_i(W_i)) = k_i$. If $k_i = 0$ then $W_i$ is a singleton and the $\RV$-polydisc $\rv^{-1}(W_i)$ contains a definable point, in which case lifting $F^{\rightleftharpoons} \rest W_i$ is easy. Therefore, by compactness, a routine induction on $n = \dim_{\RV}(U^*)$ proves the assertion.
\end{proof}

\begin{cor}\label{L:sur:c}
The lifting map $\bb L_{\leq k}$ induces a surjective homomorphism, which is simply denoted by $\bb L$, between the Grothendieck semigroups
\[
\gsk \RV[\leq k] \fun \gsk \VF[k].
\]
\end{cor}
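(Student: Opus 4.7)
The plan is to verify, in order, that (i) $\bb L_{\leq k}$ lands in $\VF[k]$, (ii) it is additive and descends to isomorphism classes (giving a well-defined semigroup homomorphism), and (iii) the resulting map is surjective.

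For (i), if $\bm U = (U, f) \in \RV[i]$ with $i \leq k$, then $\bb L_i(\bm U)$ is a union of open polydiscs $\rv^{-1}(f(u)) \sub \VF^i$, so its $\VF$-dimension is at most $i \leq k$; the $\RV$-fiber over a $\VF$-point $a$ is $f^{-1}(\rv(a))$, which is finite by the finite-to-one hypothesis on $f$. For (ii), additivity is immediate from the defining formula $\bb L_{\leq k}\!\left(\bigoplus_i \bm U_i\right) = \biguplus_i \bb L_i(\bm U_i)$, and well-definedness on isomorphism classes is precisely the content of Lemma~\ref{RV:lift}: any $\RV[\leq k]$-isomorphism $F : \bm U \fun \bm V$ admits a lift $F^{\uparrow} : \bb L(\bm U) \fun \bb L(\bm V)$ which is a $\VF[k]$-morphism.

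For (iii), let $A \in \VF[k]$, say $A \sub \VF^n \times \RV^m$. By Corollary~\ref{all:subsets:rvproduct} there is a special bijection $T : A \fun A^{\sharp}$ with $A^{\sharp}$ an $\RV$-pullback; since special bijections preserve both $\VF$-dimension and finiteness of $\RV$-fibers, $T$ is a $\VF[k]$-morphism and $[A] = [A^{\sharp}]$ in $\gsk \VF[k]$. For each $E \sub [n]$ set
\[
A^{\sharp}_E = \{(a, u) \in A^{\sharp} : a_j = 0 \text{ iff } j \notin E\},
\]
which gives a definable partition of $A^{\sharp}$ whose $E$-piece is itself an $\RV$-pullback of $\VF$-dimension $|E| =: i \leq k$, sitting inside $\VF^E \times \{0\}^{\wt E} \times \RV^m$. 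Dropping the identically-zero $\wt E$-coordinates yields a definable bijection of $A^{\sharp}_E$ with an $\RV$-pullback $B_E \sub \VF^E \times \RV^m \cong \VF^i \times \RV^m$. Setting $U_E = \rv(B_E) \sub (\RV^{\times})^i \times \RV^m$ with $f_E : U_E \fun (\RV^{\times})^i$ the projection on the first $i$ coordinates, one checks that $f_E$ is finite-to-one (a direct consequence of the $\RV$-fiber dimension zero hypothesis on $A$), so $\bm U_E \coloneqq (U_E, f_E) \in \RV[i]$. The projection $\bb L_i(\bm U_E) \fun B_E$ that drops the $(\RV^{\times})^i$-factor (which is anyway determined by the $\VF$-coordinate via $\rv$) is a definable bijection, and summing over $E$ yields $[A^{\sharp}] = [\bb L(\bigoplus_E \bm U_E)]$ with $\bigoplus_E \bm U_E \in \RV[\leq k]$.

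The substantive work has already been done in Corollary~\ref{all:subsets:rvproduct} and Lemma~\ref{RV:lift}; the rest is a bookkeeping exercise repackaging an $\RV$-pullback as the image under $\bb L$ of an appropriately chosen $\RV[\leq k]$-object, so I do not anticipate any real obstacle.
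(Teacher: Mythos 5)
Your proof is correct and follows essentially the same route as the paper, which itself delegates to an external reference (\cite[Corollary~7.7]{Yin:special:trans}) after substituting in precisely the two ingredients you invoke: Corollary~\ref{all:subsets:rvproduct} for surjectivity and Lemma~\ref{RV:lift} for well-definedness on isomorphism classes. The explicit partition of $A^{\sharp}$ by vanishing $\VF$-coordinates (to discard degenerate $\RV$-polydiscs, since $\bb L$ only produces nondegenerate ones), together with the check that $f_E$ is finite-to-one via the $\RV$-fiber-dimension-$0$ hypothesis and that special bijections preserve the defining conditions of $\VF[k]$, is exactly the bookkeeping the paper leaves implicit.
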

\begin{proof}
Using Corollary~\ref{all:subsets:rvproduct} and Lemma~\ref{RV:lift} instead of \cite[Corollary~5.6, Theorem~7.6]{Yin:special:trans}, the proof of \cite[Corollary~7.7]{Yin:special:trans} works.
\end{proof}

\begin{lem}\label{simul:special:dim:1}
Let $f : A \fun B$ be a definable bijection, where $A$, $B$ have exactly one $\VF$-coordinate each. Then there exist special bijections $T_A : A \fun A^{\sharp}$ and $T_B : B \fun B^{\sharp}$ such that $A^{\sharp}$, $B^{\sharp}$ are $\RV$-pullbacks and, in
the commutative diagram
\[
\bfig
  \square(0,0)/->`->`->`->/<600,400>[A`A^{\sharp}`B`B^{\sharp};
  T_A`f``T_B]
 \square(600,0)/->`->`->`->/<600,400>[A^{\sharp}`\rv(A^{\sharp})`B^{\sharp} `\rv(B^{\sharp});  \rv`f^{\sharp}`f^{\sharp}_{\downarrow}`\rv]
 \efig
\]
$f^{\sharp}_{\downarrow}$ is bijective and hence $f^{\sharp}$ is a lift of it.
\end{lem}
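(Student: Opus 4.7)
The plan is to iteratively construct $T_A$ and $T_B$ by alternating applications of Corollary~\ref{special:bi:term:constant} on the $A$-side and centripetal-transformation constructions on the $B$-side, with termination guaranteed by HNF (Remark~\ref{rem:HNF}) and compactness.

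First, apply Corollary~\ref{special:bi:term:constant} to $A$ with the function $\rv \circ f : A \fun \RV^{m+1}$, where $\rv$ is applied to the single $\VF$-coordinate of the codomain $B \sub \VF \times \RV^m$. This yields a special bijection $T_A : A \fun A^{\sharp}$ such that $A^{\sharp}$ is an $\RV$-pullback and $\rv \circ f \circ T_A^{-1}$ is contractible. Setting $g = f \circ T_A^{-1}$, each $\RV$-polydisc $\gp \sub A^{\sharp}$ is then mapped by $g$ into a single $\RV$-polydisc $\gq \sub \VF \times \RV^m$. Invoking Lemma~\ref{open:pro} and refining $T_A$ by further regularization steps (absorbable into $T_A$), one may further assume $g(\gp)$ is either a point or an open disc sitting inside $\gq$.

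Next, construct $T_B$ so that $T_B \circ g$ descends bijectively on the level of $\RV$-polydiscs. For each $\RV$-polydisc $\gq$ meeting $B$, the intersection $B \cap \gq$ is a pairwise disjoint union of open discs $\gd_{\gq, i} = g(\gp_i)$, uniformly definable as $\gq$ varies; by Lemma~\ref{clo:disc:bary} applied to the smallest closed disc containing each $\gd_{\gq, i}$, each $\gd_{\gq, i}$ admits a definable center. A centripetal transformation on $B$ focused at one chosen center per $\gq$ (definably selected in a uniform way), followed by a regularization, opens up the chosen disc into a union of smaller $\RV$-polydiscs, while the remaining $\gd_{\gq, j}$ sit in definable sub-polydiscs of the refined structure. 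To keep $T_A$ matched to this finer $B$-structure, apply Corollary~\ref{special:bi:term:constant} again to $A^{\sharp}$ with the updated composition. Alternating these two operations, the combinatorial mismatch between the $\RV$-polydisc structures on $A^{\sharp}$ and $B^{\sharp}$ strictly decreases at each stage, via the finite stratification provided by HNF; hence the process terminates after finitely many iterations.

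At termination, $f^{\sharp} \coloneqq T_B \circ f \circ T_A^{-1}$ sends each $\RV$-polydisc of $A^{\sharp}$ bijectively onto an $\RV$-polydisc of $B^{\sharp}$, so it descends to a bijection $f^{\sharp}_{\downarrow} : \rv(A^{\sharp}) \fun \rv(B^{\sharp})$ and is by construction a lift of it. The main technical obstacle is proving that the alternating refinement actually terminates: this relies on the fact that, at each iteration, the only remaining source of mismatch is the failure of certain open discs inside ambient $\RV$-polydiscs to coincide with those polydiscs, a phenomenon controlled by the finite HNF stratification together with compactness in the $\RV$-parameters.
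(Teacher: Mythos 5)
Your proposal heads in the right general direction (simultaneous special bijections, contractibility via Corollary~\ref{special:bi:term:constant}, otop via Lemma~\ref{open:pro}, definable centers via Lemma~\ref{clo:disc:bary}), but the structure of your argument differs from the paper's and — more importantly — the crux of your argument, namely termination of the alternating refinement, is asserted rather than proved, and I don't see how to supply it.

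The paper proceeds asymmetrically. After first reducing (by Corollaries~\ref{special:bi:term:constant}, \ref{all:subsets:rvproduct} and Lemma~\ref{open:pro}) to the situation where $A$, $B$ are already $\RV$-pullbacks and $f$ is both contractible and has otop, one fixes \emph{once and for all} a special bijection $T_B = T_{B,n}\circ\cdots\circ T_{B,1}$ such that $(T_B\circ f)^{-1}$ is contractible (this comes from a single application of Corollary~\ref{special:bi:term:constant} to $B$ with the function $\rv\circ f^{-1}$). Then $T_A$ is built as a composition $T_{A,n}\circ\cdots\circ T_{A,1}$ of \emph{the same length} $n$, constructed inductively so that at each intermediate stage $i$ both $\wt T_{B,i}\circ f\circ (\wt T_{A,i})^{-1}$ and $\wt T_{A,i}\circ(T_B\circ f)^{-1}$ stay contractible; the step-by-step matching is carried out exactly as in \cite[Lemma~5.2]{Yin:int:acvf}, whose construction only needs $f$ to be contractible and have otop. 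Termination is thus automatic: the length of $T_A$ is a priori bounded by the length of $T_B$, which was fixed up front.

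In contrast, you alternate between refining $A$ and refining $B$. Each centripetal transformation on the $B$-side opens up one chosen sub-disc of an $\RV$-polydisc $\gq$ of $B^{\sharp}$ into an infinite family of smaller $\RV$-polydiscs; the remaining discs $\gd_{\gq,j} = g(\gp_j)$ inside $\gq$ then become proper subdiscs of those smaller polydiscs, so that re-applying Corollary~\ref{special:bi:term:constant} on the $A$-side to restore contractibility of the composition may in turn introduce \emph{new} proper subdiscs of the new $B$-polydiscs, and the process has to continue. There is no a priori bound on the number of such rounds, and neither HNF nor compactness obviously furnishes one: HNF gives a finite $\vv$-interval decomposition \emph{at each stage}, but does not by itself control how many stages are needed, and compactness controls uniformity in the $\RV$-parameters, not the depth of the recursion. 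You correctly identify termination as ``the main technical obstacle,'' but the single sentence you offer about ``finite HNF stratification together with compactness'' is not an argument; this is precisely the gap. If you want to pursue the alternating strategy, you would need to exhibit a genuinely decreasing quantity (for instance, relate your iterations to the length of the special bijection produced by Corollary~\ref{special:bi:term:constant} applied to $\rv\circ f^{-1}$), and at that point you would effectively be reconstructing the paper's fix-$T_B$-first-then-match argument.
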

\begin{proof}
By Corollaries~\ref{special:bi:term:constant}, \ref{all:subsets:rvproduct}, and Lemma~\ref{open:pro}, we may assume that $A$, $B$ are $\RV$-pullbacks, $f$ is contractible and has otop, and there is a special bijection $T_B: B \fun B^{\sharp}$ such that $(T_B \circ f)^{-1}$ is contractible. Let $T_B = T_{B, n} \circ \ldots \circ T_{B, 1}$. It is enough to construct a special bijection $T_A = T_{A, n} \circ \ldots \circ T_{A, 1}$ on $A$ such that, for each $i$, both $\wt{T}_{B, i} \circ f \circ (\wt{T}_{A, i})^{-1}$ and $\wt{T}_{A, i} \circ (T_B \circ f)^{-1}$ are contractible, where
\[
\wt{T}_{B, i} = T_{B, i} \circ \ldots \circ T_{B, 1} \quad \text{and} \quad \wt{T}_{A, i} = T_{A, i} \circ \ldots \circ T_{A, 1}.
\]
Now we may simply use the construction in the proof of \cite[Lemma~5.2]{Yin:int:acvf}, since it only depends on $f$ being contractible and having otop.
\end{proof}

\begin{defn}
Let $A \sub \VF^{n} \times \RV^{m_1}$ and $B \sub \VF^{n} \times \RV^{m_2}$ and $f : A \fun B$ be a bijection. We say that $f$ is \emph{relatively unary} if there is an $i \in [n]$ such that $(\pr_{\widetilde{i}} \circ f)(x) = \pr_{\widetilde{i}}(x)$ for all $x \in A$. In this case we say that $f$ is \emph{unary relative to the $i$th $\VF$-coordinate}. If $f \rest A_a$ is also a special bijection for every $a \in \pr_{\widetilde{i}} (A)$ then we say that $f$ is \emph{special relative to the $i$th $\VF$-coordinate}.
\end{defn}

Obviously the inverse of a relatively unary bijection is a relatively unary bijection.

Let $A \sub \VF^n \times \RV^m$ be a definable subset, $C \sub \RVH(A)$ an $\RV$-pullback, $\lambda$ a focus map with respect to $C$ (and the first $\VF$-coordinate), and $\eta$ the centripetal transformation with respect to $\lambda$. Clearly $\eta$ is unary relative to the first $\VF$-coordinate. It follows that every special bijection $T$ on $A$ is a composition of relatively special bijections. Choose an $i \in [n]$. By Corollary~\ref{all:subsets:rvproduct} and compactness, there is a bijection $T_i$ on $A$, special relative to the $i$th $\VF$-coordinate, such that $T_i(A_a)$ is an $\RV$-pullback for every $a \in \pr_{\widetilde i}(A)$. Let
\[
A_i = \bigcup_{a \in \pr_{\widetilde i}(A)} \{a\} \times (\prv \circ T_i)(A_a) \sub \VF^{n-1} \times \RV^{m_i}.
\]
We write $\wh T_i : A \fun A_i$ for the function naturally induced by $T_i$. For any $j \in [n-1]$, we may repeat the above procedure on $A_i$ with respect to the $j$th $\VF$-coordinate and thereby obtain a subset $A_{j} \sub \VF^{n-2} \times \RV^{m_j}$ and a function $\wh T_{j} : A_i \fun A_{j}$. Continuing thus, we see that, for any permutation $\sigma$ of $[n]$, we can construct a (not necessarily unique) sequence of relatively special bijections $T_{\sigma(1)}, \ldots, T_{\sigma(n)}$ and a corresponding function $\wh T_{\sigma} : A \fun \RV^{l}$. Note that $(\wh T_{\sigma}(A), \pr_{\leq n})$ is an object in $\RV[\leq n]$, where we always shuffle the relevant $\RV$-coordinates in $\wh T_{\sigma}(A)$ to the first $n$ positions. We also have the natural bijection determined by $\wh T_{\sigma}$:
\[
T_{\sigma} : A \fun \mathbb{L}(\wh T_{\sigma}(A), \pr_{\leq n}),
\]
which is a special bijection and may be thought of as the ``composition'' $T_{\sigma(n)} \circ \ldots \circ T_{\sigma(1)}$.

\begin{defn}\label{defn:standard:contraction}
The function $\wh T_{\sigma}$ (or the image $\wh T_{\sigma}(A)$) is called a \emph{standard contraction} of $A$.
\end{defn}

\begin{rem}\label{special:dim:1:RV:iso}
In Lemma~\ref{simul:special:dim:1}, $\rv(A^{\sharp})$ and $\rv(B^{\sharp})$ are respectively standard contractions of $A$ and $B$. Actually $(\rv(A^{\sharp}), \pr_1)$ and $(\rv(B^{\sharp}), \pr_1)$ are $\RV[\leq 1]$-isomorphic. To see this, let $(a, t) \in A^{\sharp}$ and $f^{\sharp}(a, t) = (b, s)$; then, by Lemma~\ref{RV:no:point}, $b$ is $a$-definable and hence $\rv(b)$ is $\rv(a)$-definable; similarly for the other direction.
\end{rem}

\begin{lem}\label{bijection:partitioned:unary}
Let $A \sub \VF^{n} \times \RV^{m_1}$, $B \sub \VF^{n} \times \RV^{m_2}$, and $f : A \fun B$ be a definable bijection. Then there is a partition of $A$ into definable subsets $A_i$ such that each $f \rest A_i$ is a composition of definable relatively unary bijections.
\end{lem}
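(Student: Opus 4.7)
The plan is to proceed by induction on $n$. The base case $n = 1$ is immediate: any bijection between subsets of $\VF \times \RV^{m_1}$ and $\VF \times \RV^{m_2}$ is vacuously unary relative to the unique $\VF$-coordinate, since there are no other $\VF$-coordinates to preserve. For $n > 1$, the strategy on each of finitely many definable pieces of $A$ is to decompose $f = h \circ g$ with $g$ unary relative to the $n$-th $\VF$-coordinate and $h$ fixing that coordinate. Applying the inductive hypothesis (together with compactness) to $h$ fibrewise over the preserved coordinate will then decompose $h$, and hence $f$, into relatively unary bijections.

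To implement this, I would apply Corollary~\ref{uni:fun:decom} to the fibrewise function $x_n \mapsto f_n(x_1, \ldots, x_n, s)$ with $(x_1, \ldots, x_{n-1}, s)$ held fixed, and use compactness to pass to a finite definable partition of $A$ on which this map is uniformly either injective or constant on each fibre. On a piece where it is injective, the formula $g(x_1, \ldots, x_n, s) = (x_1, \ldots, x_{n-1}, f_n(x, s), s)$ defines a relatively unary bijection onto $A' \coloneqq g(A)$, and $h \coloneqq f \circ g^{-1}$ automatically fixes the $n$-th $\VF$-coordinate, since $f_n$ has been installed precisely at that position by $g$. The inductive hypothesis applied to each fibre of $h$ over the $n$-th $\VF$-coordinate, together with compactness, yields the required decomposition of $h$.

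On a piece where $x_n \mapsto f_n$ is constant, the injectivity of $f$ forces some other component to distinguish $x_n$ on each fibre. If some $f_j$ with $j \in [n]$, $j \neq n$, does so, then post-composing $f$ with a swap of the $j$-th and $n$-th $\VF$-coordinates of $B$, realized as a composition of three relatively unary bijections via the standard trick
\[
(a, b) \mapsto (a + b, b) \mapsto (a + b, a) \mapsto (b, a),
\]
reduces matters to the injective case. Otherwise only $f_{RV}$ distinguishes $x_n$; applying Corollary~\ref{function:rv:to:vf:finite:image} to the inverse of the fibrewise injection $x_n \mapsto f_{RV}$ forces $x_n$ to take only finitely many values as a function of $(x_1, \ldots, x_{n-1}, s)$, so a further finite partition makes $x_n$ a definable function of those coordinates, on which the same formula for $g$ remains a bijection and the preceding analysis applies verbatim. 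The main obstacle is the bookkeeping that threads the coordinate swap and the inductive invocation through compositions of relatively unary bijections; compactness is what lets the various piecewise choices assemble into a single finite definable partition.
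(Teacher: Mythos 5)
Your overall plan---induction on $n$, peel off a relatively unary bijection $g$ that installs $f_n$ in the last $\VF$-slot, apply the inductive hypothesis to $h = f \circ g^{-1}$, use the three-shear swap to reduce to that case when some other $\VF$-coordinate is the injective one, and invoke orthogonality of $\RV$ to $\VF$ in the finite-fibre case---is the right one and is in the spirit of the proof of \cite[Lemma~5.6]{Yin:int:acvf} that the paper cites. But the case analysis has a genuine gap.

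In the constant case you claim that injectivity of $f$ ``forces some other component to distinguish $x_n$ on each fibre'', and you then split into ``some $f_j$ ($j<n$) does so'' and ``only $f_{\RV}$ distinguishes $x_n$''. This is a false dichotomy. Injectivity of $f$ on a fibre says only that the tuple $(f_1,\ldots,f_{n-1},f_{\RV})$ is \emph{jointly} injective there; it does not make any single component injective, and the failure of every $f_j$ ($j<n$) to be fibrewise injective does not make $f_{\RV}$ fibrewise injective, which is exactly what your appeal to Corollary~\ref{function:rv:to:vf:finite:image} requires. To close the gap you must apply Corollary~\ref{mono} (monotonicity) simultaneously to all of $f_1, \ldots, f_n$, not just to $f_n$; this gives a finite definable partition of $A$ on each piece of which every $f_j$ is fibrewise either constant or strictly monotone, and only then is the trichotomy actually forced: $f_n$ is injective, or some other $f_j$ is injective (swap), or every $\VF$-component is constant so that $f$ injective makes $f_{\RV}$ fibrewise injective and hence each fibre finite. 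A related technical point: Corollary~\ref{uni:fun:decom}, which you cite, produces an $\RV$-indexed fibrewise partition rather than a finite one, so the $g$ you wrote down need not be injective ($f_n$ is injective only on each $\pi$-subfibre). Either invoke Corollary~\ref{mono} directly, which does yield a finite fibrewise partition into $\vv$-intervals, or append the auxiliary $\RV$-coordinate $\pi$ to the output of $g$, as in Convention~\ref{conv:can}. With these repairs the argument goes through.
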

\begin{proof}
The proof of \cite[Lemma~5.6]{Yin:int:acvf} may be reproduced here with virtually no changes.
\end{proof}

\subsection{Additive invariants}
We begin by a discussion of the issue of $2$-cells, as in \cite[\S4]{Yin:int:acvf}.

\begin{lem}\label{bijection:dim:1:decom:RV}
Let $f : A \fun B$ be a definable bijection between two subsets of $\VF$. Then there is a special bijection $T$ on $A$ such that $T(A)$ is an $\RV$-pullback and, for each $\RV$-polydisc $\gp \sub T(A)$, $f \rest T^{-1}(\gp)$ is $\rv$-linear.
\end{lem}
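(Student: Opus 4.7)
The strategy is to invoke Lemma~\ref{simul:special:dim:1} and then reduce the analysis on each $\RV$-polydisc to Lemma~\ref{rv:lin}, by recognizing $f$ restricted to each such polydisc as a composition of two affine translations with an $\rv$-linear bijection between two atomic open discs.

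I begin by applying Lemma~\ref{simul:special:dim:1} to obtain special bijections $T_A : A \fun A^{\sharp}$ and $T_B : B \fun B^{\sharp}$ such that $A^{\sharp}, B^{\sharp}$ are $\RV$-pullbacks and $f^{\sharp} \coloneqq T_B \circ f \circ T_A^{-1}$ is a lift of a bijection $f^{\sharp}_{\downarrow} : \rv(A^{\sharp}) \fun \rv(B^{\sharp})$. I set $T = T_A$, to be possibly refined below. For each $\RV$-polydisc $\gp = \rv^{-1}(t) \times \{s\} \sub A^{\sharp}$ with image $\gp' \coloneqq f^{\sharp}(\gp) = \rv^{-1}(t') \times \{s'\}$, Lemma~\ref{inverse:special:dim:1} identifies $T^{-1}_A(\gp)$ and $T_B^{-1}(\gp')$ as definable translates of the open discs $\rv^{-1}(t)$ and $\rv^{-1}(t')$; in particular the $\VF$-components of $T_A \rest T^{-1}_A(\gp)$ and $T_B^{-1} \rest \gp'$ are plain affine translations, hence $\rv$-linear with constant $1$. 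Since $\rv$-linearity is preserved under composition (with constants multiplying), it suffices to prove that the bijection $f^{\sharp} \rest \gp : \rv^{-1}(t) \fun \rv^{-1}(t')$, viewed in its $\VF$-coordinate, is $\rv$-linear.

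The natural route is via Lemma~\ref{rv:lin}, which demands that the two open discs be atomic in the expansion $\gC^{\bullet}$ over their disc parameters. By Corollary~\ref{open:disc:def:point} together with Lemma~\ref{RV:no:point}, atomicity of $\rv^{-1}(t)$ over $\la t \ra$ amounts to $\rv^{-1}(t)$ containing no $\la t \ra$-definable point. The main obstacle is to guarantee this atomicity for every polydisc arising from $T(A)$. To resolve the exceptional polydiscs where atomicity fails, I will select bad points through a uniform definable choice (using Corollary~\ref{special:bi:term:constant} to ensure that the relevant selecting data is constant on each polydisc) and then refine $T_A$ by composing with an additional centripetal transformation whose focus passes through these definable points. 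Each such refinement strictly enlarges the valuative radii of the split pieces, so by a dimension-descent argument on the $\RV$-dimension of the exceptional locus (cf.\ Lemma~\ref{RV:bou:dim}) together with compactness, the refinement terminates after finitely many steps and yields a special bijection $T$ whose image decomposes into atomic $\RV$-polydiscs.

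With this atomicity secured, Lemma~\ref{rv:lin} applies to each restriction $f^{\sharp} \rest \gp$, and together with the two translation factors this delivers the $\rv$-linearity of $f \rest T^{-1}(\gp)$ for every $\RV$-polydisc $\gp \sub T(A)$, completing the argument.
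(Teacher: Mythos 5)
Your route diverges from the paper's, which never invokes Lemma~\ref{simul:special:dim:1}: the paper works directly with $f$ and Lemma~\ref{rv:lin} to extract, for almost every $a \in A$, the least $a$-definable $\delta_a \in |\Gamma|$ such that $f \rest \go(a, \delta_a)$ is $\rv$-linear, and then applies Corollary~\ref{special:bi:term:constant} once to the resulting \emph{definable} map $g : a \efun \delta_a$.

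The gap in your argument lies in the termination step. You want every $\RV$-polydisc $\gp \sub T(A)$ to have $T^{-1}(\gp)$ atomic so that Lemma~\ref{rv:lin} applies, and you propose to handle exceptional polydiscs by repeated centripetal refinement, terminating via a descent on the $\RV$-dimension of ``the exceptional locus.'' But atomicity is not a first-order expressible property, so the collection of $\RV$-parameters $(t,s)$ for which $T^{-1}(\rv^{-1}(t) \times \{s\})$ fails to be $\la t, s\ra$-atomic is not a \emph{definable} subset of $\prv(T(A))$; there is no locus on which Lemma~\ref{RV:bou:dim} or any $\RV$-dimension count can operate. Worse, the target state is unreachable: by Lemma~\ref{inverse:special:dim:1}, $T^{-1}(\gp)$ is a translate of $\rv^{-1}(t)$ by a point of $\VF(\mdl S)$, so it contains an $\mdl S$-definable point exactly when $t \in \RV(\mdl S)$, and since $\mdl S$ is a model every centripetal refinement around an $\mdl S$-definable focus $c$ again produces pieces $\rv^{-1}(u) + c$ with $u \in \RV(\mdl S)$, hence again non-atomic ones. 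The fix is to track a \emph{definable} surrogate: either run the paper's argument with $g$, or work with the definable condition ``$f$ is not $\rv$-linear on $T^{-1}(\gp)$'' and close the termination by compactness (an infinite strictly shrinking chain of non-$\rv$-linear open discs would have a common point $a$, whose $\mdl S$-type-definable open neighbourhood is atomic over its disc parameter by Lemma~\ref{atom:self}, so Lemma~\ref{rv:lin} forces $\rv$-linearity there --- a contradiction). With that replacement your outline closes, but at that point the initial appeal to Lemma~\ref{simul:special:dim:1} is superfluous and you have essentially reconstructed the paper's proof.
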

\begin{proof}
By Lemma~\ref{rv:lin} and compactness, for all but finitely many $a \in A$ there is an $a$-definable $\delta_a \in |\Gamma|$ such that $f \rest \go(a, \delta_a)$ is $\rv$-linear. Without loss of generality, we may assume that, for all $a \in A$, $\delta_a$ exists and is the least element that satisfies this condition. Let $g : A \fun |\Gamma|$ be the definable function given by $a \efun \delta_a$. By Corollary~\ref{special:bi:term:constant}, there is a special bijection $T$ on $A$ such that $T(A)$ is an $\RV$-pullback and, for all $\RV$-polydisc $\gp \sub T(A)$, $(g \circ T^{-1}) \rest \gp$ is constant. So $T$ is as required.
\end{proof}

\begin{lem}\label{bijection:rv:one:one}
Let $A \sub \VF^2$ be a definable subset such that $\ga_1 \coloneqq \pr_1(A)$ and $\ga_2 \coloneqq \pr_2(A)$ are open discs. Let $f : \ga_1 \fun \ga_2$ be a definable bijection that has otop. Suppose that for each $a \in \ga$ there is a $t_a \in \RV$ such that $A_a = \rv^{-1}(t_a) + f(a)$. Then there is a special bijection $T$ on $\ga_1$ such that $T(\ga_1)$ is an $\RV$-pullback and, for each $\RV$-polydisc $\gp \sub T(\ga_1)$, the subset
\[
\{\rv(a - f^{-1}(b)) : a \in T^{-1}(\gp) \text{ and } b \in A_a \}
\]
is a singleton.
\end{lem}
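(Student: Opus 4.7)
The plan is to combine two simplification tools via special bijections on $\ga_1$, whose common refinement gives $T$. First, since $f$ has otop, Lemma~\ref{bijection:dim:1:decom:RV} furnishes a special bijection $T_1$ on $\ga_1$ such that $T_1(\ga_1)$ is an $\RV$-pullback and $f \rest T_1^{-1}(\gp_1)$ is $\rv$-linear on every $\RV$-polydisc $\gp_1 \sub T_1(\ga_1)$. Second, the assignment $a \efun t_a$ is a definable function $\ga_1 \fun \RV$, so Corollary~\ref{special:bi:term:constant} gives a special bijection $T_2$ on $\ga_1$ on each of whose $\RV$-polydiscs the value $t_a$ is constant. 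A routine mutual refinement---combining the centripetal transformations of $T_1$ and $T_2$ in the usual way, as in the final part of the proof of Lemma~\ref{simul:special:dim:1}---yields a single special bijection $T$ enjoying both properties simultaneously.

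Now fix an $\RV$-polydisc $\gp \sub T(\ga_1)$ and write $\gb = T^{-1}(\gp)$, which by Lemma~\ref{inverse:special:dim:1} is a translate of the $\RV$-disc occupying the $\VF$-coordinate of $\gp$ and hence an open disc of $\VF$. On $\gb$, $f$ is $\rv$-linear with some constant $s \in \RV^{\times}$, and $t_a$ equals a common value $t$. Take $a \in \gb$ and $b \in A_a = \rv^{-1}(t) + f(a)$, so $b = f(a) + c$ with $\rv(c) = t$, and set $a' = f^{-1}(b)$; then $f(a') - f(a) = c$. Provided $a' \in \gb$, $\rv$-linearity yields $t = \rv(c) = s \cdot \rv(a' - a)$, whence $\rv(a - a')$ equals a fixed element of $\RV$ depending only on $s$ and $t$---precisely the desired singleton.

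The main obstacle is securing $a' \in \gb$ for every relevant $b$, which by otop amounts to the inclusion $\rv^{-1}(t) + f(a) \sub f(\gb)$, i.e., $|\vrv|(t) > \rad(f(\gb))$. Since the hypothesis $A_a \sub \ga_2$ already forces $|\vrv|(t) > \rad(\ga_2)$, only the borderline range $|\vrv|(t) \leq \rad(f(\gb))$---which requires $\rad(f(\gb)) > \rad(\ga_2)$, so that $\gb$ is strictly smaller than $\ga_1$---presents trouble. To rule it out I would strengthen the construction of $T_1$ via Lemma~\ref{simul:special:dim:1} applied to $f$, producing coordinated special bijections on $\ga_1$ and $\ga_2$ whose $\RV$-polydiscs match through $f$; a further centripetal refinement on the $\ga_2$-side can then be arranged (using that $t_a$ is constant on $\gb$ and otop of $f^{-1}$) so that each translate $\rv^{-1}(t) + f(a)$ is an $\RV$-polydisc of the refined image of $\ga_2$, whose pullback through $f$ is precisely $\gb$. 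The $\rv$-linear calculation of the previous paragraph then applies uniformly across $\gb$, delivering the required constancy.
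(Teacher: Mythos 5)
Your overall plan has a genuine gap, and the patch you propose to close it cannot work; but you have correctly located exactly where the trouble is, which is a good sign. Let me explain why the fix fails and then contrast with what the paper does.

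The issue you identify is real: after $T$ is fixed and $\gp \sub T(\ga_1)$ is given, set $\gb = T^{-1}(\gp)$; for $a \in \gb$ and $b \in A_a$, the $\rv$-linear computation needs $a' := f^{-1}(b) \in \gb$, which amounts to $|\vrv(t)| > \rad(f(\gb))$. This is not always the case, since the hypothesis only forces $|\vrv(t)| > \rad(\ga_2)$ and $\rad(f(\gb))$ may well exceed $\rad(\ga_2)$. Your attempted repair — coordinated special bijections via Lemma~\ref{simul:special:dim:1} and a further centripetal refinement on the $\ga_2$-side so that $f^{-1}(A_a)$ becomes precisely $\gb$ — cannot succeed, because the failure is geometric: when $|\vrv|(t) \leq \rad(f(\gb))$, the discs $A_a$ coincide for all $a\in\gb$ (the shift $f(a)-f(a')$ has valuation $>\rad(f(\gb))\geq|\vrv(t)|$), this common disc $D$ does not meet $f(\gb)$, and hence $f^{-1}(D)$ is genuinely disjoint from $\gb$. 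No special bijection on $\ga_1$ (which is all that $T$ is allowed to be) can change the relative position of $\gb$ and $f^{-1}(A_a)$ inside $\ga_1$. So $a'\in\gb$ simply fails in this regime.

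What saves the statement in that regime is a different, simpler observation: if $\gb$ and $f^{-1}(D)$ are disjoint open discs, then $\rv(a-a')$ is already constant over $a\in\gb$, $a'\in f^{-1}(D)$, with no reference to $\rv$-linearity. So your two cases can be handled, but your write-up does not notice the second case, and the supposed remedy is a dead end.

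The paper's proof sidesteps the case split entirely and does not invoke $\rv$-linearity at all. The key observation is that for each $a\in\ga_1$ with $t_a\neq\infty$, $f(a)\notin A_a$ (because $0\notin\rv^{-1}(t_a)$), hence $a\notin f^{-1}(A_a)$. Since $f^{-1}(A_a)$ is an open disc (otop) and $a$ lies outside it, the function $x\efun\rv(a-x)$ is automatically constant on $f^{-1}(A_a)$; the degenerate case $t_a=\infty$ is trivial. This gives a well-defined definable function $h(a) := \rv(a - f^{-1}(A_a))$, and Corollary~\ref{special:bi:term:constant} applied to $h$ (as in Lemma~\ref{bijection:dim:1:decom:RV}) immediately produces the required $T$. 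So: avoid $\rv$-linearity here; use the ``$a$ is outside the disc $f^{-1}(A_a)$'' observation to define $h$ directly, then contract $h$.
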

\begin{proof}
For each $a \in \ga_1$, let $\gb_a$ be the smallest closed disc that contains $A_a$. Since $A_a - f(a) = \rv^{-1}(t_a)$, we have $f(a) \in \gb_a$ but $f(a) \notin A_a$ if $t_a \neq \infty$. Hence $a \notin f^{-1}(A_a)$ if $t_a \neq \infty$ and $\{a\} = f^{-1}(A_a)$ if $t_a = \infty$. Since $f^{-1}(A_a)$ is a disc, in either case, the function $\rv(a - X)$ is constant on $f^{-1}(A_a)$. Since the function $h : \ga_1 \fun \RV$ given by $a \efun \rv(a - f^{-1}(A_a))$ is definable, we may apply Corollary~\ref{special:bi:term:constant} as in the proof of Lemma~\ref{bijection:dim:1:decom:RV}.
\end{proof}

\begin{defn}\label{defn:balance}
Let $A$, $\ga_1$, $\ga_2$, and $f$ be as in Lemma~\ref{bijection:rv:one:one}. Suppose that $f$ is actually $\rv$-linear. We say that $f$ is \emph{balanced in $A$} if there are $t_1$, $t_2$ in $\RV$, called the \emph{paradigms} of $f$, such that, for every $a \in \ga_1$,
\[
 A_a = \rv^{-1}(t_2) + f(a) \quad \text{and} \quad f^{-1}(A_a) = a - \rv^{-1}(t_1).
\]
\end{defn}

If one of the paradigms is $\infty$ then the other one must be $\infty$. In this case $A$ is the (graph of the) bijection $f$. Assume $t_1, t_2 \in \RV^{\times}$. Let $\gB_1$, $\gB_2$ be respectively the sets of open subdiscs of $\ga_1$, $\ga_2$ of valuative radii $|\vrv(t_1)|$, $|\vrv(t_2)|$. Then, for every $\gb_1 \in \gB_1$, every $a_1 \in \gb_1$, and every $a_2 \in A_{a_1} \eqqcolon \gb_2$, we have
\[
\gb_2 = \rv^{-1}(t_2) + f(\gb_1) \in \gB_2 \quad \text{and} \quad A_{a_2} = f^{-1}(\gb_2) + \rv^{-1}(t_1) = \gb_1.
\]
This internal symmetry of $A$ is illustrated by the following diagram:
\[
\bfig
  \dtriangle(0,0)|amb|/.``<-/<800,250>[\gb_1`f^{-1}(\gb_2)`\gb_2; \pm \rv^{-1}(t_1)`\times`f^{-1}]
  \ptriangle(800,0)|amb|/->``./<800,250>[\gb_1`f(\gb_1)`\gb_2; f`` \pm \rv^{-1}(t_2)]
 \efig
\]

\begin{defn}\label{def:units}
We say that a subset $A$ is a \emph{$1$-cell} if it is either an open disc contained in a single $\RV$-disc or a point in $\VF$. We say
that $A$ is a \emph{$2$-cell} if
\begin{itemize}
 \item $A$ is a subset of $\VF^2$ contained in a single $\RV$-polydisc and $\pr_1(A)$ is a $1$-cell,
 \item there is a function $\epsilon : \pr_1 (A) \fun \VF$ and a $t \in \RV$ such that, for every $a \in \pr_1(A)$, $A_a =
   \rv^{-1}(t) + \epsilon(a)$,
 \item one of the following three possibilities occurs:
  \begin{itemize}
   \item $\epsilon$ is constant,
   \item $\epsilon$ is injective, has otop, and $\rad(\epsilon(\pr_1(A))) \geq |\vrv(t)|$,
   \item $\epsilon$ is balanced in $A$.
  \end{itemize}
\end{itemize}
The function $\epsilon$ is called the \emph{positioning function} of $A$ and the element $t$ is called the \emph{paradigm} of $A$.

More generally, a subset $A$ with exactly one $\VF$-coordinate is a \emph{$1$-cell} if, for each $t \in \pr_{>1}(A)$, $A_t$ is a $1$-cell in the sense specified above; the parameterized version of the notion of a $2$-cell is formulated in the same way.
\end{defn}

A $2$-cell is definable if all the relevant ingredients are definable. Naturally we shall only be concerned with definable $2$-cells. Notice that Corollary~\ref{all:subsets:rvproduct} implies that for every definable subset $A$ with exactly one $\VF$-coordinate there is a
definable function $\pi: A \fun \RV^l$ such that each fiber $\pi^{-1}(s)$ is a $1$-cell. This should be understood as $1$-cell decomposition and the next lemma $2$-cell decomposition.

\begin{lem}\label{decom:into:2:units}
For every definable subset $A \sub \VF^2$ there is a definable function $\pi: A \fun \RV^m$ such that each fiber $\pi^{-1}(s)$ is a $2$-cell.
\end{lem}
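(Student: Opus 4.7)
The plan is to decompose $A$ by a sequence of refinements that reduce the problem to a situation where each fiber of the final $\pi$ satisfies one of the three clauses in Definition~\ref{def:units}. First I would apply the parameterized $1$-cell decomposition (noted after Definition~\ref{def:units}) to the fibers of $\pr_1 \rest A$, obtaining a definable function $\pi_0 : A \fun \RV^l$ so that every $\pr_1$-fiber of every $\pi_0$-fiber is a $1$-cell. Then, using Corollary~\ref{special:bi:term:constant} applied to the natural $\RV$-valued functions recording (a) the $\RV$-disc containing $A_a$, (b) the valuative radius of $A_a$ when $A_a$ is an open disc, (c) the $\RV$-disc containing $\pr_1(A)$, I refine $\pi_0$ so that each piece of $A$ is contained in a single $\RV$-polydisc $\rv^{-1}(u_1,u_2)$ and the fiber type (point, or open disc of fixed paradigm $t\in\RV$) is constant. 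Finally, another application of $1$-cell decomposition lets me assume $B \coloneqq \pr_1(A)$ is itself a $1$-cell.

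Now I split into cases. If $B$ is a point, $A$ reduces to a $1$-cell in the second coordinate, trivially a $2$-cell. If every $A_a$ is a point, $A$ is the graph of a definable $\epsilon : B \fun \VF$; if every $A_a$ is an open disc, I use the definable Skolem functions of $\TCVF$ (Theorem~\ref{tcon:qe} together with the remark after Theorem~\ref{theos:qe}) to pick $\epsilon(a) \in A_a - c$ for a fixed $c \in \rv^{-1}(t)$, so that $A_a = \rv^{-1}(t) + \epsilon(a)$. In either setup, I apply Corollary~\ref{uni:fun:decom} to partition $B$ into pieces on which $\epsilon$ is constant (giving the first clause of Definition~\ref{def:units}) or injective, and then apply Lemma~\ref{open:pro} to the injective pieces to further partition so that $\epsilon$ has otop.

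On an injective-otop piece $B'$, a further refinement via Corollary~\ref{special:bi:term:constant} makes $\rad(\epsilon(B'))$ a single value, to be compared against $|\vrv(t)|$ (with $|\vrv(t)| = \infty$ in the point-fiber case). If $\rad(\epsilon(B')) \geq |\vrv(t)|$ the second clause of Definition~\ref{def:units} holds directly. Otherwise, I invoke Lemma~\ref{bijection:dim:1:decom:RV} applied to $\epsilon : B' \fun \epsilon(B')$ to obtain a special bijection $T$ on $B'$ such that $\epsilon \circ T^{-1}$ is $\rv$-linear on each $\RV$-polydisc of $T(B')$; simultaneously Lemma~\ref{bijection:rv:one:one} (whose hypotheses are met because $\epsilon$ has otop and $A_a = \rv^{-1}(t) + \epsilon(a)$) gives, on a further refinement of the same special bijection, a uniform value $t_1$ for $\rv(a - \epsilon^{-1}(b))$. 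Each preimage $T^{-1}(\gp)$ is an open disc by the centripetal-transformation structure of $T$, and the conjunction of $\rv$-linearity with the Lemma~\ref{bijection:rv:one:one} constancy is exactly the balanced configuration with paradigms $(t_1,t_2) = (t_1,t)$, giving the third clause. Assembling $\pi$ from $\pi_0$, the $\rv$-class parameters, the Corollary~\ref{uni:fun:decom} functions, and the $\RV$-polydisc coordinates of $T(B')$ produces the desired definable function whose fibers are $2$-cells.

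The main obstacle is the bookkeeping needed to guarantee that after all these partitions and the special bijection $T$ on $B$ are run in tandem, the resulting pieces of $A$ literally satisfy Definition~\ref{def:units}: that the preimages of $\RV$-polydiscs under $T$ really are $1$-cells in the sense of the definition, that the paradigms can be indexed uniformly by a single $\RV$-parameter $s$, and that the balanced internal symmetry recorded by the diagram after Definition~\ref{defn:balance} is preserved at each stage. Once these bookkeeping points are verified, the rest is a direct unfolding of the definitions in combination with the machinery already established.
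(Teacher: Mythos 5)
Your proposal is correct and matches the route the paper takes: the paper's proof is a one-line transplant of the proof of the corresponding lemma in Yin:int:acvf, with Corollaries~\ref{all:subsets:rvproduct}, \ref{uni:fun:decom} and Lemmas~\ref{inverse:special:dim:1}, \ref{bijection:dim:1:decom:RV}, \ref{bijection:rv:one:one} substituted for the originals, and your reconstruction uses exactly this toolkit (with Corollary~\ref{special:bi:term:constant} and Lemma~\ref{open:pro} supplying the role of Lemma~\ref{inverse:special:dim:1} implicitly). The one place the ordering should be adjusted is the dichotomy between the second and third clauses of Definition~\ref{def:units}: applying Lemma~\ref{bijection:dim:1:decom:RV} shrinks the base to pieces $T^{-1}(\gp)$ whose image under $\epsilon$ can have strictly larger valuative radius than $\epsilon(B')$, so the comparison of $\rad(\epsilon(\cdot))$ with $|\vrv(t)|$ must be run on each piece after the special bijection rather than once at the outset; with that reordering the balanced clause does fall out of $\rv$-linearity together with the Lemma~\ref{bijection:rv:one:one} constancy, as you claim.
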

\begin{proof}
The proof of \cite[Lemma~4.8]{Yin:int:acvf} still works if we replace the supporting results \cite[Corollary~2.25, Lemmas~2.7, 4.1, 4.2, 4.4]{Yin:int:acvf} there with, respectively, Corollaries~\ref{all:subsets:rvproduct}, \ref{uni:fun:decom}, Lemmas~\ref{inverse:special:dim:1}, \ref{bijection:dim:1:decom:RV}, \ref{bijection:rv:one:one} above.
\end{proof}

For the next two lemmas, let $12$ and $21$ denote the permutations of $\{1, 2\}$; for their proofs, see \cite[Lemma~5.7, Corollary~5.8]{Yin:int:acvf}.
\begin{lem}\label{2:unit:contracted}
Let $A \sub \VF^2$ be a definable $2$-cell. Then there are standard contractions $\wh T_{12}$ and $\wh R_{21}$ of $A$ such that $(\wh T_{12}(A), \pr_{\leq 2})$ and $(\wh R_{21}(A), \pr_{\leq 2})$ are $\RV[\leq 2]$-isomorphic.
\end{lem}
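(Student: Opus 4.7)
I would follow the strategy of the corresponding result \cite[Lemma~5.7]{Yin:int:acvf} by inspecting the three clauses of Definition~\ref{def:units} in turn and constructing both standard contractions in tandem with the witnessing $\RV[\leq 2]$-isomorphism.

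If $\epsilon$ is constant, the 2-cell $A$ is a direct product $\pr_1(A) \times (\rv^{-1}(t) + c)$ of two 1-cells. Applying Lemma~\ref{simplex:with:hole:rvproduct} independently to each factor produces the same $\RV$-data regardless of the order, up to a permutation of the two $\pr_{\leq 2}$-coordinates; such a permutation is itself a definable bijection realizing an $\RV[\leq 2]$-isomorphism.

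If $\epsilon$ is injective, has otop, and $\rad(\epsilon(\pr_1(A))) \geq |\vrv(t)|$, I apply Lemma~\ref{simul:special:dim:1} to the definable bijection $\epsilon : \pr_1(A) \fun \epsilon(\pr_1(A))$ to obtain compatible special bijections $S_1$, $S_2$ whose standard contractions are $\RV[\leq 1]$-isomorphic via a canonical contracted map $\epsilon^{\sharp}_{\downarrow}$. For $\wh R_{21}(A)$, I first apply a centripetal transformation on the second coordinate with focus $\lambda(a) = \epsilon(a)$, turning $A$ into the product $\pr_1(A) \times \rv^{-1}(t)$, and then contract the first coordinate using $S_1$. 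For $\wh T_{12}(A)$, the fibers $A_b$ are singletons under the disjoint-shift hypothesis, so the contraction of the first coordinate is essentially trivial, and the second-coordinate contraction uses $S_2$ on $\epsilon(\pr_1(A))$. The desired $\RV[\leq 2]$-isomorphism is then furnished by $\epsilon^{\sharp}_{\downarrow}$ together with the identity on the paradigm tag $t$.

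If $\epsilon$ is balanced with paradigms $(t_1, t_2)$, then the internal symmetry displayed in the diagram following Definition~\ref{defn:balance} provides an $\epsilon$-induced definable bijection between the open subdiscs $\gB_1$ of $\pr_1(A)$ of radius $|\vrv(t_1)|$ and the open subdiscs $\gB_2$ of $\pr_2(A)$ of radius $|\vrv(t_2)|$. Both standard contractions record essentially this bijection together with the two paradigms, with the roles of $t_1$ and $t_2$ interchanged between the two orderings; the $\RV[\leq 2]$-isomorphism is then the definable involution that swaps the paradigm tags consistently with the $\pr_{\leq 2}$-data. The main obstacle is this balanced case: one must verify that the $\RV$-data introduced by the centripetal transformations in each order match up under the paradigm swap. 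This in turn amounts to an application of Lemmas~\ref{inverse:special:dim:1} and \ref{RV:no:point} to ensure that the relevant focus maps produce compatible $\RV$-invariants on the two sides, after which the definable bijection between $\wh T_{12}(A)$ and $\wh R_{21}(A)$ reads off directly from the symmetry diagram.
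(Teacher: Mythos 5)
The overall case split by the three clauses of Definition~\ref{def:units} is the right skeleton, and your treatment of the constant case and (in outline) the balanced case is fine. There is, however, a concrete error in your second case. Under the hypothesis $\rad(\epsilon(\pr_1(A))) \geq |\vrv(t)|$, the image $\epsilon(\pr_1(A))$ is an open disc of radius at least $|\vrv(t)|$, so for any $a, a' \in \pr_1(A)$ one has $|\vv|(\epsilon(a) - \epsilon(a')) > \rad(\epsilon(\pr_1(A))) \geq |\vrv(t)|$, and consequently $\rv^{-1}(t) + \epsilon(a) = \rv^{-1}(t) + \epsilon(a')$. In other words this clause forces $A$ to be a genuine product $\pr_1(A) \times D$, with $D = \rv^{-1}(t) + \epsilon(a_0)$ independent of $a_0$. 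The fibers $A_b$ over the second coordinate are therefore all of $\pr_1(A)$, not singletons; indeed $A_b$ can only be a singleton in the degenerate situation $t = \infty$, since otherwise $A_b$ is the $\epsilon$-preimage of an $\RV$-disc intersected with an open disc, which is either empty or infinite. So the ``disjoint-shift hypothesis'' and the claim that the first-coordinate contraction in $\wh T_{12}$ is trivial are simply not available here. The good news is that nothing of substance is needed: since $A$ is a product, the two standard contractions are, as in your first case, the same pair of one-variable contractions carried out in the two possible orders, so the witnessing $\RV[\leq 2]$-isomorphism is again just a coordinate permutation. Your invocation of Lemma~\ref{simul:special:dim:1} in this case is superfluous.

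By contrast, Lemma~\ref{simul:special:dim:1} is exactly the tool you need in the balanced case, where you do not cite it. There the two contraction orders produce $\{t_1\} \times U$ and $\{t_2\} \times V$, where $U$ is a contraction of $\ga_2 = \pr_2(A)$ and $V$ a contraction of $\ga_1 = \pr_1(A)$, and the definable bijection between $U$ and $V$ is supplied precisely by applying Lemma~\ref{simul:special:dim:1} to the $\rv$-linear bijection $\epsilon : \ga_1 \fun \ga_2$ (with $\rv$-linearity coming from Lemma~\ref{rv:lin}); the ``paradigm swap'' you describe is then the extra bookkeeping on the first $\RV$-coordinate. Your gesture at Lemmas~\ref{inverse:special:dim:1} and~\ref{RV:no:point} alone does not furnish the required $\RV[\leq 1]$-isomorphism between the contractions of $\ga_1$ and $\ga_2$. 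So the right allocation is: product argument for cases~1 and~2, and Lemma~\ref{simul:special:dim:1} applied to $\epsilon$ for the balanced case.
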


\begin{lem}\label{subset:partitioned:2:unit:contracted}
Let $A \sub \VF^2 \times \RV^m$ be a definable subset. Then there is a definable injection $f : A \fun \VF^2 \times \RV^l$ such that
\begin{itemize}
  \item $f$ is unary relative to both coordinates,
  \item there are standard contractions $\wh T_{12}$, $\wh R_{21}$ of $f(A)$ such that, in $\gsk \RV[\leq 2]$,
  \[
  [(\wh T_{12}(f(A)), \pr_{\leq 2})] = [(\wh R_{21}(f(A)), \pr_{\leq 2})].
  \]
\end{itemize}
\end{lem}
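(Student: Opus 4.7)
The plan is to combine the parameterized 2-cell decomposition furnished by Lemma~\ref{decom:into:2:units} with the single-cell comparison Lemma~\ref{2:unit:contracted}, and then to uniformize across parameters by compactness. First I would apply Lemma~\ref{decom:into:2:units} fiberwise over $s \in \prv(A)$: for each such $s$, there is an $s$-definable function $\pi_s : A_s \fun \RV^{m_s}$ whose fibers in $\VF^2$ are $2$-cells. Routine compactness then uniformizes these into a single definable function $\pi : A \fun \RV^{l'}$ such that, for every $(s, t) \in \prv(A) \times \pi(A)$ realized in $A$, the subset $\{a \in A : \prv(a) = s,\ \pi(a) = t\}$, viewed in $\VF^2$, is a $2$-cell.

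Set $l = m + l'$ and define $f : A \fun \VF^2 \times \RV^l$ by
\[
f(a) = (\pvf(a),\ \prv(a),\ \pi(a)).
\]
Then $f$ is a definable injection (the middle component recovers the original $\RV$-datum), and since $f$ fixes both $\VF$-coordinates pointwise, it is vacuously unary relative to each of them. Moreover, each nonempty fiber of $\pr_{>2} : f(A) \fun \RV^l$ is a $2$-cell in $\VF^2$.

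For each $r \in \pr_{>2}(f(A))$, Lemma~\ref{2:unit:contracted} produces $r$-definable standard contractions $(\wh T_{12})_r$ and $(\wh R_{21})_r$ of the $2$-cell $f(A)_r$ together with an $r$-definable $\RV[\leq 2]$-isomorphism
\[
G_r : ((\wh T_{12})_r(f(A)_r),\ \pr_{\leq 2}) \fun ((\wh R_{21})_r(f(A)_r),\ \pr_{\leq 2}).
\]
A standard contraction is specified by a finite sequence of relatively special bijections, each of which is in turn given by an $\RV$-pullback locus and a $\VF$-valued focus map. By compactness applied to the defining formulas, there is a finite definable partition of $\pr_{>2}(f(A))$ on each piece of which these data and the isomorphism $G_r$ are given by a single $\lan{}{TRV}{}$-formula; gluing yields global standard contractions $\wh T_{12}$, $\wh R_{21}$ of $f(A)$ and a definable $\RV[\leq 2]$-isomorphism between their images, which is the required equality in $\gsk \RV[\leq 2]$.

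The main obstacle is exactly this uniformization step: the loci, focus maps, and witnessing isomorphism coming out of Lemma~\ref{2:unit:contracted} must be pieced together into globally definable objects over the parameter space $\pr_{>2}(f(A))$. This is handled by the same sort of compactness-and-partition argument used in the construction preceding Definition~\ref{defn:standard:contraction} and in the proof of Theorem~\ref{special:term:constant:disc}, so no new ideas are needed beyond careful bookkeeping. With this in place, Lemmas~\ref{decom:into:2:units} and~\ref{2:unit:contracted} combine to give the conclusion directly.
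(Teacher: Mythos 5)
Your proposal is correct and follows essentially the same route as the proof the paper cites (\cite[Corollary~5.8]{Yin:int:acvf}): encode the parameterized $2$-cell decomposition from Lemma~\ref{decom:into:2:units} as additional $\RV$-coordinates via an injection $f$ that fixes both $\VF$-coordinates (hence is trivially unary relative to each), apply Lemma~\ref{2:unit:contracted} fiberwise over the $\RV$-parameters, and then uniformize the contraction data and the witnessing $\RV[\leq 2]$-isomorphism by the standard compactness-and-finite-partition argument. The resulting fiberwise special bijections and bijections glue exactly because the definitions of relatively special bijection and standard contraction are built to be parameterized over the non-active coordinates, so equality of classes in $\gsk \RV[\leq 2]$ follows by summing over the finitely many pieces.
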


The key notion for understanding the kernels of the semigroup homomorphisms $\bb L$ is still that of a blowup:

\begin{defn}\label{defn:blowup:coa}
Let $\bm U = (U, f) \in \RV[k]$ with $k > 0$ and, for some $j \leq k$, the coordinate projection $\pr_{\wt j} \rest f(U)$ is finite-to-one. An \emph{elementary blowup} of $\bm U$ is an object $\bm U^{\sharp} = (U^{\sharp}, f^{\sharp})$ such that $U^{\sharp} = U \times \RV^{\circ \circ}$ and, for every $(t, s) \in U^{\sharp}$,
\[
f^{\sharp}_{i}(t, s) = f_{i}(t) \text{ for } i \neq j, \quad f^{\sharp}_{j}(t, s) = s f_{j}(t).
\]
Note that $\bm U^{\sharp}$ is an object in $\RV[\leq k]$ (actually in $\RV[k-1] \amalg \RV[k]$ ) because $f^{\sharp}_{j}(t, \infty) = \infty$.

Let $\bm V = (V, g) \in \RV[k]$, $C \sub V$, and $\bm C = (C, g \rest C) \in \RV[k]$. Let $F : \bm U \fun \bm C$ be an $\RV[k]$-morphism. Then
\[
\bm U^{\sharp} \uplus (V \mi C, g \rest (V \mi C))
\]
is understood as a \emph{blowup of $\bm V$ via $F$}, written as $\bm V^{\sharp}_F$. The subscript $F$ may be dropped in context if there is no danger of confusion. The object $\bm C$ (or the subset $C$) is referred to as the \emph{locus} of the blowup $\bm V^{\sharp}_F$. A \emph{blowup of length $n$} is a composition of $n$ blowups.
\end{defn}

\begin{defn}\label{defn:isp}
Let $\isp[k]$ be the subset of $\ob \RV[\leq k] \times \ob \RV[\leq k]$ that consists of those pairs $(\bm U, \bm V)$ such that there exist isomorphic blowups $\bm U^{\sharp}$, $\bm V^{\sharp}$.

Set $\isp[*] = \bigcup_{k} \isp[k]$.
\end{defn}

At this point we have produced analogues of the auxiliary results as well as the arguments (in Lemmas~\ref{RV:lift}, \ref{bijection:dim:1:decom:RV}, etc.) that the discussion in \cite[\S6]{Yin:int:acvf} formally depends on. The reader is invited to reconstruct the passage that leads to the crucial conclusion \cite[Proposition~6.17]{Yin:int:acvf} (the wording of its proof is improved in \cite[Proposition~5.11]{Yin:int:expan:acvf}) and its corollaries, \textit{mutatis mutandis}; but we shall omit it here.

\begin{prop}\label{kernel:L:dag:coa}
For $\bm U, \bm V \in \RV[\leq k]$,
\[
[\bb L (\bm U)] = [\bb L (\bm V)] \quad \text{if and only if} \quad ([\bm U], [\bm V]) \in \isp.
\]
\end{prop}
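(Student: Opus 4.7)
The plan is to prove the two directions separately; the hard direction is $(\Rightarrow)$, which, as advertised in the introduction, carries the analogue of Fubini's theorem in this setting.

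For $(\Leftarrow)$, using that $\isp$ is generated by the elementary blowup relation and that $\bb L$ is a semigroup homomorphism, it suffices to show $[\bb L(\bm U^{\sharp})] = [\bb L(\bm U)]$ for an arbitrary elementary blowup $\bm U^{\sharp}$ of $\bm U = (U,f) \in \RV[k]$. If the blowup is taken along the $j$th coordinate, then for each $u \in U$ and any lift $a_0 \in \VF^k$ of $f(u)$, the $u$-fiber of $\bb L(\bm U)$ is the $\RV$-polydisc $\prod_i \rv^{-1}(f_i(u))$, whose $j$th factor is $a_{0,j}(1+\MM)$, while the union over $s \in \RV^{\circ\circ}$ of the $(u,s)$-fibers of $\bb L(\bm U^{\sharp})$ has $j$th factor equal to $a_{0,j}\MM$ (including $0$, from $s=\infty$). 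The translation $x_j \mapsto x_j - a_{0,j}$, parameterized definably in $u$ via $f$, furnishes the required $\VF[k]$-isomorphism. The remaining coordinates go along unchanged.

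For $(\Rightarrow)$, I proceed by induction on the number $n$ of $\VF$-coordinates in the ambient product $\bb L(\bm U)$, closely following \cite[\S 6]{Yin:int:acvf}. The base case $n \leq 1$ is handled by Lemma~\ref{simul:special:dim:1} combined with Remark~\ref{special:dim:1:RV:iso}, which already produces isomorphic blowups on the $\RV$-side from any definable bijection on the $\VF$-side. For the inductive step, given a definable bijection $F \colon \bb L(\bm U) \fun \bb L(\bm V)$, Lemma~\ref{bijection:partitioned:unary} reduces to the case where $F$ is relatively unary, say unary relative to the first $\VF$-coordinate. Applying the base case fiberwise over the remaining coordinates and invoking compactness, one obtains special bijections $T_U$, $T_V$ on $\bb L(\bm U)$ and $\bb L(\bm V)$ respectively, unary relative to the first $\VF$-coordinate, whose ranges are $\RV$-pullbacks and such that $F$ descends to an $\RV[\leq 1]$-morphism between the corresponding standard contractions in the first variable. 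These special bijections, read through the $\RV$-side via Definition~\ref{defn:standard:contraction}, correspond precisely to sequences of elementary blowups of $\bm U$ and $\bm V$. The inductive hypothesis applied to the $(n-1)$-variable reduction then closes the argument.

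The main technical obstacle, exactly as in the algebraically closed case, is that a definable bijection need not be unary relative to a single coordinate on the nose but only after partitioning, and reconciling the standard contractions obtained from two different choices of ordering of the $\VF$-coordinates is nontrivial. This is where the $2$-cell machinery (Lemmas~\ref{decom:into:2:units}, \ref{2:unit:contracted}, and especially~\ref{subset:partitioned:2:unit:contracted}) is essential: after refining to a partition into $2$-cells, the two standard contractions yield classes that are already equal in $\gsk \RV[\leq 2]$, so the induction does not create spurious classes outside $\isp$. All of the auxiliary results invoked by the arguments of \cite[\S 6]{Yin:int:acvf} (in particular Corollary~\ref{all:subsets:rvproduct}, Lemma~\ref{RV:lift}, Lemma~\ref{bijection:dim:1:decom:RV}, and the $2$-cell decomposition) have now been established in their $T$-convex analogues, so the remainder of the proof is a direct transcription \emph{mutatis mutandis}, which I therefore omit.
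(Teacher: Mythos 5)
Your proposal takes essentially the same route the paper itself indicates—reconstruct \cite[\S 6, Proposition~6.17]{Yin:int:acvf} \emph{mutatis mutandis} from the $T$-convex analogues of \S\S 3--5—and your account of the $(\Rightarrow)$ direction correctly identifies the induction scheme, the base case via Lemma~\ref{simul:special:dim:1} and Remark~\ref{special:dim:1:RV:iso}, the reduction through Lemma~\ref{bijection:partitioned:unary}, and the role of the $2$-cell machinery (Lemmas~\ref{decom:into:2:units}--\ref{subset:partitioned:2:unit:contracted}) in reconciling standard contractions arising from different orderings of the $\VF$-coordinates.

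Your $(\Leftarrow)$ argument, however, has a gap in the claim that the translation $x_j \mapsto x_j - a_{0,j}$ is ``parameterized definably in $u$ via $f$.'' By Corollary~\ref{function:rv:to:vf:finite:image}, a definable function from $U \sub \RV^m$ into $\VF$ has finite image, so a definable choice $u \mapsto a_{0,j}(u) \in \rv^{-1}(f_j(u))$ would force $f_j(U)$ to be finite; this holds automatically when $k=1$ (there the blowup hypothesis already forces $f(U)$ finite), but fails for $k>1$. What is actually needed is a focus map in the sense of Definition~\ref{defn:special:bijection}: a definable $\lambda \colon \pr_{\wt j}(\bb L(\bm U)) \to \VF$ with graph contained in $\bb L(\bm U)$, depending on the remaining $\VF$-coordinates $a_{\wt j}$ as well as on $u$. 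The blowup hypothesis that $\pr_{\wt j} \rest f(U)$ is finite-to-one is precisely what produces such a $\lambda$: over any $a_{\wt j}$ the $j$th $\VF$-fiber of $\bb L(\bm U)$ is a finite, $a_{\wt j}$-definable union of $\RV$-discs, and since $\mdl S\la a_{\wt j}\ra$ is $\VF$-generated and hence elementary, a definable point exists in each disc, uniformly by compactness. With this focus in hand, your translation becomes the centrifugal transformation with respect to $\lambda$ followed by a regularization, which is the $\VF[k]$-morphism you want; the reduction to elementary blowups via the homomorphism property of $\bb L$ then goes through as you say.
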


This shows that, as a semiring congruence relation on $\gsk \RV[*]$, $\isp$ is generated by the pair
\[
([1], \bm 1_{\K} + [(\RV^{\circ \circ} \mi \{\infty\}, \id)])
\]
and hence its corresponding ideal in the graded ring $\ggk \RV[*]$ is generated by the element $\bm 1_{\K} + [\bm J]$ (see Notation~\ref{nota:RV:short}).

\begin{thm}\label{main:prop:k:vol:dag}
For each $k \geq 0$ there is a canonical isomorphism of Grothendieck semigroups
\[
\int_{+} : \gsk  \VF[k] \fun \gsk  \RV[\leq k] /  \isp
\]
such that
\[
\int_{+} [A] = [\bm U]/  \isp \quad \text{if and only if} \quad  [A] = [\bb L(\bm U)].
\]
Putting these together, we obtain a canonical isomorphism of Grothendieck semirings
\[
\int_{+} : \gsk \VF_* \fun \gsk  \RV[*] /  \isp.
\]
\end{thm}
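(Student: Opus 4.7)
The plan is to package the results that have just been assembled, primarily Corollary~\ref{L:sur:c} and Proposition~\ref{kernel:L:dag:coa}, into the desired isomorphism and then upgrade it from semigroups to semirings. First I would fix $k$ and reinterpret Corollary~\ref{L:sur:c}: the map $\bb L_{\leq k}$ induces a surjective semigroup homomorphism $\bb L : \gsk \RV[\leq k] \to \gsk \VF[k]$. The content of Proposition~\ref{kernel:L:dag:coa} is precisely that, for $\bm U, \bm V \in \RV[\leq k]$, one has $\bb L([\bm U]) = \bb L([\bm V])$ in $\gsk \VF[k]$ if and only if $([\bm U],[\bm V]) \in \isp[k]$. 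Consequently $\isp[k]$ is the kernel congruence of $\bb L$ on $\gsk \RV[\leq k]$, so $\bb L$ factors through an injection
\[
\ol{\bb L} : \gsk \RV[\leq k]/\isp \fun \gsk \VF[k],
\]
which by surjectivity is a semigroup isomorphism. I would then define $\int_+ : \gsk \VF[k] \to \gsk \RV[\leq k]/\isp$ to be $\ol{\bb L}{}^{-1}$; the displayed characterization $\int_+[A] = [\bm U]/\isp \Leftrightarrow [A] = [\bb L(\bm U)]$ is then just the definition of the inverse.

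Next I would assemble these level-wise isomorphisms into a single semigroup isomorphism $\int_+ : \gsk \VF_* \to \gsk \RV[*]/\isp$. Since $\VF_* = \bigcup_k \VF[k]$ is a filtered union (every object of $\VF[k]$ is an object of $\VF[k+1]$, compatibly with $\bb L_{\leq k} \hookrightarrow \bb L_{\leq k+1}$), and since $\isp[k] \sub \isp[k+1]$, the level-wise isomorphisms are compatible with the inclusion maps and hence glue to a bijection. This takes care of the additive structure.

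To promote this bijection to a semiring isomorphism it remains to check that $\int_+$ respects multiplication, that is, $\bb L([\bm U] \cdot [\bm V]) = \bb L([\bm U]) \cdot \bb L([\bm V])$ in $\gsk \VF_*$. This is immediate from the definition of $\bb L_k$ in Definition~\ref{def:L}: the lift of a Cartesian product of objects is (up to the evident definable bijection shuffling the $\RV$-coordinates) the Cartesian product of the lifts, and the isomorphism class of $\bb L_{k+l}(\bm U \times \bm V)$ in $\gsk \VF[k+l]$ therefore agrees with the product $[\bb L_k(\bm U)][\bb L_l(\bm V)]$. Since $\isp$ is a semiring congruence (which is also part of the content of Proposition~\ref{kernel:L:dag:coa} and the remark identifying its generating pair), the induced map $\gsk \RV[*]/\isp \to \gsk \VF_*$ is a semiring homomorphism, so $\int_+$ is its inverse semiring isomorphism.

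The only step with any substance is the reduction to Proposition~\ref{kernel:L:dag:coa}; everything else is formal bookkeeping. The hard part has already been done in the ``if'' direction of that proposition, namely showing that two objects of $\RV[\leq k]$ with isomorphic lifts in $\VF[k]$ actually admit isomorphic blowups. The expected obstacle, were one to unpack the proof of \ref{kernel:L:dag:coa}, is the $2$-cell analysis underlying Lemmas~\ref{2:unit:contracted} and~\ref{subset:partitioned:2:unit:contracted}, which compares standard contractions built in different coordinate orders and is the weakly \omin-minimal substitute for the Fubini step in the Hrushovski--Kazhdan construction; but at the current level of exposition this is invoked as a black box and the theorem above is a direct corollary.
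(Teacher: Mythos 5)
Your proof is correct and follows exactly the route the paper intends: Corollary~\ref{L:sur:c} gives surjectivity of $\bb L$, Proposition~\ref{kernel:L:dag:coa} identifies $\isp$ as its kernel congruence, the first isomorphism theorem for semigroups then yields the level-wise isomorphism (with the stated characterization being the definition of the inverse), and the graded gluing plus the evident compatibility of $\bb L$ with Cartesian products upgrades this to a semiring isomorphism. The paper leaves the reconstruction of this passage to the reader as an explicit exercise, so your write-up is simply filling in the omitted details faithfully.
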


Recall that, for any definable subset $A \sub \VF^n$, the semimodule of definable functions $A \fun \gsk  \RV[*] /  \isp$ is denoted by $\fn(A, \gsk  \RV[*] /  \isp)$. By Theorem~\ref{main:prop:k:vol:dag}, we have a canonical homomorphism of semimodules:
\[
\int_{+A} : \fn(A, \gsk  \RV[*] /  \isp) \fun \gsk \RV[*] /  \isp.
\]

\begin{prop}\label{semi:fubini}
For all $\bm f \in \fn(A, \gsk  \RV[*] /  \isp)$ and all nonempty subsets $E_1, E_2 \sub [n]$,
\[
\int_{+ a \in \pr_{E_1}(A)} \int_{+ A_a} \bm f = \int_{+ a \in \pr_{E_2}(A)} \int_{+ A_a} \bm f.
\]
\end{prop}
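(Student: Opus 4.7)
The plan is to reduce the general statement to the base case $n=2$, $E_1=\{1\}$, $E_2=\{2\}$, and then to settle that base case using the two-coordinate symmetry provided by Lemma~\ref{subset:partitioned:2:unit:contracted}.

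First, I would observe that any two subsets $E_1, E_2 \sub [n]$ can be connected by a sequence of single-index swaps between the ``inner'' and ``outer'' parts of the integration. By a standard symmetric-group argument, it therefore suffices to treat the case where $E_2$ differs from $E_1$ by the transposition of two indices $i, j$. Fibering both sides over the $\lh(n)-2$ coordinates outside $\{i,j\}$ via the semimodule homomorphism $\int_{+A}$ and compactness, this further reduces to the case $n=2$, $E_1 = \{1\}$, $E_2 = \{2\}$, with the remaining $\RV$-coordinates of $A$ playing an inert spectator role.

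Next, since $\bm f$ is constant on each piece of some definable partition of $A$ (its level sets) and $\int_+$ is additive, linearity lets me assume $\bm f$ is a single constant class $[\bm V]/\isp$. After absorbing $\bb L(\bm V)$ into the ambient product, the claim becomes: for any definable $B \sub \VF^2 \times \RV^m$,
\[
\int\nolimits_{+\, a \in \pr_1(B)} \int\nolimits_{+\, B_a} [B] \;=\; \int\nolimits_{+\, a \in \pr_2(B)} \int\nolimits_{+\, B_a} [B]
\]
in $\gsk \RV[*] / \isp$.

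For this base case I would invoke Lemma~\ref{subset:partitioned:2:unit:contracted}, which yields a definable injection $f : B \fun \VF^2 \times \RV^l$ that is unary relative to both $\VF$-coordinates, together with standard contractions $\wh T_{12}$, $\wh R_{21}$ of $f(B)$ satisfying
\[
[(\wh T_{12}(f(B)), \pr_{\leq 2})] \;=\; [(\wh R_{21}(f(B)), \pr_{\leq 2})]
\]
in $\gsk \RV[\leq 2]$. By construction (Definition~\ref{defn:standard:contraction}), $\wh T_{12}$ is built as a composition of relatively special bijections, first in one $\VF$-coordinate and then in the other; Theorem~\ref{main:prop:k:vol:dag} applied fiberwise and then globally then identifies the class of $\wh T_{12}(f(B))$ modulo $\isp$ with the iterated integral of $[B]$ taken in the corresponding order, and likewise for $\wh R_{21}$ in the opposite order. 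Passing the displayed $\RV[\leq 2]$-equality to the quotient by $\isp$ gives the desired identity. The main obstacle will be the precise identification of a standard contraction with the iterated integral: this requires a careful unwinding of how each relatively special bijection, together with Lemma~\ref{bijection:partitioned:unary}, converts a single $\VF$-coordinate into $\RV$-data in a manner compatible with the isomorphism $\int_+$ in one $\VF$-variable, and is analogous to the argument in \cite[\S6]{Yin:int:acvf} underlying Proposition~\ref{kernel:L:dag:coa}.
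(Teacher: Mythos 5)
Your strategy is essentially the one the paper leaves implicit (delegating it to [Yin:int:acvf, \S6]): reduce $\bm f$ to constant classes, identify each iterated integral with the $\isp$-class of a standard contraction of $A$, and settle the comparison via the two-variable cell decomposition. And you are right to flag the identification step as the real work: it requires gluing the fiberwise special bijections produced by the inner integrals into a relatively special bijection on all of $A$ (via Lemma~\ref{bijection:partitioned:unary} and compactness) and then applying the outer $\int_+$ to the resulting $\gsk \RV[*]/\isp$-valued function, by induction on $n$.

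The reduction step, however, contains a genuine gap. The subsets $E_1, E_2$ need not have the same cardinality (e.g.\ $E_1 = \{1\}$ and $E_2 = [n]$, where the $E_2$-iterated integral is simply $\int_{+A}\bm f$), so a symmetric-group argument via transpositions cannot connect them: transpositions preserve $|E|$. Your opening observation about single-index moves between inner and outer is the right one, but it is never converted into an argument that such moves preserve the iterated integral, and the subsequent ``it therefore suffices to treat transpositions'' does not follow. Moreover, ``fibering over the $n-2$ coordinates outside $\{i,j\}$ via the semimodule homomorphism $\int_{+A}$'' is circular: it quietly presupposes the Fubini-type compatibility being proved. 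The cleanest repair is to abandon the reduction altogether. Once you have the identification that each iterated integral equals $[\wh T_\sigma(A)]/\isp$ for some standard contraction $\wh T_\sigma$ (with the inner order of contraction dictated by $\wt E$ and the outer by $E$, after absorbing the $\bb L(\bm V_i)$'s), note that every $\wh T_\sigma(A)$ is joined to $A$ by the special bijection $T_\sigma$, so $[\bb L(\wh T_\sigma(A))] = [A]$ in $\gsk \VF_*$ for all $\sigma$; Proposition~\ref{kernel:L:dag:coa} then gives $[\wh T_\sigma(A)]/\isp = [\wh T_\tau(A)]/\isp$ for all $\sigma,\tau$, with no case analysis on $E_1, E_2$. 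In this framing Lemma~\ref{subset:partitioned:2:unit:contracted} enters only indirectly, through its role in establishing Proposition~\ref{kernel:L:dag:coa}.
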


Groupifying $\int_+$ and combining it with Remark~\ref{rem:poin} and Proposition~\ref{prop:eu:retr:k}, we obtain:

\begin{thm}\label{thm:ring}
The Grothendieck semiring isomorphism $\int_+$ naturally induces a ring isomorphism onto $\Z^{(2)}$:
\[
\Xint{\textup{G}}  :  \ggk \VF_* \to \ggk \RV[*] / (\bm 1_{\K} + [\bm J]) \to^{\bb E_{\Gamma}} \Z^{(2)},
\]
and two ring homomorphisms onto $\Z$:
\[
\Xint{\textup{R}}^g, \Xint{\textup{R}}^b: \ggk \VF_* \to \ggk \RV[*] / (\bm 1_{\K} + [\bm J]) \two^{\bb E_{\K, g}}_{\bb E_{\K, b}} \Z.
\]
\end{thm}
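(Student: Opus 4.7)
The proof is really an assembly of pieces already in place, so the plan is short. I would begin with the semiring isomorphism $\int_+ : \gsk \VF_* \to \gsk \RV[*]/\isp$ of Theorem~\ref{main:prop:k:vol:dag} and groupify. Since groupification is a left adjoint and hence commutes with colimits, in particular with quotients by semiring congruences, the groupification of $\gsk \RV[*]/\isp$ is canonically $\ggk \RV[*]/\langle\isp\rangle$, where $\langle\isp\rangle$ denotes the ideal generated by the differences $[\bm U]-[\bm V]$ for $([\bm U],[\bm V]) \in \isp$. The remark following Proposition~\ref{kernel:L:dag:coa} identifies this ideal explicitly as the principal ideal $(\bm 1_{\K} + [\bm J])$. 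Therefore groupifying $\int_+$ produces a ring isomorphism
\[
\ggk \VF_* \xrightarrow{\;\sim\;} \ggk \RV[*]/(\bm 1_{\K}+[\bm J]).
\]

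For the first assertion, I would then compose with $\bb E_\Gamma$. By Remark~\ref{rem:poin}, $\bb E_\Gamma$ is already stated as a ring isomorphism from $\ggk \RV[*]/(\bm 1_{\K}+[\bm J])$ onto $\Z^{(2)}$; this is a direct consequence of Proposition~\ref{red:D:iso} (tensor decomposition), Lemma~\ref{gam:euler} (Euler-characteristic description of the factors), and a trivial check that $\bb E_\Gamma(\bm 1_{\K}+[\bm J])=1+2YX+X$ is precisely the relation being killed. Composing gives $\Xint{\textup{G}}$ as a ring isomorphism onto $\Z^{(2)}$.

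For the second assertion, I would apply Proposition~\ref{prop:eu:retr:k}: the homomorphisms $\bb E_{\K,g}$ and $\bb E_{\K,b}$ are defined on $\ggk \RV[*]$ and send $\bm 1_{\K}+[\bm J]$ to $0$, so each factors uniquely through the quotient $\ggk \RV[*]/(\bm 1_{\K}+[\bm J])$. Their images, the zeroth graded pieces of $\ggk \RES[*][[\bm A]^{-1}]$ and $\ggk \RES[*][[1]^{-1}]$, are canonically $\Z$ (since $\ggk \RES_*=\Z$, as noted just before Lemma~\ref{gk:ortho}, and localisation at an invertible generator of the grading collapses the graded ring to its degree-zero piece). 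Precomposing with the groupification of $\int_+$ yields $\Xint{\textup{R}}^g$ and $\Xint{\textup{R}}^b$.

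The only point requiring any care is the very first step: verifying that the semiring quotient $\gsk\RV[*]/\isp$ groupifies to the ring $\ggk \RV[*]/(\bm 1_{\K}+[\bm J])$, and that the resulting map is still multiplicative. Multiplicativity is automatic because $\int_+$ was already a semiring homomorphism in Theorem~\ref{main:prop:k:vol:dag}; the identification of the ideal is the nontrivial content, but that work is done in Proposition~\ref{kernel:L:dag:coa} and the remark after it. Everything else is formal categorical bookkeeping plus the already-established facts of the preceding sections, so I do not expect a genuine obstacle; the theorem is essentially a corollary bundling three specializations of the motivic integral $\int_+$ through the three Euler characteristics $\bb E_\Gamma$, $\bb E_{\K,g}$, $\bb E_{\K,b}$.
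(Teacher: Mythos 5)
Your proposal is correct and follows the same route the paper takes: the paper's entire proof is the single sentence ``Groupifying $\int_+$ and combining it with Remark~\ref{rem:poin} and Proposition~\ref{prop:eu:retr:k}, we obtain:'', and you have simply unpacked that sentence---groupify Theorem~\ref{main:prop:k:vol:dag}, identify the resulting ideal via Proposition~\ref{kernel:L:dag:coa} and the remark following it, then compose with $\bb E_\Gamma$ (Remark~\ref{rem:poin}) for $\Xint{\textup{G}}$ and with $\bb E_{\K,g}$, $\bb E_{\K,b}$ (Proposition~\ref{prop:eu:retr:k}) for $\Xint{\textup{R}}^g$, $\Xint{\textup{R}}^b$. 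One small citation slip: the identification of the zeroth graded pieces of $\ggk \RES[*][[\bm A]^{-1}]$ and $\ggk \RES[*][[1]^{-1}]$ with $\Z$ is not literally the same statement as $\ggk \RES_*=\Z$ (these are different graded rings); it follows instead from the explicit description of $\ggk \RES[k]$ given after Definition~\ref{defn:RES:cat} together with the colimit construction in the proof of Proposition~\ref{prop:eu:retr:k}, which the paper records in the sentence immediately after that proposition.
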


\subsection{Integration against the constant $\Gamma$-volume form $1$}\label{sec:mor:vol}

As soon as one considers adding volume forms to definable subsets in $\VF$, even for the simplest one, namely the constant $\Gamma$-volume form $1$, the question of ambient dimension arises and, consequently, one has to take ``essential bijections'' as morphisms.

Let $f : \VF^n \times \RV^m \fun \VF^{n'} \times \RV^{m'}$ be a definable function. For each $(t,s) \in \RV^{m+m'}$ let $f_{t, s}$ be the function $f \cap (\VF^{n + n'} \times \{(t, s)\})$. Note that for some $(t, s)$ we have $\dim_{\VF}(\dom(f_{t, s})) = n$, in other words, $\dom(f_{t, s})$ contains an open polydisc. For such a $(t,s)$ and each $a \in \dom(f_{t, s})$ we define the \emph{$ij$th partial derivative of $f$ at $(a, t)$} to be the $ij$th partial derivative of $f_{t, s}$ at $a$. It follows that every partial derivative of $f$, and hence its Jacobian $\jcb_{\VF} f$ if $n = n'$, are defined almost everywhere.

\begin{defn}[$\vol\VF$-categories]\label{defn:volVF:cat}
A subset $A$ is an object of the category $\vol \VF[k]$ if and only if it is an object of $\VF[k]$ with $\pvf(A) \sub \VF^k$. A \emph{morphism} between two such objects $A$, $A'$ is a definable \emph{essential bijection} $F : A \fun A'$, that is, a bijection that is defined outside of definable subsets of $A$, $A'$ of $\VF$-dimension $< k$, such that $\vv(\jcb_{\VF} F(x)) = 1$ for every $x \in \dom(F)$.

The category $\vol\VF^{\db}[k]$ is the full subcategory of $\vol\VF[k]$ such that $A \in \vol \VF^{\db}[k]$ if and only if $A$ is doubly bounded.

Set $\vol\VF[*] = \coprod_k \vol \VF[k]$ and $\vol\VF^{\db}[*] = \coprod_k \vol \VF^{\db}[k]$.
\end{defn}

\begin{lem}\label{volRV:lift}
Let $F : (U, f) \fun (V, g)$ be a $\vol\RV[k]$-morphism. Then there is a $\vol\VF[k]$-morphism that lifts $F$.
\end{lem}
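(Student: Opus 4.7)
The plan is to apply Lemma~\ref{RV:lift} to obtain some $\VF[k]$-morphism $F_0^{\uparrow}$ lifting $F$, and then to modify $F_0^{\uparrow}$ on a subset of $\VF$-dimension strictly less than $k$ so that the extra Jacobian condition required of a $\vol\VF[k]$-morphism holds; the modification is essentially a sign correction, permitted by the fact that morphisms in $\vol\VF[k]$ are only required to be essential bijections.

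First I would reduce, by way of Lemma~\ref{RV:decom:RES:vol} applied to $f(U)$ and $g(V)$ together with a preliminary definable partition of $F^{\rightleftharpoons}$, to the case where, on each piece, both $f(U)$ and $g(V)$ are products of $\K^{+}$-parts with single $\vrv$-fibers over $\Gamma^{l}$. After applying Lemma~\ref{RV:lift} on each such piece, the lift $F_0^{\uparrow}$ is piecewise given by $\lan{T}{}{}$-terms, hence $C^{p}$ outside a subset of $\VF$-dimension $<k$ by Lemma~\ref{fun:suba:fun} and \cite[\S7.3]{dries:1998}; in particular $\jcb_{\VF} F_0^{\uparrow}$ is defined almost everywhere. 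Using the torsor identifications of Remark~\ref{rem:K:aff} to rescale each $\RV$-polydisc $\rv^{-1}(f(u))$ into $\OO^{k}$, I would then apply Corollary~\ref{rv:op:comm} entry-by-entry to the partial derivatives forming the rescaled Jacobian matrix: since the determinant is a polynomial expression and $\res$ is a ring homomorphism on $\OO$, the residue contraction of the rescaled Jacobian equals the $\K$-Jacobian of the induced bijection between $\K^{+}$-cells. A short calculation shows that the rescaling factors contribute $\Pi \vrv(u) / \Pi \vrv(v)$ to $\vv(\jcb_{\VF} F_0^{\uparrow})$, which, in combination with the volume condition $\jcb_{\Gamma} F^{\rightleftharpoons}(u,v) = \Pi \vrv(v)/\Pi \vrv(u) = 1$ in $\Gamma$, forces $\vv(\jcb_{\VF} F_0^{\uparrow}) \in \K^{\times}/\K^{+} = \{\pm 1\} \subseteq \Gamma$ on the open set where the induced $\K$-Jacobian is nonzero; the complement is a definable subset of smaller $\VF$-dimension.

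The remaining issue is the sign: $\vv(\jcb_{\VF} F_0^{\uparrow})$ may equal $-1$ rather than $1$ on a definable subset $A_{-} \subseteq \dom F_0^{\uparrow}$. On $A_{-}$ I would replace $F_0^{\uparrow}$ fiberwise on each $\RV$-polydisc $\rv^{-1}(f(u))$ by an explicit affine bijection $a \mapsto b_0(u) + M(u)(a - a_0(u))$ onto $\rv^{-1}(g(F(u)))$, where $a_0(u) \in \rv^{-1}(f(u))$ and $b_0(u) \in \rv^{-1}(g(F(u)))$ are definable sections obtained from the definable Skolem functions of Theorem~\ref{tcon:qe}, and $M(u)$ is a diagonal matrix whose entries $c_i(u) \in \VF^{\times}$ satisfy $|\vv(c_i(u))| = |\vrv(g_i(F(u)))| - |\vrv(f_i(u))|$. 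Since flipping the sign of an individual $c_i(u)$ preserves this absolute-value constraint, and since $\Pi \vrv(g(F(u))) = \Pi \vrv(f(u))$ in $\Gamma$, we may arrange $\det M(u)$ to be a positive unit, so that $\vv(\jcb_{\VF}) = 1$ on $A_{-}$. Gluing the unmodified $F_0^{\uparrow}$ on $\dom F_0^{\uparrow} \mi A_{-}$ with the affine replacement on $A_{-}$ and discarding the $(<k)$-dimensional overlap then yields the required $\vol\VF[k]$-morphism.

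The main obstacle I anticipate is ensuring the simultaneous definability and compatibility of the sign-corrected affine lift with the finite-to-finite correspondence $F^{\rightleftharpoons}$: the sections $a_0(u), b_0(u)$ and the entries $c_i(u)$ must be chosen definably in $u$ and must respect all matching pairs, which calls for a further preparatory partition and careful use of the definable Skolem functions from Theorem~\ref{tcon:qe}. The Jacobian computation in the second paragraph is also technically delicate, since contracting the determinant through Corollary~\ref{rv:op:comm} requires that the partial derivatives of the rescaled lift lie in $\OO$ on the relevant open pieces, which rests in turn on $T$-convexity and on the continuity of the $\lan{T}{}{}$-term representation of $F_0^{\uparrow}$.
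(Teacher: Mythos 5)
Your high-level strategy --- lift via Lemma~\ref{RV:lift}, then verify the Jacobian condition on a large subset via Corollary~\ref{rv:op:comm} --- is the same as the paper's, and the preparatory reduction via Lemma~\ref{RV:decom:RES:vol} is a reasonable variant of the paper's reduction (which instead uses Lemma~\ref{gk:ortho} to make $\vrv(U)$ and $\vrv(V)$ singletons). But two steps do not go through.

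The ``sign correction'' by fiberwise affine replacement is not available. The assignments $u \mapsto a_0(u)$ and $u \mapsto b_0(u)$ that you propose would be definable functions from an infinite subset of $\RV^k$ into $\VF^k$, and by Corollary~\ref{function:rv:to:vf:finite:image} (or Lemma~\ref{RV:no:point}) any such function has finite image; so they cannot select a point of each polydisc $\rv^{-1}(f(u))$ as $u$ varies over an infinite set. The Skolem functions of Theorem~\ref{tcon:qe} live within the $\VF$-sort and do not supply a cross-sort section. More fundamentally, the sign is not something a different lift can fix: Corollary~\ref{rv:op:comm} already forces $\res(\jcb_{\VF}G(a)) = \jcb_{\K}F^{\rightleftharpoons}(\res a)$ almost everywhere for \emph{any} definable lift $G$, so the sign of $\jcb_{\VF}G$ over a generic polydisc is determined by the sign of the contracted $\K$-Jacobian; an orientation-reversing $F^{\rightleftharpoons}$ (say $t \mapsto 5 - t$ on $(1,2) \sub \K^+$, which does satisfy $\jcb_{\Gamma} F^{\rightleftharpoons} = 1$) has \emph{no} lift with positive Jacobian almost everywhere. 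The paper's proof never raises the sign, which reflects that the $\vol\VF[k]$-morphism condition should be read as requiring $\jcb_{\VF}F$ to be a unit of $\OO$; in any case the affine replacement is a dead end.

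Independently, your claim that ``the complement is a definable subset of smaller $\VF$-dimension'' is false. Corollary~\ref{rv:op:comm} gives good behavior only over $U \mi U'$ for some definable $U' \sub U$ with $\dim_{\RV}(U') < k$, and the pullback $\rv^{-1}(U')$ is a union of $k$-dimensional open $\RV$-polydiscs, hence has $\VF$-dimension $k$ whenever $U'$ is infinite. The essential-bijection flexibility only permits discarding subsets of $\VF$-dimension strictly less than $k$, so $\rv^{-1}(U')$ cannot simply be thrown away. The paper handles it by a genuine induction: $U'$ is sent by a $\vol\RV[k]$-morphism onto $U'' \times \{(1,\ldots,1)\}$ with $U'' \sub (\RV^{\times})^{k'}$, $k' < k$, the induced $\vol\RV[k']$-morphism is lifted recursively, and the base case $k' = 1$ uses that $\mdl S$ is $\VF$-generated (so that the finitely many leftover $\RV$-polydiscs each contain a definable point). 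Your proof is missing this induction entirely.
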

\begin{proof}
By Lemma~\ref{gk:ortho}, without loss of generality, we may assume that $f = g = \id$ and $\vrv(U)$, $\vrv(V)$ are singletons. By Lemma~\ref{RV:lift}, $F$ can be lifted to a $\VF[k]$-morphism $F^{\uparrow} : \rv^{-1}(U) \fun \rv^{-1}(V)$. By Corollary~\ref{rv:op:comm}, there is a definable subset $U' \sub U$ with $\dim_{\RV}(U') = k' < k$ such that $F^{\uparrow} \rest \rv^{-1}(U \mi U')$ is a $\vol\VF[k]$-morphism. By \omin-minimality, there is a $\vol\RV[k]$-morphism
\[
F' : U' \fun U'' \times \{(1, \ldots, 1)\},
\]
where $U'' \sub (\RV^{\times})^{k'}$; similarly for $V' \coloneqq F(U')$. The induced $\vol\RV[k']$-morphism $F'' : U'' \fun V''$ may be lifted almost everywhere, and so on. If $k' = 1$ then $F''$ may be simply lifted since $\mdl S$ is assumed to be $\VF$-generated. The lemma follows.
\end{proof}

Since all special bijections are $\vol\VF[*]$-morphisms, we may deduce the following analogue of Corollary~\ref{L:sur:c}:

\begin{cor}\label{volL:sur:c}
The lifting map $\bb L_{k}$ induces a surjective homomorphism
\[
\vol \bb L :\gsk \vol \RV[k] \fun \gsk \vol \VF[k].
\]
\end{cor}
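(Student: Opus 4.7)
The plan is to adapt the argument for Corollary~\ref{L:sur:c} to the volume setting. The three ingredients are: (i) Corollary~\ref{all:subsets:rvproduct}, which trivializes any $A \in \vol \VF[k]$ into an $\RV$-pullback via a special bijection; (ii) the observation (invoked by the author just before the statement) that special bijections are morphisms in $\vol \VF[*]$; and (iii) Lemma~\ref{volRV:lift}, which lifts $\vol \RV[k]$-isomorphisms to $\vol \VF[k]$-isomorphisms.

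First I would check well-definedness. If $F : \bm U \fun \bm V$ is a $\vol \RV[k]$-isomorphism, then Lemma~\ref{volRV:lift} furnishes a $\vol \VF[k]$-morphism $F^{\uparrow} : \bb L_k(\bm U) \fun \bb L_k(\bm V)$, so the assignment $[\bm U] \efun [\bb L_k(\bm U)]$ respects the equivalence. Additivity over disjoint unions is built into the definition of $\bb L_k$. Hence $\vol \bb L$ is a well-defined homomorphism of semigroups.

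For surjectivity, pick $A \in \vol \VF[k]$. By Corollary~\ref{all:subsets:rvproduct} there is a special bijection $T : A \fun A^{\sharp}$ with $A^{\sharp}$ an $\RV$-pullback. A special bijection is an alternating composition of regularizations (which do not affect the $\VF$-coordinates) and centripetal transformations; each centripetal transformation acts on its locus as $(a, x) \efun (a - \lambda(x), x)$, i.e.\ a $\VF$-translation in one coordinate, whose $\VF$-Jacobian is identically $1$. Thus $T$ is a $\vol \VF[k]$-morphism, and $[A] = [A^{\sharp}]$ in $\gsk \vol \VF[k]$. Since $A^{\sharp}$ is an $\RV$-pullback whose $\VF$-projection lies in $\VF^k$, after partitioning we may present $A^{\sharp}$ as $\bb L_k(\bm U)$ for some $\bm U = (U, f) \in \RV[k]$ (the objects of $\vol \RV[k]$ coincide with those of $\RV[k]$); the portion where some component of $f$ is $\infty$ has $\VF$-dimension strictly less than $k$ and is therefore negligible in $\vol \VF[k]$. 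Consequently $[A] = \vol \bb L([\bm U])$.

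The main routine obstacle is bookkeeping: one has to confirm that every intermediate stage of $T$ remains inside $\vol \VF[k]$ (not merely $\VF[k]$), that the centripetal formula really yields Jacobian $1$ on a full-dimensional subset, and that the discarded degenerate $\RV$-polydiscs contribute a subset of smaller $\VF$-dimension, so that passing to $\bb L_k(\bm U)$ is legitimate. All of this is immediate from Definition~\ref{defn:special:bijection}, the essential-bijection clause in Definition~\ref{defn:volVF:cat}, and the fact that $\pvf(A) \subseteq \VF^k$. No further input is required beyond the results already in the excerpt.
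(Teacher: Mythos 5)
Your proposal is correct and follows the same approach the paper takes: the paper simply notes that special bijections are $\vol \VF[*]$-morphisms and points back to Corollary~\ref{L:sur:c} (whose proof combines Corollary~\ref{all:subsets:rvproduct} for surjectivity with the relevant lifting lemma for well-definedness). You have correctly identified that Lemma~\ref{volRV:lift} replaces Lemma~\ref{RV:lift}, explicitly verified that centripetal transformations are $\VF$-translations of Jacobian $1$ (hence $\vol\VF[k]$-morphisms), and noted that degenerate $\RV$-polydiscs are discarded on a set of $\VF$-dimension $< k$ and thus absorbed by the essential-bijection clause of Definition~\ref{defn:volVF:cat} — these are exactly the bookkeeping points the paper leaves implicit.
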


Blowups in $\vol \RV[k]$ are constructed as in Definition~\ref{defn:blowup:coa}, except that $\RV^{\circ \circ}$ is replaced by $\RV^{\circ \circ} \mi \{\infty\}$; the corresponding binary relation is still denoted by $\isp[k]$ as in Definition~\ref{defn:isp}; note that we now set $\isp[*] = \coprod_{k} \isp[k]$.

Let $\bm U$, $\bm V$ be two objects in $\vol \RV[k]$ and $\bm U^{\sharp}$, $\bm V^{\sharp}$ two elementary blowups of them. It is easy to deduce that, in $\gsk \vol \RV[k]$, if $[\bm U] = [\bm V]$ then $[\bm U^{\sharp}] = [\bm V^{\sharp}]$. It follows that $\isp[*]$ is a semiring congruence relation on $\gsk \vol \RV[*]$. In fact, the situation in question here may be formally assimilated into \cite[\S 7]{Yin:int:acvf} as a special case and hence, following the discussion there, we obtain the key description of the kernel of $\vol \bb L$:

\begin{prop}\label{kernel:volL:dag:coa}
For $[\bm U], [\bm V] \in \gsk \vol \RV[k]$,
\[
\vol \bb L ([\bm U]) = \vol \bb L ([\bm V]) \quad \text{if and only if} \quad ([\bm U], [\bm V]) \in \isp.
\]
\end{prop}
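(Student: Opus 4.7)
The plan is to adapt the proof of Proposition~\ref{kernel:L:dag:coa} to the volumetric setting, checking at each stage that the constructions preserve the $\Gamma$-volume form. The ``if'' direction is straightforward; the ``only if'' direction is the substantive part.

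For the ``if'' direction, I would first show that if $\bm U^{\sharp}$ is an elementary blowup of $\bm U = (U, f) \in \vol \RV[k]$ at the $j$th coordinate, then $\vol \bb L([\bm U^{\sharp}]) = \vol \bb L([\bm U])$. Since $\mdl S$ is $\VF$-generated, for every $t \in U$ there is a $t$-definable element $b_j(t) \in \rv^{-1}(f_j(t))$. The essential bijection
\[
\bb L(\bm U^{\sharp}) \fun \bb L(\bm U), \quad (a, t, s) \efun (a_1, \ldots, a_{j-1}, a_j + b_j(t), a_{j+1}, \ldots, a_k, t),
\]
defined off the locus $s = \infty$ (which, by construction of vol-blowups, has already been removed), is a fibered translation in the $j$th $\VF$-coordinate; hence its $\VF$-Jacobian is identically $1$ and it is a $\vol \VF[k]$-morphism. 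Induction on the length of a blowup, together with Lemma~\ref{volRV:lift} for lifting arbitrary $\vol \RV[k]$-isomorphisms, then yields the ``if'' direction.

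For the ``only if'' direction, I would run the reduction scheme of \cite[Proposition~6.17]{Yin:int:acvf} (reformulated more cleanly in \cite[Proposition~5.11]{Yin:int:expan:acvf}) using the analogues established above, namely Lemmas~\ref{bijection:partitioned:unary}, \ref{decom:into:2:units}, \ref{2:unit:contracted}, \ref{subset:partitioned:2:unit:contracted}, and Corollary~\ref{volL:sur:c}. Given a $\vol \VF[k]$-morphism $F : \bb L(\bm U) \fun \bb L(\bm V)$, one partitions $F$ into relatively unary bijections, then iteratively standardizes the $\RV$-data one $\VF$-coordinate at a time via special bijections, invoking $2$-cell decomposition to absorb the ambiguity in the ordering of coordinates. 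This yields blowups $\bm U^{\sharp}$, $\bm V^{\sharp}$ together with an $\RV[k]$-isomorphism $\bm U^{\sharp} \fun \bm V^{\sharp}$ in the underlying non-volumetric category.

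The main obstacle is promoting this to a $\vol \RV[k]$-isomorphism, that is, verifying that the induced bijection satisfies $\jcb_{\Gamma} = 1$. This is exactly the content of Corollary~\ref{rv:op:comm}: partial derivatives of $\VF$-definable functions commute with contraction almost everywhere, so the $\Gamma$-Jacobian of the $\RV$-bijection on $\RV$-polydiscs is the image of $\jcb_{\VF} F$ under the signed valuation $\vv$. Since $F \in \vol \VF[k]$ satisfies $\vv(\jcb_{\VF} F) = 1$ almost everywhere, the contracted $\RV$-bijection satisfies $\jcb_{\Gamma} = 1$ almost everywhere, and the exceptional locus has $\RV$-dimension $< k$, hence can be absorbed into further special bijections and blowups. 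The fact that $\rv$-linear bijections (Lemma~\ref{rv:lin}) and special bijections on the $\VF$-side are automatically volume-preserving, and that their contractions inherit $\jcb_{\Gamma} = 1$, ensures that every intermediate reduction step stays inside the $\vol$-categories. The remaining combinatorial bookkeeping then proceeds verbatim as in the non-volumetric case.
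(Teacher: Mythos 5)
The ``if'' direction is handled correctly. The gap is in the ``only if'' direction, specifically in the key claim that ``the contracted $\RV$-bijection satisfies $\jcb_{\Gamma} = 1$ almost everywhere'' because $\vv(\jcb_{\VF} F) = 1$ almost everywhere. This implication is false, and the failure is not confined to a lower-dimensional locus.

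The issue is that $\jcb_{\Gamma}$ of the contracted correspondence $\Phi^{\rightleftharpoons}$ at a pair $(u, v)$ equals $\Pi \vrv(v) / \Pi \vrv(u)$, which depends on the \emph{positions} of the polydiscs $\rv^{-1}(u)$ and $\rv^{-1}(v)$, whereas $\vv(\jcb_{\VF} F)$ controls only the \emph{local} behavior of $F$ (its derivative). The condition $\vv(\jcb_{\VF} F) = 1$ forces $|\vrv(v)| = |\vrv(u)|$ (matching radii), but it does \emph{not} force the signs to match. A translation has $\jcb_{\VF} \equiv 1$ but can carry $\rv^{-1}(1)$ to $\rv^{-1}(-1)$, where $\jcb_{\Gamma} = -1$; the map $(a, b) \mapsto (-b, a)$ has $\jcb_{\VF} \equiv 1$ yet its contraction sends $(u_1, u_2)$ to $(-u_2, u_1)$, again with $\jcb_{\Gamma} = -1$. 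In these examples the ``exceptional locus'' is \emph{everything}, so the proposed remedy of absorbing it into further special bijections and blowups (which works only for a locus of $\RV$-dimension $< k$) does not apply. Moreover, Corollary~\ref{rv:op:comm} is not ``exactly the content'' you need here: that statement concerns the $\res$-contraction of differential operators, and it does not relate $\vv(\jcb_{\VF} F)$ to $\jcb_{\Gamma} \Phi^{\rightleftharpoons}$. (Its use in Lemma~\ref{volRV:lift} is in the \emph{lifting} direction, where one has freedom to choose the lift; the contraction direction you want is not symmetric to it, precisely because of the sign.)

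What actually makes the proposition true is that the sign discrepancy is exactly what $\isp$ absorbs in the volumetric setting: as noted in Remark~\ref{rem:shr}, $\isp$ is generated by both $([+, 1], [(\RV^{\circ\circ} \mi \{\infty\}, \id)])$ and $([-, 1], [(\RV^{\circ\circ} \mi \{\infty\}, \id)])$, and the blowup object $(\RV^{\circ\circ} \mi \{\infty\}, \id)$ admits the sign-flipping $\vol\RV$-automorphism $s \mapsto -s$. So after the reduction produces an $\RV[\leq k]$-isomorphism $\Phi$ of blowups whose $\jcb_{\Gamma}$ takes values in $\{\pm 1\}$, one must compose with suitable sign-flips on the blowup coordinates to land in $\vol\RV[\leq k]$. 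This step is not in your write-up; without it the ``only if'' direction is incomplete. The paper sidesteps the entire issue by citing \cite[\S 7]{Yin:int:acvf}, where the volumetric bookkeeping (including the sign) is handled at the source.
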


\begin{thm}\label{main:k:dag}
For each $k \geq 0$ there is a canonical isomorphism of Grothendieck semigroups
\[
\int_{+} : \gsk  \vol \VF[k] \fun \gsk \vol \RV[k] /  \isp
\]
such that
\[
\int_{+} [A] = [\bm U]/  \isp \quad \text{if and only if} \quad  [A] = \vol \bb L ([\bm U]).
\]
Putting these together, we obtain a canonical isomorphism of graded Grothendieck semirings
\[
\int_{+} : \gsk \vol \VF[*] \fun \gsk \vol \RV[*] /  \isp.
\]
\end{thm}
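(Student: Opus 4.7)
The plan is to assemble the theorem directly from the two main technical results already established for the $\vol$-setting, namely the surjectivity of the lifting map (Corollary~\ref{volL:sur:c}) and the precise description of its kernel (Proposition~\ref{kernel:volL:dag:coa}). Together these say that $\vol \bb L : \gsk \vol \RV[k] \fun \gsk \vol \VF[k]$ is a surjective semigroup homomorphism whose congruence kernel is exactly $\isp[k]$. The universal property of quotient semigroups then furnishes, for each $k$, a unique isomorphism of Grothendieck semigroups
\[
\int_{+} : \gsk  \vol \VF[k] \fun \gsk \vol \RV[k] /  \isp
\]
satisfying $\int_{+} [A] = [\bm U]/\isp$ if and only if $[A] = \vol \bb L ([\bm U])$. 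Since elementary blowups preserve ambient $\RV$-dimension (an object in $\RV[k]$ has blowups that decompose in $\RV[k-1] \amalg \RV[k]$, but after quotienting by $\isp$ the class is still represented in degree $k$), the relation $\isp[k]$ is defined grade by grade and the collection $\{\int_+\}_k$ assembles into a graded map on $\gsk \vol \VF[*] = \coprod_k \gsk \vol \VF[k]$.

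To upgrade from graded semigroup isomorphism to graded semiring isomorphism, I would verify that $\vol \bb L$ is compatible with Cartesian products, which represent multiplication in both semirings. Concretely, given $(U,f) \in \vol\RV[k]$ and $(V,g) \in \vol\RV[l]$, the product $(U \times V, f \times g) \in \vol \RV[k+l]$ lifts under $\vol \bb L$ to $\bb L_k(U,f) \times \bb L_l(V,g)$, which is the product object in $\vol \VF[k+l]$; the Jacobian of the product lift is the block-diagonal product of the factor Jacobians, so the $\vv$-Jacobian condition is inherited and volume-preservation is preserved under products. Hence $\vol \bb L$ is a homomorphism of graded semirings. The congruence $\isp$ is likewise a semiring congruence on $\gsk \vol \RV[*]$ (this was already observed just before Proposition~\ref{kernel:volL:dag:coa}), so the quotient $\gsk \vol \RV[*] / \isp$ carries a graded semiring structure and $\int_+$ becomes the desired graded semiring isomorphism.

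The only step that is not purely formal bookkeeping is checking that $\isp$ really does capture the kernel of $\vol \bb L$ at every graded piece; but this is precisely Proposition~\ref{kernel:volL:dag:coa}, which is assumed. The ``hard'' ingredients---the lifting lemma (Lemma~\ref{volRV:lift}), cell decomposition via special bijections (Corollary~\ref{all:subsets:rvproduct}), and the balanced $2$-cell analysis (Lemmas~\ref{decom:into:2:units}, \ref{2:unit:contracted}, \ref{subset:partitioned:2:unit:contracted})---have all been adapted to the $T$-convex setting in the preceding sections, so the present theorem is essentially a packaging statement. The main point to be careful about is that one cannot substitute $[A] = \vol \bb L ([\bm U])$ by $[A] = \vol \bb L ([\bm V])$ without also shifting $[\bm V]$ by an element of $\isp$; but this is exactly what Proposition~\ref{kernel:volL:dag:coa} guarantees, so the definition of $\int_+$ is unambiguous.
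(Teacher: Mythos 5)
Your proof is correct and takes essentially the same route as the paper, which treats Theorem~\ref{main:k:dag} as an immediate consequence of Corollary~\ref{volL:sur:c} and Proposition~\ref{kernel:volL:dag:coa} without further argument. One small correction to your parenthetical about grading: in the $\vol$-setting blowups are formed with $\RV^{\circ\circ} \mi \{\infty\}$ rather than $\RV^{\circ\circ}$, so an elementary blowup of an object in $\vol\RV[k]$ stays exactly in $\vol\RV[k]$ (no degenerate component lands in $\RV[k-1]$); this is precisely what makes $\isp[*] = \coprod_k \isp[k]$ a homogeneous congruence, and your phrase ``decompose in $\RV[k-1] \amalg \RV[k]$'' describes the unweighted case, not the one at hand.
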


\begin{rem}\label{rem:shr}
The semiring congruence relation $\isp$ is now generated by two pairs
\[
([+, 1], [(\RV^{\circ \circ} \mi \{\infty\}, \id)]) \quad \text{and} \quad  ([-, 1], [(\RV^{\circ \circ} \mi \{\infty\}, \id)]).
\]
In $\gsk \vol \RV[*] /  \isp$, this has the effect of canceling the difference of the sign among elements that are, in a sense, ``disjoint'' from $\gsk \vol\Gamma[*]$; for example, if $U$ and $V$ are two (nonempty) finite definable subsets of $\RV^{\times}$ then
\[
[+, \bm W] [(U, \id)] / \isp = [-, \bm W] [(V, \id)] / \isp.
\]
Thus, in $\gsk \vol \RV[*] /  \isp$, we may set
\[
[\bm A!] \coloneqq [+, \bm T] + [-, \bm T] + [1]
\]
without specifying the sign of $[1]$.

Note that the ideal determined by $\isp$ in the graded ring $\ggk \vol \RV[*]$ is homogeneous, being generated by the corresponding elements $[+, \bm J]$ and $[-, \bm J]$.

The semiring congruence relation on $\gsk \vol \RV[*]$ generated by the pair
\[
([+, 1], [-, 1])
\]
will be denoted by $\ipm$. The semiring $\sgsk \vol \RES [\ast] / \ipm$ is abbreviated as
\[
\dsgsk \vol \RES [\ast],
\]
in which we may speak of the element $[\bm A!]$. Clearly in the Grothendieck \emph{ring} $\dsggk \vol \RES [\ast]$ we have
\[
[+, \bm T] = [-, \bm T] = -[1]
\]
and hence $\dsggk \vol \RES [\ast]$ is actually isomorphic to $\ggk \RES [\ast]$, which in turn is naturally isomorphic to $\Z[X]$ via the assignments $[\bm A] \efun - X$ and $[1] \efun X$, or the other way around.
\end{rem}

\begin{prop}\label{prop:eu:retr:volk}
There are four natural homomorphisms of graded rings
\[
\sbe_{\K, g}^{\pm}, \sbe_{\K, b}^{\pm}: \ggk \vol\RV[*] \fun \dsggk \vol \RES [\ast] \to^{[\bm A!] \efun \pm X} \Z[X]
\]
such that, for the first arrow, they agree on $\ggk \vol\RV^{\db}[*]$ and, in particular, they agree with the natural homomorphism on $\ggk \vol \RES [\ast]$. Moreover, on $\ggk \vol\RV^{\db}[*]$, they factor through the ideal determined by $\isp$.
\end{prop}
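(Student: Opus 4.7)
The strategy is to mirror the construction of Proposition~\ref{prop:eu:retr:k} in the volumetric setting. By Proposition~\ref{red:volD:iso}, the isomorphism $\vol \bb D$ identifies $\gsk \vol \RV[*]$ with $\gsk \vol\RES[*] \otimes \gsk \vol\Gamma[*]$. For each pair $(i,j)$ and each $? \in \{g, b\}$, I would define a group homomorphism
\[
E^{i,j}_{?}: \gsk \vol\RES[i] \otimes \gsk \vol\Gamma[j] \fun \sggk \vol\RES[i+j], \qquad x \otimes y \efun \mdl E_{\Gamma, ?, j}(y) \cdot x \cdot [\bm T]^j,
\]
where $\mdl E_{\Gamma, ?, j}$ is the degree-$j$ component of the Euler characteristic of Lemma~\ref{gam:euler} and $x$ is passed through the canonical map $\gsk \vol\RES \fun \sggk \vol\RES$. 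Assembling and groupifying yields $\bb E^*_{\K, ?}: \ggk \vol\RV[*] \fun \sggk \vol\RES[*]$; post-composing with the quotient $\sggk \vol\RES[*] \fun \dsggk \vol\RES[*]$ and the identification $\dsggk \vol\RES[*] \simeq \Z[X]$ sending $[\bm A!] \efun \pm X$ from Remark~\ref{rem:shr} produces the four maps $\sbe^{\pm}_{\K, ?}$.

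The first technical verification is that $E^{i,j}_{?}$ is $\gsk \vol\Gamma^c[*]$-balanced (which is precisely why the target must be $\sggk$ and not $\gsk$). Any $z = [\{\gamma\}] \in \gsk \vol\Gamma^c[l]$ is sent to $[(\vrv^{-1}(\gamma), \id)]$ under the inclusion into $\gsk \vol\RES[*]$, and the defining relations of $I!$ identify this with $[\bm T]^l$ in $\sggk \vol\RES[*]$; combined with $\mdl E_{\Gamma, ?, l}(z) = \chi(\{\gamma\}) = 1$, the balancing identity
\[
\mdl E_{\Gamma, ?, j+l}(z \cdot y) \cdot x \cdot [\bm T]^{j+l} = \mdl E_{\Gamma, ?, j}(y) \cdot (x \cdot [\bm T]^l) \cdot [\bm T]^j
\]
holds by multiplicativity of $\chi_{\Gamma, ?}$. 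Compatibility with the graded ring structure is then straightforward from the tensor decomposition.

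Next I would verify the two agreement statements. The Euler characteristics $\chi_{\Gamma, g}$ and $\chi_{\Gamma, b}$ differ only on classes involving the unbounded half-line $[\bm H] = [(\infty, 1)]$ (see Lemma~\ref{gam:euler}), and no such class lies in $\ggk \vol\Gamma^{\db}[*]$; hence by Corollary~\ref{db:vold}, $\bb E^*_{\K, g}$ and $\bb E^*_{\K, b}$ coincide on $\ggk \vol\RV^{\db}[*]$. Restricted further to $\ggk \vol\RES[*]$, a class $x \in \ggk \vol\RES[i]$ decomposes as $x \otimes \bm 1_{\Gamma}$ and is sent to $x$ itself, matching the natural quotient map $\ggk \vol\RES[*] \fun \dsggk \vol\RES[*]$.

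The main obstacle is the final claim, that on $\ggk \vol\RV^{\db}[*]$ the map factors through the ideal cut out by $\isp$. By Remark~\ref{rem:shr} this ideal is generated by the two elements $[\pm, \bm J]$. Although $[\pm, \bm J]$ itself is not doubly bounded, multiples $y \cdot [\pm, \bm J]$ that land in $\ggk \vol\RV^{\db}[*]$ must be shown to vanish under $\bb E^*_{\K, ?}$. This is accomplished by a direct calculation parallel to $\bb E_{g, \leq n}(\bm 1_{\K} + [\bm J]) = [\bm A]^n - 2[\bm T][\bm A]^{n-1} - [1][\bm A]^{n-1} = 0$ from Proposition~\ref{prop:eu:retr:k}, now performed inside $\dsggk \vol\RES[*]$ where $[+, \bm T] = [-, \bm T] = -[1]$ and $[\bm A!] = -[1]$ as recorded in Remark~\ref{rem:shr}. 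The principal difficulty lies in bookkeeping the signs and in carving out doubly bounded pieces of $\RV^{\circ \circ}$ so that the vanishing identity can be written entirely inside $\ggk \vol\RV^{\db}[*]$; once this combinatorial setup is in place the vanishing itself is forced by the defining formula of $E^{i,j}_?$ and the collapse of signs in $\dsggk \vol\RES[*]$.
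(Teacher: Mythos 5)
You have correctly identified the template for the first parts (mirroring Proposition~\ref{prop:eu:retr:k} with $\vol \bb D$ in place of $\bb D$, checking $\gsk\vol\Gamma^c[*]$-balance, and observing that $\chi_{\Gamma,g}$ and $\chi_{\Gamma,b}$ only diverge on classes involving $[\bm H]$, which do not occur over $\ggk\vol\RV^{\db}[*]$). One small remark: the balancing you verify only uses the $I!$-relations, which identify $[(\vrv^{-1}(\gamma),\id)]$ with $[\bm T]$ for $\gamma \in \Gamma^{+}(\mdl S)$; for tuples $\gamma$ with negative entries one also needs the sign collapse coming from $\ipm$, so the well-defined target of the first arrow is really $\dsggk\vol\RES[*]$ rather than $\sggk\vol\RES[*]$. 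Since you compose with the quotient anyway, this is easily repaired.

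The last paragraph, however, contains a genuine gap, and the strategy proposed there would not work. First, the claim that the ideal determined by $\isp$ on $\ggk\vol\RV^{\db}[*]$ is seen through ``multiples $y\cdot[\pm,\bm J]$ landing in $\ggk\vol\RV^{\db}[*]$'' is incorrect: $[\pm,\bm J]$ involves $(\RV^{\circ\circ}\mi\{\infty\},\id)$, whose $\vrv$-image is the unbounded interval $\Gamma^{\circ\circ}\mi\{1\}$, and multiplication never destroys this unboundedness, so \emph{no nonzero multiple of $[\pm,\bm J]$ is doubly bounded}. Thus that set is empty while the $\db$-ideal is not (the class $[\bm U]-[1]$ with $\bm U = (\rv(\MM\mi\MM_\gamma)\cup\{t\},\id)$ for a definable $t$ with $|\vrv|(t)=\gamma$ is a genuine nonzero element of it). Second, the hoped-for ``parallel'' to $\bb E_{g,\le n}(\bm 1_\K + [\bm J])=0$ does not exist in this setting: the non-vol vanishing rests on the colimit maps $\tau_{m,k}\colon x\mapsto x[\bm A]^k$, which are absent here because the homomorphisms are required to be graded. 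In fact, decomposing $\RV^{\circ\circ}\mi\{\infty\}$ via $\vol\bb D$ into two copies of $[\bm H]$ one computes, using $\chi_{\Gamma,g}([\bm H])=-1$, $\chi_{\Gamma,b}([\bm H])=0$ and $[\bm T]=-[1]$ in $\dsggk\vol\RES[*]$, that $\sbe^{\pm}_{\K,g}([+,\bm J]) = -2[\bm T]-[1] = [1]$ and $\sbe^{\pm}_{\K,b}([+,\bm J]) = -[1]$; both are nonzero, so $\sbe^{\pm}_{\K,?}$ does \emph{not} kill the generators $[\pm,\bm J]$, and vanishing on multiples is out of the question. The mechanism that actually makes the doubly bounded statement true is different in kind: for a $\db$ truncation of an elementary blowup such as the key example $\bm U$ above, the $\Gamma$-component that appears is a \emph{doubly bounded} half-open interval, whose Euler characteristic (for either $\chi_{\Gamma,g}$ or $\chi_{\Gamma,b}$) is $0$, so $\sbe^{\pm}_\K([\bm U]) = \sbe^{\pm}_\K([\bm t]) = \sbe^{\pm}_\K([1])$; the general case is then reached by the same induction on blowup length used in Lemma~\ref{hm:factor}. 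So the correct argument replaces the ``kill-the-generator'' calculation by a truncation-plus-induction argument, and these are not interchangeable.
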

\begin{proof}
This is completely analogous to the construction of the homomorphisms $\bb E_{\K, g}$ and $\bb E_{\K, b}$ in Proposition~\ref{prop:eu:retr:k}, but using $\vol \bb D$ instead of $\bb D$.

For the second assertion, we first examine a key example. Let $t \in |\vrv|^{-1}(\gamma)$ be a definable element and $U = \rv(\MM \mi \MM_{\gamma}) \cup \{t\}$. Clearly $[\bm U]/\isp = [1]/\isp$. We have
\[
\sbe_{\K}^{\pm}([\bm U]) = \sbe_{\K}^{\pm}([\bm t]) = \sbe_{\K}^{\pm}([1]).
\]
For the general case, we may go through the same induction as in the proof of Lemma~\ref{hm:factor} below.
\end{proof}

Of course the volume form does not really play any role in $\sbe_{\K, g}^{\pm}$, $\sbe_{\K, b}^{\pm}$ and hence they factor through the forgetful homomorphism $\ggk \vol\RV[*] \fun \ggk \RV[*]$. However, the volume form is needed when we reconstruct these homomorphisms piece by piece through certain zeta functions in \S\ref{sec:zeta}.

\begin{defn}\label{defn:binv}
Let $\beta \in |\Gamma|(\mdl S)$ and $\pi_\beta: \VF \fun \VF / \MM_{\beta}$ be the natural map (see Notation~\ref{nota:tor}). We say that a definable subset $A \sub \VF^n \times \RV^m$ is \emph{$\beta$-invariant} if $A$ is bounded, $\prv(A)$ is doubly bounded, and $A_t$ is a pullback via $\pi_\beta$ for every $t \in \prv(A)$.

Let $\vol \VF^{\diamond}[k]$ be the full subcategory of $\vol \VF[k]$ such that $A \in \vol \VF^{\diamond}[k]$ if and only if $A$ is $\beta$-invariant for some $\beta$. Set $\vol \VF^{\diamond}[\ast] = \coprod_k \vol \VF^{\diamond}[k]$.
\end{defn}

\begin{lem}\label{binv}
For all $A \in \vol \VF^{\diamond}[k]$ there is a $\bm U \in \vol \RV^{\db}[k]$ such that $[A] = \vol \bb L([\bm U])$.
\end{lem}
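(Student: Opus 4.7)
Plan:

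By Corollary~\ref{all:subsets:rvproduct} there is a special bijection $T$ on $A$ with $T(A)$ an $\RV$-pullback; since every special bijection is a $\vol\VF[k]$-morphism (its centripetal components translate the $\VF$-coordinates with $\VF$-Jacobian $1$, and its regularizations leave $\VF$-coordinates fixed), this gives $T(A) = \vol\bb L(\bm U_0)$ for some $\bm U_0 \in \vol\RV[k]$ and thus $[A] = [\vol\bb L(\bm U_0)]$ in $\gsk\vol\VF[k]$.

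To obtain a doubly bounded representative I exploit the structural hypotheses on $A$ directly. By $\beta$-invariance, each fiber $A_t$ is a union of $\MM_\beta^k$-cosets; by boundedness of $A$, say $|\vv|(\pvf(a)) \geq \gamma_0$ for some $\gamma_0 \in |\Gamma|(\mdl S)$, each such union is finite, and every non-zero coset center coordinate $c_i$ satisfies $\gamma_0 \leq |\vv|(c_i) \leq \beta$. Pick a definable $s_0 \in \VF$ with $|\vv|(s_0) = \beta$ (available since $\mdl S$ is $\VF$-generated and $\beta \in |\Gamma|(\mdl S)$, cf.\ Lemma~\ref{clo:disc:bary}). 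Partition $A$ according to the subset $E \subseteq [k]$ of non-central coordinates of each coset, and on each piece apply the volume-preserving translation
\[
a_i \efun a_i + s_0 - \tilde c_i, \qquad \tilde c_i = \begin{cases} c_i & \text{if } i \in E,\\ 0 & \text{if } i \notin E,\end{cases}
\]
which has Jacobian $1$ in each coordinate and maps every $\MM_\beta^k$-coset onto the fixed $\rv$-polydisc $\rv^{-1}(\rv(s_0))^k$ of valuative radius $\beta$. Combining these translations with $T$ produces a $\vol\VF[k]$-isomorphism of $A$ with $\vol\bb L(\bm U)$ in which the $\RV$-object $\bm U$ has $f$ constantly equal to $(\rv(s_0),\ldots,\rv(s_0))$ (so $|\vrv|$ of the $f$-coordinates equals $\beta$) and label set consisting of the original doubly bounded $\prv(A)$-coordinates together with the tuples $(\rv(c_i))_{i \in E} \in (\RV^\times)^{|E|}$, each coordinate of which lies in the doubly bounded range $[\gamma_0,\beta]$. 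Hence $\bm U \in \vol\RV^{\db}[k]$.

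The main obstacle is the uniform definability of the coset center function $c(x)$ as a function of the $\RV$-coordinates of $A$, and in particular the coordinate-wise distinction between central and non-central components. This is handled by a definable partition of $\prv(A)$ via compactness together with definable choice in the $\VF$-sort (using that $\mdl S$ is $\VF$-generated, cf.\ Lemma~\ref{clo:disc:bary}), in exactly the same manner as the bounded-integration arguments in \cite[\S 7]{Yin:int:acvf}; the remaining technical details are routine.
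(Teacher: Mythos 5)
Your opening paragraph correctly mirrors the paper's own starting observation (that Corollary~\ref{all:subsets:rvproduct} alone may leave an $\RV$-object that is not doubly bounded), but the proposed one-step translation has a genuine gap. The claim that ``by boundedness of $A$, each such union is finite'' is false: boundedness only gives a lower bound on $|\vv|$ along $\pvf(A)$ and does not bound the number of $\MM_\beta^k$-cosets per fiber --- already $\OO$ lies in $\vol\VF^{\diamond}[1]$ (it is $0$-bounded and $\beta$-invariant for $\beta = |\vv|(\imath)$, say) yet decomposes into infinitely many $\MM_\beta$-cosets. Independently of finiteness, your translation together with the labels $(\rv(c_i))_{i \in E}$ is not injective: if $|\vv|(c_i) < \beta$ there exist $c'_i$ with $\rv(c'_i) = \rv(c_i)$ and $|\vv|(c_i - c'_i) \leq \beta$, so that $c + \MM_\beta^k$ and $c' + \MM_\beta^k$ are \emph{distinct} cosets in the same $\RV$-fiber which your map sends to the same image with the same $\RV$-label; distinguishing them requires the class in $\RV_\beta$, which is strictly finer than the class in $\RV$, and $\rv(c_i)$ forgets exactly that information. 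There is also the subsidiary problem that $\MM_\beta^k$-cosets are open polydiscs, and --- as the paper notes immediately after Lemma~\ref{clo:disc:bary} --- the device that produces a definable point inside a definable closed disc does not extend to open discs, so a uniform definable center function $c$ is not available in the first place.

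These obstacles are precisely why the paper's proof proceeds iteratively rather than by a single coordinated translation. In each $\RV$-fiber only the one coset $\MM_\beta \times \{t\}$ through $0$ has a definable center (namely $0$), so only that piece $A'$ is moved, by a centrifugal transformation, onto the $\RV$-disc $\rv^{-1}(u)$ with $|\vv|(u) = \beta$; the remainder $A \mi A'$ is then genuinely doubly bounded, and Corollary~\ref{all:subsets:rvproduct} is applied to it, which may reintroduce polydiscs around $0$, at which point the procedure is repeated, one $\VF$-coordinate at a time. The gradual refinement recorded inside the special bijections is exactly what resolves the coset data that a single translation with $\rv$-labels cannot.
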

\begin{proof}
We first remark that Corollary~\ref{all:subsets:rvproduct} does not guarantee that such an object $\bm U$ exists since, in general, special bijections may produce polydiscs around $0$, whose regularizations are not doubly bounded. However, for $\beta$-invariant subsets, this may be remedied by interweaving special bijections with centrifugal transformations at suitable places. To illustrate this, let us consider the simplest nontrivial case $A \sub \VF \times \RV^m$. Let $A'$ be the union of the subsets of $A$ of the form $\MM_{\beta} \times \{t\}$. Since $\beta$ is definable and $\mdl S$ is $\VF$-generated, there is a definable point $b \in \VF$ with $|\vv|(b) = \beta$. Set $u = \rv(b)$. Then there is a centrifugal transformation from $A'$ onto the subset $\rv^{-1}(u) \times \prv(A')$, which is doubly bounded. On the other hand, $A \mi A'$ is $\beta$-invariant and doubly bounded as well. Applying Corollary~\ref{all:subsets:rvproduct} to $A \mi A'$ may produce polydiscs of the form $\MM_{\beta} \times \{t\}$ again, but then we just repeat the above procedure until a doubly bounded $\RV$-pullback is reached.

More generally, when $k>1$, we may concentrate on one $\VF$-coordinate at a time, as done in a standard contraction. This is straightforward and the details are left to the reader.
\end{proof}

\begin{thm}\label{thm:poin}
The Grothendieck semiring isomorphism $\int_+$ naturally induces two homomorphisms of graded rings onto $\Z[X]$:
\[
\Xint{\textup{R}}^{\pm}: \ggk \vol \VF^{\diamond}[\ast] \to \ggk \vol\RV^{\db}[*] / \isp \to^{\sbe_{\K}^{\pm}} \Z[X].
\]
\end{thm}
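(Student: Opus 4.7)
\medskip

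The plan is to read off $\Xint{\textup{R}}^{\pm}$ as a straightforward composition of three results already at our disposal: Theorem~\ref{main:k:dag} (groupified), Lemma~\ref{binv}, and Proposition~\ref{prop:eu:retr:volk}. The only genuine content beyond bookkeeping is that the restriction of $\int_+$ to $\ggk\vol\VF^{\diamond}[\ast]$ has image lying in the image of $\ggk\vol\RV^{\db}[\ast]/\isp$ inside $\ggk\vol\RV[\ast]/\isp$, on which the ``Euler characteristic'' $\sbe_{\K}^{\pm}$ is defined.

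First I would package the intermediate arrow. Given $[A]\in\gsk\vol\VF^{\diamond}[k]$, Lemma~\ref{binv} furnishes an object $\bm U\in\vol\RV^{\db}[k]$ with $[A]=\vol\bb L([\bm U])$, and Theorem~\ref{main:k:dag} then gives $\int_+[A]=[\bm U]/\isp$. If $\bm U,\bm V\in\vol\RV^{\db}[k]$ both satisfy $\vol\bb L([\bm U])=\vol\bb L([\bm V])=[A]$, then by Proposition~\ref{kernel:volL:dag:coa} we have $([\bm U],[\bm V])\in\isp$, so $[\bm U]/\isp=[\bm V]/\isp$ in $\gsk\vol\RV^{\db}[\ast]/\isp$ (here $\isp$ on $\vol\RV^{\db}[\ast]$ is understood as the restriction of the ambient relation on $\vol\RV[\ast]$). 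This produces a well-defined graded semiring homomorphism
\[
\int_+\rest: \gsk\vol\VF^{\diamond}[\ast]\fun \gsk\vol\RV^{\db}[\ast]/\isp,
\]
which after groupification is a homomorphism $\ggk\vol\VF^{\diamond}[\ast]\to \ggk\vol\RV^{\db}[\ast]/\isp$.

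Next I would apply Proposition~\ref{prop:eu:retr:volk}, which asserts that $\sbe_{\K}^{\pm}$ restricted to $\ggk\vol\RV^{\db}[\ast]$ factors through the ideal determined by $\isp$, landing in $\dsggk\vol\RES[\ast]$ and then, via $[\bm A!]\efun \pm X$, in $\Z[X]$. The composition
\[
\Xint{\textup{R}}^{\pm}\;\coloneqq\; \sbe_{\K}^{\pm}\circ\int_+\rest
\]
is then a homomorphism of graded rings, since both constituents preserve grading (the restriction of $\int_+$ sends grade $k$ to grade $k$ by Theorem~\ref{main:k:dag}, and $\sbe_{\K}^{\pm}$ is graded by Proposition~\ref{prop:eu:retr:volk}).

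Finally, for surjectivity onto $\Z[X]$, I would note that $\vol\RES[\ast]$ is a full subcategory of $\vol\RV^{\db}[\ast]$ (finite $\vrv$-image is automatically doubly bounded), and by Proposition~\ref{prop:eu:retr:volk} (the ``they agree with the natural homomorphism on $\ggk\vol\RES[\ast]$'' clause) the map $\sbe_{\K}^{\pm}$ already surjects $\ggk\vol\RES[\ast]$ onto $\dsggk\vol\RES[\ast]\cong\Z[X]$. It remains to observe that any class $[\bm U]\in\ggk\vol\RES[\ast]$ lifts to $\ggk\vol\VF^{\diamond}[\ast]$: for $\bm U=(U,f)$ with $\vrv(U)$ finite, the lift $\vol\bb L(\bm U)$ is a finite union of $\RV$-polydiscs, each of the form $\rv^{-1}(t)$ for some bounded $t$, hence bounded, and each is a $\MM_\beta$-pullback for large enough $\beta$, so $\vol\bb L(\bm U)\in\vol\VF^{\diamond}[\ast]$. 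Taking this $\beta$ uniformly over the finite piece exhibits the desired lift, completing the surjection.

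The only place where any genuine verification is needed is the well-definedness argument in the first paragraph, i.e.\ that $\int_+\rest$ lands in (and is well defined on) $\ggk\vol\RV^{\db}[\ast]/\isp$ rather than just in the larger ring $\ggk\vol\RV[\ast]/\isp$; but this is immediate from Lemma~\ref{binv} together with Proposition~\ref{kernel:volL:dag:coa}. No further combinatorial work is required, and the theorem follows.
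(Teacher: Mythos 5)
Your proposal is correct and follows the same route as the paper's two-sentence proof (Lemma~\ref{binv} plus Theorem~\ref{main:k:dag} for the first arrow, Proposition~\ref{prop:eu:retr:volk} for the second), spelling out the well-definedness via Proposition~\ref{kernel:volL:dag:coa} and the surjectivity that the paper leaves implicit. The only small slip is calling $\vol\bb L(\bm U)$ a \emph{finite} union of $\RV$-polydiscs---it is indexed by all of $U$, which may be infinite---but since $\vrv(f(U))$ is finite the valuative radii take only finitely many values, so your uniform choice of $\beta$ is still justified.
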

\begin{proof}
By Theorem~\ref{main:k:dag} and Lemma~\ref{binv}, the first arrow is well-defined. By Proposition~\ref{prop:eu:retr:volk}, the second arrow is well-defined as well.
\end{proof}

\section{Real topological zeta functions}\label{sec:zeta}

Let $\mdl R$ be a polynomial-bounded \omin-minimal field that models $T$. In this section we shall follow the discussion in \cite[\S 8]{hru:loe:lef} closely and work in the power series field $\mdl R \dlbr t^{\Q} \drbr$, with \emph{all} parameters allowed. Recall from Example~\ref{exam:RtQ} that $\Gamma$ may be identified with $\pm e^{\Q}$ and $|\Gamma|$ with $\Q$. We shall also consider $t^q$, $q \in \Q$, as elements in $\RV$.

For each integer $m \geq 1$, let $\vol \RV^{\db}_m[k]$ be the full subcategory of $\vol \RV^{\db}[k]$ such that $(U, f) \in \vol \RV^{\db}_m[k]$ if and only if $\Sigma |\gamma| \in \frac{1}{m} \Z$ for all $\gamma \in \vrv(f(U))$. The categories $\vol \RES_m[k]$, $\vol \Gamma^{\db}_m[k]$, etc., are formulated likewise. Set $\vol \RV^{\db}_m[\ast] = \coprod_k \vol \RV^{\db}_m[k]$, etc. With a little extra bookkeeping, we may construct an isomorphism of graded semirings as in Corollary~\ref{db:vold}:
\[
\vol \bb D^{\db}_m : \gsk \vol\RES_m[*] \otimes \gsk \vol\Gamma^{\db}_m[*] \fun \gsk \vol\RV^{\db}_m[*],
\]
where the tensor product is taken over $\gsk \vol\Gamma^{c}_m[*]$. On the other hand, for any $U \sub (\RV^{\times})^k$, we may take its ``$\frac{1}{m} \Z$-rational'' points:
\[
\Delta_m(U) = \{t \in U : \Sigma |\vrv(t)| \in \tfrac{1}{m} \Z\}.
\]
This naturally determines a surjective homomorphism of graded semirings
\[
\Delta_m : \gsk \vol \RV^{\db}[\ast] \fun \gsk \vol \RV^{\db}_m[\ast].
\]

Let $I$ be an object in $\vol \Gamma^{\db}_m[k]$. Let $\sigma_I : I \fun \Gamma$ be the definable function given by $\gamma \efun \Pi \gamma$ and $|\sigma_I| = |\cdot| \circ \sigma_I : I \fun \frac{1}{m} \Z$. We set
\[
\bm a_m (I) = \sum_{|\beta| \in \dom(|\sigma_I|)} \chi_{\Gamma}(\sigma_I^{-1}(\beta)) \frac{[\sgn(\beta), \bm T^k]}{[\bm A!]^k} \biggl(\frac{[1]}{[\bm A!]} \biggr)^{m |\beta|}
\]
in $\dsgsk \vol \RES [\ast] [[\bm A!]^{-1}]$, where $\chi_{\Gamma}$ is either one of the Euler characteristics $\chi_{\Gamma, g}$ and $\chi_{\Gamma, b}$ (it does not matter which one we choose since $I$ is doubly bounded). It is routine to check that this determines a homomorphism of semirings
\[
\bm a_m : \gsk  \vol \Gamma^{\db}_m [\ast]  \fun \dsgsk \vol \RES [\ast] [[\bm A!]^{-1}].
\]
Let $\bm U = (U, \id)$ be an object in $\vol \RES_m [k]$ such that $\vrv(U)$ is a singleton $\{\gamma\}$. We set
\[
\bm b_m (\bm U) = \frac{[\bm U]}{[\bm A!]^{k}} \biggl( \frac{[1]}{[\bm A!]} \biggr)^{m |\Pi \gamma|}
\]
in $\dsgsk \vol \RES [\ast] [[\bm A!]^{-1}]$. This assignment extends uniquely to a homomorphism of semirings
\[
\bm b_m : \gsk \vol \RES_m [\ast] \fun \dsgsk \vol \RES [\ast] [[\bm A!]^{-1}].
\]
Now, if $U = \vrv^{-1}(\gamma)$ then $[\sgn(\Pi \gamma), \bm T^k] = [\bm U]$ and hence $\bm a_m([\{\gamma\}]) = \bm b_m ([\bm U])$. This means that the tensor product of $\bm b_m$ and $\bm a_m$ over $\gsk \vol\Gamma^{c}_m[*]$ is a well-defined homomorphism:
\[
\bm b_m \otimes \bm a_m : \gsk \vol \RES_m [\ast] \otimes \gsk \vol\Gamma^{\db}_m[\ast] \fun \dsgsk \vol \RES [\ast] [[\bm A!]^{-1}].
\]
Composing it with the inverse of $\vol \bb D^{\db}_m$, we obtain a homomorphism
\[
\bm h_m : \gsk \vol \RV^{\db}_m [\ast] \fun \dsgsk \vol \RES [\ast] [[\bm A!]^{-1}].
\]

\begin{lem}\label{hm:factor}
The homomorphism $\bm h_m$  factors through the semiring congruence relation $\isp$.
\end{lem}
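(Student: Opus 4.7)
The plan is to verify that $\bm h_m$ takes the same value on both sides of every generating pair of $\isp$, restricted to $\gsk \vol \RV^{\db}_m[\ast]$. By the analogue of Remark~\ref{rem:shr} (passing through $\Delta_m$), $\isp$ is generated as a semiring congruence by the two pairs
$$\bigl([\pm, 1],\ [(\Delta_m(\RV^{\circ \circ} \mi \{\infty\}), \id)]\bigr),$$
so since $\bm h_m$ is a semiring homomorphism, checking these two identities is sufficient; compatibility with $+$ and $\cdot$ then extends the equality to arbitrary $\isp$-related elements.

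For the left-hand side, $[\pm, 1]$ decomposes trivially under $\vol \bb D^{\db}_m$ as $[\pm, 1] \otimes [\mathrm{pt}]$, yielding
$$\bm h_m([\pm, 1]) = \bm b_m([\pm, 1]) = \frac{[\pm, 1]}{[\bm A!]} = \frac{[1]}{[\bm A!]}$$
in $\dsgsk \vol \RES[\ast][[\bm A!]^{-1}]$ (using the identification $[+, 1] = [-, 1] = [1]$ in $\dsgsk$). For the right-hand side, $\Delta_m(\RV^{\circ \circ} \mi \{\infty\}) = \vrv^{-1}(I)$ where $I = \{\pm e^{-q} : q \in \tfrac{1}{m}\Z^+\}$ is a doubly bounded subset of $\Gamma$ whose every element $\gamma$ satisfies $|\gamma| \in \tfrac{1}{m}\Z$. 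Under $\vol \bb D^{\db}_m$ this corresponds to $\bm 1_{\K} \otimes [I]$, so $\bm h_m$ evaluates to $\bm a_m([I])$.

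Applying the defining formula for $\bm a_m$ (noting that $\sigma_I^{-1}(\beta) = \{\beta\}$ and $\chi_{\Gamma}(\{\beta\}) = 1$ for each $\beta \in I$) and using the relation $[+, \bm T] + [-, \bm T] = [\bm A!] - [1]$ in $\dsgsk \vol \RES[\ast]$, with $x = [1]/[\bm A!]$, one computes
$$\bm a_m([I]) = \sum_{q \in \frac{1}{m}\Z^+} \frac{[+, \bm T] + [-, \bm T]}{[\bm A!]} x^{mq} = (1-x)\sum_{n \geq 1} x^n = \frac{[1]}{[\bm A!]}$$
by the formal geometric series, which matches the left-hand side.

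The main obstacle is giving rigorous meaning to the infinite sum $\sum_{n \geq 1} x^n$, since it is not a priori an element of $\dsgsk \vol \RES[\ast][[\bm A!]^{-1}]$: it must be interpreted in a suitable completion or localization in which $([\bm A!] - [1])^{-1}$ exists. This formal setting is implicit in the Hrushovski--Kazhdan framework and is precisely the context in which the topological zeta functions of \S\ref{sec:zeta} are developed, so modulo this technical convention, the verification is an entirely explicit direct computation.
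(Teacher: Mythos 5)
Your approach has a genuine gap at the key step. You propose to check the factorization on the generating pairs $([\pm,1], [(\Delta_m(\RV^{\circ\circ}\mi\{\infty\}),\id)])$ of $\isp$. But $\bm h_m$ is only defined on $\gsk\vol\RV^{\db}_m[\ast]$, and the object $(\Delta_m(\RV^{\circ\circ}\mi\{\infty\}),\id)$ is \emph{not} doubly bounded: its image under $|\vrv|$ consists of all positive elements of $\frac{1}{m}\Z$, which is unbounded from above, so this object does not lie in the category and $\bm h_m$ is not defined on it. In particular, your assertion that $I = \{\pm e^{-q} : q \in \frac{1}{m}\Z,\ q>0\}$ is a doubly bounded subset of $\Gamma$ is false. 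It follows that the ``check on generators'' strategy is inapplicable: the generating pairs of the congruence on the ambient $\gsk\vol\RV[\ast]$ are not pairs of objects of the doubly bounded subcategory, so one must argue directly about arbitrary $\isp$-related pairs $[\bm U],[\bm V]$ in $\gsk\vol\RV^{\db}_m[\ast]$. This mismatch is what then forces you into the infinite series $\sum_{n\geq 1}x^n$, which is not an element of $\dsgsk\vol\RES[\ast][[\bm A!]^{-1}]$; resorting to an unspecified completion would change the target ring in which the zeta functions of \S\ref{sec:zeta} are required to live, so it cannot be waved off as a technical convention.

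The paper's proof avoids both difficulties by a finite telescoping through \emph{truncated} blowups. Given a blowup $\bm U^{\sharp}$ of $\bm U$ at one point, one forms for each $q$ the doubly bounded object $U^{\sharp}_{q/m} = \Delta_m(U^{\sharp}\mi|\vrv|^{-1}(\tfrac{q}{m},\infty))\uplus\{t^{q/m}\}$ and verifies the single-step identity
\[
\bm a_m([\pm e^{q/m}]) + \bm b_m([\bm t^{q/m}]) = \bm b_m([\bm t^{(q-1)/m}]),
\]
which, iterated finitely, shows $\bm h_m([\bm U]) = \bm h_m([\bm U^{\sharp}_{q/m}])$ for every $q$; an induction on the length of the blowup together with weak o-minimality in the $\RV$-sort then identifies $\bm h_m([\bm U])$ with $\bm h_m([\bm V])$. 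All computations are finite and land in the stated codomain. Your formal geometric series is, in spirit, the limit of this telescoping as $q$ grows; removing the need for that limit is exactly the content of the lemma.
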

\begin{proof}
It is enough to show that, for all objects $\bm U$, $\bm V$ in $\vol \RV^{\db}_m [1]$ with $[\bm U] / \isp = [\bm V] / \isp =  [1] / \isp$, we have $\bm h_m ([\bm U]) = \bm h_m ([\bm V])$. Let $\bm U^{\sharp}$, $\bm V^{\sharp}$ be blowups of $\bm U$, $\bm V$ such that $[\bm U^{\sharp}] = [\bm V^{\sharp}]$. We may always assume that $\bm U^{\sharp}$ is of the form $(U^{\sharp}, \id)$; similarly for $\bm V^{\sharp}$.

Suppose that $U \sub |\vrv|^{-1}(\frac{p}{m})$ and $\bm U^{\sharp}$ is obtained by blowing up one point in $U$. Consider any $\frac{q}{m} \geq \frac{p}{m}$ and set
\[
U^{\sharp}_{\frac{q}{m}} = \Delta_m(U^{\sharp} \mi |\vrv|^{-1}(\tfrac{q}{m}, \infty)) \uplus \{t^{\frac{q}{m}}\};
\]
note that $U^{\sharp}$ can be recovered from $U^{\sharp}_{\frac{q}{m}}$ by pulling back along $\Delta_m$ and blowing up $t^{\frac{q}{m}}$. We have
\[
\bm a_m([\pm e^{\frac{q}{m}}]) + \bm b_m ([\bm t^{\frac{q}{m}}]) = \frac{[\pm, \bm T]}{[\bm A!]} \biggl(\frac{[1]}{[\bm A!]} \biggr)^{q} + \frac{[1]}{[\bm A!]}\biggl(\frac{[1]}{[\bm A!]} \biggr)^{q} = \bm b_m ([\bm t^{\frac{q-1}{m}}]).
\]
Repeating this calculation, we see that $\bm h_m([\bm U]) = \bm h_m([\bm U^{\sharp}_{\frac{q}{m}}])$. More generally, by induction on the length of the blowup $\bm U^{\sharp}$, this equality holds between any $[\bm U]$ and any suitable sum whose summands are of the form $[\bm U^{\sharp}_{\frac{q}{m}}]$. This implies that, by weak \omin-minimality in the $\RV$-sort, $\bm h_m([\bm U])$ and $\bm h_m([\bm V])$ may be written as the same sum and hence are equal.
\end{proof}

\begin{cor}\label{3vol}
For each integer $m \geq 1$ there is a natural homomorphism of Grothendieck semirings
\[
\bm e_m : \gsk \vol \VF^{\diamond}[*] \to^{\int_+} \gsk \vol \RV^{\db}[*] / \isp \to^{\bm h_m \circ \Delta_m} \dsgsk \vol \RES [\ast] [[\bm A!]^{-1}].
\]
The groupification of $\bm e_m$ yields two ring homomorphisms
\[
\bm e_m^{\pm} : \ggk \vol \VF^{\diamond}[*] \fun \Z[X, X^{-1}].
\]
\end{cor}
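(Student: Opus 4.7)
The plan is to verify that each arrow in the stated composition is a well-defined homomorphism of Grothendieck semirings and then pass to the groupifications. The heavy lifting has already been done by Theorem~\ref{main:k:dag} and Lemma~\ref{hm:factor}; what remains is essentially bookkeeping.

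First, Theorem~\ref{main:k:dag} gives the isomorphism $\int_+ : \gsk \vol \VF[\ast] \to \gsk \vol \RV[\ast]/\isp$, and Lemma~\ref{binv} guarantees that its restriction to $\gsk \vol \VF^{\diamond}[\ast]$ lands in the sub-semiring $\gsk \vol \RV^{\db}[\ast]/\isp$, giving the first arrow. Second, I need to check that $\Delta_m$ respects $\isp$: if $\bm U^{\sharp}$ is an elementary blowup of $\bm U$ with locus $\bm C$, then $\Delta_m(\bm U^{\sharp})$ is an elementary blowup of $\Delta_m(\bm U)$ with locus $\Delta_m(\bm C)$ in $\vol \RV^{\db}_m[\ast]$, because the factor $\RV^{\circ\circ} \setminus \{\infty\}$ appearing in a blowup contributes only integer values to $\Sigma|\vrv|$ and is thus preserved under the passage to $\tfrac{1}{m}\Z$-rational points. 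Third, Lemma~\ref{hm:factor} asserts precisely that $\bm h_m$ factors through $\isp$. Composing these three arrows yields the semiring homomorphism $\bm e_m$.

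For the groupification, $\bm e_m$ induces a ring homomorphism $\ggk \vol \VF^{\diamond}[\ast] \to \dsggk \vol \RES[\ast][[\bm A!]^{-1}]$. By Remark~\ref{rem:shr}, $\dsggk \vol \RES[\ast]$ is canonically isomorphic to $\ggk \RES[\ast]$, and the latter admits two isomorphisms onto $\Z[X]$ differing by sign: either $[\bm A] \efun -X$ and $[1] \efun X$, or $[\bm A] \efun X$ and $[1] \efun -X$. Since $[\bm A!] = 2[\bm T] + [1] = [\bm A]$ in this quotient, inverting $[\bm A!]$ on the codomain yields $\Z[X, X^{-1}]$, and the two sign choices produce the two desired ring homomorphisms $\bm e_m^{\pm}$. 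The only substantive obstacle is already subsumed into Lemma~\ref{hm:factor}; compatibility of $\Delta_m$ with blowups is routine, and the bifurcation into $\bm e_m^{\pm}$ is a direct consequence of the two isomorphisms in Remark~\ref{rem:shr}.
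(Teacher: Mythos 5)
Your overall architecture matches the paper's: well-definedness of the first arrow follows from Theorem~\ref{main:k:dag} and Lemma~\ref{binv} (as in Theorem~\ref{thm:poin}), well-definedness of the second from Lemma~\ref{hm:factor}, and the groupification from Remark~\ref{rem:shr}. However, your interpolated ``second'' step contains a genuine error. You claim that $\Delta_m(\bm U^{\sharp})$ is again an elementary blowup because ``the factor $\RV^{\circ\circ}\setminus\{\infty\}$ appearing in a blowup contributes only integer values to $\Sigma|\vrv|$.'' This is false: $\vrv(\RV^{\circ\circ}\setminus\{\infty\})$ is the entire strictly positive part of $|\Gamma|$, so $|\vrv(s)|$ for $s$ in the blowup factor ranges over all positive rationals in the model $\mdl R\dlbr t^{\Q}\drbr$, not only integers. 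Consequently $\Delta_m$ replaces $\RV^{\circ\circ}\setminus\{\infty\}$ by a proper subset $\Delta_m(\RV^{\circ\circ}\setminus\{\infty\})$, and $\Delta_m(\bm U^{\sharp})$ is simply not an elementary blowup of $\Delta_m(\bm U)$ in the sense of Definition~\ref{defn:blowup:coa}.

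The fact that $\Delta_m$ does \emph{not} interact trivially with blowups is exactly what makes Lemma~\ref{hm:factor} a nontrivial result; its proof works by truncating a blowup at $|\vrv|$-level $\tfrac{q}{m}$, replacing the lost mass by the singleton $\{t^{q/m}\}$, and verifying by an explicit geometric-series identity that $\bm h_m$ evaluates identically on all such modifications. In other words, Lemma~\ref{hm:factor} should be read as asserting directly that $\bm h_m\circ\Delta_m$ on $\gsk\vol\RV^{\db}[*]$ descends to the quotient by $\isp$, and you should simply invoke it for that purpose rather than inserting a separate (and incorrect) argument that $\Delta_m$ itself respects $\isp$ --- indeed the relation $\isp$ on $\gsk\vol\RV^{\db}_m[*]$ is not even defined via blowups in that category, since the blowup factor $\RV^{\circ\circ}$ lives outside it. The remaining parts of your argument (the first arrow via Theorem~\ref{main:k:dag} and Lemma~\ref{binv}, and the groupification via Remark~\ref{rem:shr} with $[\bm A!]\efun\pm X$) are correct and consonant with the paper.
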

\begin{proof}
As in Theorem~\ref{thm:poin}, the first arrow is well-defined. On the other hand, by Lemma~\ref{hm:factor}, the second arrow is well-defined as well. The second assertion follows from Remark~\ref{rem:shr}.
\end{proof}

Henceforth let $f : \mdl R^n \fun \mdl R$ be an $\lan{T}{}{}$-definable non-constant continuous function (in $\mdl R$) sending $0$ to $0$, or more generally a germ at $0$ of such functions. The positive and the negative \emph{Milnor fibers} of $f$ at $0$ are given by
\[
M_+ = B(0, \epsilon) \cap f^{-1}(\delta) \quad \text{and} \quad M_- = B(0, \epsilon) \cap f^{-1}(- \delta),
\]
where $0 < \delta \ll \epsilon \ll 1$ and $B(0, \epsilon)$ is the ball in $\mdl R^n$ centered at $0$ with radius $\epsilon$. By \omin-minimal trivialization (see \cite[\S 9.2.1]{dries:1998}), the (embedded) definable homeomorphism types of $M_+$ and $M_-$ do not depend on the choice of $\delta$ and $\epsilon$ (of course $M_+$ and $M_-$ are not necessarily homeomorphic, definably or not).

By $T$-convexity (see (Ax.~\ref{ax:tcon}) and (Ax.~\ref{ax:match}) of Definition~\ref{defn:tcf}), there is a quasi-$\lan{T}{}{}$-definable continuous function $f^{\uparrow} : \OO^n \fun \OO$, uniquely determined by $f$, that $\res$-contracts to $f$. Let
\begin{align*}
\wt M_+  &= \{ a \in \MM^n : \rv (f^{\uparrow}(a)) = \rv(t)\}, \\
\wt M_-  &= \{ a \in \MM^n : \rv (f^{\uparrow}(a)) = -\rv(t)\}.
\end{align*}
These may be thought of as the Milnor fibers of $f$ with (thickened) formal arcs attached to each point. We shall concentrate on $\wt M_+$ below and omit the subscript, since all results about $\wt M_+$ also hold for $\wt M_-$, \textit{mutatis mutandis}.

\begin{lem}\label{qinv}
$\wt M$ is $q$-invariant for some (and hence for all sufficiently large) $q \in \Q$.
\end{lem}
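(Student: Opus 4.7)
\medskip
\noindent\textbf{Plan.} The set $\wt M \sub \MM^n$ has no $\RV$-coordinates, so $\prv(\wt M)$ is trivially doubly bounded, and boundedness of $\wt M$ is immediate from $\MM \sub (-1,1)$. The substantive content is to produce a rational $q$ such that $a \in \wt M$ and $a' - a \in \MM_q^n$ imply $a' \in \wt M$; equivalently, to show that $\wt M$ is a pullback under $\pi_q$ in the sense of Definition~\ref{defn:binv}.

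The first key observation is that $\wt M$ is closed (in fact clopen) as a subset of $\VF^n$. Indeed, $\wt M = (f^{\uparrow})^{-1}(t + \MM_1) \cap \MM^n$, where $f^{\uparrow}$ is continuous by $T$-convexity ((Ax.~\ref{ax:tcon}) and (Ax.~\ref{ax:match}) of Definition~\ref{defn:tcf}); and both $\MM^n$ and the disc $t + \MM_1 = \{x : |\vv|(x - t) > 1\}$ are clopen in $\VF$, as one checks directly from the ultrametric property of $|\vv|$ (any point outside $\MM_1$ has a small-enough interval around it contained in the complement, and vice versa for points inside).

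Next, define a function $p : \wt M \fun |\Gamma|$ by
\[
p(a) = \min\{q \in |\Gamma| : a + \MM_q^n \sub \wt M\}.
\]
Openness of $\wt M$ guarantees that the set in question is nonempty and upward closed, and the identity $\MM_{\gamma}^n = \bigcup_{q > \gamma} \MM_q^n$ (valid because $|\Gamma| = \Q$ is dense in itself) shows that the infimum is attained; the function $p$ is definable. I claim $p$ is an $\go$-partition of $\wt M$: for $a' \in \go(a, p(a)) \cap \wt M$, the ultrametric identity $a + \MM_{p(a)}^n = a' + \MM_{p(a)}^n$ gives $p(a') \leq p(a)$, while the symmetric argument (assuming $p(a') < p(a)$ one has $a \in a' + \MM_{p(a')}^n$, so $a + \MM_{p(a')}^n = a' + \MM_{p(a')}^n \sub \wt M$, contradicting the minimality of $p(a)$) yields equality.

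Finally, since $\wt M$ is closed and bounded, Lemma~\ref{vol:par:bounded} applies and gives that $p(\wt M)$ is doubly bounded in $|\Gamma| = \Q$. Any upper bound $q \in \Q$ then satisfies $a + \MM_q^n \sub a + \MM_{p(a)}^n \sub \wt M$ for every $a \in \wt M$, which is exactly $q$-invariance. I do not anticipate a serious obstacle; the only step deserving a little care is the clopen verification for $\wt M$, which hinges on the ultrametric property of the valuation $|\vv|$ on the Hahn field $\mdl R \dlbr t^{\Q} \drbr$.
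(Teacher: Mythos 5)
Your proposal is correct and follows essentially the same route as the paper: observe that continuity of $f^{\uparrow}$ together with the ultrametric topology makes $\wt M$ clopen, pass to the function $a \mapsto \gamma_a$ recording the radius of the largest open polydisc around $a$ contained in $\wt M$, verify that this is an $\go$-partition, and invoke Lemma~\ref{vol:par:bounded}. The paper states the middle steps tersely ("naturally gives rise to an $\go$-partition"); you fill in exactly the expected details — that the infimum defining $p(a)$ is attained, and the ultrametric translation argument showing $p$ is locally constant at the stated scale — so there is nothing genuinely different in approach, only in the amount written down.
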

\begin{proof}
Since $f^{\uparrow}$ is continuous, $\wt M$ is actually clopen. So for every $a \in \wt M$ there is an $a$-definable $\gamma_a \in |\Gamma|$ such that $\go(a, \gamma_a) \sub \wt M$. This naturally gives rise to an $\go$-partition of $\wt M$ and hence the assertion follows from Lemma~\ref{vol:par:bounded}.
\end{proof}

Note that for certain special $f$, such as a polynomial function, there is no need to invoke Lemma~\ref{vol:par:bounded} and we may take $q =1$ in Lemma~\ref{qinv}.

Following \cite{denefloeser:arc}, the motivic zeta functions attached to $f$ are defined to be the generating series
\[
Z^+_{f}(Y) \coloneqq \sum_{m \geq 1} \bm e^+_m([\wt M]) Y^m \quad \text{and} \quad Z^-_{f}(Y) \coloneqq \sum_{m \geq 1} \bm e^-_m([\wt M]) Y^m
\]
in $\Z[X^{-1}]\llbracket Y \rrbracket \simeq \Z[X]\llbracket Y \rrbracket$ (only one of the variables $X$, $X^{-1}$ needs to appear here since $\wt M \sub \OO^n$). Actually these series seem to be closer conceptually to the topological zeta functions defined in \cite{denef:loeser:1992:caract}. We shall write $Z^{\pm}_{f}(Y)$ below when there is no need to distinguish the two zeta functions.

The reader may consult \cite[\S5.1]{DL:tom:seb} or \cite[\S8.4]{hru:loe:lef} for the definition of (formal) rationality of such generating series and other related materials.

\begin{thm}\label{zeta:rat}
The zeta function $Z^{\pm}_{f}(Y)$ is rational and its denominators are products of terms of the form
$1 - (-X)^{a} Y^b$, where $b \geq 1$.
\end{thm}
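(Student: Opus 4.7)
My plan is to unfold the zeta function $Z^{\pm}_f(Y)$ via the Hrushovski--Kazhdan decomposition of $\int_+[\wt M]$ afforded by the tensor product isomorphism $\vol \bb D^{\db}$ of Corollary~\ref{db:vold}, compute $\bm e_m^{\pm}$ on each summand explicitly, and recognize the resulting generating series as rational with denominators of the required form by elementary lattice-point combinatorics on $\Q$.

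Since $\wt M$ is $q$-invariant for some $q \in \Q$ by Lemma~\ref{qinv}, it lies in $\vol \VF^{\diamond}[n]$, so Lemma~\ref{binv} and Theorem~\ref{main:k:dag} give $\int_+[\wt M] = [\bm W]/\isp$ for some $\bm W \in \vol \RV^{\db}[n]$. Applying Corollary~\ref{db:vold} together with (the proof of) Lemma~\ref{RV:decom:RES:vol}, write $[\bm W] = \sum_j [\bm V_j \otimes I_j]$ with $\bm V_j \sub \K^{k_j}$ in $\vol \RES[k_j]$ (so $\vrv(V_j) = \{1\}$ and $|\Pi \vrv(V_j)| = 0$) and $I_j \sub \Gamma^{l_j}$ a definable doubly bounded subset, $k_j + l_j = n$. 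Applying $\Delta_m$, $\bm h_m$, and $\bm e_m^{\pm}$ term by term, and tracing through the identification of $\dsggk \vol \RES[\ast][[\bm A!]^{-1}]$ with $\Z[X, X^{-1}]$ under $[\bm A!] \efun \pm X$, the definitions of $\bm b_m$ and $\bm a_m$ collapse the contribution of each summand to
\[
\chi(V_j)\,(-1)^{k_j}\,(\pm X)^{c_j}\sum_{\beta\,\in\,\sigma_{I_j}(I_j)\,\cap\,\tfrac{1}{m}\Z} \chi_\Gamma(\sigma_{I_j}^{-1}(\beta))\,(-1)^{m|\beta|},
\]
where $c_j$ is an integer read off from the relevant graded piece.

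By Theorem~B of \cite{Dries:tcon:97}, each $\sigma_{I_j}(I_j) \sub \Q$ is a finite disjoint union of intervals $(a_i, b_i]$ with rational endpoints on each of which $\chi_\Gamma(\sigma_{I_j}^{-1}(\cdot))$ is constant. Substituting $r = m|\beta| \in \Z_{\geq 1}$ and interchanging the orders of summation in $Z^{\pm}_f(Y) = \sum_m \bm e_m^{\pm}([\wt M])\,Y^m$ reduces the problem to analyzing finitely many series of the form
\[
S_{a,b}(Y) \;=\; \sum_{r \geq 1}(-1)^r \sum_{\substack{m \geq 1 \\ r/b \,\leq\, m \,<\, r/a}} Y^m
\]
for rational $0 \leq a < b$ (with the convention $r/0 := \infty$). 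The inner sum is a finite geometric sum in $Y$, and its coefficient in $m$ is periodic modulo $2L$ with $L$ the least common denominator of $a$ and $b$. Hence $S_{a,b}(Y)$ is rational with denominator dividing $1 - Y^{2L}$; factoring $1 - Y^{2L}$ over $\Z$ and using the sign identifications of Remark~\ref{rem:shr} to rewrite each factor $1 \pm Y^d$ as $1 - (-X)^a Y^d$ with $a$ of the appropriate parity yields denominators of the claimed form.

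The main obstacle I anticipate is the bookkeeping of the monomials $(\pm X)^{c_j}$ and the various signs after aggregating contributions over all $j$ and all intervals $(a_i, b_i]$: one must verify that the aggregate is a rational function whose denominator is a product of the claimed factors with $b \geq 1$ throughout (the latter coming from the vanishing of each $S_{a,b}$ at $Y = 0$) and whose numerator is a Laurent polynomial in $X$ and a polynomial in $Y$. Once this is settled, the theorem follows from the rationality of each $S_{a,b}(Y)$.
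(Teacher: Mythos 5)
Your plan substitutes direct interchange of summation for the paper's reduction to a Hadamard product $Z^{\pm}(\bm U)(Y) * Z^{\pm}(I)(Y)$, which the paper then handles via the cited rationality results for Hadamard products in \cite{DL:tom:seb, hru:loe:lef}. Your route is a legitimate alternative and arguably more self-contained, since it sidesteps that external machinery and reduces everything to the elementary lattice-point series $S_{a,b}(Y)$, whose rationality follows from the periodicity of $m \efun \sum_{am < r \leq bm}(-1)^r$ modulo $2L$. However, three points need repair. First, the decomposition ``$[\bm W]=\sum_j[\bm V_j\otimes I_j]$ with $\bm V_j\sub\K^{k_j}$'' is not quite what Lemma~\ref{RV:decom:RES:vol} provides: it leaves an exceptional finite piece $\vrv^{-1}(E_0)$ whose $\RES$-summands have $\vrv$-projection a nontrivial singleton $\{\gamma\}$; for these, $\bm b_m$ produces an extra factor $([1]/[\bm A!])^{m|\Pi\gamma|}$ supported only on $m$ divisible by the denominator of $|\Pi\gamma|$, giving a geometric series in $Y$ exactly as in the paper's $Z^{\pm}(\bm U)$ computation, so this contribution must be accounted for separately rather than absorbed into the $S_{a,b}$'s.

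Second, and more seriously, the closing step about ``factoring $1-Y^{2L}$ over $\Z$'' is both incorrect and unnecessary. Factoring $1-Y^{2L}$ over $\Z$ produces cyclotomic polynomials that are not of the form $1\pm Y^d$ in general (e.g.\ $1+Y+Y^2\mid 1-Y^6$), and the proposed rewriting of $1+Y^d$ as $1-(-X)^aY^d$ cannot work: $(-X)^a$ is a monomial of degree $a$ in $\Z[X,X^{-1}]$ and is never the constant $-1$, so Remark~\ref{rem:shr} does not license the identification you want. But you do not need any of this: $1-Y^{2L}$ is already a term of the form $1-(-X)^aY^b$ with $a=0$ and $b=2L\geq 1$, so the product over the finitely many contributing intervals already has a denominator of the required shape. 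Delete the factoring paragraph and state this directly. Finally, a small point: the inner sum $\sum_{r/b\leq m<r/a}Y^m$ is not finite when $a=0$, so it is not ``a finite geometric sum in $Y$''; the rationality of $S_{0,b}$ still follows from the periodicity argument, but you should not lean on the finiteness claim.
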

\begin{proof}
By Corollary~\ref{db:vold}, we may assume that $\int_+ [\wt M]$ is of the form $[\bm U] \otimes [I]$, where $\bm U \in \vol\RES[k]$ and $I \in \vol\Gamma^{\db}[l]$. By Remark~\ref{rem:volG:embed}, $Z^{\pm}_{f}(Y)$ may be written as the Hadamard product of the series
\[
Z^{\pm}(\bm U)(Y) \coloneqq \sum_{m \geq 1} \bm l^{\pm}_m([\bm U]) Y^m \quad \text{and} \quad  Z^{\pm}(I)(Y) \coloneqq \sum_{m \geq 1} \bm l^{\pm}_m([I]) Y^m,
\]
where $\bm l_m^{\pm}$ is defined by the condition $\bm e^{\pm}_m = \bm l^{\pm}_m \circ \int$. By \cite[Propositions~5.1.1, 5.1.2]{DL:tom:seb} (also see \cite[Lemma~8.4.1]{hru:loe:lef}), it is enough to prove the statement for $Z^{\pm}(\bm U)(Y)$ and $Z^{\pm}(I)(Y)$.

By construction, for some positive integers $p$ and $q$, $Z^{\pm}(\bm U)(Y)$ may be understood as the geometric series
\[
\frac{[\bm U]}{[\bm A!]^{k}} \sum_{n \geq 1} \biggl( \frac{[1]}{[\bm A!]} \biggr)^{n q} Y^{n p}
\]
and hence the desired rationality of $Z^{\pm}(\bm U)(Y)$ follows from standard geometric series summation.

Next, recall that the definable function $\sigma_I : I \fun \Gamma$ is given by $\gamma \efun \Pi \gamma$. By \cite[Proposition~4.2.10]{dries:1998}, we may assume that $\sigma_I(I)$ is an open interval and the function on $\sigma_I(I)$ given by $\beta \efun \chi_{\Gamma}(\sigma_I^{-1}(\beta))$ is constant, say, $d \in \Z$. In the simplest case $|\sigma_I(I)| = (0, 1)$, $Z^{\pm}(I)(Y)$ may be understood as the series
\[
d \biggl(\frac{[\bm T]}{[\bm A!]}\biggr)^l \sum_{m > 1} \sum_{0 < n < m} \biggl( \frac{[1]}{[\bm A!]} \biggr)^{n} Y^{m},
\]
which is rational of the desired form. The case that the endpoints of $|\sigma_I(I)|$ are integers follows immediately from this and linearity. More generally, we may assume that $|\sigma_I(I)|$ is of the form $(0, \frac{q}{p})$ and consider the points in $\frac{p}{m} \Z \cap (0, q)$; the desired rationality follows.
\end{proof}

Continuing the proof above, a straightforward computation shows that
\[
\lim_{T \limplies \infty} Z^{\pm}(\bm U)(Y) = - \frac{[\bm U]}{[\bm A!]^{k}} \quad \text{and} \quad \lim_{T \limplies \infty} Z^{\pm}(I)(Y) = - d \biggl(\frac{[\bm T]}{[\bm A!]}\biggr)^l.
\]
Consequently, by \cite[Lemma~8.4.1]{hru:loe:lef} and Theorem~\ref{thm:poin}, we have:

\begin{cor}\label{limitcor}
The zeta function $Z^{\pm}_f(Y)$ attains a limit $e^{\pm}_f \in \Z$ as $Y \limplies \infty$ and we have the equality
\[
e^{\pm}_f X^n = - \Xint{\textup{R}}^{\pm} [\wt M].
\]
\end{cor}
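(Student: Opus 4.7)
The plan is to concatenate three ingredients already assembled in the paper: the rationality together with the explicit factor limits from Theorem~\ref{zeta:rat}, the Hadamard-product limit formula of \cite[Lemma~8.4.1]{hru:loe:lef}, and the definition of $\Xint{\textup{R}}^{\pm}$ from Theorem~\ref{thm:poin}.

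First I would recall from the proof of Theorem~\ref{zeta:rat} the Hadamard decomposition $Z^{\pm}_f(Y) = Z^{\pm}(\bm U)(Y) \odot Z^{\pm}(I)(Y)$, together with the computed factor limits
\[
\lim_{Y \to \infty} Z^{\pm}(\bm U)(Y) = -\frac{[\bm U]}{[\bm A!]^k}, \qquad \lim_{Y \to \infty} Z^{\pm}(I)(Y) = -d\biggl(\frac{[\bm T]}{[\bm A!]}\biggr)^l,
\]
where $k+l = n$ and $d \in \Z$ is the constant value of the Euler characteristic $\chi_{\Gamma}(\sigma_I^{-1}(\beta))$ on $\sigma_I(I)$. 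Since each factor is rational with denominator of the admissible form $\prod (1 - (-X)^a Y^b)$, $b \geq 1$, the Hadamard-product limit formula of \cite[Lemma~8.4.1]{hru:loe:lef} guarantees that $Z^{\pm}_f(Y)$ itself attains a limit $e^{\pm}_f$, expressible in terms of the individual limits up to the canonical sign prescribed by that lemma.

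Next I would make the limit explicit. Under the isomorphism $\dsggk \vol \RES[\ast] \cong \Z[X]$ (given by $[\bm A!] \efun \pm X$, $[\bm T] \efun \mp X$, as recorded in Remark~\ref{rem:shr}), the product of the two factor limits becomes a monomial of degree $-n$ in $X$ (possibly multiplied by $d$ and a sign), so $e^{\pm}_f \cdot X^n$ is an integer; this already yields the first part of the statement, namely $e^{\pm}_f \in \Z$.

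Finally I would compare with $\Xint{\textup{R}}^{\pm}[\wt M]$. Since $\int_+[\wt M]$ decomposes as $[\bm U] \cdot [I]$ under the isomorphism $\vol \bb D^{\db}$ of Corollary~\ref{db:vold} used in the proof of Theorem~\ref{zeta:rat}, and since $[I]$ is evaluated in $\dsggk \vol \RES[\ast]$ via Euler characteristic as $d[\bm T]^l$, Theorem~\ref{thm:poin} gives $\Xint{\textup{R}}^{\pm}[\wt M] = \sbe_{\K}^{\pm}([\bm U][I])$ as an explicit integer multiple of $X^n$. Direct identification with the expression for $e^{\pm}_f X^n$ obtained in the previous step yields the equality $e^{\pm}_f X^n = -\Xint{\textup{R}}^{\pm}[\wt M]$. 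The only real technical obstacle is the sign bookkeeping: one must reconcile the $-1$ coming from the Hadamard-product limit formula, the two $-1$'s from the individual factor limits, and the $(-1)^l$ arising from the identification $[\bm T] = -[1]$ in $\dsggk$, checking that they combine to the single overall minus sign on the right-hand side of the stated equality.
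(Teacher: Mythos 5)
Your proposal follows exactly the same route as the paper: Hadamard decomposition from the proof of Theorem~\ref{zeta:rat}, the two factor limits computed there, the Hadamard-product limit formula of \cite[Lemma~8.4.1]{hru:loe:lef}, and comparison with the definition of $\Xint{\textup{R}}^{\pm}$ through Theorem~\ref{thm:poin}. That is precisely what the paper does (its proof is a two-line appeal to the same three ingredients).

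There are, however, two bookkeeping errors in your ``making the limit explicit'' step, and the first one actually contradicts your own conclusion. You assert that the product of the two factor limits is a monomial of degree $-n$ in $X$. It is not: $[\bm U]/[\bm A!]^k$ has $[\bm A!]$-degree $k - k = 0$, and $([\bm T]/[\bm A!])^l$ has degree $l - l = 0$, so the product sits in the degree-$0$ part of $\dsggk\vol\RES[\ast][[\bm A!]^{-1}]$, which maps to $\Z \subset \Z[X, X^{-1}]$ under $[\bm A!] \mapsto \pm X$. If the product really had degree $-n$, then $e^{\pm}_f$ would be a Laurent monomial of degree $-n$ and the claim $e^{\pm}_f \in \Z$ would fail for $n > 0$; the reason $e^{\pm}_f$ is an integer is exactly that the degree is $0$. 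The factor $X^n$ in the stated equality comes from the other side: $\Xint{\textup{R}}^{\pm}[\wt M]$ lies in the degree-$n$ graded piece $\Z X^n$ of $\Z[X]$, not from clearing a pole on the left. The second slip is the sign of $[\bm T]$: by Remark~\ref{rem:shr}, in $\dsggk\vol\RES[\ast]$ one has $[\bm T] = -[1] = [\bm A!]$, so under $[\bm A!] \mapsto \pm X$ one also gets $[\bm T] \mapsto \pm X$ (same sign, not opposite). With these two corrections the sign bookkeeping you deferred at the end becomes a short and honest calculation rather than a guess; as written, the deferred check cannot be completed from your stated data.
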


\begin{rem}
More generally, the construction of this section may be carried out with respect to a germ $f$ of non-constant continuous functions on a definable subset $V$ in $\mdl R$. However, it may seem that, for such an arbitrary $V$, the associated zeta function $Z^{\pm}_{V,f}(Y)$ is often rather trivial since $\bm e_m$ ignores those objects in $\vol \VF^{\diamond}[\ast]$ whose dimension is less than the ambient dimension. The proper way to remedy this is to introduce volume forms, that is, we equip each object $A$ in $\vol \VF^{\diamond}[\ast]$ with various definable functions $\omega : A \fun \RV^{\times}$. For the formalism to work out, we shall need a reduced cross-section $\Gamma \fun \RV^{\times}$ or a reduced standard part map $\RV^{\times} \fun \K^{\times}$, as in \cite{Yin:int:expan:acvf}. However, for definable subsets in $\mdl R$, there is no need to appeal to this level of generality. The reason is that volume forms over $\mdl R$ are nullified through the forgetful homomorphism into $\gsk \vol \VF^{\diamond}[*]$ and we may still use $\bm e_m$ in the construction, as long as which volume form is being used is understood (see \cite[Remark~8.2.2]{hru:loe:lef} for further explanation).
\end{rem}

\end{document}